\newtheorem{theorem}{Theorem}[section]
\newtheorem{lemma}[theorem]{Lemma}
\newtheorem{prop}[theorem]{Proposition}
\newtheorem{corollary}[theorem]{Corollary}
\newtheorem{observation}[theorem]{Observation}
\theoremstyle{definition}
\newtheorem{definition}[theorem]{Definition}
\newenvironment{assumptionp}[1]{
  
  \assumptionalt
}{\endassumptionalt}
\theoremstyle{remark}
\newtheorem{remark}[theorem]{Remark}
\definecolor{shadecolor}{named}{GreenYellow}
\newcommand{\pushright}[1]{\ifmeasuring@#1\else\omit\hfill$\displaystyle#1$\fi\ignorespaces}
\newcommand{\pushleft}[1]{\ifmeasuring@#1\else\omit$\displaystyle#1$\hfill\fi\ignorespaces}
\newcommand{\plal}{\mathbb P_{\lambda,L}}
\newcommand{\Acal}{\mathcal{A}}
\newcommand{\Gcal}{\mathcal{G}}
\newcommand{\E}{\mathbb E}
\newcommand{\Ecal}{\mathcal E}
\newcommand{\R}{\mathbb R}
\newcommand{\Rd}{\mathbb R^d}
\newcommand{\Z}{\mathbb Z}
\newcommand{\N}{\mathbb N}
\newcommand{\dd}{\mathrm{d}} 
\newcommand{\C}{\mathscr {C}}
\newcommand{\Complex}{\mathbb C}
\newcommand{\piv}[1]{\textsf {Piv}(#1)}
\DeclareMathOperator*{\esssup}{ess\,sup}
\DeclareMathOperator*{\essinf}{ess\,inf}
\newcommand{\LandauBigO}[1]{\mathcal{O}\left(#1\right)}
\newcommand{\conn}[3]{#1 \longleftrightarrow #2\textrm { in } #3}
\newcommand{\adja}[3]{#1 \sim #2\textrm { in } #3}
\newcommand{\nconn}[3]{#1 \centernot\longleftrightarrow #2\textrm { in } #3}
\newcommand{\dconn}[3]{#1 \Longleftrightarrow #2\textrm { in } #3}
\newcommand{\xconn}[4]{#1 \xleftrightarrow{\,\,#4\,\,} #2\textrm { in } #3}
\newcommand{\thinning}[2]{#1_{\langle #2 \rangle}}
\newcommand{\Id}{\mathds 1}
\newcommand{\Optau}{\mathcal{T}}
\newcommand{\Opconnf}{\varPhi}
\newcommand{\OpLace}{\varPi}
\DeclarePairedDelimiter\abs{\lvert}{\rvert}
\DeclarePairedDelimiter\norm{\lVert}{\rVert}
\DeclarePairedDelimiterX{\inner}[2]{\langle}{\rangle}{#1, #2}
\newcommand{\orig}{o}
\newcommand{\e}{\text{e}}
\newcommand{\connf}{\varphi}
\definecolor{darkorange}{RGB}{255,165,0}
\definecolor{altviolet}{RGB}{139,0,139}
\definecolor{turquoise}{RGB}{64,224,208}
\definecolor{lblue}{RGB}{173,216,230}
\definecolor{violet}{RGB}{238,130,238}
\definecolor{darkgreen}{RGB}{0,100,0}
\definecolor{lgreen}{RGB}{144,238,144}
\newcommand{\HypDim}{{\mathbb{H}^d}}
\newcommand{\HypTwo}{{\mathbb{H}^2}}
\newcommand{\HypThree}{{\mathbb{H}^3}}
\newcommand{\habsd}[1]{\abs*{#1}_\HypDim}
\newcommand{\dist}[1]{\mathrm{dist}_{\HypDim}\left(#1\right)}
\newcommand{\distThree}[1]{\mathrm{dist}_{\HypThree}\left(#1\right)}
\newcommand{\artanh}{\mathrm{artanh}}
\newcommand{\arsinh}{\mathrm{arsinh}}
\numberwithin{equation}{section}
\title{Estimating the critical threshold for stretched-out hyperbolic random connection models}
\author{Matthew Dickson\footnote{University of British Columbia, Department of Mathematics, Vancouver, BC, Canada, V6T 1Z2; Email: dickson@math.ubc.ca; \orcidlink{0000-0002-8629-4796}~https://orcid.org/0000-0002-8629-4796}}
\date{}
\begin{document}
\maketitle

\vspace{-1em}

{\centering{ \today}\par}

\vskip-3em

\begin{abstract}
This paper examines the model-dependent asymptotic behaviour of the critical threshold intensity for stretched-out random connection models (RCMs) on hyperbolic spaces. The proof uses lace expansion arguments, but has notable qualitative differences to the Euclidean case in how it evaluates spectral radii. The result is applied to the Boolean disc RCM and a heat kernel RCM.
\end{abstract}

\noindent\emph{Mathematics Subject Classification (2020).} Primary: 82B43; Secondary: 60G55, 43A90.

\smallskip

\noindent\emph{Keywords and phrases.} Continuum percolation; random connection model; hyperbolic space; critical threshold; asymptotic behaviour; lace expansion

{\footnotesize
}

\section{Introduction}

Many percolative systems exhibit a phase transition: below some critical parameter value there exists no infinite connected component almost surely, and above the critical parameter there does exist an infinite connected component almost surely. Kesten proved in \cite{kesten1980critical} that for Bernoulli bond percolation (BBP) on nearest-neighbour $\Z^2$ the critical edge probability $p_c=\frac{1}{2}$. Having such an exact answer for the critical value is atypical.

Estimates for the critical probability for spread-out Bernoulli bond percolation (BBP) on various graphs have been found in works such as \cite{HarSla90,penrose1993spread,hofstadsakai2005critical,freiperkins2016lower,hong2023upper,spanos2024spread,hong2025lower}. Most relevant to this paper,  \cite{hofstadsakai2005critical} uses lace expansion results to examine spread-out BBP on $\Z^d$ for $d>6$. Amongst other results, they prove that if  two vertices $x,y\in\Z^d$ share an edge independently with probability
\begin{equation}
    \frac{p}{\left(2L+1\right)^d-1}\Id\left\{\norm*{x-y}_\infty \leq L\right\},
\end{equation}
then the critical probability obeys
\begin{equation}
\label{eqn:BBPexpansion}
    p_\mathrm{c} = 1 + \left(U^{\star 2}\left(\orig\right) + \frac{1}{2}\sum^\infty_{n=3}\left(n+1\right) U^{\star n}\left(\orig\right)\right)L^{-d} + \LandauBigO{L^{-d-1}},
\end{equation}
as $L\to\infty$, where $U\colon \Rd\to \left[0,1\right]$ is given by
\begin{equation}
    U(x) := 2^{-d}\Id\left\{\norm*{x}_\infty < 1\right\}
\end{equation}
and $U^{\star n}\left(x\right)$ denotes the $n$-fold convolution of $U$ in $\R^d$.

In this paper we will be concerned with approximating the critical threshold for stretched-out random connection models (RCMs) on the hyperbolic space $\HypDim$ (for $d\geq 2$). We will also be using a lace expansion approach (building upon the arguments in \cite{HeyHofLasMat19} and \cite{dickson2025expansion}), but will not require the full \emph{spread-out} behaviour of \cite{hofstadsakai2005critical}, and the relevancy of terms in the asymptotics will differ from \eqref{eqn:BBPexpansion}. The high-level proof description and the result will look similar to the argument in \cite{dickson2025expansion} which dealt with RCMs on $\Rd$ as $d\to\infty$, but as we shall see, the ``easy'' and ``difficult'' parts of the proof are transposed. 
The lace expansion component of the argument is now significantly easier because we can inherit the finiteness of the critical triangle diagram from the spherical transform argument in \cite{dickson2024NonUniqueness} -- no bootstrap argument is required. However, extracting the critical intensity, $\lambda_\mathrm{c}$, from the resulting Ornstein-Zernike equation is no longer as simple a matter as applying the ($\HypDim$-analog of the) Fourier transform, and a more careful study of the $L^1\to L^1$ spectral radius of particular convolution operators is required.

\paragraph{Notation.} We write $f=\LandauBigO{g}$ to mean that $\limsup\abs*{\sfrac{f}{g}}<\infty$, and write $f=o\left(g\right)$ or $f\ll g$ to mean that $\lim\sfrac{f}{g}=0$. By $f\sim g$ we mean that $\lim\sfrac{f}{g}=1$. Given a real-valued function $f$, $\left(f\right)_+$ is the positive part and $\left(f\right)_-$ is the absolute value of the negative part. That is, $\left(f\right)_+ := \frac{1}{2}\left(\abs*{f} + f\right)$ and $\left(f\right)_- := \frac{1}{2}\left(\abs*{f} - f\right)$. Given $z\in\Complex$, we write $\overline{z}$ for the complex conjugate of $z$.

\section{The Model and the Results}
\subsection{The Model} 
\label{sec:model}

    \paragraph{Hyperbolic space.}
    For $d\geq 2$, hyperbolic $d$-space (denoted $\HypDim$) is the unique simply connected $d$-dimensional Riemannian manifold with constant sectional curvature $-1$. It can be convenient to describe $\HypDim$ using the Poincar{\'e} ball model (also called the conformal ball model). Let us consider the open Euclidean ball in $\Rd$ with unit radius: $\mathbb{B}:=\left\{x\in\Rd\colon \abs*{x}<1\right\}$. Then for each $x\in\mathbb{B}$ the \emph{hyperbolic metric} on $\mathbb{B}$ assigns hyperbolic distance
    \begin{equation}
        \dist{x,\orig} = 2\, \artanh\abs*{x},
    \end{equation}
    where $\orig$ is the origin of $\Rd$ and $\abs{\cdot}$ is the Euclidean norm. The (hyperbolic-)isometries of $\mathbb{B}$ are then the M{\"o}bius transformations of $\mathbb{B}$, and these can be used to extend this distance to any pair of points in $\mathbb{B}$. For any Lebesgue measurable subset $E\subset \mathbb{B}$, the \emph{hyperbolic measure} on $\mathbb{B}$ assigns mass
    \begin{equation}
        \mu\left(E\right) := \int_{E}\frac{4}{\left(1-\abs*{x}^2\right)^2}\dd x.
    \end{equation}
    In particular, this measure is preserved by the (hyperbolic-)isometries of $\mathbb{B}$.

    Also note that for a $\mu$-integrable function $f\colon \HypDim\to \R$ for which there exists $g\colon \R_{\geq 0}\to \R$ such that $f(x)=g\left(\dist{x,\orig}\right)$,
    \begin{equation}
        \int_\HypDim f(x)\mu\left(\dd x\right) = \mathfrak{S}_{d-1}\int^\infty_0 g(r)\left(\sinh r\right)^{d-1}\dd r,
    \end{equation}
    where $\mathfrak{S}_{d-1}:=\sfrac{2\pi^{\frac{d}{2}}}{\Gamma\left(\frac{d}{2}\right)}$ equals the $\R^{d-1}$-Lebesgue measure of the Euclidean unit radius sphere (i.e. $\mathbb{S}^{d-1}$) in $\R^d$.

    \paragraph{Random connection model.}
    Given $\lambda\geq0$ let the vertex set of the RCM, $\eta$, be distributed as a Poisson point process on $\HypDim$ with intensity measure $\lambda\mu$, where $\mu$ is the hyperbolic measure on $\HypDim$. Given a measurable \emph{adjacency function} $\connf\colon \R_{\geq 0}\to\left[0,1\right]$, for each pair of vertices in $\eta$ (say, $\left\{x,y\right\}$), assign an edge independently of everything else with probability 
    \begin{equation}
        \connf\left(\dist{x,y}\right).
    \end{equation}
    We will sometimes write $\connf\left(x,y\right) := \connf\left(\dist{x,y}\right)$. The overloading of notation should cause minimal confusion. Note that since hyperbolic translations generate hyperbolic rotations (see for example \cite[Exercise~29.13]{martin2012foundations}), this is no great restriction on the family of adjacency functions we can consider if we want some sort of transitivity in the model. If $x$ and $y$ share an edge, we say $x$ and $y$ are \emph{adjacent}, and write $x\sim y$. The configuration of the vertex set and the edge set is then notated with $\xi$. We will also require Palm-type versions of $\eta$ and $\xi$. Given a finite set $\left\{x_1,\ldots,x_k\right\}\subset \HypDim$, we can define the augmented vertex set $\eta^{x_1,\ldots,x_k}= \eta\cup\left\{x_1,\ldots,x_k\right\}$ and define the augmented configuration $\xi^{x_1,\ldots,x_k}$ by using the augmented vertex set and including edges between any two vertices in $\eta^{x_1,\ldots,x_k}$ independently with probability given by $\connf$. The full details of this construction can be found in \cite{HeyHofLasMat19}.

\begin{figure}
    \centering
    \begin{subfigure}[b]{0.49\textwidth}
        \centering
        \includegraphics[width=\textwidth]{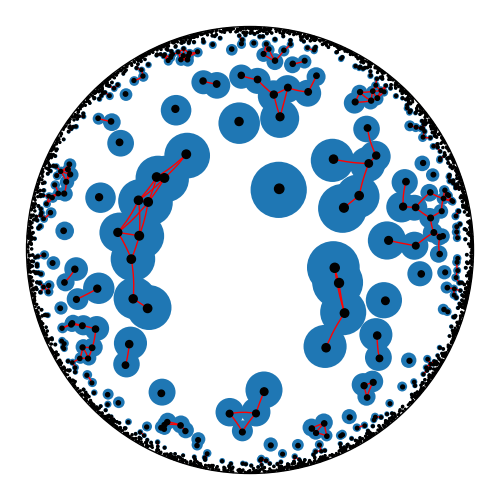}
        \caption{$\lambda=2.0$}
    \end{subfigure}
    \hfill
    \begin{subfigure}[b]{0.49\textwidth}
        \centering
        \includegraphics[width=\textwidth]{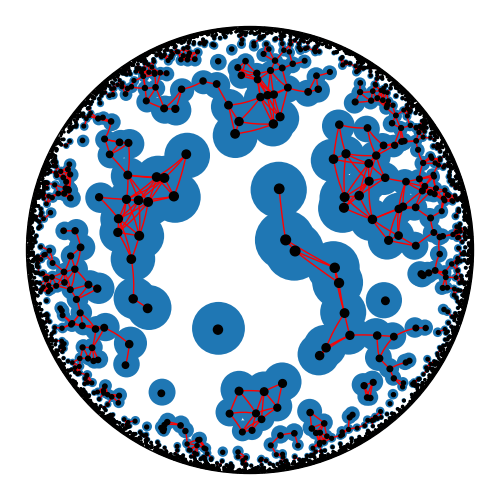}
        \caption{$\lambda=4.0$}
    \end{subfigure}
    \caption{Simulations of the RCM on $\HypTwo$ (represented with the Poincar{\'e} ball model) with adjacency function $\connf\left(r\right) = \Id\left\{r < r_0\right\}$ where $r_0=2\arsinh\left(\frac{1}{2\sqrt{\pi}}\right)\approx0.557$ is the radius of the unit $\mu$-volume disc. The blue region is the union of hyperbolic radius $\frac{1}{2}r_0$ discs centred on each vertex.}
    \label{fig:BooleanSimulation}
\end{figure}

    Given an index $L>0$, let $\connf_L$ be an adjacency function. In the examples we consider in Theorems~\ref{cor:BooleanResult} and \ref{cor:HeatKernelResult}, and in Assumption~\ref{Model_Assumption}, we shall come to view models with larger $L$ as being more `stretched-out' in that longer edges beome more likely. This adjacency function and an intensity $\lambda$ will define a RCM on $\HypDim$. Let $\mathbb{P}_{\lambda,L}$ and $\E_{\lambda,L}$ denote the probability and expectation associated with $\xi$ in the RCM on $\HypDim$ with intensity $\lambda$ and adjacency function $\connf_L$. The assumptions required for our results will impose constraints on the family of functions $\left\{\connf_L\right\}_{L>0}$ we can consider.

    Given two distinct vertices $x,y\in\HypDim$, we say $x$ is connected to $y$ in $\xi^{x,y}$ (written $\conn{x}{y}{\xi^{x,y}}$) if $x=y$ or there exists a finite sequence of edges in $\xi^{x,y}$ leading from $x$ to $y$. This leads to defining the useful \emph{two-point} function:
    \begin{equation}
        \tau_{\lambda,L}\left(x,y\right):= \mathbb{P}_{\lambda,L}\left(\conn{x}{y}{\xi^{x,y}}\right).
    \end{equation}

\paragraph{Critical Intensity.}
Our main question of these hyperbolic random connection models regards the existence of an infinite connected component (a cluster) in the resulting random graph. Let
\begin{align}
    \lambda_\mathrm{c}&:= \inf\left\{\lambda\geq0\colon \text{the RCM on }\HypDim\text{ with intensity }\lambda\text{ has an infinite cluster a.s}\right\},\\
    \lambda_u&:= \inf\left\{\lambda\geq0\colon \text{the RCM on }\HypDim\text{ with intensity }\lambda\text{ has a unique infinite cluster a.s}\right\}.
\end{align}
For RCMs on $\Rd$, \cite{MeeRoy96} proves that the analogous quantities are in fact equal, and \cite{chebunin2024uniqueness} proves that this equality survives even if the vertices are randomly and independently assigned `marks' that can change the adjacency probabilities (with some conditions). However, \cite{tykesson2007number} proves that for the Boolean disc RCM on $\HypTwo$ (see Definition~\ref{defn:BooleanDiscHyp}),
\begin{equation}
    0<\lambda_\mathrm{c}<\lambda_u<\infty.
\end{equation}
In fact \cite{tykesson2007number} proves that if the radius of the Boolean discs is sufficiently large (depending on the dimension $d$), $\lambda_\mathrm{c}<\lambda_u$ for the Boolean disc RCM on $\HypDim$ for any $d\geq 2$. This latter result is generalised in \cite{dickson2024NonUniqueness} to show that $\lambda_\mathrm{c}<\lambda_u$ for many `stretched-out' RCMs on $\HypDim$, even when marks are added to the vertices.

When we have a family of RCMs determined by the family of adjacency functions $\left\{\connf_L\right\}_{L>0}$, we will naturally have $\lambda_\mathrm{c}=\lambda_\mathrm{c}(L)$. In this paper we aim to describe the asymptotics of $\lambda_\mathrm{c}(L)$ as $L\to\infty$. 

    \subsection{Results}
    \label{sec:results}

Before introducing the most general form of our result, we present two concrete examples of it in action: the heat kernel RCM on $\HypThree$, and the Boolean disc RCM on $\HypDim$ (for any $d\geq 2$).

    \begin{definition}[Boolean disc RCM]\label{defn:BooleanDiscHyp}
        The Boolean disc RCM is defined by having the adjacency function
        \begin{equation}
            \connf_L(r) = \Id\left\{r<L\right\}.
        \end{equation}
    \end{definition}

    By $\mathfrak{S}_{d-1}$ we denote the $\R^{d-1}$-Lebesgue measure of the Euclidean unit radius sphere (i.e. $\mathbb{S}^{d-1}$) in $\R^d$, and note we will let $B_L\left(\orig\right)$ denote the hyperbolic radius $L$ ball centred on $\orig\in\HypDim$. Therefore $\mu\left(B_L\left(\orig\right)\right)$ denotes the hyperbolic measure of the hyperbolic radius $L$ ball.
    \begin{theorem}\label{cor:BooleanResult}
        For the Boolean disc RCM, as $L\to\infty$
        \begin{equation}\label{eqn:expectedDegreeBoolean}
            \mu\left(B_L(\orig)\right)\lambda_c(L) = 1 + \frac{\mathfrak{S}_{d-2}}{\mathfrak{S}_{d-1}}\frac{2^{d+2}}{d-1}\e^{-\frac{1}{2}\left(d-1\right)L} + o\left(\e^{-\frac{1}{2}\left(d-1\right)L}\right).
        \end{equation}
        Therefore
        \begin{equation}\label{eqn:BooleanExpansion}
            \lambda_c(L) = \frac{\left(d-1\right)2^{d-1}}{\mathfrak{S}_{d-1}}\e^{-\left(d-1\right)L}\left(1 + \left(1+o\left(1\right)\right)\begin{cases}
                \frac{16}{\pi}\e^{-\frac{1}{2}L}&\colon d=2\\
                8\e^{-L}&\colon d=3\\
                \frac{128}{3\pi}\e^{-\frac{3}{2}L}&\colon d=4\\
                32\e^{-2L}&\colon d=5\\
                \frac{\left(d-1\right)^2}{d-3}\e^{-2L}&\colon d\geq 6
            \end{cases}\right).
        \end{equation}
    \end{theorem}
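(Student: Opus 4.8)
The plan is to specialise the general asymptotic result (Assumption~\ref{Model_Assumption} plus its consequence, which the excerpt has not shown us but which I will take as available) to the Boolean adjacency function $\connf_L(r)=\Id\{r<L\}$. The general machinery produces, via the lace expansion, an Ornstein--Zernike-type identity whose leading behaviour reads $\mu(B_L(\orig))\,\lambda_c(L) = 1 + (\text{correction})$, where the correction is governed by the $L^1\to L^1$ spectral radius of the relevant convolution operator built from the normalised adjacency kernel. So the first step is to compute the normalising constant $\mu(B_L(\orig))$ exactly: using the polar-coordinate formula from Section~\ref{sec:model},
\[
\mu(B_L(\orig)) = \mathfrak{S}_{d-1}\int_0^L (\sinh r)^{d-1}\,\dd r,
\]
and then extract its $L\to\infty$ asymptotics. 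Since $\sinh r \sim \tfrac12 \e^{r}$, one has $\mu(B_L(\orig)) = \tfrac{\mathfrak{S}_{d-1}}{(d-1)2^{d-1}}\,\e^{(d-1)L}\big(1 + O(\e^{-2L})\big)$ after expanding $(\sinh r)^{d-1} = 2^{-(d-1)}(\e^r - \e^{-r})^{d-1}$ by the binomial theorem and integrating term by term; the subleading corrections decay like $\e^{-2L}$ relative to the leading term (the $\e^{-L}$-order term integrates to something of relative order $\e^{-2L}$ by parity of the binomial contributions, which is exactly why for $d\ge 6$ the ``$\e^{-2L}$'' regime is where the $\tfrac{(d-1)^2}{d-3}$ volume-correction competes with the lace-expansion term).

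The second step is to identify the correction term $\tfrac{\mathfrak{S}_{d-2}}{\mathfrak{S}_{d-1}}\tfrac{2^{d+2}}{d-1}\e^{-\frac12(d-1)L}$ in \eqref{eqn:expectedDegreeBoolean}. This is where the spectral-radius analysis promised in the introduction does the work: after rescaling space by $L$, the rescaled kernel $\connf_L$ concentrates, and the dominant contribution to the shift of $\lambda_c$ away from $1/\mu(B_L(\orig))$ comes from the ``triangle''-type overlap of two balls of radius $L$ whose centres are at hyperbolic distance close to $L$. Concretely, the leading correction should be of the form $\big(U_L^{\star 2}(\orig)\big)/\mu(B_L(\orig))^2$-type quantity (the hyperbolic analogue of the $U^{\star 2}(\orig)L^{-d}$ term in \eqref{eqn:BBPexpansion}), and in hyperbolic geometry the double self-convolution of the indicator of a ball of radius $L$, evaluated at the origin, equals the $\mu$-measure of the set of points $z$ with $\dist{z,\orig}<L$ and $\dist{z,w}<L$, integrated against $w$ — which reduces to a lens-volume computation. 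The key geometric input is that in $\HypDim$ two balls of radius $L$ at centre-distance $\rho$ overlap in a region of $\mu$-volume $\asymp \e^{(d-1)(L-\rho/2)}\cdot(\text{something polynomial})$ for $\rho$ near $L$; integrating this over the $\mathfrak{S}_{d-1}(\sinh\rho)^{d-1}\dd\rho$ weight and tracking constants produces the factor $\tfrac{\mathfrak{S}_{d-2}}{\mathfrak{S}_{d-1}}\tfrac{2^{d+2}}{d-1}$ and the exponent $\tfrac12(d-1)$. One must check that all the other lace-expansion diagrams (the $U^{\star n}(\orig)$ with $n\ge 3$ analogues, and the higher $\fLacelam$ terms) contribute at order $o(\e^{-\frac12(d-1)L})$ — this follows because each additional convolution factor costs another factor decaying at least like $\e^{-\frac12(d-1)L}$, exactly as the $n\ge 3$ terms are subdominant in the Euclidean statement, except for the arithmetic subtlety in low dimension.

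The third step is purely algebraic: invert $\mu(B_L(\orig))\lambda_c(L) = 1 + c_d\,\e^{-\frac12(d-1)L} + o(\cdot)$ using the volume asymptotics from step one to obtain \eqref{eqn:BooleanExpansion}. Here one divides by $\mu(B_L(\orig))$, substitutes its expansion, and must carefully combine the $\e^{-\frac12(d-1)L}$ lace-expansion correction with the $\e^{-2L}$ volume correction: for $d\in\{2,3,4,5\}$ one has $\tfrac12(d-1) < 2$, so the lace term dominates and gives the displayed $\e^{-\frac12 L}, \e^{-L}, \e^{-\tfrac32 L}, \e^{-2L}$ corrections with constants $\tfrac{16}{\pi}, 8, \tfrac{128}{3\pi}, 32$ (these are just $c_d \cdot (d-1)2^{d-1}/\mathfrak{S}_{d-1}$ rearranged, using $\mathfrak{S}_0 = 2$, $\mathfrak{S}_1 = 2\pi$, etc.); for $d\ge 6$ one has $\tfrac12(d-1)\ge \tfrac52 > 2$, so the $\e^{-2L}$ term from the volume expansion dominates, and its coefficient works out to $\tfrac{(d-1)^2}{d-3}$ after carefully computing the relative-order-$\e^{-2L}$ coefficient of $\int_0^L(\sinh r)^{d-1}\dd r$. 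The main obstacle is step two — pinning down the exact constant in the lens-volume / spectral-radius computation and verifying that no competing diagram sneaks in at the same order — since that is precisely the place where the hyperbolic argument departs from the Euclidean Fourier-transform shortcut. The borderline bookkeeping at $d=5$ and $d=6$, where the two correction scales nearly coincide, will also require care to ensure the stated $o(\cdot)$ error terms are honest.
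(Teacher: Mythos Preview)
Your overall structure matches the paper's: verify Assumption~\ref{Model_Assumption}, plug the Boolean adjacency into the general expansion (Theorem~\ref{thm:CritIntensityExpansionFull}), compute the lens volume $\connf_L^{\star 2}(r)$ geometrically, integrate to get the leading diagram, and then combine with the volume asymptotics of $\mu(B_L(\orig))$. However, two points in your proposal are actually wrong, not just imprecise.

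First, the leading lace correction for the RCM is the \emph{triangle} $\connf_L^{\star 3}(\orig,\orig)/\norm{\Opconnf_L}_{1\to1}^2$, not a bubble ``$U_L^{\star 2}(\orig)$'' in analogy with \eqref{eqn:BBPexpansion}. For the Boolean model $\connf_L^{\star 2}(\orig,\orig)=\mu(B_L(\orig))$, so the bubble normalised by $\mu(B_L(\orig))^2$ is of order $\e^{-(d-1)L}$, which is the wrong scale. Your geometric description --- lens volume $\connf_L^{\star 2}(\rho)\asymp \e^{(d-1)(L-\rho/2)}$ integrated against $(\sinh\rho)^{d-1}\dd\rho$ over $[0,L]$ --- is in fact the triangle computation (this is Lemma~\ref{lem:diagramsBoolean} in the paper), so your computation is right but your identification of which diagram it is, and hence which term of Theorem~\ref{thm:CritIntensityExpansionFull} it feeds, is wrong. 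This matters when you go to check that the ``$n\geq 3$ analogues'' are subdominant: you need to control $\connf_L^{\star 4}$, $\connf_L^{\star 1\star 2\cdot 2}$, $\Ecal(L)$, $\Omega(L)$ and $\beta(L)^N$, not $\connf_L^{\star n}$ for $n\geq 3$.

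Second, your treatment of $d=5$ is incorrect. You write $\tfrac{1}{2}(d-1)<2$ for $d\in\{2,3,4,5\}$, but for $d=5$ one has $\tfrac{1}{2}(d-1)=2$, so the lace correction and the volume correction are at the \emph{same} order $\e^{-2L}$ and must be added. The triangle coefficient is $\tfrac{\mathfrak{S}_3}{\mathfrak{S}_4}\tfrac{2^7}{4}=\tfrac{3}{4}\cdot 32=24$, while inverting $\mu(B_L(\orig))$ contributes an additional $\tfrac{(d-1)^2}{d-3}=8$ at the same order (from the $d\geq 4$ case of Lemma~\ref{lem:normsBoolean}); their sum is the stated $32$. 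Your claim that the lace term alone produces $32$ would not survive the computation. (Relatedly, your blanket $O(\e^{-2L})$ for the subleading volume term is false for $d=2$, where it is $O(\e^{-L})$, and for $d=3$, where it is $O(L\e^{-2L})$; these don't affect \eqref{eqn:BooleanExpansion} in those dimensions but the statement is wrong as written.)
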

    The order of the error term in \eqref{eqn:BooleanExpansion} becomes fixed for $d\geq5$ because the correction term for $\mu\left(B_L(\orig)\right)$ becomes relevant for $d=5$ and dominant for $d\geq 6$. There is no indication that this is related to $d=6$ being the upper critical dimension for Euclidean percolation models.

\begin{definition}[Heat Kernel RCM on $\HypThree$]
    Note that the heat kernel $K_3\colon \R_{>0}\times\HypThree\times\HypThree\to \R_{>0}$ is given by
    \begin{equation}
        K_3\left(t,x,y\right) = \frac{1}{\left(2\pi t\right)^{\frac{3}{2}}} \frac{\distThree{x,y}}{\sinh \distThree{x,y}} \exp\left(-\frac{t}{2} - \frac{\distThree{x,y}^2}{2t}\right),
    \end{equation}
    see \cite{davies1988heat}. Note (or see in the proof of Lemma~\ref{lem:heatkernelGood}) that $\int_\HypThree K_3\left(t,x,y\right)\mu\left(\dd x\right)=1$ for all $t>0$ and $y\in\HypThree$. Now let $0<\Acal_L\leq \left(2\pi L\right)^{\frac{3}{2}}\e^{\frac{1}{2}L}$ for all $L>0$. The heat kernel RCM on $\HypThree$ is defined by having the adjacency function
    \begin{equation}
        \connf_L\left(r\right) = \frac{\Acal_L}{\left(2\pi L\right)^{\frac{3}{2}}} \frac{r}{\sinh r} \exp\left(-\frac{1}{2}L - \frac{1}{2L}r^2\right)
    \end{equation}
    for all $L>0$. One can think of this as a hyperbolic analog of the RCM on $\Rd$ with a Gaussian adjacency function with variance parameter $L$ and amplitude $\Acal_L$. Like the Gaussian function on $\Rd$, this heat kernel satisfies the semi-group property, which will make it convenient to evaluate convolutions. The sole purpose of having $\Acal_L\leq \left(2\pi L\right)^{\frac{3}{2}}\e^{\frac{1}{2}L}$ is to ensure $\connf_L\left(x,y\right)\in\left[0,1\right]$ for all $L>0$ and $x,y\in\HypThree$. We have restricted to $d=3$ so that the heat kernel takes its most simple and explicit form.
\end{definition}

    \begin{theorem}\label{cor:HeatKernelResult}
        Consider the heat kernel RCM on $\HypThree$. As $L\to\infty$,
        \begin{equation}\label{eqn:heatkernelexpansion}
            \Acal_L\lambda_\mathrm{c}(L) = 1 + \frac{\Acal_L}{\left(6\pi\right)^\frac{3}{2}}L^{-\frac{3}{2}}\e^{-\frac{3}{2}L} + \frac{3}{2}\frac{\Acal_L}{\left(8\pi\right)^\frac{3}{2}}L^{-\frac{3}{2}}\e^{-2L} + \LandauBigO{\Acal_L L^{-\frac{3}{2}}\e^{-\frac{5}{2}L}+\Acal_L^2L^{-\frac{9}{2}}\e^{-\frac{5}{2}L}}.
        \end{equation}
        In particular, if $\connf_L(\orig,\orig)=1$ for all $L>0$ (i.e. $\Acal_L= \left(2\pi L\right)^{\frac{3}{2}}\e^{\frac{1}{2}L}$) then as $L\to\infty$
        \begin{equation}
            \lambda_\mathrm{c}(L) = \frac{1}{\left(2\pi\right)^{\frac{3}{2}}}L^{-\frac{3}{2}}\e^{-\frac{1}{2}L}\left( 1 + \frac{1}{3\sqrt{3}}\e^{-L} + \frac{3}{16}\e^{-\frac{3}{2}L} + \LandauBigO{L^{-\frac{3}{2}}\e^{-\frac{3}{2}L}}\right).
        \end{equation}
    \end{theorem}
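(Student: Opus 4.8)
The plan is to derive Theorem~\ref{cor:HeatKernelResult} as a special case of the general asymptotic expansion for $\lambda_c(L)$ (the result stated abstractly in Section~\ref{sec:results} under Assumption~\ref{Model_Assumption}), by checking that the heat kernel adjacency functions $\connf_L$ satisfy the model assumptions and then computing explicitly the quantities appearing in the general formula. The key structural input is the Ornstein–Zernike equation produced by the lace expansion: at criticality one has an identity of the form $1 = \lambda_c(L)\bigl(\norm{\connf_L}_1 + (\text{lace-expansion corrections})\bigr)$, where $\norm{\connf_L}_1 = \int_{\HypThree}\connf_L(\orig,x)\,\mu(\dd x)$, and the corrections are governed by convolution powers of $\connf_L$ (the leading ones being $\connf_L^{\star 2}(\orig,\orig)$ and higher $\connf_L^{\star n}(\orig,\orig)$ terms with combinatorial weights, analogous to \eqref{eqn:BBPexpansion} but with the hyperbolic convolution and with the spectral-radius correction replacing the naive Fourier evaluation). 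So the computation reduces to (i) evaluating $\norm{\connf_L}_1$, (ii) evaluating $\connf_L^{\star 2}(\orig,\orig)$ and $\connf_L^{\star 3}(\orig,\orig)$ to leading order, and (iii) bounding the remaining tail.

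For step (i), since $\connf_L(r) = \tfrac{\Acal_L}{(2\pi L)^{3/2}}\tfrac{r}{\sinh r}\exp(-\tfrac12 L - \tfrac{1}{2L}r^2) = \Acal_L K_3(L,\orig,x)$ with $|x|_{\HypThree}=r$, the stated normalisation $\int_{\HypThree}K_3(t,x,y)\mu(\dd x)=1$ gives immediately $\norm{\connf_L}_1 = \Acal_L$, which is exactly the leading $1$ on the right-hand side of \eqref{eqn:heatkernelexpansion} after multiplying through by $\Acal_L$. For step (ii), I would exploit the semigroup property of the heat kernel: $K_3(s,\cdot,\cdot)\star_{\HypThree} K_3(t,\cdot,\cdot) = K_3(s+t,\cdot,\cdot)$, so that $\connf_L^{\star n}(\orig,\orig) = \Acal_L^{\,n} K_3(nL,\orig,\orig)$. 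Using the explicit form $K_3(t,\orig,\orig)=\lim_{r\to 0}\tfrac{1}{(2\pi t)^{3/2}}\tfrac{r}{\sinh r}\e^{-t/2-r^2/(2t)} = (2\pi t)^{-3/2}\e^{-t/2}$, this yields $\connf_L^{\star 2}(\orig,\orig) = \Acal_L^2 (4\pi L)^{-3/2}\e^{-L}$ and $\connf_L^{\star 3}(\orig,\orig) = \Acal_L^3(6\pi L)^{-3/2}\e^{-3L/2}$, and similarly $\connf_L^{\star n}(\orig,\orig) = \Acal_L^n(2\pi n L)^{-3/2}\e^{-nL/2}$. Matching these against the general expansion — where the $\connf_L^{\star 2}(\orig,\orig)$ term and the $\connf_L^{\star 3}(\orig,\orig)$ term with its weight $\tfrac32$ (mirroring the $\tfrac12(n+1)$ pattern at $n=3$) appear, each divided by the normalising $\norm{\connf_L}_1 = \Acal_L$ coming from the OZ rearrangement — should reproduce the terms $\tfrac{\Acal_L}{(6\pi)^{3/2}}L^{-3/2}\e^{-3L/2}$ and $\tfrac32\tfrac{\Acal_L}{(8\pi)^{3/2}}L^{-3/2}\e^{-2L}$ in \eqref{eqn:heatkernelexpansion}. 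For step (iii), the geometric-type tail $\sum_{n\geq 4}(\text{weight}_n)\,\Acal_L^{n-1}(2\pi n L)^{-3/2}\e^{-nL/2}$ is dominated by its first term, giving $\LandauBigO{\Acal_L^3 L^{-9/2}\e^{-5L/2}}$, and one further contribution of order $\Acal_L L^{-3/2}\e^{-5L/2}$ comes from the next correction to the $\connf_L^{\star 2}$-type term in the general expansion; together these give the stated error. The second display (the case $\Acal_L = (2\pi L)^{3/2}\e^{L/2}$) then follows by dividing \eqref{eqn:heatkernelexpansion} by $\Acal_L$ and substituting: $\tfrac{\Acal_L}{(6\pi)^{3/2}}L^{-3/2}\e^{-3L/2}/\Acal_L$ needs re-expressing relative to $\lambda_c(L) = (2\pi)^{-3/2}L^{-3/2}\e^{-L/2}(1+\cdots)$, and one checks $\tfrac{(2\pi L)^{3/2}}{(6\pi)^{3/2}}L^{-3/2}\e^{-3L/2} = 3^{-3/2}\e^{-3L/2}$ wait — more carefully, dividing $1 + \tfrac{\Acal_L}{(6\pi)^{3/2}}L^{-3/2}\e^{-3L/2}+\cdots$ by $\Acal_L$ and factoring out $(2\pi)^{-3/2}L^{-3/2}\e^{-L/2}$ leaves $1 + \tfrac{1}{(6\pi)^{3/2}}L^{-3/2}\e^{-3L/2}\cdot(2\pi L)^{3/2}\e^{L/2}+\cdots = 1 + \tfrac{1}{3^{3/2}}\e^{-L} + \cdots$, matching, and similarly the $\tfrac{3}{16}\e^{-3L/2}$ term.

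**The main obstacle** I expect is step (ii) matched against the correct form of the general expansion — specifically, being careful about exactly which convolution-power terms survive at each order $\e^{-nL/2}$ and with which combinatorial coefficients, since unlike \eqref{eqn:BBPexpansion} the hyperbolic setting replaces Fourier evaluation at $0$ with an $L^1\to L^1$ spectral radius computation, and one must verify that for the heat kernel the relevant spectral radius of the convolution operator with kernel $\connf_L$ is attained in a way that the correction coefficients reduce to the clean values $1$ and $\tfrac32$. This requires knowing that the leading behaviour of $\lambda_c(L)$ is controlled by $\norm{\connf_L}_1^{-1}$ (which is where the general theorem and the $d=3$ heat-kernel normalisation $\int K_3 = 1$ do the work) and that the subleading corrections are exactly the $\connf_L^{\star n}(\orig,\orig)$ evaluated at the diagonal — this is where I would lean hardest on the general result, citing it rather than re-deriving the lace expansion. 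The remaining calculations (the semigroup identity, the diagonal value $K_3(t,\orig,\orig) = (2\pi t)^{-3/2}\e^{-t/2}$, the tail summation, and the final algebraic simplification) are routine once the general expansion is in hand.
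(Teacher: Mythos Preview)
Your overall strategy is right: cite the general expansion theorem (Theorem~\ref{thm:CritIntensityExpansionFull}), verify Assumption~\ref{Model_Assumption}, and exploit the semigroup property to evaluate the diagrams. Your computations $\norm{\connf_L}_1=\Acal_L$ and $\connf_L^{\star n}(\orig,\orig)=\Acal_L^n(2\pi nL)^{-3/2}\e^{-nL/2}$ are correct and are exactly what the paper uses.

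The gap is that you have misidentified which diagrams appear in the general expansion. You write the corrections as $\connf_L^{\star 2}(\orig,\orig)/\norm{\connf_L}_1$ and $\tfrac32\,\connf_L^{\star 3}(\orig,\orig)/\norm{\connf_L}_1$, by analogy with \eqref{eqn:BBPexpansion}. But the RCM lace expansion (see Theorem~\ref{thm:CritIntensityExpansionFull}) produces
\[
\norm{\Opconnf_L}_{1\to 1}\lambda_\mathrm{c}(L)=1+\frac{\connf_L^{\star 3}(\orig,\orig)}{\norm{\Opconnf_L}_{1\to 1}^2}+\frac{3}{2}\frac{\connf_L^{\star 4}(\orig,\orig)}{\norm{\Opconnf_L}_{1\to 1}^3}+\LandauBigO{\cdots},
\]
i.e.\ the first two corrections are the \emph{triangle} and \emph{square} diagrams, normalised by $\norm{\connf_L}_1^2$ and $\norm{\connf_L}_1^3$ respectively. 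With $\norm{\connf_L}_1=\Acal_L$ and your $\connf_L^{\star n}$ formula this gives $\Acal_L(6\pi)^{-3/2}L^{-3/2}\e^{-3L/2}$ and $\tfrac32\Acal_L(8\pi)^{-3/2}L^{-3/2}\e^{-2L}$, matching \eqref{eqn:heatkernelexpansion}. Your version, with $\star 2$ and $\star 3$ each divided by a single $\Acal_L$, would yield $\Acal_L(4\pi)^{-3/2}L^{-3/2}\e^{-L}$ and $\tfrac32\Acal_L^2(6\pi)^{-3/2}L^{-3/2}\e^{-3L/2}$, neither of which appears in the theorem.

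Relatedly, your error analysis misses that the $\Acal_L^2 L^{-9/2}\e^{-5L/2}$ contribution does not come from any simple $\connf_L^{\star n}(\orig,\orig)$ but from the non-convolutional diagram $\connf_L^{\star 1\star 2\cdot 2}(\orig,\orig)=\int\connf_L(x,\orig)\connf_L^{\star 2}(x,\orig)^2\mu(\dd x)$ inside $\Omega(L)$; this is what the paper computes in Lemma~\ref{lem:heatkernelTermsSize}. The other error term $\Acal_L L^{-3/2}\e^{-5L/2}$ comes from $\connf_L^{\star 5}(\orig,\orig)/\norm{\connf_L}_1^4$. Once you correct the indices and include the $\Omega(L)$ and $\Ecal^*(L)$ bookkeeping (the latter vanishes for large $L$ here), the rest of your plan goes through.
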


    Note that which of the two last error terms in \eqref{eqn:heatkernelexpansion} is dominant depends upon the order of the amplitude term $\Acal_L$: $\Acal_LL^{-\frac{3}{2}}\e^{-\frac{5}{2}L}$ is dominant if $\Acal_L\ll L^3$, while $\Acal_L^2L^{-\frac{9}{2}}\e^{-\frac{5}{2}L}$ is dominant if $\Acal_L\gg L^3$.

\vspace{0.5cm}

While the above two theorems are for specific models, the following proposition provides a result for a wide class of RCM on $\HypDim$. We only require the following assumption.

\begin{assumptionp}{\textbf{A}}
\label{Model_Assumption}
We require that the family of adjacency functions $\left\{\connf_L\right\}_{L>0}$ satisfies:
        \begin{enumerate}[label=\textbf{(\ref{Model_Assumption}.\arabic*)}]
                \item\label{enum:AssumptionFiniteDegree} for all $L>0$,
        \begin{equation}
            0<\int^\infty_0\connf_L\left(r\right) \exp\left(\left(d-1\right)r\right)\dd r <\infty,
        \end{equation}

            \item\label{enum:AssumptionLongRange} for all $R>0$, 
            \begin{equation}
                \lim_{L\to\infty}\frac{\int^R_0\connf_L\left(r\right)\left(\sinh r\right)^{d-1}\dd r}{\int^\infty_0\connf_L\left(r\right)\left(\sinh r\right)^{d-1}\dd r} = 0.
            \end{equation}
        \end{enumerate}
        
    \end{assumptionp}

\begin{prop}\label{thm:CritIntensityExpansion}
    Suppose $\left\{\connf_L\right\}_{L>0}$ satisfies Assumption~\ref{Model_Assumption}. Then as $L\to\infty$,
    \begin{equation}\label{eqn:TheoremExpansion}
        \lambda_\mathrm{c}(L) \int_\HypDim\connf_L\left(x,\orig\right)\mu\left(\dd x\right)= 1 + o\left(1\right).
    \end{equation}
\end{prop}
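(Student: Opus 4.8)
The plan is to derive the Ornstein--Zernike equation from the lace expansion and then extract $\lambda_{\mathrm c}(L)$ by analysing the $L^1\to L^1$ spectral radius of the relevant convolution operator, exactly as flagged in the introduction. First, following \cite{HeyHofLasMat19,dickson2025expansion}, the two-point function satisfies $\tau_{\lambda,L} = \connf_L + \Pi_{\lambda,L} + (\connf_L + \Pi_{\lambda,L})\star \tau_{\lambda,L}$, where $\Pi_{\lambda,L}$ is the lace-expansion coefficient and $\star$ denotes convolution with respect to the hyperbolic measure $\mu$. The divergence of $\int \tau_{\lambda,L}(\orig,x)\,\mu(\dd x)$ as $\lambda\uparrow\lambda_{\mathrm c}(L)$ is governed by the condition that the $L^1\to L^1$ spectral radius of the convolution operator $f\mapsto (\connf_L+\Pi_{\lambda,L})\star f$ equals $1$. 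For a radial kernel $g\geq 0$ on $\HypDim$, the relevant fact (the ``$\HypDim$-analog of the Fourier transform'' subtlety) is that the $L^1\to L^1$ operator norm of convolution by $g$ equals $\int_\HypDim g(\orig,x)\,\mu(\dd x) = \mathfrak S_{d-1}\int_0^\infty g(r)(\sinh r)^{d-1}\dd r$ — unlike the Euclidean $L^2$ case, on hyperbolic space this total-mass norm is actually attained as the spectral radius because of the exponential volume growth (this is where \cite{dickson2024NonUniqueness} and the spherical-transform input enter). Hence $\lambda_{\mathrm c}(L)$ is characterised by
\begin{equation*}
    \lambda_{\mathrm c}(L)\int_\HypDim\left(\connf_L(\orig,x) + \Pi_{\lambda_{\mathrm c}(L),L}(\orig,x)\right)\mu(\dd x) = 1.
\end{equation*}

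Granting this characterisation, the proposition reduces to showing that the lace-expansion term is asymptotically negligible compared to the leading term, i.e.
\begin{equation*}
    \frac{\int_\HypDim\abs{\Pi_{\lambda_{\mathrm c}(L),L}(\orig,x)}\,\mu(\dd x)}{\int_\HypDim\connf_L(\orig,x)\,\mu(\dd x)} \longrightarrow 0 \qquad (L\to\infty).
\end{equation*}
The standard diagrammatic bounds control $\int\abs{\Pi_{\lambda,L}}\,\dd\mu$ by a sum over $n\geq 1$ of terms of the form $\lambda^n$ times products of triangle-type diagrams, the leading one being $\lambda^2$ times $(\connf_L\star\connf_L\star\connf_L)(\orig,\orig)$-type quantities. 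Writing $c_L := \int_\HypDim\connf_L(\orig,x)\,\mu(\dd x) = \mathfrak S_{d-1}\int_0^\infty\connf_L(r)(\sinh r)^{d-1}\dd r$, Assumption \ref{enum:AssumptionFiniteDegree} ensures $c_L\in(0,\infty)$, and the iterated condition $\lambda_{\mathrm c}(L)\asymp c_L^{-1}$ (which follows a posteriori, or can be bootstrapped from crude upper/lower bounds $\lambda_{\mathrm c}(L)\leq c_L^{-1}$ by the mean-field / tree bound, and $\lambda_{\mathrm c}(L)\geq (1-o(1))c_L^{-1}$ from the OZ equation) means each diagrammatic contribution is $\lambda_{\mathrm c}^n$ times something, and the point is to show the $n$-th term divided by $c_L$ tends to $0$. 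The key estimate is that the pointwise maximum $\sup_x\connf_L(\orig,x) = \sup_r\connf_L(r)\leq 1$, but more importantly that Assumption \ref{enum:AssumptionLongRange} forces $(\connf_L\star\connf_L)(\orig,\orig) = \mathfrak S_{d-1}\int_0^\infty\connf_L(r)^2(\sinh r)^{d-1}\dd r$ — actually one needs the convolution evaluated at the origin, $(\connf_L\star\connf_L)(\orig,\orig)=\int\connf_L(\orig,y)\connf_L(y,\orig)\mu(\dd y)\leq \sup_r\connf_L(r)\cdot c_L \leq c_L$, so this is not yet $o(c_L)$. The genuine gain comes from combining the exponential-weight normalisation in \ref{enum:AssumptionFiniteDegree} with \ref{enum:AssumptionLongRange}: I would show that $(\connf_L\star\connf_L)(\orig,\orig)/c_L\to 0$ by splitting the $y$-integral at a large fixed radius $R$; on $\dist{\orig,y}\leq R$ the integrand is bounded and the contribution is $O(\int_0^R\connf_L(r)(\sinh r)^{d-1}\dd r) = o(c_L)$ by \ref{enum:AssumptionLongRange}, while on $\dist{\orig,y}>R$ at least one of $\dist{\orig,y}$ or $\dist{y,\orig}$ exceeds $R/2$... this symmetric split does not immediately work, so instead I would use that for $\connf_L\star\connf_L$ at the origin, by the triangle inequality one of the two distances is at least half of the other, hence bounded below, and then use that the exponential weight $\e^{(d-1)r}$ in \ref{enum:AssumptionFiniteDegree} combined with $(\sinh r)^{d-1}\asymp \e^{(d-1)r}$ for large $r$ shows $\int_{r>R/2}\connf_L(r)(\sinh r)^{d-1}\dd r \leq C\int_{r>R/2}\connf_L(r)\e^{(d-1)r}\dd r$, and crucially this latter quantity, normalised by $c_L$, need not be small — so the real content is that $c_L \to\infty$ (which follows from \ref{enum:AssumptionLongRange} with $R$ fixed: $c_L\geq \mathfrak S_{d-1}\int_0^R\connf_L(r)(\sinh r)^{d-1}\dd r / o(1) \to\infty$ unless $\connf_L$ concentrates at infinity, but it cannot by \ref{enum:AssumptionFiniteDegree} giving a uniform-in-$L$... no), meaning $(\connf_L\star\connf_L)(\orig,\orig)\leq c_L$ together with $c_L\to\infty$ gives the ratio to $c_L^2$ small, and the OZ equation only ever features $\Pi/c_L\sim \lambda^2(\connf_L\star\connf_L\star\connf_L)(\orig,\orig)\cdot c_L^{-1}$ where $\lambda\asymp c_L^{-1}$, so this is $\asymp c_L^{-2}(\connf_L^{\star 3})(\orig,\orig)c_L^{-1}\leq c_L^{-2}\to 0$.

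I would organise the write-up as: (Step 1) recall the lace expansion and the OZ equation from \cite{HeyHofLasMat19,dickson2025expansion}; (Step 2) recall from \cite{dickson2024NonUniqueness} (spherical transform / finiteness of the critical triangle diagram, no bootstrap) that $\int\abs{\Pi_{\lambda,L}}\,\dd\mu<\infty$ up to $\lambda_{\mathrm c}(L)$ and that the $L^1$ spectral radius of $\connf_L+\Pi_{\lambda,L}$ hitting $1$ characterises $\lambda_{\mathrm c}(L)$; (Step 3) establish the crude two-sided bound $\lambda_{\mathrm c}(L)\,c_L \asymp 1$ where $c_L := \int_\HypDim\connf_L(\orig,x)\,\mu(\dd x)$, in particular $\lambda_{\mathrm c}(L) = O(c_L^{-1})$ from the tree-graph/mean-field upper bound and $c_L\to\infty$ from Assumption \ref{enum:AssumptionLongRange}; (Step 4) bound $\int\abs{\Pi_{\lambda_{\mathrm c},L}}\,\dd\mu$ by the diagrammatic series and show each term is $o(c_L)$ using $(\connf_L^{\star 2})(\orig,\orig)\leq c_L$, $\sup\connf_L\leq 1$, and $c_L\to\infty$; (Step 5) substitute into the OZ characterisation to conclude $\lambda_{\mathrm c}(L)\,c_L = 1/(1 + \int\Pi_{\lambda_{\mathrm c},L}\,\dd\mu / c_L) = 1 + o(1)$.

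The main obstacle I anticipate is Step 2 — rigorously justifying that on $\HypDim$ the $L^1\to L^1$ spectral radius of convolution by a nonnegative radial kernel genuinely equals its total mass and that $\lambda_{\mathrm c}(L)$ is pinned down by this spectral radius equalling $1$ (the subtlety the introduction explicitly warns about, that one cannot just take a Fourier/spherical transform and plug in $0$). This requires either invoking the spectral-radius results of \cite{dickson2024NonUniqueness} as a black box or reproving the needed piece: that the total mass is an eigenvalue-type upper bound automatically, and that it is also a lower bound because convolution on a nonamenable (exponential-growth) space does not shrink $L^1$ mass — concretely, testing against indicators of balls $B_n(\orig)$ and using $\mu(B_n)/\mu(B_{n-R})\to 1$-type estimates fails on hyperbolic space (that ratio does not go to $1$), so the argument must instead exploit nonnegativity and translation-invariance directly, e.g. via the Perron--Frobenius-flavoured observation that $\OneNorm{g^{\star n}}^{1/n}\to \int g\,\dd\mu$ since $\OneNorm{g^{\star n}} = (\int g\,\dd\mu)^n$ exactly for nonnegative $g$. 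Once that spectral identity is in hand, the remaining estimates (Steps 3--5) are routine given Assumption \ref{Model_Assumption} and the standard lace-expansion diagram bounds.
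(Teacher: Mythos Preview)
Your overall architecture matches the paper's: derive the Ornstein--Zernike equation via lace expansion, characterise $\lambda_{\mathrm c}(L)$ by $\lambda_{\mathrm c}\rho_{1\to 1}(\Opconnf_L+\OpLace_{\lambda_{\mathrm c},L})=1$, and then show $\lambda_{\mathrm c}\norm*{\OpLace_{\lambda_{\mathrm c},L}}_{1\to 1}\to 0$. But Step~4 contains a genuine error. You repeatedly invoke ``$c_L\to\infty$'' and claim it follows from Assumption~\ref{enum:AssumptionLongRange}, but this is false: the heat kernel model with amplitude $\Acal_L\equiv 1$ satisfies Assumption~\ref{Model_Assumption} yet has $c_L=\norm*{\Opconnf_L}_{1\to 1}=\Acal_L=1$ for every $L$. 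Assumption~\ref{enum:AssumptionLongRange} only says the mass of $\connf_L$ escapes every compact set, not that the total mass diverges. Consequently, your argument that the diagrammatic bounds are $o(c_L)$ because ``$c_L^{-2}\to 0$'' collapses.

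The correct small parameter is not $c_L^{-1}$ but the ratio $\norm*{\Opconnf_L}_{2\to 2}/\norm*{\Opconnf_L}_{1\to 1}$, which the paper imports from \cite{dickson2024NonUniqueness} (Theorem~\ref{thm:TriangleSmall} here): under Assumption~\ref{Model_Assumption} this ratio tends to zero, which forces $\beta(L)\to 0$ and hence $\triangle_{\lambda_{\mathrm c}(L),L}=\LandauBigO{\beta(L)^2}\to 0$. It is this triangle-diagram smallness that feeds the diagrammatic bounds (Lemma~\ref{lem:LaceBounds}) to give $\lambda_{\mathrm c}\norm*{\OpLace_{\lambda_{\mathrm c},L}}_{1\to 1}=\LandauBigO{\beta(L)}$, not any growth of $c_L$. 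You gesture at ``spherical transform / finiteness of the critical triangle diagram'' in Step~2, but you need \emph{smallness}, not finiteness, and you never actually use it in Step~4.

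On your anticipated obstacle: the identity $\rho_{1\to 1}(G)=\int g$ for nonnegative isometry-invariant $g$ is easy and your $\norm*{g^{\star n}}_1^{1/n}$ argument works (the paper's Lemma~\ref{lem:spectralradiusofPositive}). The real issue is that $\connf_L+\pi_{\lambda,L}$ is \emph{signed}, so $\rho_{1\to 1}(\Opconnf_L+\OpLace_{\lambda,L})$ is not simply the integral. The paper handles this with a commuting-operator perturbation bound (Lemma~\ref{lem:SpectrumPerturbation}): since $\Opconnf_L$ and $\OpLace_{\lambda,L}$ commute by isometry invariance, $\abs*{\rho_{1\to 1}(\Opconnf_L+\OpLace_{\lambda,L})-\rho_{1\to 1}(\Opconnf_L)}\leq\norm*{\OpLace_{\lambda,L}}_{1\to 1}$, and $\rho_{1\to 1}(\Opconnf_L)=c_L$ since $\connf_L\geq 0$. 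Once you replace your $c_L\to\infty$ claim with the correct $\beta(L)\to 0$ input, this perturbation step closes the argument.
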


\begin{remark}
    Mecke's formula (see Section~\ref{sec:Preliminaries}) implies that $\lambda_\mathrm{c}(L)\int_\HypDim\connf_L\left(x,\orig\right)\mu\left(\dd x\right)$ is equal to the expected degree of a typical vertex in the RCM. Similarly, the left hand sides of \eqref{eqn:expectedDegreeBoolean} and \eqref{eqn:heatkernelexpansion} are equal to the expected degree of a typical vertex in their respective RCMs. We can therefore view Theorems~\ref{cor:BooleanResult} and \ref{cor:HeatKernelResult} and Proposition~\ref{thm:CritIntensityExpansion} as results on the expected degree of a typical vertex in each RCM approaching $1$ as $L\to\infty$.
\end{remark}

\begin{remark}
Here are a few remarks on Assumption~\ref{Model_Assumption}.
\begin{enumerate}[label=\roman*)]

    \item Observe that Assumption~\ref{enum:AssumptionFiniteDegree} with Mecke's formula implies that $0<\E_{\lambda,L}\left[\deg{\orig}\right]<\infty$ for all $\lambda>0$ and $L>L_0$. If instead we had $\int_\HypDim\connf_L\left(x,\orig\right)\mu\left(\dd x\right)=\infty$ for some $L$, then $\E_{\lambda,L}\left[\deg{\orig}\right]=\infty$ for all $\lambda>0$ and $\lambda_\mathrm{c}(L)=0$. On the other hand, if $\int_\HypDim\connf_L\left(x,\orig\right)\mu\left(\dd x\right)=0$, then there are almost surely no edges at all and $\lambda_\mathrm{c}(L)=\infty$. This assumption therefore allows estimating $\lambda_\mathrm{c}$ to be an interesting question.

    \item In the context of Assumption~\ref{Model_Assumption} and Proposition~\ref{thm:CritIntensityExpansion} $L$ is just an index for a family of adjacency functions, and therefore Assumption~\ref{enum:AssumptionFiniteDegree} can be weakened to only hold for $L>L_0$ for some $L_0$. There are some places in the argument where having $L$ ``sufficiently large'' is important for an independent reason, and having this assumption hold for all $L>0$ makes it easier to identify these places.

    \item If one specifically considers adjacency functions of the form $\connf_L\left(r\right) = \connf\left(\frac{1}{L}r\right)$ for some fixed reference adjacency function $\connf$, then \cite[Lemma~A.3]{dickson2024NonUniqueness} proves that Assumption~\ref{enum:AssumptionLongRange} is satisfied as long as $\int^\infty_0\connf(r)\dd r>0$. Assumption~\ref{enum:AssumptionFiniteDegree} is then also satisfied as long as $\connf(r)$ is not $0$ almost everywhere, and decays faster than exponentially as $r\to\infty$. Increasing the parameter $L$ in this case corresponds to increasing the typical length scale of edges of the RCM in $\HypDim$. Therefore (up to an overall scaling of the intensity $\lambda$) increasing $L$ corresponds to increasing the (negative) curvature of the $\HypDim$ space. One may then interpret the $L\to\infty$ regime in this case as a `high curvature' regime.
\end{enumerate}
\end{remark}

Proposition~\ref{thm:CritIntensityExpansion} is actually a simplification of Theorem~\ref{thm:CritIntensityExpansionFull} later in this paper. While that result also requires Assumption~\ref{Model_Assumption}, it provides more detail to the error term in \eqref{eqn:TheoremExpansion}. In particular, a bound on the $o\left(1\right)$ term is provided that depends explicitly on the adjacency function $\connf_L$. The extra precision of Theorem~\ref{thm:CritIntensityExpansionFull} is required to derive the asymptotics in Theorems~\ref{cor:BooleanResult} and \ref{cor:HeatKernelResult}.

    \subsection{Background and Discussion}

    In this paper we give ourselves some flexibility in only asking for an asymptotic expression for the critical threshold in some perturbative regime, rather than asking for the exact value. This approach has been taken many times before. If we let $G_L$ be the graph on $\Z^d$ (with $d\geq 2$) with edge set connecting vertices $x,y\in\Z^d$ such that $\norm*{x-y}_\infty\leq L$, then \cite{penrose1993spread} proves that the critical probability satisfies
\begin{equation}
\label{eqn:BBPcriticaldegree}
    p_\mathrm{c}(L)\deg{G_L} = 1 + o\left(1\right)
\end{equation}
as $L\to\infty$. The quantity $p_\mathrm{c}\deg{G_L}$ can also be interpreted as the expected degree of the percolation sub-graph at criticality -- naturally drawing comparison to \eqref{eqn:TheoremExpansion}. In \cite{spanos2024spread}, the behaviour \eqref{eqn:BBPcriticaldegree} is proven for such spread-out versions of BBP on transitive graphs with superlinear polynomial growth.

The order of the error in \eqref{eqn:BBPcriticaldegree} has been refined for a number of cases. \cite{freiperkins2016lower} and \cite{hong2023upper} compare BBP to the `SIR epidemic' model to find matching lower and upper bounds for the error for spread-out BBP on $\Z^d$ for $d=2,3$, while \cite{hong2025lower} uses a similar approach to find a lower bound for $d=4,5,6$.

Such more precise estimates on the expected degree at criticality have also been found for spread-out BBP on $\Z^d$ for $d > 6$ in \cite{hofstadsakai2005critical} -- see \eqref{eqn:BBPexpansion} above. This work is one of many that have used lace expansion techniques to arrive at estimates for $p_\mathrm{c}$. For example, \cite{HofSla05} uses the lace expansion to get asymptotics for $p_\mathrm{c}$ for BBP on nearest-neighbour $\Z^d$ and the $d$-dimensional hypercube as $d\to\infty$. In \cite{HeyMat19b} asymptotics for the critical probability of site percolation on nearest-neighbour $\Z^d$ as $d\to \infty$ are similarly proven.

For RCMs with background space $\Rd$, \cite{dickson2025expansion} uses lace expansion results from \cite{HeyHofLasMat19} to derive asymptotics for the critical intensity $\lambda_\mathrm{c}$ as $d\to\infty$. In particular, if $\connf=\connf_d$ is given by a Gaussian function with total mass $1$ and variance $\sigma^2(d)$ such that $\liminf_{d\to\infty}\connf(\orig,\orig)^\frac{1}{2}>0$, then
\begin{equation}
    \lambda_\mathrm{c}(d) = 1 + \left(6\pi\sigma^2\right)^{-\frac{d}{2}} + \frac{3}{2}\left(8\pi\sigma^2\right)^{-\frac{d}{2}} + 2\left(10\pi\sigma^2\right)^{-\frac{d}{2}} + \LandauBigO{\left(12\pi\sigma^2\right)^{-\frac{d}{2}}}
\end{equation}
as $d\to\infty$.

Before we compare the result in this paper to results regarding RCMs on $\Rd$, we should highlight a difference between $\lambda_\mathrm{c}$ for $\Rd$-RCMs and $\lambda_\mathrm{c}$ for $\HypDim$-RCMs. Recall that $\tau_{\lambda,L}(x,y)$ is the two-point function that returns the probability that $x$ is connected to $y$ in the RCM. We can use this two-point function to define a two-point convolution operator $\Optau_{\lambda,L}$ (for details, see Section~\ref{sec:Preliminaries}). The operator norm of this operator can depend upon the spaces it is defined on: for $p\in\left[1,\infty\right]$, $\norm*{\Optau_{\lambda,L}}_{p\to p}$ denotes the operator norm of $\Optau_{\lambda,L}\colon L^p\to L^p$. We can then define
\begin{align}
    \lambda_{2\to 2}(L)&:= \inf\left\{\lambda\geq0\colon \norm*{\Optau_{\lambda,L}}_{2\to 2} = \infty\right\},\\
    \lambda_{1\to 1}(L)&:= \inf\left\{\lambda\geq0\colon \norm*{\Optau_{\lambda,L}}_{1\to 1} = \infty\right\}.
\end{align}
For $\Rd$-RCMs one can use the Fourier transform to show $\lambda_{2\to 2}=\lambda_{1\to 1}$. On the other hand, \cite{dickson2024NonUniqueness} shows that for $\HypDim$-RCMs satisfying Assumption~\ref{enum:AssumptionLongRange} with sufficiently large $L$, we have $\lambda_{2\to 2}(L) > \lambda_{1\to 1}(L)$. For $\Rd$-RCMs, \cite{Mee95} proved that $\lambda_\mathrm{c}=\lambda_{1\to 1}$ while \cite{dickson2024NonUniqueness} proved this same equality for $\HypDim$-RCMs with sufficiently large $L$.

    While the central result and high-level proof outline of this paper strongly resembles the central result of \cite{dickson2025expansion}, it is notable the inequality $\lambda_{2\to 2}>\lambda_{1\to 1}$ means that the difficulties are reversed. In the $\Rd$-RCM case, the derivation of the lace expansion was challenging because a sophisticated bootstrap argument was required to extend a property from $\lambda<\lambda_{2\to 2}$ to $\lambda\leq \lambda_{2\to 2}$. Once the uniform convergence to an Ornstein-Zernike equation was established, it was relatively clear (if involved) to expand $\lambda_{2\to 2}$ because the Fourier transform diagonalises the equation. On the other hand, deriving the lace expansion Ornstein-Zernike equation for RCMs on $\HypDim$ is simple for $\lambda\leq \lambda_{1\to 1}$, because the triangle condition is already established in \cite{dickson2024NonUniqueness}. However because we want an expansion for $\lambda_{1\to 1}$ -- and not $\lambda_{2\to 2}$ -- the spherical transform does not give the shortcut that the Fourier transform does in $\Rd$. 

    It should, nevertheless, be possible to derive an expansion for $\lambda_{2\to 2}$ in this $\HypDim$ case. Like for the $\Rd$ case, the main challenge is the derivation of the Ornstein-Zernike equation on the whole domain $\lambda\leq \lambda_{2\to 2}$. Once this is achieved, the spherical transform can then be used to extract $\lambda_{2\to 2}$. While the value $\lambda_{2\to 2}$ currently has less probabilistic interest than the $\lambda_{\mathrm{c}}$ critical intensity, \cite{dickson2024NonUniqueness} shows that it provides a lower bound for the uniqueness threshold $\lambda_\mathrm{u}$.

    Observe that the corrections to the leading term for both the heat kernel RCM on $\HypThree$ and the Boolean disc RCM on $\HypDim$ are exponentially small in $L$, while the corrections to the leading term for spread-out BBP on $\Z^d$ for $d>6$ (see \cite{hofstadsakai2005critical} or \eqref{eqn:BBPexpansion}) follow a power law. This exponential decay is not a consequence of Theorem~\ref{thm:CritIntensityExpansion}, because one can always `slow down' $L$ and Assumption~\ref{Model_Assumption} will still hold. There would need to be a more concrete map between RCMs on $\Rd$ to RCMs on $\HypDim$ for a theorem along the lines of ``corrections to the leading term for RCMs on $\HypDim$ are smaller than corrections to the leading term for RCMs on $\Rd$'' to be made. The closest we have to that here is comparing \eqref{eqn:BBPexpansion} to Theorem~\ref{cor:BooleanResult}, which suggests there may be something to explore here.

    Throughout this paper we have fixed the curvature of $\HypDim$ to be $-1$. It would be an interesting avenue of investigation to see how the approximations of $\lambda_\mathrm{c}$ behave as the curvature of $\HypDim$ approaches $0$. Does the approximation approach something meaningful for $\lambda_\mathrm{c}$ of an RCM on $\Rd$? For $2\leq d \leq 6$, there is good reason to believe the answer is no. This is because the lace expansion for RCMs on $\Rd$ (fundamentally the same as the lace expansion on $\HypDim$) should not converge for $d\leq 6$. It may be that the hidden constants in $o\left(1\right)$ in Proposition~\ref{thm:CritIntensityExpansion} grow and eventually diverge as the curvature vanishes.

\subsection{Outline of the Paper and the Argument}

In Section~\ref{sec:Preliminaries} we establish some preliminary results: specifically we recall Mecke's formula (which we use many times throughout the paper) and provide some simple properties of the operators we are considering in this paper. Note that in Theorem~\ref{thm:TriangleSmall} we also recall a result from \cite{dickson2024NonUniqueness} that will be crucial to our argument -- in particular we already have a bound on the triangle diagram at $\lambda_\mathrm{c}$.

Theorems~\ref{cor:BooleanResult} and \ref{cor:HeatKernelResult} and Proposition~\ref{thm:CritIntensityExpansion} are all corollaries of Theorem~\ref{thm:CritIntensityExpansionFull}. Sections~\ref{sec:LaceExpansionCritIntensity} to \ref{sec:estimatingspectralradius} lay the ground-work for and culminate in a proof of Theorem~\ref{thm:CritIntensityExpansionFull}. In Section~\ref{sec:LaceExpansionCritIntensity} we provide the lace expansion for the hyperbolic RCM for $\lambda\leq \lambda_\mathrm{c}$, leading to Proposition~\ref{prop:LaceExpansion}. This proceeds very similarly to the derivation of the lace expansion for RCMs on $\Rd$ (see \cite{HeyHofLasMat19,DicHey2022triangle}), with a major simplification in the fact that we already have the bound on the triangle diagram from \cite{dickson2024NonUniqueness}.

We then outline how the Ornstein-Zernike equation resulting from the lace expansion can be used to extract the value of $\lambda_\mathrm{c}$ in Section~\ref{sec:characterisingPercolationThreshold}. In particular, this involves understanding the $L^1\to L^1$ spectral radius of the sum of the adjacency operator and the lace expansion coefficient operator.

The issue now is that the lace expansion coefficient operator is an implicit object defined via the model. In Section~\ref{sec:ExpandLaceCoeff} we replicate the arguments of \cite{dickson2025expansion} to obtain an approximation of these lace expansion coefficients in terms of the adjacency function. The main difference between the argument in \cite{dickson2025expansion} and the argument here is that on $\HypDim$ we don't have the usual Fourier transform and the associated tools. We do, however, have access to the spherical transform -- an analog of the Fourier transform for symmetric spaces. This plays a crucial role in \cite{dickson2024NonUniqueness}, and we use it here to prove that the intermediate results in \cite{dickson2025expansion} that made use of the Fourier transform also hold in our case.

In Section~\ref{sec:estimatingspectralradius} we use this approximation of the lace expansion coefficients to get an approximation of the $L^1\to L^1$ spectral radius in terms of $\lambda_\mathrm{c}$ and the adjacency function. This gives $\lambda_\mathrm{c}$ as the solution of a polynomial equation (of which one term is only known asymptotically), and we use this to get the asymptotic behaviour of $\lambda_\mathrm{c}$. This completes the proof of Theorem~\ref{thm:CritIntensityExpansionFull}. Proposition~\ref{thm:CritIntensityExpansion} then follows directly.

In Section~\ref{sec:CalcSpecific} we show how the terms in Theorem~\ref{thm:CritIntensityExpansionFull} can be simplified in three cases: when we restrict to what we call `non-negative definite' RCMs, when we restrict to the heat kernel model (leading to Theorem~\ref{cor:HeatKernelResult}), and when we restrict to the Boolean disc RCM (leading to Theorem~\ref{cor:BooleanResult}).

Appendix~\ref{app:PrelimProofs} contains the proofs for most of the preliminary lemmas in Section~\ref{sec:Preliminaries}. In Appendix~\ref{app:rangesphericaltransform} we prove an elementary property of the spherical transform that applies for the functions we are considering and is used multiple times -- specifically that the range of the transformed functions are subsets of the real line. In Appendix~\ref{app:hyperbolictriangles} we give some standard properties of hyperbolic triangles which are used in Section~\ref{sec:CalcBooleanDisc} to study the Boolean disc RCM.

\section{Preliminaries}
\label{sec:Preliminaries}
\paragraph{Mecke's Formula.}

    Mecke's formula gives us a way to formulate some expectations of integrals (or sums) as integrals of expectations. Given $m\in\N$ and a measurable function $f=f\left(\xi,x_1,\ldots,x_m\right)\in \R_{\geq 0}$,
    \begin{multline}
        \mathbb{E}_{\lambda,L}\left[\sum_{\left(x_1,\ldots,x_m\right)\in \eta^{(m)}} f\left(\xi,x_1,\ldots,x_m\right)\right] \\= \lambda^m \int_{\left(\HypDim\right)^m}\mathbb{E}_{\lambda,L}\left[f\left(\xi^{x_1,\ldots,x_m},x_1,\ldots,x_m\right)\right]\mu\left(\dd x_1\right)\ldots \mu\left(\dd x_m\right),
    \end{multline}
    where $\eta^{(m)} := \left\{\left(y_1,\ldots,y_m\right)\in\eta^m\colon x_i\ne x_j\text{ for }i\ne j\right\}$. For details, see \cite[\S4]{LasPen17}.

    Notable examples of the use of Mecke's formula are to get expressions for the expected degree and expected cluster size of a RCM. Let $\deg{\orig}:= \#\left\{y\in\eta\colon \adja{y}{\orig}{\xi^\orig}\right\}$ and $\C\left(\orig\right):= \left\{\orig\right\}\cup\left\{y\in\eta\colon\conn{y}{\orig}{\xi^\orig}\right\}$. Then Mecke's formula implies
    \begin{align}
        \mathbb{E}_{\lambda,L}\left[\deg{\orig}\right] = \mathbb{E}_{\lambda,L}\left[\sum_{x\in \eta} \Id\left\{\adja{x}{\orig}{\xi^\orig}\right\}\right] &= \lambda\int_\HypDim\connf_L\left(x,\orig\right)\mu\left(\dd x\right),\\
        \mathbb{E}_{\lambda,L}\left[\#\C\left(\orig\right)\right] = 1+ \mathbb{E}_{\lambda,L}\left[\sum_{x\in \eta} \Id\left\{\conn{x}{\orig}{\xi^\orig}\right\}\right] &= 1+\lambda\int_\HypDim\tau_{\lambda,L}\left(x,\orig\right)\mu\left(\dd x\right).
    \end{align}

\paragraph{Operators.}

Let $G$ denote the convolution operator associated with the function $g\colon \HypDim\times\HypDim\to\Complex$. That is, first define
\begin{equation}
    \mathcal{D}\left(G\right) := \left\{f\colon \HypDim\to \mathbb{C} \;\vert\; \int_\HypDim\abs*{g\left(x,y\right)}\abs*{f(y)}\mu\left(\dd y\right)<\infty \text{ for } \mu\text{-almost every }x\in\HypDim\right\}.
\end{equation}
Then $G\colon \mathcal{D}\left(G\right)\to \Complex^{\HypDim}$ is the linear operator defined by
\begin{equation}
    \left(Gf\right)\left(x\right):= \int_{\HypDim}g\left(x,y\right)f(y)\mu\left(\dd y\right).
\end{equation}
Then given $p\in\left[1,\infty\right]$, define the operator norm
\begin{equation}
    \norm*{G}_{p\to p} := \begin{cases}
        \sup_{f\colon \norm*{f}_p=1}\norm*{Gf}_{p} &\colon L^p\left(\HypDim\right)\subset \mathcal{D}\left(G\right),\\
        \infty &\colon \text{otherwise,}
    \end{cases}
\end{equation}
and the spectral radius
\begin{equation}
\label{eqn:Gelfand}
    \rho_{p\to p}\left(G\right) := \lim_{n\to\infty}\norm*{G^n}_{p\to p}^{\frac{1}{n}}.
\end{equation}
Observe that this limit exists (in $\left[0,\infty\right]$) because the operator norm is sub-multiplicative. 

Lemmas~\ref{lem:ConvolutionOperator} and \ref{lem:spectralradiusofPositive} allow us to get explicit formula for the $L^1\to L^1$ operator norm and spectral radius of certain convolution operators. They are proven in Appendix~\ref{app:PrelimProofs}.

\begin{restatable}{lemma}{ConvolutionOperator}
\label{lem:ConvolutionOperator}
    The convolution operator $G$ satisfies
    \begin{equation}
        \label{eqn:OneOneNorm}
        \norm*{G}_{1\to 1} = \esssup_{y\in \HypDim}\int_{\HypDim}\abs*{g(x,y)}\mu\left(\dd x\right).
    \end{equation}
    If $g\left(x,y\right) = g\left(y,x\right)$ for all $x,y\in\HypDim$, then for all $p\in\left[1,\infty\right]$
    \begin{equation}
        \norm*{G}_{p\to p} = \norm*{G}_{\frac{p}{p-1}\to\frac{p}{p-1}},
    \end{equation}
    and $p\mapsto \norm*{G}_{p\to p}$ is a decreasing function of $p$ on $\left[1,2\right]$ and increasing on $\left[2,\infty\right]$.
\end{restatable}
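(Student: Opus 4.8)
The plan is to establish the three assertions in sequence, starting with the $L^1\to L^1$ formula \eqref{eqn:OneOneNorm}, then deducing the duality identity $\norm*{G}_{p\to p}=\norm*{G}_{\frac{p}{p-1}\to\frac{p}{p-1}}$ for symmetric kernels, and finally the monotonicity of $p\mapsto\norm*{G}_{p\to p}$.

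For \eqref{eqn:OneOneNorm}, I would argue by two inequalities. For the upper bound, take $f\in L^1(\HypDim)$ with $\norm*{f}_1=1$; then by Tonelli's theorem (everything is nonnegative after taking absolute values) and the invariance of the hyperbolic measure,
\[
    \norm*{Gf}_1 \leq \int_\HypDim\int_\HypDim\abs*{g(x,y)}\abs*{f(y)}\mu(\dd y)\,\mu(\dd x) = \int_\HypDim\abs*{f(y)}\left(\int_\HypDim\abs*{g(x,y)}\mu(\dd x)\right)\mu(\dd y) \leq \esssup_{y}\int_\HypDim\abs*{g(x,y)}\mu(\dd x),
\]
which also shows $L^1(\HypDim)\subset\mathcal D(G)$ whenever the essential supremum is finite. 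For the matching lower bound, if the essential supremum equals some $M$ (possibly $\infty$), then for any $M'<M$ the set $A=\{y:\int_\HypDim\abs*{g(x,y)}\mu(\dd x)>M'\}$ has positive $\mu$-measure; choosing a subset $A_0\subset A$ with $0<\mu(A_0)<\infty$ and testing against $f=\mu(A_0)^{-1}\Id_{A_0}$ (or against approximating point masses / normalized indicators of shrinking sets around a Lebesgue point to avoid averaging losses) gives $\norm*{Gf}_1\geq M'$ after another application of Tonelli; letting $M'\uparrow M$ closes the gap. The standard care point here is that averaging over $A_0$ can only decrease the bound, so one should either appeal to a Lebesgue-density argument to make the inner integral nearly constant on $A_0$, or simply note $\norm*{Gf}_1\geq\frac{1}{\mu(A_0)}\int_{A_0}\int_\HypDim\abs*{g(x,y)}\mu(\dd x)\,\mu(\dd y)\geq M'$ directly, which suffices.

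For the symmetric case, the operator $G^*$ dual to $G\colon L^p\to L^p$ acting on $L^{p/(p-1)}$ has kernel $g(y,x)$, which equals $g(x,y)$ by hypothesis, so $G^*=G$ as a convolution operator; since $\norm*{G}_{p\to p}=\norm*{G^*}_{\frac{p}{p-1}\to\frac{p}{p-1}}$ in general (interpreting both sides as $\infty$ when the relevant $L^p$ fails to lie in the domain — here one should check the domain condition is itself self-dual, which follows because $L^1$-boundedness and $L^\infty$-boundedness are the two ``extreme'' cases and interpolation controls the rest), the identity follows. For the monotonicity, I would combine the $L^1\to L^1$ bound \eqref{eqn:OneOneNorm} with the analogous (and easier) $L^\infty\to L^\infty$ bound $\norm*{G}_{\infty\to\infty}=\esssup_x\int_\HypDim\abs*{g(x,y)}\mu(\dd y)$, which for symmetric $g$ equals $\norm*{G}_{1\to1}$, and then apply the Riesz--Thorin interpolation theorem: for $1\leq p_0<p_1\leq\infty$ and $\theta\in(0,1)$ with $\frac1p=\frac{1-\theta}{p_0}+\frac{\theta}{p_1}$, $\norm*{G}_{p\to p}\leq\norm*{G}_{p_0\to p_0}^{1-\theta}\norm*{G}_{p_1\to p_1}^{\theta}$. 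Taking $p_0=1$, $p_1=2$ shows $\norm*{G}_{p\to p}\leq\norm*{G}_{1\to1}^{1-\theta}\norm*{G}_{2\to2}^{\theta}\leq\norm*{G}_{1\to1}$ for $p\in[1,2]$ (using $\norm*{G}_{2\to2}\leq\norm*{G}_{1\to1}$, itself a consequence of interpolating $1$ and $\infty$), and more generally that for $p<q$ in $[1,2]$ one can write $p$ as an interpolant of $1$...

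Actually, the cleanest route to monotonicity on $[1,2]$: for $1\leq p<q\leq 2$, write $\frac1p=\frac{1-\theta}{1}+\frac{\theta}{q}$ for a suitable $\theta\in(0,1)$, so that $\norm*{G}_{p\to p}\leq\norm*{G}_{1\to1}^{1-\theta}\norm*{G}_{q\to q}^{\theta}$; combined with $\norm*{G}_{q\to q}\leq\norm*{G}_{1\to1}$ (interpolate $1$ and $\infty$, using symmetry so that $\norm*{G}_{\infty\to\infty}=\norm*{G}_{1\to1}$), this gives $\norm*{G}_{p\to p}\leq\norm*{G}_{1\to1}$; to get the genuine ordering $\norm*{G}_{p\to p}\geq\norm*{G}_{q\to q}$ rather than just domination by the endpoint, interpolate $q$ between $p$ and $2$: $\frac1q=\frac{1-\eta}{p}+\frac{\eta}{2}$ gives $\norm*{G}_{q\to q}\leq\norm*{G}_{p\to p}^{1-\eta}\norm*{G}_{2\to2}^{\eta}\leq\norm*{G}_{p\to p}^{1-\eta}\norm*{G}_{p\to p}^{\eta}=\norm*{G}_{p\to p}$, where the last inequality uses $\norm*{G}_{2\to2}\leq\norm*{G}_{p\to p}$ (valid since $\norm*{G}_{2\to2}\leq\norm*{G}_{1\to1}$ and... hmm, this needs $\norm*{G}_{2\to2}\leq\norm*{G}_{p\to p}$ directly). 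The honest dependency is: $p\mapsto\norm*{G}_{p\to p}$ being minimized at $p=2$ for symmetric kernels is exactly the statement that it is decreasing on $[1,2]$ and increasing on $[2,\infty]$, and this follows from Riesz--Thorin together with the self-duality $\norm*{G}_{p\to p}=\norm*{G}_{p/(p-1)\to p/(p-1)}$: given $1\leq p<q\leq2$, the exponent $q$ lies between $p$ and its conjugate $p/(p-1)\geq2\geq q$, so $\frac1q=\frac{1-s}{p}+\frac{s}{p/(p-1)}$ for some $s\in(0,1)$, whence $\norm*{G}_{q\to q}\leq\norm*{G}_{p\to p}^{1-s}\norm*{G}_{p/(p-1)\to p/(p-1)}^{s}=\norm*{G}_{p\to p}$ by self-duality. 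The case $[2,\infty]$ follows by duality. The main obstacle in writing this up cleanly is bookkeeping the domain/finiteness conventions (the $\infty$ cases in the definition of $\norm*{G}_{p\to p}$) so that Riesz--Thorin is applied only when the endpoint norms are finite, and otherwise handling the $\infty$ cases separately, which is routine but needs stating.
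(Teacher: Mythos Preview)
Your proposal is correct and follows essentially the same strategy as the paper: Tonelli plus a supremum bound for the upper half of \eqref{eqn:OneOneNorm}, near-point-mass test functions for the lower half, and then duality combined with Riesz--Thorin for the remaining claims. The paper's own proof is considerably terser --- it uses a Dirac delta as the test function for the lower bound (which is informal, since $\delta_u\notin L^1$; your normalized-indicator argument is the honest version of that step) and for the second part simply cites \cite{hutchcroft2019percolation} for the Riesz--Thorin argument rather than writing it out. Your final formulation of the monotonicity (interpolating $q$ between $p$ and its conjugate $p/(p-1)$, then invoking self-duality) is exactly the right way to extract the ordering from interpolation, and your remark about handling the $\infty$ cases separately is the only remaining bookkeeping; you could streamline the exposition by deleting the earlier false starts and going straight to that argument.
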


\begin{definition}
    A function $f\colon \HypDim\times\HypDim\to\R$ is \emph{isometry invariant} if, for any isometry $\iota$ of $\HypDim$,
    \begin{equation}
        f\left(\iota(x),\iota(y)\right) = f(x,y)
    \end{equation}
    for all $x,y\in\HypDim$. Observe that if $f$ is isometry invariant, then $f(x,y)=f\left(\dist{x,y}\right)$ only: $f$ is constant on the hyperbolic sphere $\left\{w\in \HypDim \colon \dist{w,x}=\dist{y,x}\right\}$.
\end{definition}

\begin{restatable}{lemma}{spectralradiusofPositive}
\label{lem:spectralradiusofPositive}
    If $g(x,y)$ is isometry invariant and $g(x,y)\geq 0$ for all $x,y\in\HypDim$, then
    \begin{equation}
        \rho_{1\to1 }\left(G\right) = \int_\HypDim g\left(x,\orig\right) \mu\left(\dd x\right) = \norm*{G}_{1\to 1}.
    \end{equation}    
\end{restatable}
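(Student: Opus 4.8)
The plan is to reduce everything to the explicit $L^1\to L^1$ norm formula of Lemma~\ref{lem:ConvolutionOperator}, together with the fact that $n$-fold convolutions of an isometry-invariant non-negative kernel are again isometry invariant and non-negative. Throughout, set $m:=\int_\HypDim g\left(x,\orig\right)\mu\left(\dd x\right)\in\left[0,\infty\right]$.

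First I would establish the right-hand identity $\norm*{G}_{1\to 1}=m$. By Lemma~\ref{lem:ConvolutionOperator}, $\norm*{G}_{1\to 1}=\esssup_{y\in\HypDim}\int_\HypDim g\left(x,y\right)\mu\left(\dd x\right)$, the absolute value being vacuous since $g\geq 0$. Because $\HypDim$ is homogeneous, for every $y$ there is an isometry $\iota$ with $\iota\left(\orig\right)=y$; the substitution $x=\iota\left(x'\right)$ together with isometry invariance of $\mu$ and of $g$ turns $\int_\HypDim g\left(x,y\right)\mu\left(\dd x\right)$ into $\int_\HypDim g\left(\iota(x'),\iota(\orig)\right)\mu\left(\dd x'\right)=\int_\HypDim g\left(x',\orig\right)\mu\left(\dd x'\right)=m$, so the essential supremum is simply $m$.

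The spectral radius is the Gelfand limit of the sub-multiplicative sequence $\norm*{G^n}_{1\to 1}^{1/n}$, so $\rho_{1\to 1}\left(G\right)\leq\norm*{G}_{1\to 1}=m$ holds for free; the work is the matching lower bound, for which I would compute $\norm*{G^n}_{1\to 1}$ outright. Using $g\geq 0$, Tonelli's theorem identifies $G^n$ with the convolution operator whose kernel is the iterated convolution
\begin{equation*}
    g^{\star n}\left(x,y\right)=\int_{\left(\HypDim\right)^{n-1}}g\left(x,z_1\right)g\left(z_1,z_2\right)\cdots g\left(z_{n-1},y\right)\,\mu\left(\dd z_1\right)\cdots\mu\left(\dd z_{n-1}\right),
\end{equation*}
which is itself non-negative and, by the same change of variables in each inner integral, isometry invariant. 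Applying the first step to $g^{\star n}$ gives $\norm*{G^n}_{1\to 1}=\int_\HypDim g^{\star n}\left(x,\orig\right)\mu\left(\dd x\right)$; evaluating this integral by Tonelli — first over $x$, which yields a factor $\int_\HypDim g\left(x,z_1\right)\mu\left(\dd x\right)=m$ independent of $z_1$, then over $z_1$, and so on — leaves exactly $m^n$. Hence $\norm*{G^n}_{1\to 1}=m^n$ for every $n$, so $\rho_{1\to 1}\left(G\right)=\lim_{n\to\infty}\left(m^n\right)^{1/n}=m$, and all three quantities agree; the case $m=\infty$ is the same computation with $m^n=\infty$, which makes both the norm and the spectral radius infinite, so the identity persists.

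I do not anticipate a genuine obstacle here. The only two points that need care are the identification of $G^n$ with convolution against $g^{\star n}$ — clean in this setting because non-negativity of $g$ lets Tonelli justify every interchange of integrals without integrability hypotheses — and the appeal to the homogeneity of $\HypDim$ that lets one slide the base point of the kernel integral to the origin; both are routine.
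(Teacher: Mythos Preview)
Your proof is correct and follows essentially the same approach as the paper: both use Lemma~\ref{lem:ConvolutionOperator} to identify $\norm*{G}_{1\to 1}=m$, then compute $\norm*{G^n}_{1\to 1}$ by peeling off one integration variable at a time using isometry invariance to get $m^n$. Your version is marginally cleaner in that you evaluate $\norm*{G^n}_{1\to 1}=m^n$ directly rather than separately bounding it above and below, but this is a cosmetic difference.
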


In Section~\ref{sec:ExpandLaceCoeff} we will want to bound the perturbation to the spectral radius when we perturb the operator. Given $p\in\left[1,\infty\right]$, the spectrum of the bounded operator $A\colon L^p\left(\HypDim\right)\to L^p\left(\HypDim\right)$ is defined to be
\begin{equation}
    \sigma_{p\to p}\left(A\right):= \left\{z\in\Complex \text{ s.t. } \left(A-z\Id\right)\colon L^p\left(\HypDim\right)\to L^p\left(\HypDim\right)\text{ is not invertible}\right\}.
\end{equation}
By Gelfand's formula,
\begin{equation}
    \rho_{p\to p}(A) = \sup\left\{\abs*{z}\colon z\in \sigma_{p\to p}(A)\right\}.
\end{equation}

The following lemma is an instance of a more general result, but we will only require this $L^1\to L^1$ version in this paper. In this lemma ``$\mathrm{dist}$'' refers to the Hausdorff distance with respect to the Euclidean distance on $\Complex$.
\begin{lemma}
\label{lem:SpectrumPerturbation}
    Let $A,B\colon L^1\left(\HypDim\right)\to L^1\left(\HypDim\right)$ be bounded commuting linear operators. Then
    \begin{equation}
        \mathrm{dist}\left(\sigma_{1\to 1}\left(A+B\right),\sigma_{1\to 1}\left(A\right)\right) \leq \rho_{1\to 1}\left(B\right) \leq \norm*{B}_{1\to 1}.
    \end{equation}
    In particular,
    \begin{equation}
        \rho_{1\to 1}\left(A+B\right) \leq \rho_{1\to 1}\left(A\right) + \norm*{B}_{1\to 1}.
    \end{equation}
\end{lemma}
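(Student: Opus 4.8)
The plan is to prove the chain of inequalities from right to left, starting with the easy estimate $\rho_{1\to 1}(B)\le \norm{B}_{1\to 1}$, which is immediate from Gelfand's formula \eqref{eqn:Gelfand} since $\norm{B^n}_{1\to 1}^{1/n}\le \norm{B}_{1\to 1}$ for all $n$ by sub-multiplicativity. The substantive claim is the first inequality, that every point of $\sigma_{1\to 1}(A+B)$ lies within distance $\rho_{1\to 1}(B)$ of $\sigma_{1\to 1}(A)$. The natural tool here is holomorphic functional calculus together with the spectral mapping theorem, exploiting that $A$ and $B$ commute so that they generate a commutative Banach subalgebra of the bounded operators on $L^1(\HypDim)$.

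First I would fix $z\in\sigma_{1\to 1}(A+B)$ and suppose, for contradiction, that $\mathrm{dist}(z,\sigma_{1\to 1}(A)) > \rho_{1\to 1}(B)$; call this gap $\delta>0$. Working inside the closed commutative subalgebra $\mathcal{A}$ generated by $A$, $B$ and the identity, I would invoke the Gelfand representation: there is a compact Hausdorff space $\Phi$ of characters (multiplicative functionals) on $\mathcal{A}$, and for any $S\in\mathcal{A}$ the spectrum $\sigma(S)$ (relative to $\mathcal{A}$) equals $\{\chi(S):\chi\in\Phi\}$, with $\rho_{1\to 1}(S) = \max_{\chi}\abs{\chi(S)}$. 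Since $z\in\sigma_{\mathcal{A}}(A+B)$ — one must note the spectrum relative to the subalgebra can only be larger than the ambient one, but for the inequality we want that is the favourable direction — there is a character $\chi$ with $\chi(A)+\chi(B) = \chi(A+B) = z$. Then $\chi(A)\in\sigma_{1\to 1}(A)$ and $\abs{\chi(B)}\le \rho_{1\to 1}(B) < \delta$, so $\abs{z - \chi(A)} = \abs{\chi(B)} < \delta$, contradicting $\mathrm{dist}(z,\sigma_{1\to 1}(A))>\delta$. Hence no such gap exists, which is exactly $\mathrm{dist}(\sigma_{1\to 1}(A+B),\sigma_{1\to 1}(A))\le \rho_{1\to 1}(B)$; here the Hausdorff distance is one-sided in the sense used (every point of the first set is close to the second), matching the statement.

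Finally, for the last display I would take $z\in\sigma_{1\to 1}(A+B)$ achieving $\abs{z} = \rho_{1\to 1}(A+B)$ (such $z$ exists since the spectrum is compact and nonempty), pick the nearest $w\in\sigma_{1\to 1}(A)$, and write
\begin{equation}
    \rho_{1\to 1}(A+B) = \abs{z} \le \abs{w} + \abs{z-w} \le \rho_{1\to 1}(A) + \rho_{1\to 1}(B) \le \rho_{1\to 1}(A) + \norm{B}_{1\to 1},
\end{equation}
using the Hausdorff-distance bound just proved and then $\rho_{1\to 1}(B)\le\norm{B}_{1\to 1}$.

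The main obstacle is a bookkeeping subtlety rather than a deep one: the Gelfand theory I invoked computes spectra relative to the commutative subalgebra $\mathcal{A}$, and one must check this does not weaken the conclusion. For the inequality we want, the inclusion $\sigma_{1\to 1}(S)\subseteq \sigma_{\mathcal{A}}(S)$ is harmless on the side of $A+B$ (we only need $z$ to be in the larger set), but for $A$ we need $\chi(A)$ to land in the \emph{ambient} spectrum $\sigma_{1\to 1}(A)$. This is fine because any character of $\mathcal{A}$ extends the functional $S\mapsto$ (value at that character) and $\chi(A)\in\sigma_{\mathcal{A}}(A)$; to get $\chi(A)\in\sigma_{1\to1}(A)$ one can instead run the argument using that $(A+B-z\Id)$ non-invertible forces, via the resolvent identity $A+B-z\Id = (A-w\Id) + (B-(z-w)\Id)$ together with commutativity and a Neumann-series perturbation, that $A-w\Id$ is non-invertible for some $w$ with $\abs{z-w}\le\rho_{1\to1}(B)$; alternatively, one cites the standard result that for commuting operators the spectrum is subadditive, $\sigma(A+B)\subseteq \sigma(A)+\sigma(B)$, which directly yields $z = w + v$ with $w\in\sigma_{1\to1}(A)$, $\abs v\le \rho_{1\to1}(B)$. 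I would present the clean commuting-operators version and relegate the routine functional-calculus justification to a sentence.
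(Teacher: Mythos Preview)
The paper's own proof is a one-line citation to Kato's perturbation theory (Chapter~IV, \S3, Theorem~3.6), so your write-up is substantially more than what the paper provides. Your argument via Gelfand theory on a commutative subalgebra is the right idea and is essentially what lies behind the cited theorem; the only rough edge is the subalgebra-spectrum issue you flag yourself. The cleanest resolution is not the Neumann-series patch but to work from the outset in a \emph{maximal} commutative subalgebra $\mathcal{M}\subset\mathcal{B}(L^1(\HypDim))$ containing $A$, $B$ and the identity (Zorn's lemma furnishes one). In such an $\mathcal{M}$ the relative spectrum of any element coincides with its ambient spectrum: if $S-z$ is invertible in $\mathcal{B}(L^1)$ then $(S-z)^{-1}$ commutes with everything that commutes with $S$, hence with all of $\mathcal{M}$, and maximality forces $(S-z)^{-1}\in\mathcal{M}$. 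With this choice your character argument lands $\chi(A)$ directly in $\sigma_{1\to1}(A)$, and no auxiliary justification is needed.

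One minor remark: the paper states the bound for the genuine (symmetric) Hausdorff distance, so you also need that every point of $\sigma_{1\to1}(A)$ is within $\rho_{1\to1}(B)$ of $\sigma_{1\to1}(A+B)$. This follows immediately by writing $A=(A+B)+(-B)$ and applying the one-sided bound you proved with the roles swapped.
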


\begin{proof}
    This follows from the standard result of \cite[Chapter~IV, \S3, Theorem~3.6]{kato1995perturbation}.
\end{proof}

The following lemma gives us the means to easily tackle the ``commuting'' condition of the previous lemma. It is proven in Appendix~\ref{app:PrelimProofs}.

\begin{restatable}{lemma}{IsoInvtoCommute}
\label{lem:IsoInvtoCommute}
    Let $p\in\left[1,\infty\right]$ and $f,g\colon\HypDim\times\HypDim\to\Complex$ be isometry invariant functions such that the associated convolution operators $F,G\colon L^p\left(\HypDim\right)\to L^p\left(\HypDim\right)$ are bounded operators. Then
    \begin{equation}
        FG=GF
    \end{equation}
    on $L^p\left(\HypDim\right)$.
\end{restatable}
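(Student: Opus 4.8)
The plan is to reduce the statement to a question about commuting operators on $L^p$ and then exploit the fact that isometry-invariant convolution kernels can all be realised, after passing to the spherical transform, as multiplication operators by radial functions. First I would record the concrete form of the operators: since $f$ and $g$ are isometry invariant, by the remark following the definition of isometry invariance we have $f(x,y) = f(\dist{x,y})$ and $g(x,y) = g(\dist{x,y})$, so $F$ and $G$ are convolution operators on the homogeneous space $\HypDim = G/K$ (with $G$ the isometry group and $K$ the stabiliser of $\orig$) against $K$-bi-invariant (``radial'') kernels. The key structural fact is that the convolution of two radial kernels is again radial, and that radial convolution on $\HypDim$ is commutative — this is Gelfand's classical theorem that $(G,K)$ is a Gelfand pair, equivalently that the algebra of radial integrable functions is commutative under convolution. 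So at the level of kernels, $f * g = g * f$ pointwise ($\mu$-a.e.), and the only real work is to promote this to the operator identity $FG = GF$ on $L^p(\HypDim)$ under the hypothesis that $F,G$ are bounded.

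The main steps, in order, would be: (i) Show that for $h_1,h_2$ radial and nice enough (say, with $h_1 * h_2$ and $h_2 * h_1$ both well-defined a.e.), the pointwise identity $(h_1 * h_2)(x,\orig) = (h_2 * h_1)(x,\orig)$ holds — this is a change of variables using an isometry that swaps the two integration points (specifically the hyperbolic ``inversion'' fixing the geodesic through $\orig$ and the relevant point, combined with invariance of $\mu$), and it is exactly the commutativity of the Gelfand-pair convolution algebra. (ii) Relate the operator composition $FG$ to convolution against $f * g$: for $\phi \in L^p(\HypDim)$, compute $(F(G\phi))(x) = \int f(x,z) \big(\int g(z,y)\phi(y)\,\mu(\dd y)\big)\mu(\dd z)$ and apply Fubini to rewrite it as $\int (f * g)(x,y)\,\phi(y)\,\mu(\dd y)$, i.e.\ as convolution against the composite kernel. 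The justification of Fubini here needs the boundedness hypothesis: since $F$ and $G$ are bounded on $L^p$, one argues on a dense subclass (e.g.\ $L^1 \cap L^\infty$, or nonnegative functions first and then split) where the double integral converges absolutely, then extends by density and continuity of both $FG$ and $GF$. (iii) Combine (i) and (ii): $FG$ is convolution against $f * g$, $GF$ is convolution against $g * f$, and these kernels agree, so $FG = GF$.

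The hard part will be the integrability bookkeeping in step (ii): one wants to apply Fubini to $\iint |f(x,z)||g(z,y)||\phi(y)|\,\mu(\dd z)\mu(\dd y)$ and conclude it is finite for $\mu$-a.e.\ $x$, purely from the hypothesis that $F$ and $G$ are bounded $L^p \to L^p$ — boundedness does not, a priori, give pointwise absolute convergence of the iterated kernel integral. The clean way around this is to first establish the claim for a convenient dense subclass: restrict to $\phi$ that are bounded with bounded support (or, since $f,g \ge 0$ is not assumed, to $\phi \ge 0$ and use monotone convergence, then subtract). On such $\phi$, $G\phi$ is again controlled well enough that $F(G\phi)$ and the composite-kernel integral coincide by Tonelli, giving $FG\phi = (\text{conv.\ by } f*g)\phi = (\text{conv.\ by } g*f)\phi = GF\phi$ on the dense subclass; then both sides extend to all of $L^p$ by boundedness. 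Alternatively, and perhaps more in the spirit of the paper, one can sidestep pointwise subtleties entirely by passing to the spherical (Helgason--Fourier) transform: the spherical transform diagonalises radial convolution operators simultaneously, sending $F$ and $G$ to multiplication operators by their spherical transforms $\widehat f$ and $\widehat g$, which trivially commute; the boundedness hypothesis guarantees these multipliers are genuine (bounded) multiplication operators on the appropriate transform space, and transform-faithfulness then yields $FG = GF$. I would present the direct change-of-variables argument as the main line and mention the spherical-transform viewpoint as the conceptual reason, since the latter is the tool already invoked elsewhere in the paper.
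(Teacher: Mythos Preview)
Your proposal is correct and takes essentially the same approach as the paper: the paper's proof introduces explicit point-swapping isometries $t_{x,y}$ (with $t_{x,y}(x)=y$, $t_{x,y}(y)=x$), uses Fubini--Tonelli to switch the order of integration in $(FGh)(x)$, applies these isometries to the kernels $f$ and $g$, and then changes variables to arrive at $(GFh)(x)$ --- exactly your step (i)+(ii)+(iii) compressed into one computation. Your observation that the Fubini justification is the delicate step is apt; the paper handles it more tersely (invoking boundedness and $L^p\subset\mathcal{D}(F)\cap\mathcal{D}(G)$ directly) than your proposed density argument, and it does not invoke the spherical transform for this lemma.
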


\begin{definition}
Recall that for $\lambda\geq0$ and $L>0$, the \emph{two-point function} $\tau_{\lambda,L}\colon \HypDim\times \HypDim \to [0,1]$ is given by
\begin{equation}
    \tau_{\lambda,L}\left(x,y\right) := \mathbb{P}_{\lambda,L}\left(\conn{x}{y}{\xi^{x,y}}\right).
\end{equation}
Observe that from the isometry invariance of $\connf_L$ and $\mu$, $\tau_{\lambda,L}$ is also isometry invariant. 
We can define the operators $\Opconnf_L\colon \mathcal{D}\left(\Opconnf_L\right)\to \Complex^{\HypDim}$ and $\Optau_{\lambda,L}\colon \mathcal{D}\left(\Optau_{\lambda,L}\right)\to \Complex^{\HypDim}$ as the convolution operators associated with the functions $\connf_L$ and $\tau_{\lambda,L}$ respectively. That means they act on test functions $f$ by
\begin{equation}
    \left(\Opconnf_Lf\right)\left(x\right) = \int_\HypDim\connf_L\left(x,y\right)f\left(y\right)\mu\left(\dd y\right),\qquad \left(\Optau_{\lambda,L}f\right)\left(x\right) = \int_\HypDim\tau_{\lambda,L}\left(x,y\right)f\left(y\right)\mu\left(\dd y\right) 
\end{equation}

Given $\lambda\geq0$ and $L>0$, we define the \emph{triangle diagram} by
\begin{equation}
    \triangle_{\lambda,L}:= \lambda^2\esssup_{x\in\HypDim}\tau_{\lambda,L}^{\star 3}\left(x,\orig\right).
\end{equation}
To bound this triangle diagram at $\lambda_\mathrm{c}$, it will be convenient to define
\begin{equation}
    \label{eqn:BetaDefinition}
        \beta(L) := \left(\frac{\norm*{\Opconnf_L}_{2\to 2}}{\norm*{\Opconnf_L}_{1\to 1}}\cdot\frac{\connf_L^{\star 2}\left(\orig,\orig\right)}{\norm*{\Opconnf_L}_{1\to 1}}\right)^\frac{1}{2}.
\end{equation}

\end{definition}

The following theorem from \cite{dickson2024NonUniqueness} will be a cornerstone that allows us to bound the critical triangle diagram, and therefore vastly simplify the derivation of the lace expansion convolution equation in Proposition~\ref{prop:LaceExpansion}.

\begin{theorem}[{\cite[Lemmas~4.4, 4.7]{dickson2024NonUniqueness}}]
\label{thm:TriangleSmall}
    Let $d\geq 2$ and suppose $\left\{\connf_L\right\}_{L>0}$ satisfies Assumption~\ref{Model_Assumption}. Then as $L\to\infty$,
    \begin{equation}\label{eqn:normRatio}
        \frac{\norm*{\Opconnf_L}_{2\to 2}}{\norm*{\Opconnf_L}_{1\to 1}}\to 0
    \end{equation}
    and 
    \begin{equation}\label{eqn:betalimit}
        \beta(L)=\LandauBigO{\sqrt{\frac{\norm*{\Opconnf_L}_{2\to 2}}{\norm*{\Opconnf_L}_{1\to 1}}}}.
    \end{equation}
    Furthermore, as $L\to\infty$
    \begin{align}
        \lambda_\mathrm{c}(L)\norm*{\Opconnf_L}_{1\to 1} &=\LandauBigO{1},\label{eqn:criticalintensityBound}\\
        \lambda_c(L)\norm*{\Opconnf_L}_{2\to 2} &= \LandauBigO{\frac{\norm*{\Opconnf_L}_{2\to 2}}{\norm*{\Opconnf_L}_{1\to 1}}},\label{eqn:critcalIntensityLTwoBound}\\
        \triangle_{\lambda_{\mathrm{c}}(L),L}&=\LandauBigO{\beta(L)^2}.
    \end{align}
\end{theorem}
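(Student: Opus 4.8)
The plan is to prove the five assertions in the order listed: \eqref{eqn:normRatio} and \eqref{eqn:betalimit} are soft consequences of Assumption~\ref{Model_Assumption}, \eqref{eqn:criticalintensityBound} needs a percolation argument, \eqref{eqn:critcalIntensityLTwoBound} is then immediate, and the triangle bound is the real work. For \eqref{eqn:normRatio} I would diagonalise the isometry-invariant operator $\Opconnf_L$ with the spherical (Harish--Chandra) transform on $\HypDim$, as in \cite{dickson2024NonUniqueness}. Lemma~\ref{lem:spectralradiusofPositive} gives $\norm*{\Opconnf_L}_{1\to 1}=\mathfrak{S}_{d-1}\int_0^\infty\connf_L(r)(\sinh r)^{d-1}\,\dd r$, while bounding the spherical function by $\varphi_0(r)\le C(1+r)\e^{-\frac{1}{2}(d-1)r}$ yields
\[
  \norm*{\Opconnf_L}_{2\to 2}\le C\,\mathfrak{S}_{d-1}\int_0^\infty\connf_L(r)(1+r)\e^{-\frac{1}{2}(d-1)r}(\sinh r)^{d-1}\,\dd r.
\]
Splitting this integral at an arbitrary $R>0$, bounding $(1+r)\e^{-\frac{1}{2}(d-1)r}$ by $1$ on $[0,R]$ and, for $R$ large, by $(1+R)\e^{-\frac{1}{2}(d-1)R}$ on $[R,\infty)$, gives
\[
  \frac{\norm*{\Opconnf_L}_{2\to 2}}{\norm*{\Opconnf_L}_{1\to 1}}\le C\left[(1+R)\frac{\int_0^R\connf_L(r)(\sinh r)^{d-1}\,\dd r}{\int_0^\infty\connf_L(r)(\sinh r)^{d-1}\,\dd r}+(1+R)\e^{-\frac{1}{2}(d-1)R}\right];
\]
letting $L\to\infty$ kills the first term by Assumption~\ref{enum:AssumptionLongRange}, and then $R\to\infty$ kills the second. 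For \eqref{eqn:betalimit} I would just note that, since $\connf_L\le 1$, $\connf_L^{\star 2}\left(\orig,\orig\right)=\int_\HypDim\connf_L\left(\orig,y\right)^2\mu\left(\dd y\right)\le\norm*{\Opconnf_L}_{1\to 1}$, so the second factor in \eqref{eqn:BetaDefinition} is at most $1$ and $\beta(L)\le\bigl(\norm*{\Opconnf_L}_{2\to 2}/\norm*{\Opconnf_L}_{1\to 1}\bigr)^{1/2}$.

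For \eqref{eqn:criticalintensityBound} I need an upper bound on $\lambda_\mathrm{c}(L)$: the RCM must percolate once $\lambda\norm*{\Opconnf_L}_{1\to 1}$ exceeds a fixed constant. I would explore the cluster of $\orig$ by breadth-first search; the fresh neighbours of each revealed vertex form a Poisson process of mean $\lambda\norm*{\Opconnf_L}_{1\to 1}$ minus a correction for intersections with the already-explored set, and because of the exponential volume growth of $\HypDim$ together with Assumption~\ref{enum:AssumptionLongRange} this correction is controlled by $\beta(L)$ and tends to $0$ as $L\to\infty$ (a hyperbolic analogue of the self-avoiding-path second-moment argument of \cite{penrose1993spread}, using \eqref{eqn:betalimit}). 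Hence for $L$ large the cluster dominates a supercritical Galton--Watson process as soon as $\lambda\norm*{\Opconnf_L}_{1\to 1}>1+o(1)$, so $\lambda_\mathrm{c}(L)\norm*{\Opconnf_L}_{1\to 1}=\LandauBigO{1}$. Then \eqref{eqn:critcalIntensityLTwoBound} follows at once, since $\lambda_\mathrm{c}(L)\norm*{\Opconnf_L}_{2\to 2}=\bigl(\lambda_\mathrm{c}(L)\norm*{\Opconnf_L}_{1\to 1}\bigr)\cdot\norm*{\Opconnf_L}_{2\to 2}/\norm*{\Opconnf_L}_{1\to 1}$.

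The triangle bound is where I expect the real work. The key structural fact is that, although $\norm*{\Optau_{\lambda_\mathrm{c},L}}_{1\to 1}=\infty$, one has $\lambda_\mathrm{c}(L)<\lambda_{2\to 2}(L)$ for $L$ large: indeed $\lambda_\mathrm{c}(L)=\LandauBigO{1/\norm*{\Opconnf_L}_{1\to 1}}$ by \eqref{eqn:criticalintensityBound}, whereas the Simon--Lieb/BK inequality for the RCM, $\Optau_{\lambda,L}\le\Opconnf_L+\lambda\,\Opconnf_L\Optau_{\lambda,L}$ as operators with nonnegative kernels, forces $\lambda_{2\to 2}(L)\ge 1/\norm*{\Opconnf_L}_{2\to 2}\gg 1/\norm*{\Opconnf_L}_{1\to 1}$ by \eqref{eqn:normRatio}. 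Consequently $\norm*{\Optau_{\lambda_\mathrm{c},L}}_{2\to 2}<\infty$, and inverting the $L^2$ Simon--Lieb inequality gives $\norm*{\Optau_{\lambda,L}}_{2\to 2}\le\norm*{\Opconnf_L}_{2\to 2}/(1-\lambda\norm*{\Opconnf_L}_{2\to 2})$; at $\lambda=\lambda_\mathrm{c}(L)$, \eqref{eqn:critcalIntensityLTwoBound} and \eqref{eqn:normRatio} give $\lambda_\mathrm{c}(L)\norm*{\Opconnf_L}_{2\to 2}=o(1)$, so $\norm*{\Optau_{\lambda_\mathrm{c},L}}_{2\to 2}=\LandauBigO{\norm*{\Opconnf_L}_{2\to 2}}$. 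I would then set $\widetilde\tau:=\tau_{\lambda_\mathrm{c},L}-\connf_L\ge 0$; the mirror Simon--Lieb inequality gives $\widetilde\tau(\cdot,\orig)\le\lambda_\mathrm{c}(L)\,\Optau_{\lambda_\mathrm{c},L}\connf_L(\cdot,\orig)$, so, using isometry invariance and $\norm*{\connf_L(\cdot,\orig)}_2^2=\connf_L^{\star 2}\left(\orig,\orig\right)$, one gets $\norm*{\widetilde\tau(\cdot,\orig)}_2\le\lambda_\mathrm{c}(L)\norm*{\Optau_{\lambda_\mathrm{c},L}}_{2\to 2}\bigl(\connf_L^{\star 2}\left(\orig,\orig\right)\bigr)^{1/2}$. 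Expanding $\tau_{\lambda_\mathrm{c},L}^{\star 3}=(\connf_L+\widetilde\tau)^{\star 3}$ into its eight convolution terms and estimating each $\esssup_x(\cdots)(x,\orig)$ by Cauchy--Schwarz and isometry invariance, the pure-$\connf_L$ term satisfies
\[
  \lambda_\mathrm{c}(L)^2\esssup_x\connf_L^{\star 3}\left(x,\orig\right)\le\lambda_\mathrm{c}(L)^2\,\norm*{\Opconnf_L}_{2\to 2}\,\connf_L^{\star 2}\left(\orig,\orig\right)=\bigl(\lambda_\mathrm{c}(L)\norm*{\Opconnf_L}_{1\to 1}\bigr)^2\beta(L)^2=\LandauBigO{\beta(L)^2}
\]
by \eqref{eqn:BetaDefinition} and \eqref{eqn:criticalintensityBound}, while every term carrying at least one factor $\widetilde\tau$ is, by the same bookkeeping together with $\norm*{\Optau_{\lambda_\mathrm{c},L}}_{2\to 2}=\LandauBigO{\norm*{\Opconnf_L}_{2\to 2}}$ and $\lambda_\mathrm{c}(L)\norm*{\Opconnf_L}_{2\to 2}=o(1)$, of order $o\bigl(\beta(L)^2\bigr)$. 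Summing the eight terms yields $\triangle_{\lambda_\mathrm{c}(L),L}=\LandauBigO{\beta(L)^2}$.

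The main obstacle is precisely this last argument, and in particular the necessity of moving to $L^2$: since $\norm*{\Optau_{\lambda_\mathrm{c},L}}_{1\to 1}=\infty$ the triangle cannot be controlled in $L^1$, and it is the strict separation $\lambda_\mathrm{c}(L)<\lambda_{2\to 2}(L)$ -- a genuinely hyperbolic feature, here a consequence of \eqref{eqn:normRatio} and \eqref{eqn:criticalintensityBound} -- that makes $\Optau_{\lambda_\mathrm{c},L}$ a bounded $L^2$ operator with norm comparable to $\norm*{\Opconnf_L}_{2\to 2}$, and this is what replaces the bootstrap argument needed for RCMs on $\Rd$, where $\lambda_\mathrm{c}=\lambda_{2\to 2}$.
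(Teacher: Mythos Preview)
Your proposal is correct and follows essentially the same route as the paper's own argument (spherical transform plus Assumption~\ref{enum:AssumptionLongRange} for \eqref{eqn:normRatio}; $\connf_L\le 1$ for \eqref{eqn:betalimit}; a branching-process comparison for \eqref{eqn:criticalintensityBound}; and the first-step inequality $\tau_{\lambda,L}\le\connf_L+\lambda\connf_L\star\tau_{\lambda,L}$ together with the $L^2$-boundedness of $\Optau_{\lambda_\mathrm{c},L}$ for the triangle). Two minor bookkeeping differences are worth recording: for the triangle the paper applies the first-step bound only to the two \emph{outer} factors of $\tau_{\lambda,L}^{\star 3}$, yielding $\connf_L\star(\text{powers of }\Optau)\star\connf_L$ and hence four terms rather than your eight, which is a bit cleaner; and for \eqref{eqn:criticalintensityBound} the paper does not run a BFS with a Penrose-style second-moment correction but instead couples directly with a \emph{binary} branching process whose two children are placed in disjoint far-apart regions of $\HypDim$, using Assumption~\ref{enum:AssumptionLongRange} to guarantee each region carries a fixed fraction of the $\connf_L$-mass---this sidesteps the need to control the cumulative overlap with all previously explored vertices.
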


\begin{remark}
    The results in \cite{dickson2024NonUniqueness} are stated under a slightly different set of assumptions. In that reference the family of adjacency functions $\left\{\connf_L\right\}_{L>0}$ is assumed to be of a specific form. It is assumed that there exists a reference function $\connf\colon\R_{\geq 0}\to\left[0,1\right]$ and a family of bijective non-decreasing scaling functions $\sigma_L\colon \R_{\geq 0}\to\R_{\geq 0}$ such that $\lim_{L\to\infty}\sigma_L(r)=\infty$ for all $r>0$ and 
    \begin{equation}
        \connf_L\left(r\right) = \connf\left(\sigma^{-1}_L(r)\right).
    \end{equation}
    Two choices of scaling functions highlighted in \cite{dickson2024NonUniqueness} are the \emph{length-linear} scaling (where $\sigma_L(r)=Lr$) and the \emph{volume-linear} scaling (that satisfies $\mu\left(B_{\sigma_L(r)}(\orig)\right) = L\mu\left(B_r(\orig)\right)$). This volume-linear scaling becomes relevant for \emph{marked} RCMs on $\HypDim$. These are models where the vertices are independently given some mark, and the adjacency function also depends on the two marks of the vertices in question. The volume-linear scaling function structure is important for the argument when there are infinitely many possible marks -- it ensures that the most relevant marks stay the most relevant as $L\to\infty$.

    In our application of these results, the scaling function structure is not required because there are no marks (or equivalently, only one mark). The limit \eqref{eqn:normRatio} follows from writing $\norm*{\Opconnf_L}_{2\to 2}$ in terms of the spherical transform and using Assumption~\ref{enum:AssumptionLongRange}. Then \eqref{eqn:betalimit} follows by observing that
    \begin{equation}
        \connf_L^{\star 2}\left(\orig,\orig\right) = \int_\HypDim\connf_L\left(x,\orig\right)^2\mu\left(\dd x\right) \leq \int_\HypDim\connf_L\left(x,\orig\right)\mu\left(\dd x\right) = \norm*{\Opconnf_L}_{1\to 1}.
    \end{equation}
    
    The bounds \eqref{eqn:criticalintensityBound} and \eqref{eqn:critcalIntensityLTwoBound} follow by coupling a binary branching process with a subset of the hyperbolic RCM graph. This argument requires that a sufficient fraction of the total mass of the adjacency function lies beyond a given radius (which follows from Assumption~\ref{enum:AssumptionLongRange}). By using Mecke's formula, the probability that there exists a neighbour of $\orig$ in a given region can be bounded below by $1-\exp\left(-c\lambda\norm*{\Opconnf_L}_{1\to 1}\right)$ for some constant $c>0$, and therefore the binary branching process survives with high probability if $\lambda\norm*{\Opconnf_L}_{1\to 1}$ is sufficiently high (independently of $L$). The coupling then implies that the RCM has an infinite connected component if $\lambda\norm*{\Opconnf_L}_{1\to 1}$ is sufficiently high.

    Mecke's formula with a first-step decomposition implies that 
    \begin{equation}
        \tau_{\lambda,L} \leq \connf_L + \lambda \connf_L\star\tau_{\lambda,L}
    \end{equation}
    pointwise (see \cite[Observation~4.3]{HeyHofLasMat19}). Applying this twice to $\tau_{\lambda,L}^{\star 3}$ and converting this into operators implies that for any $f_1,f_2\in L^1\left(\HypDim\right)\cap L^2\left(\HypDim\right)$ such that $f_1,f_2\geq 0$,
    \begin{align}
        &\lambda^2\int_{\HypDim}\int_\HypDim f_1\left(x\right)\tau_{\lambda,L}^{\star 3}\left(x,y\right) f_2\left(y\right)\mu\left(y\right)\mu\left(x\right)\nonumber\\
        &\hspace{1cm}\leq \left(\lambda\norm*{\Optau_{\lambda,L}}_{2\to 2} + 2\lambda^2\norm*{\Optau_{\lambda,L}}_{2\to 2}^2 + \lambda^3\norm*{\Optau_{\lambda,L}}_{2\to 2}^3\right)\lambda\norm*{\Opconnf_Lf_1}_2 \norm*{\Opconnf_Lf_2}_2\nonumber\\
        &\hspace{1cm}\leq \left(\lambda\norm*{\Optau_{\lambda,L}}_{2\to 2} + 2\lambda^2\norm*{\Optau_{\lambda,L}}_{2\to 2}^2 + \lambda^3\norm*{\Optau_{\lambda,L}}_{2\to 2}^3\right)\lambda\int_\HypDim\connf_L\left(x,\orig\right)^2\mu\left(\dd x\right)\norm*{f_1}_1\norm*{f_2}_1.
    \end{align}
    This last inequality followed from a supremum bound. By suitably choosing $f_1$ and $f_2$, one can show that 
    \begin{equation}
        \triangle_{\lambda,L} \leq \left(\lambda\norm*{\Optau_{\lambda,L}}_{2\to 2} + 2\lambda^2\norm*{\Optau_{\lambda,L}}_{2\to 2}^2 + \lambda^3\norm*{\Optau_{\lambda,L}}_{2\to 2}^3\right)\lambda\int_\HypDim\connf_L\left(x,\orig\right)^2\mu\left(\dd x\right).
    \end{equation}
    By bounding $\lambda\tau_{\lambda,L}$ with the Green's function of the branching process with jump kernel $\lambda\connf_L$, one can show that
    \begin{equation}
        \lambda\norm*{\Optau_{\lambda,L}}_{2\to 2} \leq \sum^\infty_{n=1}\lambda^n\norm*{\Opconnf_L}^n_{2\to 2},
    \end{equation}
    and therefore $\triangle_{\lambda_{\mathrm{c}}(L),L}=\LandauBigO{\beta(L)^2}$ holds by applying \eqref{eqn:criticalintensityBound} and \eqref{eqn:critcalIntensityLTwoBound}.
\end{remark}

\section{Lace Expansion for the Critical Intensity}
\label{sec:LaceExpansionCritIntensity}

In this section we will use the lace expansion to derive a convolution equation \eqref{eqn:OZE} (an Ornstein-Zernike equation) for $\Optau_{\lambda,L}$, $\Opconnf_L$, and some other operator $\OpLace_{\lambda,L}$. It is likely that some minor variant of Proposition~\ref{prop:LaceExpansion} will hold for some $\lambda>\lambda_\mathrm{c}$, but the proof is made significantly simpler by the fact that we aren't pushing for a $\lambda$-optimal version -- we only need it for $\lambda\leq \lambda_\mathrm{c}$. The following lemma hints at why we want to formulate these operators as $L^1\to L^1$ operators.

\begin{lemma}\label{lem:LambdaCequalsLambdaT}
    Suppose Assumption~\ref{Model_Assumption} holds. Then for sufficiently large $L$,
    \begin{equation}
        \lambda_\mathrm{c}(L) = \inf\left\{\lambda\geq0\colon \norm*{\Optau_{\lambda,L}}_{1\to 1}=\infty\right\}.
    \end{equation}
\end{lemma}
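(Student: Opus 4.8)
The plan is to prove the two inequalities $\lambda_\mathrm{c}(L) \leq \inf\{\lambda : \norm{\Optau_{\lambda,L}}_{1\to 1}=\infty\}$ and $\lambda_\mathrm{c}(L) \geq \inf\{\lambda : \norm{\Optau_{\lambda,L}}_{1\to 1}=\infty\}$ separately, calling the right-hand side $\lambda_{1\to1}(L)$ as in the introduction. Since $\tau_{\lambda,L}$ is isometry invariant and non-negative, Lemma~\ref{lem:spectralradiusofPositive} gives $\norm{\Optau_{\lambda,L}}_{1\to1} = \int_\HypDim \tau_{\lambda,L}(x,\orig)\mu(\dd x) = \rho_{1\to1}(\Optau_{\lambda,L})$, and by Mecke's formula this equals $\lambda^{-1}(\E_{\lambda,L}[\#\C(\orig)] - 1)$, i.e.\ the expected cluster size (up to the additive and multiplicative constants). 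So $\lambda_{1\to1}(L)$ is exactly the threshold for divergence of the expected cluster size, and the claim becomes the standard fact that the expected-cluster-size critical point coincides with $\lambda_\mathrm{c}$.

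For the easy direction, $\lambda_{1\to1}(L) \leq \lambda_\mathrm{c}(L)$: if $\lambda < \lambda_\mathrm{c}(L)$, I want to show the expected cluster size is finite. This is where the lace expansion / triangle condition input from Theorem~\ref{thm:TriangleSmall} does the work — actually, more simply, for $\lambda$ strictly below $\lambda_\mathrm{c}$ one has exponential decay / finiteness of the susceptibility by a subcritical sharpness-type argument, or one can cite that $\E_{\lambda,L}[\#\C(\orig)] < \infty$ for $\lambda < \lambda_\mathrm{c}$ from the RCM literature (\cite{MeeRoy96}, or the mean-field bounds available here since the triangle diagram is finite). Hence $\norm{\Optau_{\lambda,L}}_{1\to1} < \infty$ for all $\lambda < \lambda_\mathrm{c}(L)$, giving $\lambda_{1\to1}(L) \geq \lambda_\mathrm{c}(L)$ — wait, that is the reverse; let me restate: finiteness below $\lambda_\mathrm{c}$ shows the divergence threshold $\lambda_{1\to1}$ is $\geq \lambda_\mathrm{c}$. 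So this handles $\lambda_{1\to1}(L) \geq \lambda_\mathrm{c}(L)$.

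For the other direction, $\lambda_{1\to1}(L) \leq \lambda_\mathrm{c}(L)$, equivalently: if $\lambda > \lambda_\mathrm{c}(L)$ then there is an infinite cluster a.s., and I must show $\norm{\Optau_{\lambda,L}}_{1\to1} = \infty$. Above $\lambda_\mathrm{c}$ there is a.s.\ a unique... no, uniqueness can fail here (that's the whole point of \cite{tykesson2007number}); but existence of an infinite cluster with positive probability suffices. By translation invariance, $\p_{\lambda,L}(\#\C(\orig) = \infty) = \theta(\lambda) > 0$ for $\lambda > \lambda_\mathrm{c}(L)$ (using that the percolation probability is positive strictly above the threshold — this is where one needs that $\lambda_\mathrm{c}$ is also the threshold for $\theta > 0$, which is standard for RCMs). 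Then $\E_{\lambda,L}[\#\C(\orig)] \geq \E_{\lambda,L}[\#\C(\orig)\mathds 1\{\#\C(\orig)=\infty\}] = \infty$, so $\norm{\Optau_{\lambda,L}}_{1\to1} = \infty$, giving $\lambda_{1\to1}(L) \leq \lambda$ for all $\lambda > \lambda_\mathrm{c}(L)$, hence $\lambda_{1\to1}(L) \leq \lambda_\mathrm{c}(L)$. Combining the two directions gives equality.

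The main obstacle is the easy-direction claim that the susceptibility $\chi(\lambda) := \E_{\lambda,L}[\#\C(\orig)]$ is finite for every $\lambda < \lambda_\mathrm{c}(L)$. For Euclidean RCMs this is the subcritical-sharpness statement; in the present hyperbolic setting I expect the author to either invoke the sharpness results now available for RCMs, or — more in the spirit of this paper — to use the triangle-diagram bound at $\lambda_\mathrm{c}$ from Theorem~\ref{thm:TriangleSmall} together with the lace expansion to show the Ornstein–Zernike equation holds with a bounded correction operator all the way up to $\lambda_\mathrm{c}$, which forces $\norm{\Optau_{\lambda,L}}_{1\to1}<\infty$ there and hence (by monotonicity in $\lambda$) below it. Either way, once finiteness below $\lambda_\mathrm{c}$ and positivity of $\theta$ above $\lambda_\mathrm{c}$ are in hand, the rest is immediate from Lemma~\ref{lem:spectralradiusofPositive} and Mecke's formula. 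I would also remark that monotonicity of $\lambda \mapsto \norm{\Optau_{\lambda,L}}_{1\to1}$ (from monotonicity of $\tau_{\lambda,L}$ in $\lambda$, via a coupling of the Poisson processes) ensures the infimum defining $\lambda_{1\to1}(L)$ is a genuine threshold with the set $\{\lambda : \norm{\Optau_{\lambda,L}}_{1\to1} = \infty\}$ being an up-set.
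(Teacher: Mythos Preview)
Your argument is correct in structure and arrives at the same conclusion as the paper, but the paper's proof is far shorter: it simply cites \cite[Theorem~2.3]{dickson2024NonUniqueness} for the equality $\lambda_\mathrm{c}(L) = \inf\{\lambda : \int_\HypDim \tau_{\lambda,L}(x,\orig)\mu(\dd x) = \infty\}$ and then applies Lemma~\ref{lem:spectralradiusofPositive} to convert the integral into $\norm*{\Optau_{\lambda,L}}_{1\to 1}$. You have essentially unpacked what that cited theorem says --- the supercritical direction is trivial from $\theta(\lambda)>0$, and the subcritical direction is the sharpness of the phase transition for the susceptibility --- and correctly flagged the latter as the substantive input.

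One genuine issue: your second proposed route for the subcritical direction, namely using the triangle bound from Theorem~\ref{thm:TriangleSmall} together with the lace expansion to bound $\norm*{\Optau_{\lambda,L}}_{1\to 1}$, would be circular. Lemma~\ref{lem:LambdaCequalsLambdaT} is invoked \emph{inside} the proof of Proposition~\ref{prop:LaceExpansion} (specifically to ensure $\norm*{\Optau_{\lambda,L}}_{1\to 1}<\infty$ for $\lambda<\lambda_\mathrm{c}$ so that the remainder $R_{\lambda,n,L}$ vanishes). So the lace expansion cannot be used to establish this lemma. The paper avoids this by importing the sharpness result from \cite{dickson2024NonUniqueness}, which is proven there by independent means; your first suggestion (invoke sharpness for RCMs) is the right one, and is exactly what the paper does by citation.
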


\begin{proof}
    In \cite[Theorem~2.3]{dickson2024NonUniqueness} it is proven under (an unimportant modification of) Assumption~\ref{Model_Assumption} that for sufficiently large $L$,
    \begin{equation}
        \lambda_\mathrm{c}(L) = \inf\left\{\lambda\geq0\colon \int_\HypDim\tau_{\lambda,L}(x,\orig)\mu\left(\dd x\right)=\infty\right\}.
    \end{equation}
    Lemma~\ref{lem:spectralradiusofPositive} then immediately concludes the proof.
\end{proof}

We now prepare the definitions needed for the lace expansion.
\begin{definition}
\label{defn:lacesetup}
Given $x \in \HypDim$ and locally finite $A \subset \HypDim$, define 
\begin{equation}
    \bar\connf(A,x) :=\prod_{y\in A}(1-\connf(x,y)).\label{eq:LE:def:thinning_probability}
\end{equation}
Then define $\thinning{\eta}{A}$ as an \emph{$A$-thinning of $\eta$} by keeping a point $w \in \eta$ as a point of $\thinning{\eta}{A}$ with probability $\bar\connf(A,w)$ independently of all other points of $\eta$. That is, we keep a vertex if it would not form an edge with any point in $A$. We similarly define $\thinning{\eta}{A}^x$ as an $A$-thinning of $\eta^x$.

Given a vertex set, say $\eta'$, we denote by $\xi\left[\eta'\right]$ the RCM graph restricted to the vertices in $\eta'$. Using thinning and this notation, we can define the event 
\begin{equation}
    \left\{\xconn{x}{y}{\xi}{A}\right\}=\left\{\conn{x}{y}{\xi}\right\}\cap\left\{\nconn{x}{y}{\xi[\thinning{\eta}{A} \cup\{x\}]} \right\}.
\end{equation} 
That is, $\{\xconn{x}{y}{\xi}{A}\}$ is the event that the vertex $x$ is connected to the vertex $y$ in $\xi$, but that this connection is broken by an $A$-thinning of $\eta\setminus\{x\}$.

We can also use thinnings to define \emph{pivotal} points. Given $x,y\in\HypDim$ and a configuration $\xi$, we say $u\in\piv{y,x,\xi}\subset\HypDim$ if $\left\{\conn{y}{x}{\xi^{x,y}}\right\}\cap\left\{\nconn{y}{x}{\xi\left[\eta\setminus\left\{u\right\}\right]}\right\}$ occurs. That is, every path on $\xi^{x,y}$ connecting $x$ and $y$ uses the vertex $u$.

Now define
\begin{equation}
    E(x,y;A,\xi) := \left\{\xconn{x}{y}{\xi}{A} \right\} \cap \left\{\nexists w \in \piv{x,y;\xi}\colon \xconn{x}{w}{\xi}{A} \right\}, \label{eq:LE:def:E_event}
\end{equation}
for a locally finite set $A \subset\HypDim$, and $x,y\in \HypDim$. That is, the vertex $x$ is connected to the vertex $y$ in $\xi$, but that this connection is broken \emph{after} the last pivotal point from $x$ to $y$ and not before.

We also say two vertices $x,y\in\HypDim$ are \emph{two-connected} in $\xi^{x,y}$ if $x$ and $y$ are adjacent or if there are two paths in $\xi^{x,y}$ that only share $x$ and $y$ as common vertices. We denote this event as $\left\{\dconn{x}{y}{\xi^{x,y}}\right\}$.
\end{definition}

\begin{definition}
\label{def:LE:lace_expansion_coefficients}
For $\lambda\geq 0$ and $L>0$ let $\left\{\xi_i\right\}_{i\in\N_0}$ be a sequence of independent $\mathbb{P}_{\lambda,L}$-distributed RCM configurations. Then for $n\in\N$, and $x,y\in\HypDim$, we define `lace expansion coefficient' functions
\begin{align}
    \pi_{\lambda,L}^{(0)}(x,y) &:= \plal \left(\dconn{y}{x}{\xi_0^{y,x}}\right) - \connf_L(x,y), \label{eq:LE:Pi0_def} \\
	\pi_{\lambda,L}^{(n)}(x,y) &:= \lambda^n \int_{\HypDim}\ldots\int_{\HypDim} \plal \left( \left\{\dconn{y}{u_0}{\xi^{y, u_0}_{0}}\right\}\right.\nonumber\\
					& \hspace{4cm}\left.\cap \bigcap_{i=1}^{n} E\left(u_{i-1},u_i; \C_{i-1}, \xi^{u_{i-1}, u_i}_{i}\right) \right) \mu\left(\dd u_0\right) \ldots \mu\left(\dd u_{n-1}\right), \label{eq:LE:Pin_def}
\end{align}
where $u_n=x$, $u_{-1}=y$, and $\C_{i} = \C(u_{i-1}, \xi^{u_{i-1}}_{i})$ is the cluster of $u_{i-1}$ in $\xi^{u_{i-1}}_i$. Further define the remainder functions
\begin{align}
    r_{\lambda, 0,L} (x,y) &:= - \lambda \int_{\HypDim} \plal \left( \left\{\dconn{y}{u_0}{\xi^{y, u_0}_0}\right\} \cap \left\{\xconn{u_0}{x}{\xi^{u_0,x}_1}{\C_0}\right\}  \right) \mu\left(\dd u_0\right), \label{eq:LE:R0_def}\\
		r_{\lambda, n,L}(x,y) &:= (-\lambda)^{n+1} \int_{\HypDim}\ldots\int_{\HypDim} \plal \left( \left\{\dconn{y}{u_0}{\xi^{y, u_0}_0}\right\} \cap \bigcap_{i=1}^{n} E\left(u_{i-1},u_i; \C_{i-1}, \xi^{u_{i-1}, u_i}_{i}\right)\right. \nonumber \\
				& \hspace{6cm}\left.\cap \left\{ \xconn{u_n}{x}{\xi^{u_n,x}_{n+1}}{\C_n} \right\} \right) \mu\left(\dd u_0\right) \ldots \mu\left(\dd u_n\right)
							 \label{eq:LE:Rn_def}.
\end{align}
Additionally, define $\pi_{\lambda, n, L}$ as the alternating partial sum
\begin{equation}
    \pi_{\lambda, n, L}(x,y) := \sum_{m=0}^{n} (-1)^m \pi_{\lambda,L}^{(m)}(x,y). \label{eq:LE:PiN_def}
\end{equation}

Given $\lambda\geq 0$, $n\in\N_0$, and $L>0$, define
\begin{multline}
    \mathcal{D}\left(\OpLace^{(n)}_{\lambda,L}\right) := \left\{f\colon \HypDim\to \mathbb{C} \;\vert\;  \int_\HypDim\abs*{\pi_{\lambda,L}^{(n)}(x,y)}\abs*{f(y)}\mu\left(\dd y\right) <\infty \right.\\
    \left.\text{ for } \mu\text{-almost every }x\in\HypDim \right\}.
\end{multline}
Similarly define $\mathcal{D}\left(R_{\lambda,n,L}\right)$ and $\mathcal{D}\left(\OpLace_{\lambda,n,L}\right)$ using $r_{\lambda,n,L}$ and $\pi_{\lambda, n,L}$ respectively. Then we can define $\OpLace^{(n)}_{\lambda,L}\colon \mathcal{D}\left(\OpLace^{(n)}_{\lambda,L}\right) \to \Complex^\HypDim$ as the convolution operator that acts on $f\in \mathcal{D}\left(\OpLace^{(n)}_{\lambda,L}\right)$ by
\begin{equation}
    \left(\OpLace^{(n)}_{\lambda,L}f\right)(x):= \int_\HypDim \pi^{(n)}_{\lambda,L}(x,y)f(y)\mu\left(\dd y\right)
\end{equation}
for almost every $x\in\HypDim$. Similarly we can define the operators $R_{\lambda,n,L}\colon \mathcal{D}\left(R_{\lambda,n,L}\right) \to \Complex^\HypDim$ and $\OpLace_{\lambda,n,L}\colon \mathcal{D}\left(\OpLace_{\lambda,n,L}\right) \to \Complex^\HypDim$ as the convolution operators (and their domains) using $r_{\lambda,n,L}$ and $\pi_{\lambda, n,L}$ respectively. Observe that for all $n\in\N_0$
\begin{equation}
    \mathcal{D}\left(\OpLace_{\lambda,n,L}\right) =\bigcap_{0\leq m \leq n}\mathcal{D}\left(\OpLace^{(m)}_{\lambda,L}\right).
\end{equation}
Therefore we have
\begin{equation}
\label{eqn:LaceExpTruncated}
    \OpLace_{\lambda, n,L} = \sum_{m=0}^{n} (-1)^m \OpLace^{(m)}_{\lambda,L}.
\end{equation}
\end{definition}

\begin{prop}\label{prop:LaceExpansion}
    Suppose $\left\{\connf_L\right\}_{L>0}$ satisfies Assumption~\ref{Model_Assumption}. Then there exists $L_0$ such that for $L\geq L_0$ the following all hold:
    \begin{enumerate}
        \item for all $\lambda\leq \lambda_\mathrm{c}(L)$
        \begin{equation}
            \OpLace_{\lambda, L}:= \lim_{n\to\infty} \OpLace_{\lambda, n,L}
        \end{equation}
        exists in the $\norm*{\cdot}_{1\to 1}$ topology,
        \item for all $\lambda\leq \lambda_\mathrm{c}(L)$
    \begin{equation}
        \lambda\norm*{\OpLace_{\lambda,L}}_{1\to 1} =\LandauBigO{\beta(L)},
    \end{equation}
    \item for all $\lambda < \lambda_\mathrm{c}(L)$ the two bounded operators $\Opconnf_L,\Optau_{\lambda,L}\colon L^1\left(\HypDim\right)\to  L^1\left(\HypDim\right)$ satisfy the Ornstein-Zernike equation
    \begin{equation}\label{eqn:OZE}
        \Optau_{\lambda,L} = \Opconnf_L + \OpLace_{\lambda,L} + \lambda\Optau_{\lambda,L}\left(\Opconnf_L + \OpLace_{\lambda,L}\right).
    \end{equation}
    \end{enumerate}
\end{prop}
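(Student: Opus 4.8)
The plan is to establish the three claims essentially by transporting the Euclidean lace-expansion machinery of \cite{HeyHofLasMat19} (and its operator reformulation in \cite{dickson2025expansion}) to the hyperbolic setting, exploiting Theorem~\ref{thm:TriangleSmall} to avoid any bootstrap. First I would record the pointwise (in fact kernel-level) lace expansion identity: for every $N\in\N$ and $\lambda\le\lambda_\mathrm{c}(L)$,
\[
    \tau_{\lambda,L} = \connf_L + \pi_{\lambda,N,L} + \lambda\bigl(\connf_L+\pi_{\lambda,N,L}\bigr)\star\tau_{\lambda,L} + r_{\lambda,N,L},
\]
which is exactly the hyperbolic analogue of the identity derived in \cite[\S3]{HeyHofLasMat19} and \cite{DicHey2022triangle}; the derivation is purely combinatorial (repeated first-pivotal decompositions and inclusion–exclusion over thinnings) and transfers verbatim since it never uses the geometry of the underlying space, only the Poisson/Mecke structure. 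Converting this into operators on $L^1(\HypDim)$ via the convolution-operator dictionary of Section~\ref{sec:Preliminaries} gives, for each finite $N$,
\[
    \Optau_{\lambda,L} = \Opconnf_L + \OpLace_{\lambda,N,L} + \lambda\Optau_{\lambda,L}\bigl(\Opconnf_L+\OpLace_{\lambda,N,L}\bigr) + R_{\lambda,N,L}.
\]

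The analytic heart is the diagrammatic bound: one must show that the kernels $\pi^{(n)}_{\lambda,L}$ obey
\[
    \lambda^{n}\,\bigl\|\OpLace^{(n)}_{\lambda,L}\bigr\|_{1\to1}
    = \esssup_{y}\lambda^{n}\!\int_\HypDim \bigl|\pi^{(n)}_{\lambda,L}(x,y)\bigr|\,\mu(\dd x)
    \le C\,\beta(L)\,\bigl(C'\trilam_{\lambda,L}\bigr)^{n},
\]
and likewise $\|R_{\lambda,N,L}\|_{1\to1}\to 0$ as $N\to\infty$, uniformly in $\lambda\le\lambda_\mathrm{c}(L)$. This is where Theorem~\ref{thm:TriangleSmall} does the work: at $\lambda=\lambda_\mathrm{c}(L)$ one has $\trilam_{\lambda_\mathrm{c}(L),L}=\LandauBigO{\beta(L)^2}$ with $\beta(L)\to0$, so for $L\ge L_0$ the triangle constant is $<1$ and the geometric series in $n$ converges; since $\lambda\mapsto\|\OpLace^{(n)}_{\lambda,L}\|_{1\to1}$ and $\lambda\mapsto\trilam_{\lambda,L}$ are monotone in $\lambda$, the bound at $\lambda_\mathrm{c}(L)$ controls all $\lambda\le\lambda_\mathrm{c}(L)$. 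The derivation of the per-diagram bound itself follows the now-standard route: decompose each $\pi^{(n)}$-event into a concatenation of ``two-connected'' saussage pieces, apply the BK-type (Harris/FKG-reversed) inequality for the RCM together with Mecke's formula to turn each piece into a $\tau_{\lambda,L}$-triangle, and bound the $\esssup$ over $x$ via $\lambda^2\esssup_x\tau_{\lambda,L}^{\star3}(x,\orig)=\trilam_{\lambda,L}$; the extra $\beta(L)$ prefactor comes from the innermost ``two-connected'' factor at $u_0$, exactly as the $\connf_L^{\star2}(\orig,\orig)$ estimate in the remark after Theorem~\ref{thm:TriangleSmall}. Summing the geometric series gives item~(2) for the limiting operator, and Cauchy-completeness of $(\cdot,\|\cdot\|_{1\to1})$ on bounded operators gives item~(1), with $\OpLace_{\lambda,L}=\sum_{m\ge0}(-1)^m\OpLace^{(m)}_{\lambda,L}$.

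For item~(3), fix $\lambda<\lambda_\mathrm{c}(L)$. Here $\|\Optau_{\lambda,L}\|_{1\to1}<\infty$ by Lemma~\ref{lem:LambdaCequalsLambdaT}, so $\Optau_{\lambda,L}$ is a genuine bounded operator on $L^1$, and $\Opconnf_L$ is bounded by Assumption~\ref{enum:AssumptionFiniteDegree}. Taking $N\to\infty$ in the finite-$N$ operator identity above, the term $\OpLace_{\lambda,N,L}\to\OpLace_{\lambda,L}$ in $\|\cdot\|_{1\to1}$ by item~(1), the composition $\Optau_{\lambda,L}\OpLace_{\lambda,N,L}\to\Optau_{\lambda,L}\OpLace_{\lambda,L}$ since composition is jointly continuous in operator norm and $\Optau_{\lambda,L}$ is bounded, and $\|R_{\lambda,N,L}\|_{1\to1}\to0$ from the remainder estimate; hence \eqref{eqn:OZE} holds. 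The main obstacle is the uniform diagrammatic bound on $\pi^{(n)}_{\lambda,L}$ and $r_{\lambda,N,L}$ in the $1\to1$ norm—ensuring that the combinatorial decomposition genuinely produces a factor $\trilam_{\lambda,L}$ per extra vertex and that all geometric constants are $L$-independent, so that $\trilam_{\lambda_\mathrm{c}(L),L}<1$ for $L\ge L_0$ suffices; everything else is a routine transcription of \cite{HeyHofLasMat19,DicHey2022triangle,dickson2025expansion} with the Lebesgue measure on $\Rd$ replaced by $\mu$ and $\|\cdot\|_{1\to1}$ computed via Lemma~\ref{lem:ConvolutionOperator}.
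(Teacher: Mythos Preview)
Your proposal is correct and is essentially the paper's own proof: record the finite-$N$ identity from \cite{HeyHofLasMat19,DicHey2022triangle} (Lemma~\ref{lem:LaceExpansion}), apply the diagrammatic bounds of Lemma~\ref{lem:LaceBounds} together with Theorem~\ref{thm:TriangleSmall} and the monotonicity of $\lambda\mapsto\triangle_{\lambda,L}$ to obtain geometric decay of $\lambda\|\OpLace^{(n)}_{\lambda,L}\|_{1\to1}$ uniformly on $[0,\lambda_\mathrm{c}(L)]$, deduce that $\{\OpLace_{\lambda,n,L}\}_n$ is Cauchy in $\|\cdot\|_{1\to1}$, and then pass to the limit in the finite-$N$ identity using $\|\Optau_{\lambda,L}\|_{1\to1}<\infty$ for $\lambda<\lambda_\mathrm{c}(L)$. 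Two cosmetic points: the bound should read $\lambda\|\OpLace^{(n)}_{\lambda,L}\|_{1\to1}$ rather than $\lambda^n\|\cdot\|$ (the factor $\lambda^n$ is already built into $\pi^{(n)}_{\lambda,L}$), and the paper's actual exponent is $(C'\triangle_{\lambda_\mathrm{c}(L),L})^{n/2}$ via $V_{\lambda,L}=(\triangle_{\lambda,L}\triangle^{\circ\circ}_{\lambda,L}U_{\lambda,L})^{1/2}$ rather than your $\beta(L)(C'\triangle_{\lambda,L})^n$, though either form gives a summable geometric series once $L\ge L_0$.
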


Before we prove this proposition, we need two lemmas to prepare the ground. Lemma~\ref{lem:LaceExpansion} establishes a convolution equation that relates the two-point function, the adjacency function, the lace expansion coefficient functions, and the remainder functions.

\begin{lemma}
\label{lem:LaceExpansion}
    Let $\lambda<\lambda_{\mathrm{c}}(L)$. Then for $n\in\N$ and $x,y\in\HypDim$
    \begin{multline}
         \tau_{\lambda,L}\left(x,y\right) = \connf_L\left(x,y\right) + \pi_{\lambda, n,L}\left(x,y\right) \\+ \lambda \int_{\HypDim}\tau_{\lambda,L}\left(x,u\right)\left(\connf_L + \pi_{\lambda, n,L}\right)\left(u,y\right)\dd u + r_{\lambda, n,L}\left(x,y\right),
    \end{multline}
    and
    \begin{equation}
        \Optau_{\lambda,L} = \Opconnf_L + \OpLace_{\lambda, n,L} +  \lambda\Optau_{\lambda,L}\left(\Opconnf_L + \OpLace_{\lambda, n,L}\right) + R_{\lambda, n,L}.
    \end{equation}
\end{lemma}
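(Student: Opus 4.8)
The plan is to derive Lemma~\ref{lem:LaceExpansion} by following the standard lace-expansion ``first expand, then iterate'' scheme of \cite{HeyHofLasMat19,DicHey2022triangle}, but translated to the hyperbolic setting (where the only real changes are cosmetic, since Mecke's formula and the thinning constructions are spatial-space-agnostic). First I would establish the \emph{one-step expansion}: conditioning on the cluster of $y$ in $\xi^{y}$ and performing a first-step decomposition via Mecke's formula, one writes
\begin{equation*}
    \tau_{\lambda,L}(x,y) = \plal\bigl(\dconn{y}{x}{\xi^{y,x}}\bigr) + \lambda\int_\HypDim \plal\bigl(E(y,u;\varnothing,\xi^{y,u})\bigr)\,\connf_L\text{-type remainder},
\end{equation*}
the point being that a connection from $y$ to $x$ either is ``doubly connected'' (no pivotal vertex) or has a well-defined \emph{last} pivotal vertex $u$ after which the $\C(y)$-thinning of the remaining connection is irrelevant; the decomposition around that last pivotal vertex produces the $E$-event and a factor $\tau_{\lambda,L}(u,x)$ with corrections. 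Writing $\plal(\dconn{y}{x}{\xi^{y,x}}) = \connf_L(x,y) + \pi^{(0)}_{\lambda,L}(x,y)$ by definition isolates the $n=0$ piece.

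Next I would iterate: the ``corrections'' at each stage are themselves of the form $\plal(\cdots \cap \{\xconn{u_{n}}{x}{\xi}{\C_n}\})$, and re-expanding the innermost connection $u_n \leftrightsquigarrow x$ in the same way (again via the last-pivotal-vertex decomposition, using the definitions \eqref{eq:LE:Pin_def} and \eqref{eq:LE:Rn_def} of $\pi^{(n)}_{\lambda,L}$ and $r_{\lambda,n,L}$) produces exactly $\pi^{(n+1)}_{\lambda,L}$ plus a new remainder $r_{\lambda,n+1,L}$, while the $\tau_{\lambda,L}(u_n,x)$ factor gives the $\lambda\Optau_{\lambda,L}(\connf_L + \cdots)$ term. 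Carrying this out $n$ times and collecting the alternating signs yields the pointwise identity with $\pi_{\lambda,n,L} = \sum_{m=0}^n (-1)^m \pi^{(m)}_{\lambda,L}$ and the $(-\lambda)^{n+1}$-weighted remainder $r_{\lambda,n,L}$. This is an induction on $n$; the base case is the one-step expansion and the inductive step is the re-expansion of the last remainder term. The operator identity then follows from the pointwise identity by integrating against a test function and invoking the definitions of the convolution operators $\Opconnf_L$, $\OpLace_{\lambda,n,L}$, $R_{\lambda,n,L}$ from Definition~\ref{def:LE:lace_expansion_coefficients}, provided all the integrals converge absolutely --- which is where the restriction $\lambda < \lambda_\mathrm{c}(L)$ enters (for such $\lambda$, $\E_{\lambda,L}[\#\C(\orig)] < \infty$, so $\Optau_{\lambda,L}$ is $L^1 \to L^1$ bounded by Lemma~\ref{lem:spectralradiusofPositive}, and the $\pi^{(n)}$, $r_{\lambda,n}$ are dominated by finitely many convolution powers of $\tau_{\lambda,L}$).

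The main obstacle I anticipate is not the algebraic bookkeeping of the expansion --- that is genuinely a transcription of \cite{HeyHofLasMat19} --- but rather justifying the measure-theoretic manipulations: verifying that the events $E(x,y;A,\xi)$, $\{\xsqconn{x}{y}{A}{\xi}\}$ and the pivotal sets are measurable in the augmented configuration, that Mecke's formula applies with the conditioning on cluster events baked in (one uses the independence of $\xi_i$'s and the fact that the thinning $\thinning{\eta}{\C_{i-1}}$ depends only on $\xi_{i-1}$ restricted to $\C_{i-1}$), and that Fubini is licensed when passing from the pointwise identity to the operator identity. In the Euclidean case these points are handled in the references and transfer verbatim, since nothing in the argument uses the group structure of $\Rd$ beyond having a $\sigma$-finite measure space with a point process --- so I would state the lemma's proof as ``this follows by repeating the argument of \cite[Section~?]{HeyHofLasMat19} (see also \cite{DicHey2022triangle}), with $\Rd$ replaced by $\HypDim$ and Lebesgue measure replaced by $\mu$; convergence of all integrals for $\lambda < \lambda_\mathrm{c}(L)$ follows from $\lambda\tau_{\lambda,L} \leq$ (branching process Green's function) as in the remark following Theorem~\ref{thm:TriangleSmall}.'' and only spell out the one-step expansion in detail if the referee demands it.
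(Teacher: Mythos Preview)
Your proposal is correct and takes essentially the same approach as the paper: both defer to \cite[Proposition~3.8]{HeyHofLasMat19} and \cite[Proposition~4.6]{DicHey2022triangle}, noting that the derivation is insensitive to replacing $\Rd$ by $\HypDim$, and that $\lambda<\lambda_\mathrm{c}(L)$ guarantees finiteness of the $\pi^{(n)}_{\lambda,L}$ (hence well-definedness of the partial sums and operators). Your outline of the one-step expansion and iteration is more detailed than the paper's one-line deferral, but the content is the same.
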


\begin{proof}
    This is very nearly exactly \cite[Proposition~3.8]{HeyHofLasMat19} with \cite[Proposition~4.6]{DicHey2022triangle} for the extension to operators. These were working on $\Rd$, but replacing this with $\HypDim$ causes no issue for the parts we require. The condition $\lambda<\lambda_\mathrm{c}$ here ensures that $\pi^{(n)}_{\lambda,L}(x,y)$ are all finite, so $\pi_{\lambda,n,L}(x,y)$ and $\OpLace_{\lambda,n,L}$ are all well-defined.
\end{proof}

This convolution equation would look more manageable if the remainder function wasn't there. One may hope that the remainder function diminishes as $n\to\infty$, and that we can recover an equation that relates $\Optau_{\lambda,L}$, $\Opconnf_L$, and some large $n$ limit of $\OpLace_{\lambda,n,L}$. To confirm this is indeed the case, in Lemma~\ref{lem:LaceBounds} we bound some of these terms with objects defined in Definition~\ref{defn:TrianglesEtc}, and the proof of Proposition~\ref{prop:LaceExpansion} will largely consist of showing that these objects are sufficiently small that the required $n\to\infty$ limits hold.

\begin{definition}\label{defn:TrianglesEtc}
    For $\lambda\geq0$, recall
    \begin{equation}
        \triangle_{\lambda,L} = \lambda^2\esssup_{x\in\HypDim}\tau^{\star 3}_{\lambda,L}\left(x,\orig\right),
    \end{equation}
    and -- using the notation of \cite{DicHey2022triangle} -- define
    \begin{align}
        \triangle^{\circ\circ}_{\lambda,L} &:= \triangle_{\lambda,L} + \lambda\esssup_{x\in\HypDim}\tau^{\star 2}_{\lambda,L}\left(x,\orig\right) + 1,\\
        U_{\lambda,L}&:= \triangle_{\lambda,L}\triangle^{\circ\circ}_{\lambda,L} + \left(\triangle_{\lambda,L}^{\circ\circ}\right)^2 + \lambda\norm*{\Opconnf_L}_{1\to 1},\\
        V_{\lambda,L}&:= \left(\triangle_{\lambda,L}\triangle_{\lambda,L}^{\circ\circ}U_{\lambda,L}\right)^{\frac{1}{2}}.
    \end{align}
\end{definition}

\begin{lemma}
\label{lem:LaceBounds}
For all $\lambda\geq0$ and $n\in\N$,
    \begin{align}
    \label{eqn:RemainderBound}
        \lambda\norm*{R_{\lambda,n,L}}_{1\to 1} &\leq \lambda^2 \norm*{\Optau_{\lambda,L}}_{1\to 1}\norm*{\OpLace^{(n)}_{\lambda,L}}_{1\to 1},\\
        \label{eqn:ZerothLaceBound}
    \lambda\norm*{\OpLace^{(0)}_{\lambda,L}}_{1\to 1} &\leq \frac{1}{2}\lambda \norm*{\Opconnf_L}_{1\to 1}\triangle_{\lambda,L},\\
    \label{eqn:NthLaceBound}
    \lambda\norm*{\OpLace^{(n)}_{\lambda,L}}_{1\to 1} &\leq 6\times 4^{n-1}U_{\lambda,L}V_{\lambda,L}^n.
    \end{align}
\end{lemma}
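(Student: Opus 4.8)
The plan is to bound each of the three lace-expansion quantities by exploiting the probabilistic structure in Definitions~\ref{defn:lacesetup} and \ref{def:LE:lace_expansion_coefficients}, combined with the $L^1\to L^1$ operator-norm formula from Lemma~\ref{lem:ConvolutionOperator}: for a convolution operator $G$ we have $\norm*{G}_{1\to 1}=\esssup_y\int_\HypDim\abs*{g(x,y)}\mu(\dd x)$. So in every case we must produce a pointwise bound on (a suitable integral of) the defining function that is uniform in the ``source'' variable, and the objects in Definition~\ref{defn:TrianglesEtc} are precisely the diagrammatic quantities that arise. The whole argument runs almost identically to \cite[Section~4]{DicHey2022triangle} (and the related estimates in \cite{HeyHofLasMat19}), since the hyperbolic structure enters only through $\mu$ being an isometry-invariant measure and $\connf_L,\tau_{\lambda,L}$ being isometry-invariant kernels; none of the combinatorial/diagrammatic steps care about the underlying space.

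For \eqref{eqn:RemainderBound}: the remainder function $r_{\lambda,n,L}$ in \eqref{eq:LE:Rn_def} differs from $\pi^{(n)}_{\lambda,L}$ in \eqref{eq:LE:Pin_def} only by the extra final factor $\{\xconn{u_n}{x}{\xi^{u_n,x}_{n+1}}{\C_n}\}$ and an extra integration $\mu(\dd u_n)$ and factor $(-\lambda)$. First I would bound the indicator of that last event by the indicator of $\{\conn{u_n}{x}{\xi^{u_n,x}_{n+1}}\}$, take expectations, use independence of $\xi_{n+1}$ from the preceding configurations, and recognise $\plal(\conn{u_n}{x}{\xi^{u_n,x}_{n+1}})=\tau_{\lambda,L}(u_n,x)$. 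This factorises the $u_n$-integral as a convolution, giving pointwise $\abs*{r_{\lambda,n,L}(x,y)}\le\lambda\int_\HypDim\tau_{\lambda,L}(x,u_n)\abs*{\pi^{(n)}_{\lambda,L}(u_n,y)}\mu(\dd u_n)$, i.e.\ at the operator level $\norm*{R_{\lambda,n,L}}_{1\to1}\le\lambda\norm*{\Optau_{\lambda,L}}_{1\to1}\norm*{\OpLace^{(n)}_{\lambda,L}}_{1\to1}$; multiplying by $\lambda$ gives the claim. (One should note the sign/absolute-value bookkeeping is the only subtlety, and it is handled by passing to absolute values before integrating.)

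For \eqref{eqn:ZerothLaceBound}: by \eqref{eq:LE:Pi0_def}, $\pi^{(0)}_{\lambda,L}(x,y)=\plal(\dconn{y}{x}{\xi_0^{y,x}})-\connf_L(x,y)\ge 0$ since being two-connected but not adjacent forces the existence of two vertex-disjoint paths, each of length $\ge 2$; a BK-type / tree-graph bound then says this probability is at most $\tfrac12\lambda^2(\connf_L\star\tau_{\lambda,L}\star\connf_L)(x,y)$ or, more crudely, bounded by the standard triangle-diagram integrand. Integrating over $x$ and using $\connf_L\le$ its own norm after one convolution gives $\lambda\norm*{\OpLace^{(0)}_{\lambda,L}}_{1\to1}\le\tfrac12\lambda\norm*{\Opconnf_L}_{1\to1}\triangle_{\lambda,L}$; this is exactly \cite[Lemma~4.?]{DicHey2022triangle} transported to $\HypDim$.

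For \eqref{eqn:NthLaceBound}: this is the genuinely laborious step and the main obstacle. One must expand each event $E(u_{i-1},u_i;\C_{i-1},\xi_i^{u_{i-1},u_i})$ using the standard cluster-expansion / Factorisation Lemma machinery (the analogue of \cite[Lemma~2.?]{DicHey2022triangle} or the ``$K$-diagram'' estimates of \cite{HeyHofLasMat19}): conditioning on $\C_{i-1}$, the event $E$ decomposes into a sum of products of two-point functions along ``double connections'' and ``displacements'', producing a chain of $n$ diagrammatic blocks glued together. Each block, after summing over its internal vertex and over the possible intersection points with the previous cluster, is bounded by a factor controlled by $V_{\lambda,L}=(\triangle_{\lambda,L}\triangle^{\circ\circ}_{\lambda,L}U_{\lambda,L})^{1/2}$, the endpoint blocks contribute the extra $U_{\lambda,L}$, and the combinatorial count of ways to form the lace gives the $6\times4^{n-1}$ prefactor (the $4^{n-1}$ from binary choices at each of $n-1$ gluings, the $6$ from the two end blocks). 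Throughout, all ``$\esssup$'' bounds and convolution identities go through verbatim because $\mu$ is isometry-invariant, so the kernels depend only on hyperbolic distance and the essential suprema are genuine constants. I would therefore phrase the proof of \eqref{eqn:NthLaceBound} as: ``following \cite[proof of Lemma~4.?]{DicHey2022triangle} line by line, replacing $\Rd$ by $\HypDim$ and Lebesgue measure by $\mu$, and using Lemma~\ref{lem:ConvolutionOperator} to pass from pointwise kernel bounds to $\norm*{\cdot}_{1\to1}$ bounds'', and only spell out the (one or two) places where the isometry-invariance is the property actually being used, rather than the algebraic flatness of $\Rd$.
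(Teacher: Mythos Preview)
Your proposal is correct and takes essentially the same approach as the paper: the paper's own proof simply records that these three inequalities are proven in \cite{DicHey2022triangle} for the $\Rd$ case (Lemmas~5.1, 5.2, and 5.8 respectively) and that the $\HypDim$ version goes through identically. Your sketch is a more expanded version of the same argument, correctly identifying that only isometry invariance of $\mu$ and of the kernels is used, and that the combinatorial and diagrammatic steps transfer verbatim.
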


\begin{proof}
    These inequalities are proven in \cite{DicHey2022triangle} for the $\Rd$ versions, and the $\HypDim$ version is exactly the same. The inequality \eqref{eqn:RemainderBound} follows from Lemma~5.1, \eqref{eqn:ZerothLaceBound} from Lemma~5.2, and \eqref{eqn:NthLaceBound} from Lemma~5.8 therein.
\end{proof}

\begin{remark}
    In Lemma~\ref{lem:LaceExpansion} we required $\lambda<\lambda_\mathrm{c}$ to ensure $\pi^{(n)}_{\lambda,L}\left(x,y\right)$ were finite and the alternating sums defining $\pi_{\lambda,n,L}\left(x,y\right)$ and $\OpLace_{\lambda,n,L}$ are well-defined. However, the bounds \eqref{eqn:ZerothLaceBound} and \eqref{eqn:NthLaceBound} provide us the means to define $\OpLace_{\lambda_c,n,L}$ as the alternating sum of bounded $L^1\to L^1$ operators in \eqref{eqn:LaceExpTruncated} (if $\triangle_{\lambda_\mathrm{c}(L),L}<\infty$).
\end{remark}

\begin{proof}[Proof of Proposition~\ref{prop:LaceExpansion}]
    First observe that for all $\lambda\geq 0$, $L>0$, and $x\in\HypDim$
    \begin{equation}\label{eqn:FirstStep}
        \tau_{\lambda,L}\left(x,\orig\right) \leq \connf_{L}\left(x,\orig\right) + \lambda\connf_L\star \tau_{\lambda,L}\left(x,\orig\right).
    \end{equation}
    This is proven for RCMs on $\Rd$ in \cite[Observation~4.3]{HeyHofLasMat19} by using Mecke's formula and a first-step analysis. The argument for RCMs on $\HypDim$ is exactly the same. Using this inequality twice, 
    \begin{align}
        \lambda\esssup_{x\in\HypDim}\tau_{\lambda,L}^{\star 2}\left(x,\orig\right) &\leq \lambda\esssup_{x\in\HypDim}\connf_L^{\star 2}\left(x,\orig\right) + 2\lambda^2\esssup_{x\in\HypDim}\connf_L^{\star 2}\star \tau_{\lambda,L}\left(x,\orig\right) \nonumber\\
        &\hspace{7cm}+ \lambda^3\esssup_{x\in\HypDim}\connf_L^{\star 2}\star \tau_{\lambda,L}^{\star 2}\left(x,\orig\right)\nonumber\\
        &\leq \lambda\norm*{\Opconnf_L}_{1\to 1} + 2\triangle_{\lambda,L} + \lambda\norm*{\Opconnf_L}_{1\to 1} \triangle_{\lambda,L},\label{eqn:bubblebound}
    \end{align}
    where in the second inequality we have used supremum bounds and $1\geq \tau_{\lambda,L}\geq \connf_L$. From Theorem~\ref{thm:TriangleSmall} we know $\triangle_{\lambda_\mathrm{c}(L),L}=\LandauBigO{\beta(L)^2}$ and $\lambda_{\mathrm{c}}(L)\norm*{\Opconnf}_{1\to 1}=\LandauBigO{1}$ as $L\to\infty$. Therefore $\triangle^{\circ\circ}_{\lambda_\mathrm{c}(L),L} = \LandauBigO{1}$, $U_{\lambda_\mathrm{c}(L),L}= \LandauBigO{1}$, and $V_{\lambda_\mathrm{c}(L),L}= \LandauBigO{\beta(L)}$ as $L\to\infty$.

    These bounds with Lemma~\ref{lem:LaceBounds} and the monotonicity of $\lambda\mapsto\triangle_{\lambda,L}$ imply that there exists a constant $C<\infty$ such that for all $n\geq 1$ and $\lambda\leq \lambda_\mathrm{c}(L)$
    \begin{align}
        \lambda\norm*{\OpLace^{(0)}_{\lambda,L}}_{1\to 1} &\leq C \triangle_{\lambda_{\mathrm{c}}(L),L},\\
        \lambda\norm*{\OpLace^{(n)}_{\lambda,L}}_{1\to 1} &\leq C \left(16\triangle_{\lambda_{\mathrm{c}}(L),L}\right)^\frac{n}{2},\label{eqn:triangleboundsLace}\\
        \lambda\norm*{R_{\lambda,n,L}}_{1\to 1} &\leq C \left(16\triangle_{\lambda_{\mathrm{c}}(L),L}\right)^\frac{n}{2}\lambda\norm*{\Optau_{\lambda,L}}_{1\to 1}.
    \end{align}

    Therefore for $L$ large enough to get $16\triangle_{\lambda_{\mathrm{c}}(L),L}<1$, the sequence $\left\{\lambda\OpLace_{\lambda,n,L}\right\}_{n\in\N}$ is a Cauchy sequence for $\lambda \in\left[0,\lambda_\mathrm{c}(L)\right]$. Hence 
    \begin{equation}
        \OpLace_{\lambda,L} = \lim_{n\to\infty}\OpLace_{\lambda,n,L}
    \end{equation}
    exists and is a bounded linear $L^1\left(\HypDim\right)\to L^1\left(\HypDim\right)$ operator for $\lambda \in\left[0,\lambda_\mathrm{c}(L)\right]$ and $L$ sufficiently large. In particular, there exists a $C'<\infty$ such that $\lambda\norm*{\OpLace_{\lambda,L}}_{1\to 1}\leq C'\beta(L)$ for $\lambda \in\left(0,\lambda_\mathrm{c}(L)\right]$ and $L$ sufficiently large.
    
    For $L$ sufficiently large and $\lambda<\lambda_\mathrm{c}(L)$, we have $\norm*{\Optau_{\lambda,L}}_{1\to 1}<\infty$ (see Lemma~\ref{lem:LambdaCequalsLambdaT}). Therefore for $L$ sufficiently large and $\lambda \in\left[0,\lambda_\mathrm{c}(L)\right)$ we have
    \begin{equation}
        \lim_{n\to\infty}\norm*{R_{\lambda,n,L}}_{1\to 1} = 0.
    \end{equation}
    Therefore Lemma~\ref{lem:LaceExpansion} implies that for $\lambda \in\left[0,\lambda_\mathrm{c}(L)\right)$ and $L$ sufficiently large we have
    \begin{equation}
        \Optau_{\lambda,L} = \left(\Opconnf_L + \OpLace_{\lambda,L}\right) +\lambda\Optau_{\lambda,L}\left(\Opconnf_L + \OpLace_{\lambda,L}\right)
    \end{equation}
    as required.
\end{proof}

We will require one more property of the limiting $\OpLace_{\lambda,L}$ operator.

\begin{lemma}
\label{lem:LaceContinuous}
    Suppose $\left\{\connf_L\right\}_{L>0}$ satisfies Assumption~\ref{Model_Assumption}. For $L$ sufficiently large, the function $\lambda\mapsto \OpLace_{\lambda,L}$ is continuous with respect to $\norm*{\cdot}_{1\to 1}$ on $\left[0,\lambda_\mathrm{c}(L)\right]$.
\end{lemma}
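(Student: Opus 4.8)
The plan is to deduce the $\norm*{\cdot}_{1\to 1}$-continuity of $\OpLace_{\lambda,L}$ from that of the individual coefficient operators $\OpLace^{(m)}_{\lambda,L}$, using that the alternating series $\OpLace_{\lambda,L}=\sum_{m\ge 0}(-1)^m\OpLace^{(m)}_{\lambda,L}$ converges in $\norm*{\cdot}_{1\to 1}$ \emph{uniformly} in $\lambda\in[0,\lambda_\mathrm{c}(L)]$. For the uniform convergence I would rerun the estimates in the proof of Proposition~\ref{prop:LaceExpansion}, but keeping explicit track of powers of $\lambda$: since $\tau_{\lambda,L}$ is non-decreasing in $\lambda$ (by the usual coupling of the vertex Poisson processes across intensities), the diagrams $\triangle_{\lambda,L}$, $\triangle^{\circ\circ}_{\lambda,L}$, $U_{\lambda,L}$, $V_{\lambda,L}$ are all non-decreasing in $\lambda$, and moreover $\triangle_{\lambda,L}=\lambda^2\esssup_{x}\tau_{\lambda,L}^{\star 3}(x,\orig)\le(\lambda/\lambda_\mathrm{c}(L))^2\triangle_{\lambda_\mathrm{c}(L),L}$ forces $V_{\lambda,L}\le(\lambda/\lambda_\mathrm{c}(L))V_{\lambda_\mathrm{c}(L),L}$. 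Feeding this into \eqref{eqn:ZerothLaceBound}--\eqref{eqn:NthLaceBound} yields, for $L$ large enough that $4V_{\lambda_\mathrm{c}(L),L}<1$ (possible since $V_{\lambda_\mathrm{c}(L),L}=\LandauBigO{\beta(L)}\to 0$ by Theorem~\ref{thm:TriangleSmall}), a bound $\norm*{\OpLace^{(m)}_{\lambda,L}}_{1\to 1}\le M_L s_L^{m}$ valid for all $m\ge 0$ and all $\lambda\in[0,\lambda_\mathrm{c}(L)]$, with $s_L<1$ and $M_L<\infty$. Hence it suffices to prove that each $\lambda\mapsto\OpLace^{(m)}_{\lambda,L}$ is $\norm*{\cdot}_{1\to 1}$-continuous on $[0,\lambda_\mathrm{c}(L)]$.

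Fix $m$. The kernels $\pi^{(m)}_{\lambda,L}$ are isometry invariant (as $\connf_L$ and $\mu$ are, exactly as for $\tau_{\lambda,L}$), so $\pi^{(m)}_{\lambda',L}-\pi^{(m)}_{\lambda,L}$ is isometry invariant and Lemma~\ref{lem:ConvolutionOperator} gives
\[
  \norm*{\OpLace^{(m)}_{\lambda',L}-\OpLace^{(m)}_{\lambda,L}}_{1\to 1}=\int_{\HypDim}\bigl|\pi^{(m)}_{\lambda',L}(x,\orig)-\pi^{(m)}_{\lambda,L}(x,\orig)\bigr|\,\mu(\dd x),
\]
so we need $L^1(\mu)$-continuity of $\lambda\mapsto\pi^{(m)}_{\lambda,L}(\cdot,\orig)$. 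The diagrammatic estimates from \cite{DicHey2022triangle} underlying Lemma~\ref{lem:LaceBounds} bound the integrand defining $\pi^{(m)}_{\lambda,L}(x,\orig)$ pointwise — in $x$ and in the internal vertices $u_0,\dots,u_{m-1}$ — by a diagrammatic majorant $\Psi^{(m)}_{\lambda,L}$ (a $\lambda^m$-prefactored finite sum of products of factors $\tau_{\lambda,L}$ and $\connf_L$), which is non-decreasing in $\lambda$ and satisfies $\int_{\HypDim^{m+1}}\Psi^{(m)}_{\lambda_\mathrm{c}(L),L}<\infty$ because the critical triangle diagram is finite (Theorem~\ref{thm:TriangleSmall}). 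Hence, by dominated convergence — over $u_0,\dots,u_{m-1}$ (if $m\ge 1$) and then over $x$, with dominating function $\Psi^{(m)}_{\lambda_\mathrm{c}(L),L}$ — it is enough to prove that for fixed $x$ and fixed $u_0,\dots,u_{m-1}$ the probabilities appearing in \eqref{eq:LE:Pi0_def}--\eqref{eq:LE:Pin_def} are continuous in $\lambda\in[0,\lambda_\mathrm{c}(L)]$.

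This last point I would establish with the standard monotone coupling: realise the $n+1$ independent vertex configurations, simultaneously for all intensities, as the first-coordinate projections of independent Poisson processes on $\HypDim\times[0,\infty)$ with intensity $\mu\otimes\Leb$ intersected with $\{\text{second coordinate}\le\lambda\}$, together with intensity-independent edge variables. As $\lambda'\to\lambda$ the vertex configurations change only by the points with second coordinate between $\lambda$ and $\lambda'$, and in any bounded subset of $\HypDim$ there are a.s.\ none of these once $\lambda'$ is close enough to $\lambda$. For $\lambda\le\lambda_\mathrm{c}(L)$ every cluster entering the events in \eqref{eq:LE:Pi0_def}--\eqref{eq:LE:Pin_def} (namely $\C(y,\xi_0)$ and the $\C_i$'s) is a.s.\ finite: for $\lambda<\lambda_\mathrm{c}(L)$ because $\E_{\lambda,L}[\#\C(\orig)]=1+\lambda\int_{\HypDim}\tau_{\lambda,L}(x,\orig)\mu(\dd x)<\infty$ by Lemma~\ref{lem:LambdaCequalsLambdaT}, and for $\lambda=\lambda_\mathrm{c}(L)$ because, for $L$ large, the finiteness of the critical triangle diagram (Theorem~\ref{thm:TriangleSmall}) precludes an infinite cluster at criticality — the standard consequence of the triangle condition, cf.\ \cite{HeyHofLasMat19}. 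Consequently each such event depends only on the configuration inside a (random) bounded region, so its indicator is a.s.\ eventually constant as $\lambda'\to\lambda$, and bounded convergence gives continuity of the probability. Assembling the three reductions proves the lemma.

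The main obstacle is the behaviour at the right endpoint $\lambda=\lambda_\mathrm{c}(L)$: for $\lambda<\lambda_\mathrm{c}(L)$ the coupling argument is routine because all clusters have finite expected size, but at $\lambda_\mathrm{c}(L)$ the argument genuinely needs the absence of an infinite cluster there, which is precisely where the critical triangle bound of \cite{dickson2024NonUniqueness} (Theorem~\ref{thm:TriangleSmall}) is invoked once more; everything else is dominated-convergence bookkeeping built on estimates already assembled for Proposition~\ref{prop:LaceExpansion}.
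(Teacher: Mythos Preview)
Your proposal is correct and follows essentially the same approach as the paper: dominated convergence using the diagrammatic majorants from \cite{DicHey2022triangle}, pointwise convergence via the standard monotone coupling, and almost-sure finiteness of clusters at $\lambda\le\lambda_\mathrm{c}(L)$ (which the paper cites from \cite{dickson2024NonUniqueness} rather than deriving from the triangle condition, but either route works). You are somewhat more explicit than the paper's proof summary about the uniform-in-$\lambda$ convergence of the series $\sum_m(-1)^m\OpLace^{(m)}_{\lambda,L}$ needed to pass from continuity of each term to continuity of the limit; this step is implicit in the paper's account but follows from the same bounds already established in the proof of Proposition~\ref{prop:LaceExpansion}.
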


\begin{proof}[Proof Summary]
    This proceeds as in \cite[Corollary~6.1]{HeyHofLasMat19}. The core idea is a dominated convergence argument. We aim to show $\lim_{\lambda\to \lambda^*}\norm*{\OpLace_{\lambda,L}^{(n)}- \OpLace_{\lambda^*,L}^{(n)}}_{1\to 1}=0$ for all $n\in\N_0$, $\lambda^*\leq \lambda_\mathrm{c}(L)$ and $L$ sufficiently large. As in the proof of Lemma~\ref{lem:LaceBounds} (see \cite{DicHey2022triangle}), we can find a function $h^{(n)}_{L}$ such that $\pi^{(n)}_{\lambda,L}\leq h^{(n)}_L$ pointwise for all $n\in\N_0$ and $\lambda^*\leq \lambda_\mathrm{c}(L)$, and $h^{(n)}_{L}$ is integrable for all $n\in\N_0$ and $L$ sufficiently large. If we can show pointwise convergence for $\pi_{\lambda,L}^{(n)}$, then this integrability shows that the required limit holds. The pointwise convergence holds exactly as explained in \cite[Corollary~6.1]{HeyHofLasMat19}, and uses the fact that connected components are almost surely finite. This fact is proven for $\lambda\leq \lambda_\mathrm{c}(L)$ in our context in \cite{dickson2024NonUniqueness}.
\end{proof}

\section{Characterising the Percolation Threshold}
\label{sec:characterisingPercolationThreshold}

In this section we show how the critical threshold $\lambda_\mathrm{c}$ can be extracted from \eqref{eqn:OZE}, and given implicitly in terms of the operators $\Opconnf_L$ and $\OpLace_{\lambda,L}$.

We require a relatively standard percolation result that (via Mecke's formula) shows that the expected cluster size diverges as the critical threshold is approached from below.

\begin{lemma}
\label{lem:continuityofOnetoOne}
    For all $L>0$,
    \begin{equation}
        \lim_{\lambda\nearrow\lambda_\mathrm{c}(L)}\norm*{\Optau_{\lambda,L}}_{1\to 1} = \infty.
    \end{equation}
\end{lemma}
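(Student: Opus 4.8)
The plan is to reinterpret $\norm*{\Optau_{\lambda,L}}_{1\to 1}$ as (essentially) the expected cluster size of a typical vertex, and then to invoke the standard fact that this susceptibility diverges as the critical intensity is approached from below. First, since $\tau_{\lambda,L}$ is isometry invariant and non-negative, Lemma~\ref{lem:spectralradiusofPositive} gives $\norm*{\Optau_{\lambda,L}}_{1\to 1} = \int_\HypDim \tau_{\lambda,L}(x,\orig)\mu(\dd x)$ in $\left[0,\infty\right]$, and Mecke's formula rewrites the right-hand side as $\lambda^{-1}\bigl(\E_{\lambda,L}[\#\C(\orig)] - 1\bigr)$. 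Writing $\chi(\lambda) := \E_{\lambda,L}[\#\C(\orig)]$, and observing that near $\lambda_\mathrm{c}(L)$ the prefactor $\lambda^{-1}$ is bounded away from $0$ (we may assume $0<\lambda_\mathrm{c}(L)<\infty$, which covers the cases of interest, in particular those under Assumption~\ref{Model_Assumption}), it suffices to show $\chi(\lambda)\to\infty$ as $\lambda\nearrow\lambda_\mathrm{c}(L)$.

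Next I would record monotonicity and left-continuity of $\chi$. Building all the RCMs on a common probability space — e.g.\ by attaching to each potential point an independent ``birth intensity'' and to each potential edge an independent uniform mark, so that $\eta_\lambda$ increases in $\lambda$ and the graph $\xi_\lambda$ increases with it — the connection event $\{\conn{x}{\orig}{\xi_\lambda^{x,\orig}}\}$ is increasing in $\lambda$, whence $\tau_{\lambda,L}(x,\orig)$ and $\chi(\lambda)$ are non-decreasing. Moreover a connecting path at $\lambda_\mathrm{c}(L)$ uses only finitely many points, each of which is already present for all $\lambda$ past its (almost surely strictly sub-$\lambda_\mathrm{c}(L)$) birth intensity, so $\tau_{\lambda,L}(x,\orig)\nearrow\tau_{\lambda_\mathrm{c}(L),L}(x,\orig)$ and monotone convergence gives $\chi(\lambda)\nearrow\chi(\lambda_\mathrm{c}(L))$ as $\lambda\nearrow\lambda_\mathrm{c}(L)$. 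Thus the claim reduces to showing $\chi(\lambda_\mathrm{c}(L))=\infty$.

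The main obstacle is precisely this last point: that the susceptibility is infinite at (hence, by the previous step, up to) criticality. The soft half is easy — if $\chi(\lambda_0)<\infty$ then Mecke's formula together with an ergodicity/mass-transport argument shows there is almost surely no infinite cluster at $\lambda_0$, so $\lambda_0\leq\lambda_\mathrm{c}(L)$ — but ruling out that $\chi$ remains finite all the way up to $\lambda_\mathrm{c}(L)$ is exactly the sharpness of the phase transition. I would invoke this as a known result for random connection models (via the differential inequality $\tfrac{\dd}{\dd\lambda}\chi(\lambda)\ge c\,\chi(\lambda)^2$ and the identification of the finite-susceptibility threshold with $\lambda_\mathrm{c}$, or by citing the relevant sharpness statement directly; for sufficiently large $L$ the identification is also available through \cite{dickson2024NonUniqueness}, cf.\ Lemma~\ref{lem:LambdaCequalsLambdaT}, though that alone does not pin down the endpoint). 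Everything else in the argument is routine.
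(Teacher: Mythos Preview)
Your reduction to the susceptibility $\chi(\lambda)=\E_{\lambda,L}\bigl[\#\C(\orig)\bigr]$ is fine, but you misidentify the remaining obstacle and quote the differential inequality in the wrong direction. You frame the crux as sharpness---the identification of the finite-susceptibility threshold with $\lambda_\mathrm{c}(L)$---and propose to extract it from $\tfrac{\dd}{\dd\lambda}\chi\ge c\,\chi^2$. That is the harder, mean-field-lower-bound direction, and integrating it only gives an \emph{upper} bound $\chi(\lambda)\le \bigl(c(\lambda_T-\lambda)\bigr)^{-1}$, not the divergence you want; it also leaves you leaning on a sharpness statement you cannot cleanly cite for all $L>0$.

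Neither sharpness nor your left-continuity coupling is needed. The paper's route (following \cite{AizNew84} and \cite{caicedo2023criticalexponentsmarkedrandom}) is to work with spatially truncated susceptibilities $\chi_n$, which are always finite, and use the Margulis--Russo formula together with a BK/tree-graph bound to obtain the \emph{upper} inequality $\tfrac{\dd}{\dd\lambda}\chi_n\le C\chi_n^2$ with $C$ independent of $n$. Read as $\bigl|\tfrac{\dd}{\dd\lambda}\chi_n^{-1}\bigr|\le C$, this makes $\{\chi_n^{-1}\}_n$ equicontinuous on all of $[0,\infty)$, so the monotone limit $\chi^{-1}$ is continuous everywhere. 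Since an infinite cluster exists for every $\lambda>\lambda_\mathrm{c}(L)$, we have $\chi^{-1}(\lambda)=0$ there, and continuity at $\lambda_\mathrm{c}(L)$ gives $\chi^{-1}(\lambda)\to 0$ as $\lambda\nearrow\lambda_\mathrm{c}(L)$. This delivers the lemma for \emph{all} $L>0$, not only the large-$L$ range where Lemma~\ref{lem:LambdaCequalsLambdaT} is available.
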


\begin{proof}[Proof Summary]
    This is proven in exactly the same way as is was in the proof of \cite[Theorem~2.5]{caicedo2023criticalexponentsmarkedrandom}, using standard ideas that go back at least as far as \cite{AizNew84}. The argument considers $\Optau_{\lambda,L}^{(n)}$, a spatially truncated version of $\Optau_{\lambda,L}$. By using the Margulis-Russo formula to control the derivative, it can be proven that the sequence (when varying $n$) of functions
    \begin{equation}
        \lambda\mapsto \norm*{\Optau_{\lambda,L}^{(n)}}_{1\to 1}^{-1}
    \end{equation}
    is an equicontinuous sequence of functions. Since $\norm*{\Optau_{\lambda,L}^{(n)}}_{1\to 1}^{-1}\to \norm*{\Optau_{\lambda,L}}_{1\to 1}^{-1}$ monotonically as $n\to \infty$, this proves that
    \begin{equation}
        \lambda\mapsto \norm*{\Optau_{\lambda,L}}_{1\to 1}^{-1}
    \end{equation}
    is continuous for all $\lambda\geq 0$.
\end{proof}

We are now prepared to characterise $\lambda_\mathrm{c}(L)$ in terms of $\Opconnf_L$ and $\OpLace_{\lambda,L}$.

\begin{lemma}
\label{lem:CriticalEquation}
    Let $d\geq 2$ and suppose $\left\{\connf_L\right\}_{L>0}$ satisfies Assumption~\ref{Model_Assumption}. Then there exists $L_0$ such that for $L\geq L_0$,
    \begin{equation}
        \lambda\rho_{1\to 1}\left(\Opconnf_L +\OpLace_{\lambda,L}\right) \begin{cases}
            <1 & \text{if }\lambda<\lambda_{\mathrm{c}}(L),\\
            =1 & \text{if }\lambda=\lambda_\mathrm{c}(L).
        \end{cases}
    \end{equation}
\end{lemma}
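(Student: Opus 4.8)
The plan is to exploit the Ornstein--Zernike equation \eqref{eqn:OZE} together with the fact that $\Opconnf_L+\OpLace_{\lambda,L}$ is (for $L$ large) a small $L^1\to L^1$ perturbation of the nonnegative isometry-invariant operator $\Opconnf_L$, so that Lemmas~\ref{lem:spectralradiusofPositive}, \ref{lem:SpectrumPerturbation} and \ref{lem:IsoInvtoCommute} apply. Write $A_{\lambda,L}:=\Opconnf_L+\OpLace_{\lambda,L}$. First I would note that $A_{\lambda,L}$ is a convolution operator with an isometry-invariant kernel (since $\connf_L$ is isometry invariant by construction and $\pi_{\lambda,L}$ inherits isometry invariance from the model, exactly as $\tau_{\lambda,L}$ does), so by Lemma~\ref{lem:IsoInvtoCommute} the family $\{A_{\lambda,L}\}_\lambda$ commutes and each $A_{\lambda,L}$ commutes with $\Optau_{\lambda,L}$ and with $\Opconnf_L$. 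Then from \eqref{eqn:OZE}, for $\lambda<\lambda_\mathrm{c}(L)$ we may solve formally: $(\Id-\lambda A_{\lambda,L})\Optau_{\lambda,L}=A_{\lambda,L}$, i.e. $\Optau_{\lambda,L}=(\Id-\lambda A_{\lambda,L})^{-1}A_{\lambda,L}$, and this is consistent with $\Optau_{\lambda,L}$ being a bounded $L^1\to L^1$ operator (Lemma~\ref{lem:LambdaCequalsLambdaT}) only if $1\notin\sigma_{1\to1}(\lambda A_{\lambda,L})$, and more specifically only if $\lambda\rho_{1\to1}(A_{\lambda,L})<1$ — this is the ``$<1$ for $\lambda<\lambda_\mathrm{c}$'' half. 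I would make this rigorous by using the Neumann series: since $A_{\lambda,L}$ has a nonnegative-dominated kernel, $\norm{\Optau_{\lambda,L}}_{1\to1}\geq \norm{(\lambda A_{\lambda,L})^n A_{\lambda,L}}_{1\to1}$-type lower bounds force $\lambda\rho_{1\to1}(A_{\lambda,L})\leq 1$; strict inequality then follows because if $\lambda\rho_{1\to1}(A_{\lambda,L})=1$ at some $\lambda<\lambda_\mathrm{c}(L)$, monotonicity/continuity considerations (below) would already force a divergence there, contradicting $\norm{\Optau_{\lambda,L}}_{1\to1}<\infty$.

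Next, for the ``$=1$ at $\lambda=\lambda_\mathrm{c}(L)$'' half, the strategy is a squeeze using Lemma~\ref{lem:continuityofOnetoOne}. For $\lambda<\lambda_\mathrm{c}(L)$, since $\lambda A_{\lambda,L}$ is a convolution operator whose kernel is dominated in absolute value by a nonnegative isometry-invariant kernel (namely $\lambda(\connf_L + |\pi_{\lambda,L}|)$), I can compare: on one hand $\norm{\Optau_{\lambda,L}}_{1\to1}\leq \sum_{n\geq0}\lambda^{n}\norm{A_{\lambda,L}}_{1\to1}^{n+1}$ whenever $\lambda\norm{A_{\lambda,L}}_{1\to1}<1$ — but we actually want the spectral radius, not the norm. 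The cleaner route: by Lemma~\ref{lem:spectralradiusofPositive} applied to the nonnegative dominating kernel, and by Lemma~\ref{lem:SpectrumPerturbation} (with $A=\Opconnf_L$, $B=\OpLace_{\lambda,L}$, which commute by Lemma~\ref{lem:IsoInvtoCommute}), we get $\rho_{1\to1}(A_{\lambda,L})\leq \rho_{1\to1}(\Opconnf_L)+\norm{\OpLace_{\lambda,L}}_{1\to1}=\norm{\Opconnf_L}_{1\to1}+\norm{\OpLace_{\lambda,L}}_{1\to1}$, which by Proposition~\ref{prop:LaceExpansion}(2) and Theorem~\ref{thm:TriangleSmall} is $\LandauBigO{\norm{\Opconnf_L}_{1\to1}}$; in particular $\lambda\rho_{1\to1}(A_{\lambda,L})$ stays bounded. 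To upgrade $\leq 1$ to $=1$ at criticality I would argue: if along $\lambda\nearrow\lambda_\mathrm{c}(L)$ we had $\limsup \lambda\rho_{1\to1}(A_{\lambda,L})=:1-\delta<1$, then via the Neumann series $\Optau_{\lambda,L}=\sum_{n\geq0}(\lambda A_{\lambda,L})^n A_{\lambda,L}$ (convergent in $\norm{\cdot}_{1\to1}$ once $\lambda\norm{A_{\lambda,L}^N}_{1\to1}^{1/N}<1$ for large $N$, which holds since the spectral radius is $<1$), one gets a uniform bound $\norm{\Optau_{\lambda,L}}_{1\to1}\leq C(\delta)<\infty$ near $\lambda_\mathrm{c}(L)$, contradicting Lemma~\ref{lem:continuityofOnetoOne}. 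Hence $\lambda\rho_{1\to1}(A_{\lambda,L})\to 1$ as $\lambda\nearrow\lambda_\mathrm{c}(L)$.

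Finally I would pass to the limit $\lambda=\lambda_\mathrm{c}(L)$ itself. Here I use Proposition~\ref{prop:LaceExpansion}(1)--(2) (which define $\OpLace_{\lambda_\mathrm{c}(L),L}$ as a bounded $L^1\to L^1$ operator) and Lemma~\ref{lem:LaceContinuous} (continuity of $\lambda\mapsto\OpLace_{\lambda,L}$ in $\norm{\cdot}_{1\to1}$ on $[0,\lambda_\mathrm{c}(L)]$). Since $\lambda\mapsto A_{\lambda,L}=\Opconnf_L+\OpLace_{\lambda,L}$ is then norm-continuous on $[0,\lambda_\mathrm{c}(L)]$, and the spectral radius of a convolution operator with nonnegative-dominated isometry-invariant kernel is continuous under norm perturbations here — more carefully, $\rho_{1\to1}$ is upper semicontinuous in general, and for these operators Lemma~\ref{lem:spectralradiusofPositive}/\ref{lem:SpectrumPerturbation} give $|\rho_{1\to1}(A_{\lambda,L})-\rho_{1\to1}(A_{\lambda',L})|\leq \norm{\OpLace_{\lambda,L}-\OpLace_{\lambda',L}}_{1\to1}$, so $\lambda\mapsto\lambda\rho_{1\to1}(A_{\lambda,L})$ is continuous — the limit from the second half gives $\lambda_\mathrm{c}(L)\rho_{1\to1}(A_{\lambda_\mathrm{c}(L),L})=1$. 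The ``$<1$ for $\lambda<\lambda_\mathrm{c}(L)$'' case is then finished by combining the Neumann-series lower bound on $\norm{\Optau_{\lambda,L}}_{1\to1}$ (finite by Lemma~\ref{lem:LambdaCequalsLambdaT}) with this continuity to rule out equality strictly below criticality. The main obstacle I anticipate is the careful handling of the relationship between $\norm{\Optau_{\lambda,L}}_{1\to1}$ and $\rho_{1\to1}(\lambda A_{\lambda,L})$ through the Neumann series — in particular justifying that boundedness of $\Optau_{\lambda,L}$ is \emph{equivalent} to $\lambda\rho_{1\to1}(A_{\lambda,L})<1$ (not merely implied by it), which requires exploiting the sign structure (nonnegative-dominated, isometry-invariant kernels, so that $\norm{\cdot}_{1\to1}=\rho_{1\to1}$ on the dominating operator via Lemma~\ref{lem:spectralradiusofPositive}) rather than generic Banach-algebra facts, since on a general Banach space $\rho<1$ is sufficient but the converse direction needs the positivity to transfer norms into the OZE iteration.
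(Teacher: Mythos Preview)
Your approach is essentially the same as the paper's: both hinge on the continuity of $\lambda\mapsto\OpLace_{\lambda,L}$ in $\norm{\cdot}_{1\to1}$ (Lemma~\ref{lem:LaceContinuous}), the spectral-radius perturbation estimate (Lemmas~\ref{lem:SpectrumPerturbation} and~\ref{lem:IsoInvtoCommute}), the OZE/Neumann series link between $\Optau_{\lambda,L}$ and $A_{\lambda,L}=\Opconnf_L+\OpLace_{\lambda,L}$, and the divergence of $\norm{\Optau_{\lambda,L}}_{1\to1}$ at $\lambda_\mathrm{c}$ (Lemma~\ref{lem:continuityofOnetoOne}).

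The paper's organization is cleaner and directly resolves the obstacle you flag at the end. Rather than trying to prove the \emph{equivalence} ``$\norm{\Optau_{\lambda,L}}_{1\to1}<\infty \iff \lambda\rho_{1\to1}(A_{\lambda,L})<1$'' head-on, the paper defines $\tilde\lambda:=\inf\{\lambda\in(0,\lambda_\mathrm{c}]:\lambda\rho_{1\to1}(A_{\lambda,L})\geq1\}$ and shows $\tilde\lambda=\lambda_\mathrm{c}$. For $\lambda<\tilde\lambda$ one has the Neumann series $\Optau_{\lambda,L}=\sum_{n\geq1}\lambda^{n-1}A_{\lambda,L}^n$, and since $\tau_{\lambda,L}\geq0$ is isometry invariant, Lemma~\ref{lem:spectralradiusofPositive} gives $\norm{\Optau_{\lambda,L}}_{1\to1}=\rho_{1\to1}(\Optau_{\lambda,L})$; the spectral mapping theorem (applied to $f(z)=z/(1-\lambda z)$, holomorphic on a neighbourhood of $\sigma_{1\to1}(A_{\lambda,L})$) then yields the exact identity
\[
\lambda\norm{\Optau_{\lambda,L}}_{1\to1}=\frac{\lambda\rho_{1\to1}(A_{\lambda,L})}{1-\lambda\rho_{1\to1}(A_{\lambda,L})}.
\]
This replaces your vaguer ``Neumann-series lower bound'' argument: if $\tilde\lambda<\lambda_\mathrm{c}$, continuity forces the right-hand side to diverge as $\lambda\nearrow\tilde\lambda$, contradicting finiteness below $\lambda_\mathrm{c}$; conversely, since $\tilde\lambda\geq\lambda_\mathrm{c}$, Lemma~\ref{lem:continuityofOnetoOne} forces the right-hand side to diverge as $\lambda\nearrow\lambda_\mathrm{c}$, hence $\tilde\lambda=\lambda_\mathrm{c}$. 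The positivity of $\tau_{\lambda,L}$ is used exactly once --- to convert $\norm{\Optau_{\lambda,L}}_{1\to1}$ into a spectral radius --- not to dominate $A_{\lambda,L}$ by a nonnegative kernel as you suggest.
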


\begin{proof}
    First define
    \begin{equation}
        \tilde{\lambda}:= \inf\left\{\lambda\in\left(0,\lambda_\mathrm{c}\right]\colon \lambda\rho_{1\to 1}\left(\Opconnf_L + \OpLace_{\lambda,L}\right)\geq 1\right\}.
    \end{equation}
    First note that by Lemma~\ref{lem:LaceContinuous}, $\lambda\mapsto \lambda\Opconnf_L + \lambda\OpLace_{\lambda,L}$ is continuous with respect to $\norm*{\cdot}_{1\to 1}$ for all $\lambda\leq \lambda_\mathrm{c}$. Since $\connf_L$ and $\pi_{\lambda,n,L}$ are all isometry invariant, lemmas~\ref{lem:SpectrumPerturbation} and \ref{lem:IsoInvtoCommute} give that the map
    \begin{equation}
        \lambda\mapsto \lambda\rho_{1\to 1}\left(\Opconnf_L + \OpLace_{\lambda,L}\right)
    \end{equation}
    is continuous for $L>L_0$ and $\lambda\leq \lambda_\mathrm{c}$. Also note that this map takes $0\mapsto0$ (since $\Opconnf_L$ and $\OpLace_{0,L}$ are both bounded operators), and therefore $\tilde{\lambda}>0$.

    Suppose $\lambda<\tilde{\lambda}$. Then $\Id - \lambda\left(\Opconnf_L + \OpLace_{\lambda,L}\right)\colon L^1\left(\HypDim\right)\to L^1\left(\HypDim\right)$ is invertible and
    \begin{equation}
        \left(\Id - \lambda\left(\Opconnf_L + \OpLace_{\lambda,L}\right)\right)^{-1} = \Id + \sum^\infty_{n=1}\lambda^n\left(\Opconnf_L + \OpLace_{\lambda,L}\right)^n.
    \end{equation}
    Therefore Proposition~\ref{prop:LaceExpansion} implies that if $\lambda<\min\left\{\tilde{\lambda},\lambda_\mathrm{c}\right\}$ and $L\geq L_0$ then
    \begin{equation}
    \label{eqn:TauInfiniteSum}
        \Optau_{\lambda,L} = \left(\Opconnf_L+\OpLace_{\lambda,L}\right) \left(\Id - \lambda\left(\Opconnf_L + \OpLace_{\lambda,L}\right)\right)^{-1} = \sum^\infty_{n=1}\lambda^{n-1}\left(\Opconnf_L + \OpLace_{\lambda,L}\right)^n.
    \end{equation}

    By Lemma~\ref{lem:spectralradiusofPositive} and $\tau_{\lambda,L}\geq 0$, $\norm*{\Optau_{\lambda,L}}_{1\to 1}=\rho_{1\to 1}\left(\Optau_{\lambda,L}\right)$. The spectral mapping theorem then tells us that for $\lambda<\min\left\{\tilde{\lambda},\lambda_\mathrm{c}\right\}$ and $L\geq L_0$,
    \begin{equation}\label{eqn:spectralmapping}
        \lambda\norm*{\Optau_{\lambda,L}}_{1\to 1}= \lambda\rho_{1\to 1}\left(\Optau_{\lambda,L}\right) = \frac{\lambda\rho_{1\to 1}\left(\Opconnf_L + \OpLace_{\lambda,L}\right)}{1-\lambda\rho_{1\to 1}\left(\Opconnf_L + \OpLace_{\lambda,L}\right)}.
    \end{equation}

    If $\tilde{\lambda} < \lambda_\mathrm{c}$, then the continuity property on $\left[0,\lambda_\mathrm{c}\right]$ implies $\lambda\rho_{1\to 1}\left(\Opconnf_L+\OpLace_{\lambda,L}\right)\to 1$ as $\lambda\nearrow\tilde{\lambda}$, and with \eqref{eqn:spectralmapping} this implies $\norm*{\Optau_{\lambda,L}}_{1\to 1}\to\infty$ as $\lambda\nearrow\tilde{\lambda}$. Therefore Lemma~\ref{lem:continuityofOnetoOne} produces a contradiction and we have $\tilde{\lambda}\geq  \lambda_\mathrm{c}$.

    Since $\tilde{\lambda}\geq \lambda_\mathrm{c}$, Lemma~\ref{lem:continuityofOnetoOne} and \eqref{eqn:spectralmapping} imply $\lambda\rho_{1\to 1}\left(\Opconnf_L+\OpLace_{\lambda,L}\right)\nearrow 1$ as $\lambda\nearrow \lambda_\mathrm{c}$. Therefore $\tilde{\lambda}= \lambda_\mathrm{c}$.

    The equality $\lambda_\mathrm{c}=\tilde{\lambda}$ and \eqref{eqn:spectralmapping} imply $\lim_{\lambda\nearrow\lambda_\mathrm{c}}\lambda\rho_{1\to 1}\left(\Opconnf_L+\OpLace_{\lambda,L}\right)=1$, and the result then follows from the continuity of $\lambda\mapsto \lambda\rho_{1\to 1}\left(\Opconnf_L+\OpLace_{\lambda,L}\right)$ for $\lambda\leq \lambda_\mathrm{c}$.
\end{proof}

\section{Expanding the Lace Expansion Coefficients}

\label{sec:ExpandLaceCoeff}

Now we have characterised $\lambda_\mathrm{c}(L)$ in terms of $\Opconnf_L$ and $\OpLace_{\lambda,L}$, we need a finer understanding of $\OpLace_{\lambda,L}$. The objective here is to replicate many of the arguments in \cite[Section~3]{dickson2025expansion} to approximate $\pi^{(n)}_{\lambda,L}$ functions with combinations of $\connf_L$ functions. That paper was concerned with RCMs on $\Rd$, but most of the arguments did not rely on the underlying space. The exceptions to this came about in that that former paper did often use Fourier transform arguments to bound certain integrals and to determine which integrals will be the most relevant. We therefore first need to replicate these lemmas using the spherical transform on $\HypDim$.

\begin{definition}
    Given $x\in \HypDim$ and $b\in \mathbb{S}^{d-1}$ (i.e. the Euclidean unit radius sphere), let $\mathfrak{h}\left(x,b\right)$ denote the \emph{horocycle} through $x$ with normal $b$, and denote by $A\left(x,b\right)$ the (signed) \emph{composite distance} from $\orig$ to $\mathfrak{h}\left(x,b\right)$. That is,
    \begin{equation}
        A\left(x,b\right) = \begin{cases}
            \dist{\orig,\mathfrak{h}\left(x,b\right)}&\colon \orig \text{ is `outside' }\mathfrak{h}\left(x,b\right),\\
            -\dist{\orig,\mathfrak{h}\left(x,b\right)}&\colon \orig \text{ is `inside' }\mathfrak{h}\left(x,b\right).
        \end{cases}
    \end{equation}
    See Figure~\ref{fig:Horocycles} for some level-sets of $A(x,b)$ in $\HypTwo$.
    For a point $x\in\HypDim$ with Poincar{\'e} ball model coordinate $z\in\mathbb{B}\subset \Rd$ and $b\in \mathbb{S}^{d-1}\subset\Rd$,
    \begin{equation}
        A\left(x,b\right) = \log \frac{1-\abs*{z}^2}{\abs*{z-b}^2}.
    \end{equation}
    This composite distance is the $\HypDim$ analog of the usual Euclidean scalar product $\left(y,\omega\right)$ for $y\in\Rd$ and $\omega\in \mathbb{S}^{d-1}$ (see \cite{helgason1994geometric}).

\begin{figure}
    \centering
    \begin{tikzpicture}[scale=1]
        \draw[very thick] (0,0) circle (4);
        \draw (2,0) circle (2);
        \draw (3.81,0) circle (0.19);
        \draw (3.52,0) circle (0.48);
        \draw (2.93,0) circle (1.07);
        \draw (1.07,0) circle (2.93);
        \draw (0.48,0) circle (3.52);
        \draw (0.19,0) circle (3.81);
        \filldraw (0,0) circle (2pt) node[left]{$\orig$};
        \filldraw (4,0) circle (2pt) node[right]{$b$};
    \end{tikzpicture}
    \caption{Contours of $A\left(x,b\right)=-3-2,-1,0,1,2,3$ in the Poincar{\'e} disc model of $\HypTwo$.}
    \label{fig:Horocycles}
\end{figure}

    If $f$ is a function on $\HypDim$, then we define
    \begin{equation}
    \label{eqn:SphericalTransform}
        \widetilde{f}\left(s,b\right) := \int_\HypDim f(x) \e^{\left(-is + \frac{d-1}{2}\right)A\left(x,b\right)}\mu\left(\dd x\right)
    \end{equation}
    for all $s\in\R$, $b\in \mathbb{S}^{d-1}$ for which the integral exists. If $f$ is invariant under rotations about $\orig\in\HypDim$, then $\widetilde{f}$ is $b$-independent and we write $\widetilde{f}(s,b)=\widetilde{f}(s)$ only. Let $L^p_\natural\left(\HypDim\right)$ denote the set of $L^p\left(\HypDim\right)$ functions that are rotation invariant  about $\orig\in\HypDim$.
    
    We call this $f\mapsto \widetilde{f}$ transformation the \emph{spherical transformation} - in some texts it may be called the Fourier transform (on $\HypDim$), but we avoid using this name to avoid confusion with the usual Fourier transform (on $\Rd$).
\end{definition}

\begin{lemma}[Inversion, multiplication, and Plancherel]
\label{lem:InvMultPlancherel}
    There exists a $d$-dependent function $\mathbf{c}\colon \R\to\Complex$ such that the transformation \eqref{eqn:SphericalTransform} extends to an isometry of $L^2\left(\HypDim\right)$ onto $L^2\left(\R\times \mathbb{S}^{d-1}\right)$ with measure $\abs{\mathbf{c}(s)}^{-2}\dd s\dd b$ on $\R\times \mathbb{S}^{d-1}$.

    There exists a $d$-dependent real constant $w>0$ such that for $f\in L^2\left(\HypDim\right)$,
    \begin{equation}
        f(x) = \frac{1}{w} \int_{\mathbb{S}^{d-1}}\int_\R\e^{\left(is + \frac{d-1}{2}\right)A(x,b)}\widetilde{f}(s,b)\abs{\mathbf{c}(s)}^{-2}\dd s\dd b.
    \end{equation}

    For $f_1\in L^2_\natural\left(\HypDim\right)$ and $f_2\in L^2\left(\HypDim\right)$,
    \begin{equation}\label{eqn:diagonalise}
        \left(f_1\star f_2\right)^{\sim}(s,b) = \widetilde{f}_1(s)\widetilde{f}_2(s,b).
    \end{equation}

    For $f_1,f_2\in L^2\left(\HypDim\right)$
    \begin{equation}
        \int_{\HypDim}f_1(x)\overline{f_2(x)} \mu\left(\dd x\right) = \frac{1}{w}\int_{\mathbb{S}^{d-1}}\int_\R\widetilde{f_1}(s,b)\overline{\widetilde{f_2}(s,b)}\abs{\mathbf{c}(s)}^{-2}\dd s \dd b.
    \end{equation}
\end{lemma}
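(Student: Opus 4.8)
The statement is the classical inversion, Plancherel, and multiplication package for the Helgason--Fourier (``spherical'') transform on the rank-one Riemannian symmetric space $\HypDim\cong SO_0(d,1)/SO(d)$, and the plan is to import the inversion and Plancherel parts from that theory and to give the short direct computation for the multiplication formula \eqref{eqn:diagonalise}. The first step is to make the identification precise: in an Iwasawa decomposition of the isometry group, a Poincar\'e-ball point $x$ (with coordinate $z$) and a boundary point $b\in\mathbb{S}^{d-1}$ determine the composite distance $A(x,b)=\log\frac{1-\abs*{z}^2}{\abs*{z-b}^2}$, which is exactly the Iwasawa $\mathfrak{a}$-projection, and the half-sum of the positive roots (with root multiplicity $d-1$) equals $\rho=\tfrac{d-1}{2}$; hence $\e^{(-is+\frac{d-1}{2})A(x,b)}$ is the standard joint eigenfunction of the invariant differential operators, and \eqref{eqn:SphericalTransform} is the Helgason--Fourier transform in the normalisation fixed by the hyperbolic measure $\mu$ and by our choice of rotation-invariant measure on $\mathbb{S}^{d-1}$.

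Granting this identification, the first and second displayed equations are Helgason's Plancherel and inversion theorems for the Fourier transform on symmetric spaces (see \cite{helgason1994geometric}): the transform, defined first on $C_c(\HypDim)$ and then extended by density, is (the appropriate version of) an isometry of $L^2(\HypDim)$ onto $L^2(\R\times\mathbb{S}^{d-1},\,\abs*{\mathbf{c}(s)}^{-2}\,\dd s\,\dd b)$, with $\mathbf{c}$ the Harish-Chandra $c$-function of $\HypDim$ --- explicitly a ratio of Gamma functions in $is$, nowhere zero on $\R$, and satisfying $\mathbf{c}(-s)=\overline{\mathbf{c}(s)}$. The inversion formula with the constant $w>0$ is the companion statement, and the Plancherel identity (the fourth display) is its polarisation. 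I would simply cite these; the only work is to check that the normalisations of $\mu$ and of the sphere measure are the ones under which the cited constants take the stated form, so that one and the same $\mathbf{c}$ and $w$ serve all four displays at once.

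For the multiplication formula \eqref{eqn:diagonalise} I would argue directly --- first for $f_1,f_2\in C_c(\HypDim)$, then extending by density. Writing $\left(f_1\star f_2\right)(x)=\int_\HypDim f_1(\dist{x,z})\,f_2(z)\,\mu(\dd z)$, transforming in $x$ and using Fubini gives
\begin{equation}
    \left(f_1\star f_2\right)^{\sim}(s,b)=\int_\HypDim f_2(z)\left(\int_\HypDim f_1(\dist{x,z})\,\e^{(-is+\frac{d-1}{2})A(x,b)}\,\mu(\dd x)\right)\mu(\dd z).
\end{equation}
In the inner integral substitute $x=\iota_z\cdot x'$ for an isometry $\iota_z$ with $\iota_z\cdot\orig=z$ (this preserves $\mu$ and sends $\dist{x,z}$ to $\dist{x',\orig}$), and then apply the additive cocycle identity $A(\iota\cdot x',b)=A(x',\iota^{-1}\cdot b)+A(\iota\cdot\orig,b)$ obeyed by the composite distance --- the defining property of the Iwasawa projection, with $A(\orig,b)=0$ for every $b$. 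Since $f_1$ is rotation invariant about $\orig$, its transform $\widetilde{f_1}$ is $b$-independent, so the inner integral equals $\e^{(-is+\frac{d-1}{2})A(z,b)}\,\widetilde{f_1}(s)$, and \eqref{eqn:diagonalise} follows on integrating out $z$. The asymmetry of the two factors is exactly what is needed: only the radial factor has a $b$-free transform that can be pulled through.

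The main obstacle is one of convention and bookkeeping rather than of substance. One must pin down, against the cited references, the exact form of the Plancherel theorem for $\HypDim$: whether the spectral variable $s$ ranges over the positive half-line (where the transform is genuinely surjective) or over all of $\R$ (where the image is the subspace cut out by the $\mathbf{c}$-functional equation, i.e.\ the Weyl symmetry), together with the precise placement of the constant $w$ and the normalisation of the measure on $\mathbb{S}^{d-1}$ --- all of which must be made consistent across the isometry, inversion, and Plancherel statements and, in turn, consistent with the explicit form of $A(x,b)$ and with the hyperbolic measure $\mu$. The multiplication formula, by contrast, requires nothing beyond the cocycle identity for $A$ and the rotation invariance of $f_1$.
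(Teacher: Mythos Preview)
Your proposal is correct and takes essentially the same approach as the paper: both reduce the lemma to a citation of Helgason's \cite{helgason1994geometric} for the Helgason--Fourier transform on $\HypDim$. The paper's proof is simply a one-line reference to \cite[Chapter~III, Lemmas~1.3, 1.4, 1.5]{helgason1994geometric}, whereas you additionally sketch the direct cocycle argument for the multiplication formula \eqref{eqn:diagonalise} and flag the normalisation bookkeeping --- this extra detail is helpful but not required, since the multiplication formula is also covered by the cited lemmas.
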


\begin{proof}
    See \cite[Chapter~III, Lemmas 1.3, 1.4, 1.5]{helgason1994geometric}.
\end{proof}

The constant $w$ is the order of the Weyl group and the function $\mathbf{c}(s)$ is known as the Harish-Chandra $\mathbf{c}$-function. We will not require expressions for these in our argument, but their definitions can be found in \cite[Chapter~1, p.75 \& p.108]{helgason1994geometric}.

The following standard lemma is proven in Appendix~\ref{app:rangesphericaltransform}.
\begin{lemma}
\label{lem:RangeSphericalTransform}
    If $G\colon L^2\left(\HypDim\right)\to L^2\left(\HypDim\right)$ is a bounded convolution operator associated with an isometry invariant and real $g$, then
    \begin{equation}
        \widetilde{g}(s)\in\R
    \end{equation}
    for Lebesgue-almost all $s\in\R$, and 
    \begin{equation}
        \norm*{G}_{2\to 2} = \esssup_{s\in\R}\abs*{\widetilde{g}(s)}.
    \end{equation}
\end{lemma}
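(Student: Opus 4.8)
The plan is to establish the two assertions separately, both by exploiting that $g$ is isometry invariant (hence rotation invariant about $\orig$, so $\widetilde{g}$ is $b$-independent) and real-valued, together with the symmetry $g(x,y)=g(y,x)$ that isometry invariance forces via the existence of an isometry swapping $x$ and $y$.

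First I would prove $\widetilde{g}(s)\in\R$ for almost all $s$. The idea is that rotation invariance of $g$ about $\orig$ makes the spherical transform behave like the Fourier transform of a real, even function. Concretely, one should relate $\widetilde{g}(s)$ and $\widetilde{g}(-s)$: from the integral representation \eqref{eqn:SphericalTransform} and a symmetry of the kernel $\e^{(-is+\frac{d-1}{2})A(x,b)}$ — namely that replacing $s$ by $-s$ corresponds to complex conjugation of the exponential when $A(x,b)$ is real, combined with the fact that $g$ is real — one gets $\overline{\widetilde{g}(s)} = \widetilde{g}(-s)$. Separately, the rotation invariance of $g$ together with the inversion formula (or directly, the fact that $\widetilde{g}(s)$ coincides with the spherical function transform $\int_\HypDim g(x)\varphi_s(x)\mu(\dd x)$ where $\varphi_s$ is the elementary spherical function, which satisfies $\varphi_s=\varphi_{-s}$) gives $\widetilde{g}(s)=\widetilde{g}(-s)$. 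Combining the two identities yields $\overline{\widetilde{g}(s)}=\widetilde{g}(s)$, i.e. $\widetilde{g}(s)\in\R$. The cleanest route is to cite the standard fact (Helgason) that for a radial function the spherical transform is given by integration against $\varphi_s$ and that $\varphi_{-s}=\varphi_s$, which the paper's Appendix~\ref{app:rangesphericaltransform} presumably does in detail; here I would just record that this is where the evenness comes from.

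Next I would prove $\norm*{G}_{2\to 2} = \esssup_{s\in\R}\abs*{\widetilde{g}(s)}$. Since $G$ is a convolution operator by the isometry-invariant $g$, and by Lemma~\ref{lem:InvMultPlancherel} the spherical transform is (up to the measure $\abs{\mathbf{c}(s)}^{-2}\dd s\,\dd b$) a unitary equivalence $L^2(\HypDim)\to L^2(\R\times\mathbb{S}^{d-1})$ which by \eqref{eqn:diagonalise} conjugates $G$ into multiplication by the $b$-independent function $\widetilde{g}(s)$, the operator $G$ is unitarily equivalent to a multiplication operator $M_{\widetilde{g}}$ on an $L^2$ space. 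The operator norm of a multiplication operator is the essential supremum of the modulus of the multiplier with respect to the underlying measure; since $\abs{\mathbf{c}(s)}^{-2}\dd s$ is mutually absolutely continuous with Lebesgue measure $\dd s$ on $\R$ (away from the finitely many zeros/poles of $\mathbf{c}$, which form a null set), this essential supremum equals $\esssup_{s\in\R}\abs*{\widetilde{g}(s)}$ with respect to Lebesgue measure. One caveat to address: strictly $g$ need not lie in $L^2$, only that $G$ is bounded on $L^2$; but the statement only requires the identity \eqref{eqn:diagonalise} to make sense as a multiplier identity for the bounded operator, which follows by approximating $g$ by truncations $g\Id\{\dist{\cdot,\orig}\le R\}\in L^1\cap L^2$ and passing to the limit, using that the spherical transform of a bounded convolution operator is still the multiplier.

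The main obstacle I anticipate is the first part — pinning down precisely why $\widetilde{g}$ is real — because it requires care about the $b$-dependence and the exact symmetry properties of the kernel $A(x,b)$ and the spherical functions; the reality is ``obvious'' by analogy with the Euclidean case but needs the evenness $\varphi_{-s}=\varphi_s$ (equivalently the Weyl-group invariance of the spherical transform) to be invoked correctly. The norm identity, by contrast, is a routine consequence of the Plancherel theorem in Lemma~\ref{lem:InvMultPlancherel} once the multiplier picture is set up, modulo the minor measure-theoretic remark about $\abs{\mathbf{c}(s)}^{-2}$ versus Lebesgue measure.
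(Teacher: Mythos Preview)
Your proposal is correct, but your route to the reality of $\widetilde{g}(s)$ differs from the paper's. You argue via the Weyl-group symmetry of spherical functions: for real $g$ one has $\overline{\widetilde{g}(s)}=\widetilde{g}(-s)$ directly from the kernel, and for radial $g$ the identity $\varphi_s=\varphi_{-s}$ gives $\widetilde{g}(s)=\widetilde{g}(-s)$, whence $\widetilde{g}(s)\in\R$. The paper instead proceeds through self-adjointness: it shows $\langle f,Gf\rangle\in\R$ for all $f\in L^2(\HypDim)$ using $g(x,y)=g(y,x)$ and $g$ real, then rewrites this via Plancherel as $\langle f,Gf\rangle=\tfrac{\mathfrak{S}_{d-1}}{w}\int_\R\overline{\widetilde{g}(s)}\,|\widetilde{f}(s)|^2\,|\mathbf{c}(s)|^{-2}\dd s$, and finally derives a contradiction by choosing $\widetilde{f}$ to be the indicator of a set where $\widetilde{g}$ is bounded away from $\R$. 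Your argument is shorter and more conceptual but imports the fact $\varphi_{-s}=\varphi_s$ from outside the paper's stated toolkit; the paper's argument is more self-contained relative to Lemma~\ref{lem:InvMultPlancherel} and has the bonus of extending immediately to the non-negative-definite claim $\widetilde{\connf}_L(s)\geq 0$ (which the paper also proves in the same appendix) by the same mechanism. For the norm identity $\norm{G}_{2\to 2}=\esssup_s|\widetilde{g}(s)|$, your multiplication-operator argument via the Plancherel isometry is essentially the same as the paper's; your remark about $|\mathbf{c}(s)|^{-2}\dd s$ versus Lebesgue measure and about $g\notin L^2$ are valid technical points that the paper does not make explicit.
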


Let us denote 
\begin{equation}
    Q_d(x) := \frac{1}{\mathfrak{S}_{d-1}}\int_{\mathbb{S}^{d-1}}\e^{
        \frac{d-1}{2}A\left(x,b\right)}\dd b,
\end{equation}
for $x\in\HypDim$. The rotational invariance of $\dd b$ implies $Q_d$ is also rotationally invariant about $\orig$.

\begin{lemma}\label{lem:QfunctionBound}
    There exists a constant $C=C_d<\infty$ such that 
    \begin{equation}
        Q_d\left(x\right) \leq C \max\left\{1,\dist{x,\orig}\right\}\exp\left(-\frac{1}{2}\left(d-1\right)\dist{x,\orig}\right),
    \end{equation}
    for all $x\in\HypDim$.
\end{lemma}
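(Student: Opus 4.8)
The plan is to compute $Q_d(x)$ explicitly in Poincar\'e ball coordinates, reduce the integral over $\mathbb{S}^{d-1}$ to a one-dimensional integral, isolate the (logarithmic) blow-up coming from the point of $\mathbb{S}^{d-1}$ nearest to $x$, and then translate the Euclidean quantities back into $t:=\dist{x,\orig}$. Note first that the naive bound $|z-b|\geq 1-|z|$ is useless: it gives $Q_d(x)\leq \left(\tfrac{1+|z|}{1-|z|}\right)^{(d-1)/2}=\e^{\frac{d-1}{2}t}$, which is exponentially \emph{large}. The point is that $|z-b|$ is small only for $b$ in a tiny cap around $z/|z|$, and this cap is exactly where the claimed $\max\{1,t\}$ factor is born.

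Using the Poincar\'e-ball formula $A(x,b)=\log\frac{1-|z|^2}{|z-b|^2}$ with $z$ the ball coordinate of $x$ and $\rho:=|z|=\tanh(t/2)$, we get
\[
    Q_d(x) = \frac{(1-\rho^2)^{\frac{d-1}{2}}}{\mathfrak{S}_{d-1}}\int_{\mathbb{S}^{d-1}}\frac{\dd b}{\abs*{z-b}^{d-1}}.
\]
By rotational symmetry take $z=\rho e_1$ and parametrise $b\in\mathbb{S}^{d-1}$ by its polar angle $\theta$ to $e_1$, so $\abs*{z-b}^2 = 1-2\rho\cos\theta+\rho^2 = (1-\rho)^2 + 2\rho(1-\cos\theta)$ and the sphere integral equals $\mathfrak{S}_{d-2}\int_0^\pi (\sin\theta)^{d-2}\big((1-\rho)^2+2\rho(1-\cos\theta)\big)^{-\frac{d-1}{2}}\dd\theta$. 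The heart of the argument is to bound this by $C_d\big(1+\abs*{\log(1-\rho)}\big)$ uniformly for $\rho\in[\tfrac12,1)$. Writing $\epsilon:=1-\rho$ and substituting $u=1-\cos\theta$ (so $\dd\theta = \dd u/\sqrt{u(2-u)}$ and $(\sin\theta)^{d-2}=(u(2-u))^{(d-2)/2}$) turns it into $\int_0^2 (u(2-u))^{\frac{d-3}{2}}(\epsilon^2+2\rho u)^{-\frac{d-1}{2}}\dd u$. Split at $u=1$: the piece on $[1,2]$ has denominator $\geq (2\rho)^{(d-1)/2}\geq 1$ and a numerator with finite integral, hence is $\LandauBigO{1}$; on $[0,1]$ bound $(u(2-u))^{\frac{d-3}{2}}\leq C_d u^{\frac{d-3}{2}}$ and rescale $u=\tfrac{\epsilon^2}{2\rho}v$, reducing the piece to $(2\rho)^{-\frac{d-1}{2}}\int_0^{2\rho/\epsilon^2}\frac{v^{(d-3)/2}}{(1+v)^{(d-1)/2}}\dd v$. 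Since the exponents in numerator and denominator differ by exactly $-1$, the integrand decays like $v^{-1}$ at infinity and is integrable at $0$ (even when $d=2$, where $(d-3)/2=-\tfrac12>-1$), so this last integral is $\LandauBigO{1+\log_+(2\rho/\epsilon^2)} = \LandauBigO{1+\abs*{\log\epsilon}}$, as claimed.

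It remains to convert back to $t$. From $\rho=\tanh(t/2)$ one has $1-\rho^2=\cosh(t/2)^{-2}$ and $1-\rho = \frac{\e^{-t/2}}{\cosh(t/2)}$; using $\tfrac12\e^{t/2}\leq\cosh(t/2)\leq\e^{t/2}$ this yields $(1-\rho^2)^{\frac{d-1}{2}}\leq 2^{d-1}\e^{-\frac{d-1}{2}t}$ and $\e^{-t}\leq 1-\rho\leq 2\e^{-t}$, so in particular $\abs*{\log(1-\rho)}\leq t$ whenever $\rho\geq\tfrac12$ (equivalently $t\geq\log 3>\log 2$). Combining the three displays gives $Q_d(x)\leq C_d\, t\,\e^{-\frac{d-1}{2}t}$ for all $x$ with $\dist{x,\orig}\geq\log 3$. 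For the remaining range $\rho<\tfrac12$ we simply use $\abs*{z-b}\geq 1-\abs*{z}\geq\tfrac12$, which gives $Q_d(x)\leq 2^{d-1}(1-\abs*{z}^2)^{\frac{d-1}{2}}\leq 2^{d-1}$, while $\max\{1,t\}\e^{-\frac{d-1}{2}t}\geq 3^{-(d-1)/2}$ on that range; hence the stated inequality holds there too after enlarging the constant. Taking $C$ to be the larger of the two constants finishes the proof.

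The main obstacle is the second paragraph: one genuinely has to carry out the angular integral (rather than bound it crudely) to see that $\int_{\mathbb{S}^{d-1}}\dd b/\abs*{z-b}^{d-1}$ grows only like $\log\frac{1}{1-\rho}$ and not like $(1-\rho)^{-(d-1)}$, and to get the constants right uniformly in $d\geq 2$ (the case $d=2$, where $(\sin\theta)^{d-2}\equiv 1$ and $(d-3)/2<0$, needs a brief separate check for integrability). Everything else — the coordinate formula, the spherical slicing, and the passage from $1-\rho$ to $t$ — is routine once that estimate is in hand.
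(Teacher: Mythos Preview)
Your argument is correct. The computation in Poincar\'e coordinates, the reduction to the one-dimensional integral via the polar angle, the substitution $u=1-\cos\theta$ followed by the rescaling $u=\tfrac{\epsilon^2}{2\rho}v$, and the observation that the resulting integrand decays exactly like $v^{-1}$ (producing the logarithm) are all sound; the integrability checks at $u=0$ and $u=2$ work for every $d\geq 2$, including the borderline case $d=2$.

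The paper itself does not give a proof of this lemma: it simply cites \cite[Lemma~5.3]{dickson2024NonUniqueness}. Your write-up therefore supplies a self-contained derivation where the paper defers to an external reference. The approach you take --- recognising $Q_d$ as $(1-\rho^2)^{(d-1)/2}$ times the Newtonian-type integral $\int_{\mathbb{S}^{d-1}}|z-b|^{-(d-1)}\dd b$ and analysing the latter by hand --- is the standard route to bounding the bottom-of-spectrum spherical function $\phi_0$ on $\HypDim$, and is essentially what one would expect the cited reference to do as well.
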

\begin{proof}
    See \cite[Lemma~5.3]{dickson2024NonUniqueness}.
\end{proof}

The following lemma allows us to bound integrals involving factors of the implicitly known $\tau_{\lambda,L}$ with integrals solely of the explicitly known $\connf_L$.

\begin{lemma}
\label{lem:tautoconnf}
Suppose $\left\{\connf_L\right\}_{L>0}$ satisfies Assumption~\ref{Model_Assumption}. Then there exists $L_0$ and $K<\infty$ such that for $L\geq L_0$, $\lambda\leq \lambda_\mathrm{c}(L)$, even $m\geq 2$, and all $n\geq 1$,
    \begin{equation}
        \lambda^{n}\sup_{x\in\HypDim}\connf_L^{\star m}\star\tau_{\lambda,L}^{\star n}\left(x,\orig\right) \leq K^n\connf_L^{\star m}\left(\orig,\orig\right).
    \end{equation}
\end{lemma}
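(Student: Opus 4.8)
The plan is to reduce the claim to a statement about the $L^1\to L^1$ operator norms of the relevant convolution operators, using the structure established earlier. Write $\connf_L^{\star m}\star\tau_{\lambda,L}^{\star n}\left(x,\orig\right)$ in operator language: the supremum over $x$ is controlled by $\OneNorm{\Opconnf_L^{\star m} \Optau_{\lambda,L}^{n}}$ in a suitable sense, but the cleaner route is to peel off one factor of $\connf_L^{\star 2}$ at a time. The key observation is that $\connf_L^{\star m}$ with $m$ even factors as $\connf_L^{\star m/2}\star\connf_L^{\star m/2}$, which lets us write $\connf_L^{\star m}\star\tau_{\lambda,L}^{\star n}\left(x,\orig\right) = \int_\HypDim\int_\HypDim \connf_L^{\star m/2}(x,u)\,\tau_{\lambda,L}^{\star n}(u,w)\,\connf_L^{\star m/2}(w,\orig)\,\mu(\dd u)\mu(\dd w)$, and then bound this bilinear form by $\OneNorm{\Optau_{\lambda,L}^{n}}\cdot\sup_x \connf_L^{\star m/2}(x,\cdot)$-type quantities. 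Actually, the quickest approach: using $\tau_{\lambda,L}\geq 0$ and isometry invariance, $\lambda^n\sup_x \connf_L^{\star m}\star\tau_{\lambda,L}^{\star n}(x,\orig)$ is bounded by $\lambda^n\OneNorm{\Optau_{\lambda,L}}^{n}\sup_x\connf_L^{\star m}(x,\orig)$ would be false in general since $\OneNorm{\Optau_{\lambda,L}}$ diverges as $\lambda\nearrow\lambda_c$; so we must instead exploit the $L^2$ smallness of $\Optau_{\lambda,L}$.

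The real mechanism should be: split $\Optau_{\lambda,L}^{n}$ acting between the two $\connf_L^{\star m/2}$-factors and use that $\connf_L^{\star m/2}\in L^2$ (since $\connf_L\in L^\infty\cap L^1$, hence $\connf_L^{\star k}\in L^2$ by Young), together with the bound $\lambda\OneNorm{\Optau_{\lambda,L}}\leq$ something controlled. The cleaner tool is the inequality from the remark after Theorem~\ref{thm:TriangleSmall}, namely $\lambda\TwoNorm{\Optau_{\lambda,L}}\leq \sum_{k\geq 1}\lambda^k\TwoNorm{\Opconnf_L}^k$, which by \eqref{eqn:critcalIntensityLTwoBound} is $\LandauBigO{\norm*{\Opconnf_L}_{2\to 2}/\norm*{\Opconnf_L}_{1\to 1}}=o(1)$. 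So the plan is: iterate $\tau_{\lambda,L}\leq \connf_L + \lambda\connf_L\star\tau_{\lambda,L}$ (the pointwise inequality \eqref{eqn:FirstStep}) to express $\tau_{\lambda,L}^{\star n}$ via $\connf_L$ and $\tau_{\lambda,L}$, then use a supremum bound to pull one $\connf_L$ out, and repeatedly apply Young's inequality / the $L^1$–$L^2$ interplay. Concretely, I expect to show by induction on $n$ that $\lambda^n \connf_L^{\star m}\star\tau_{\lambda,L}^{\star n}(x,\orig)\leq K^n\connf_L^{\star m}(\orig,\orig)$, where at each step one uses $\lambda\connf_L^{\star j}\star\tau_{\lambda,L}(x,\orig)\leq \lambda\OneNorm{\Opconnf_L}\cdot\sup\connf_L^{\star j}$ when a cheap $L^1$ bound suffices and the sharper $L^2$ bound $\lambda\|\Optau_{\lambda,L}\|_{2\to 2}\|\connf_L^{\star j_1}\|_2\|\connf_L^{\star j_2}\|_2$ when we need to absorb the divergence. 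Since $\lambda_c\OneNorm{\Opconnf_L}=\LandauBigO{1}$ by \eqref{eqn:criticalintensityBound}, and the $L^2$ factors are themselves bounded by $\connf_L^{\star m}(\orig,\orig)^{1/2}$-type quantities via $\|\connf_L^{\star j}\|_2^2=\connf_L^{\star 2j}(\orig,\orig)$ together with monotonicity of convolution sups, the constant $K$ can be taken uniform in $L\geq L_0$ and $\lambda\leq\lambda_c(L)$.

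The main obstacle will be the bookkeeping: matching up even-ness of $m$ with the need to always have a symmetric pair $\connf_L^{\star m/2}\star(\cdot)\star\connf_L^{\star m/2}$ so that Plancherel/$L^2$ bounds apply, and verifying that the hidden constants do not accumulate a $L$-dependent factor. The subtle point is that $\OneNorm{\Optau_{\lambda_c,L}}$ may be infinite (indeed $\to\infty$ as $\lambda\nearrow\lambda_c$), so we cannot simply use the crude $L^1$ bound for the full string of $n$ copies of $\tau_{\lambda,L}$; at least one "sandwiching" by $\connf_L$ factors in $L^2$ is essential, and one must confirm that $n-1$ of the $n$ intermediate integrations can be handled by the convergent geometric series $\sum_k \lambda^k\|\Opconnf_L\|_{2\to2}^k$ while the boundary ones are handled by the $L^2$ norms of the $\connf_L^{\star m/2}$ blocks. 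Once the sandwich structure is set up correctly, the estimate is a routine consequence of \eqref{eqn:criticalintensityBound}, \eqref{eqn:critcalIntensityLTwoBound}, Young's inequality, and the identity $\|f\|_2^2 = f\star f(\orig,\orig)$ for isometry-invariant $f$.
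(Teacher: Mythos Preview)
Your core idea is correct and arrives at the same endpoint as the paper, though by a somewhat different route. Both arguments hinge on the single fact that $\lambda\norm*{\Optau_{\lambda,L}}_{2\to 2}$ is bounded uniformly for $\lambda\leq\lambda_c(L)$ and $L\geq L_0$. Your sandwich argument --- write $\connf_L^{\star m}\star\tau_{\lambda,L}^{\star n}(x,\orig)=\langle\connf_L^{\star m/2}(x,\cdot),\Optau_{\lambda,L}^n\connf_L^{\star m/2}(\cdot,\orig)\rangle$, apply Cauchy--Schwarz together with the operator-norm bound, and use $\norm*{\connf_L^{\star m/2}(x,\cdot)}_2^2=\connf_L^{\star m}(\orig,\orig)$ by isometry invariance --- is clean and works directly. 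The paper instead passes to the spherical transform, uses the Ornstein--Zernike equation from Proposition~\ref{prop:LaceExpansion} to express $\lambda\widetilde\tau_{\lambda,L}(s)$ explicitly, bounds it pointwise in $s$, and then uses $\sup_x Q_d(x)<\infty$ to reassemble $\connf_L^{\star m}(\orig,\orig)$. So the paper's proof depends on the lace expansion already being in place, whereas your argument needs only the elementary branching-process comparison $\lambda\norm*{\Optau_{\lambda,L}}_{2\to 2}\leq\sum_{k\geq1}\lambda^k\norm*{\Opconnf_L}_{2\to2}^k$ cited in the remark after Theorem~\ref{thm:TriangleSmall}; in that sense your route is more self-contained. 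In both approaches the evenness of $m$ plays the same structural role (for you, so that $m/2$ is an integer and the symmetric sandwich exists; for the paper, so that $\widetilde\connf_L(s)^m\geq 0$).

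Your write-up is cluttered with detours you should discard: the abandoned $L^1$ attempt, the induction on $n$, and the iteration of $\tau_{\lambda,L}\leq\connf_L+\lambda\connf_L\star\tau_{\lambda,L}$ are all unnecessary once the sandwich is in place --- the bound is then a single line with no induction required.
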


\begin{proof}
    First let $0<\lambda<\lambda_\mathrm{c}(L)$. Then by lemmas~\ref{lem:spectralradiusofPositive} and \ref{lem:LambdaCequalsLambdaT},
    \begin{equation}
        \int_\HypDim\tau_{\lambda,L}(x,\orig)\mu\left(\dd x\right) = \norm*{\Optau_{\lambda,L}}_{1\to 1}<\infty.
    \end{equation}
    Then by the rotational invariance of the model and having $\tau_{\lambda,L}\left(x,\orig\right)\in\left[0,1\right]$, we have $\tau_{\lambda,L}\in L^2_\natural\left(\HypDim\right)$. Therefore we can define $\widetilde{\tau}_{\lambda,L}(s)$. Assumption~\ref{enum:AssumptionFiniteDegree} also implies $\connf_L\in L^2_\natural\left(\HypDim\right)$ and so Lemma~\ref{lem:InvMultPlancherel} implies
    \begin{equation}
        \connf_L^{\star m}\star\tau_{\lambda,L}^{\star n}\left(x,\orig\right) = \frac{1}{w}\int_{\mathbb{S}^{d-1}}\int_\R\e^{\left(is + \frac{d-1}{2}\right)A\left(x,d\right)}\widetilde{\connf}_L(s)^m\widetilde{\tau}_{\lambda,L}(s)^n\abs{\mathbf{c}(s)}^{-2}\dd s\dd b.
    \end{equation}
    By Lemma~\ref{lem:RangeSphericalTransform}, $\widetilde{\connf}_L\left(s\right)\in\R$ and $\abs*{\widetilde{\connf}_L(s)}^m=\widetilde{\connf}_L(s)^m$ (since $m$ is even) for almost all $s\in\R$. Therefore
    \begin{equation}
        \esssup_{x\in\HypDim}\connf_L^{\star m}\star\tau_{\lambda,L}^{\star n}\left(x,\orig\right) \leq \left(\esssup_{x\in\HypDim} Q_d\left(x\right)\right)\frac{\mathfrak{S}_{d-1}}{w}\int_\R\widetilde{\connf}_L(s)^m\abs*{\widetilde{\tau}_{\lambda,L}(s)}^n\abs{\mathbf{c}(s)}^{-2}\dd s.
    \end{equation}

    By applying the spherical transform to \eqref{eqn:OZE} in Proposition~\ref{prop:LaceExpansion},
    \begin{equation}
        \lambda\widetilde{\tau}_{\lambda,L}(s) = \frac{\lambda\left(\widetilde{\connf}_L(s) + \widetilde{\pi}_{\lambda,L}(s)\right)}{1 - \lambda\left(\widetilde{\connf}_L(s) + \widetilde{\pi}_{\lambda,L}(s)\right)}.
    \end{equation}
    From Lemma~\ref{lem:ConvolutionOperator}, Proposition~\ref{prop:LaceExpansion}, and Lemma~\ref{lem:RangeSphericalTransform} we have 
    \begin{equation}
        \lambda\esssup_{s\in\R}\abs*{\widetilde{\pi}_{\lambda,L}(s)} = \lambda\norm*{\OpLace_{\lambda,L}}_{2\to 2}\leq \lambda\norm*{\OpLace_{\lambda,L}}_{1\to 1} = \LandauBigO{\beta(L)}
    \end{equation}
    uniformly in $\lambda\leq \lambda_\mathrm{c}(L)$. We also have 
    \begin{equation}
        \lambda\esssup_{s\in\R}\abs*{\widetilde{\connf}_L(s)} = \lambda\norm*{\Opconnf_{L}}_{2\to 2} = \LandauBigO{\frac{\norm*{\Opconnf_{L}}_{2\to 2}}{\norm*{\Opconnf_{L}}_{1\to 1}}}
    \end{equation}
    uniformly in $\lambda\leq \lambda_\mathrm{c}(L)$.
    Therefore for sufficiently large $L$ there exists $C<\infty$ such that $\lambda\abs*{\widetilde{\tau}_{\lambda,L}(s)}\leq C$ for all $\lambda\leq \lambda_\mathrm{c}(L)$ and almost all $s\in\R$. Therefore by using Lemma~\ref{lem:QfunctionBound},
    \begin{multline}
        \lambda^{n}\sup_{x\in\HypDim}\connf_L^{\star m}\star\tau_{\lambda,L}^{\star n}\left(x,\orig\right) \leq \left(\sup_{x\in\HypDim} Q_d\left(x\right)\right)C^n\frac{\mathfrak{S}_{d-1}}{w}\int_\R\widetilde{\connf}_L(s)^m \abs{\mathbf{c}(s)}^{-2}\dd s\\ \leq C'C^n \connf_L^{\star m}\left(\orig,\orig\right).
    \end{multline}
    The result then follows by taking $K=C\max\left\{1,C'\right\}$.
\end{proof}

It will soon become become convenient to introduce some notation for integrals of $\connf_L$ functions that are not simple convolutions. For integers $n_1,n_2,n_3\geq 1$ we will write 
    \begin{equation}\label{eqn:}
        \connf_L^{\star n_1\star n_2\cdot n_3}\left(\orig,\orig\right):= \connf_L^{\star n_1}\star\left(\connf_L^{\star n_2}\cdot\connf_L^{\star n_3}\right)\left(\orig,\orig\right) = \int_\HypDim\connf_L^{\star n_1}\left(x,\orig\right)\connf_L^{\star n_2}\left(x,\orig\right)\connf_L^{\star n_3}\left(x,\orig\right)\mu\left(\dd x\right).
    \end{equation}
This last expression makes it clear that $\connf_L^{\star n_1\star n_2\cdot n_3}\left(\orig,\orig\right)$ is invariant under permutations of $n_1$, $n_2$, and $n_3$.

The following lemma allows us to bound integrals involving many factors of $\connf_L$ with simpler integrals of fewer factors of $\connf_L$.

\begin{lemma}
\label{lem:DiagramOrderGeneral}
    Suppose $\left\{\connf_L\right\}_{L>0}$ satisfies Assumption~\ref{enum:AssumptionFiniteDegree}. Then there exists $K<\infty$ such that for all $L>0$, even $m\geq 2$, and $n\geq m$,
    \begin{equation}
        \sup_{x\in\HypDim}\connf_L^{\star n}\left(x,\orig\right) \leq K\norm*{\Opconnf_L}_{2\to 2}^{n-m}\connf^{\star m}_L\left(\orig,\orig\right).
    \end{equation}
    If in addition $n_1,n_2,n_3\geq 1$,
    \begin{equation}
        \connf_L^{\star n_1\star n_2\cdot n_3}\left(\orig,\orig\right)
        \leq \connf^{\star \left(n_1+n_2\right)}\left(\orig,\orig\right)\times
        \begin{cases}
            1& \colon n_3=1\\
            K\norm*{\Opconnf_L}_{2\to 2}^{n_3-2}\connf_L^{\star 2}\left(\orig,\orig\right) &\colon n_3\geq 2
        \end{cases}
    \end{equation}
\end{lemma}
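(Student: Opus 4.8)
The plan is to prove both inequalities by exploiting the spherical transform and Lemma~\ref{lem:RangeSphericalTransform}, which lets us pass between sup-norms of convolutions over $\HypDim$ and integrals of transformed functions. For the first inequality, I would start by writing $\connf_L^{\star n}\left(x,\orig\right)$ via the inversion formula of Lemma~\ref{lem:InvMultPlancherel}: since $\connf_L\in L^2_\natural\left(\HypDim\right)$ (by Assumption~\ref{enum:AssumptionFiniteDegree}), we have $\connf_L^{\star n}\left(x,\orig\right) = \frac{1}{w}\int_{\mathbb{S}^{d-1}}\int_\R \e^{\left(is+\frac{d-1}{2}\right)A(x,b)}\widetilde{\connf}_L(s)^n \abs{\mathbf{c}(s)}^{-2}\dd s\dd b$. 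Bounding $\abs*{\e^{\left(is+\frac{d-1}{2}\right)A(x,b)}} = \e^{\frac{d-1}{2}A(x,b)}$ and integrating over $b$ introduces the factor $\mathfrak{S}_{d-1}Q_d(x)$, so $\sup_{x}\connf_L^{\star n}\left(x,\orig\right) \leq \left(\sup_x Q_d(x)\right)\frac{\mathfrak{S}_{d-1}}{w}\int_\R \abs*{\widetilde{\connf}_L(s)}^n\abs{\mathbf{c}(s)}^{-2}\dd s$; note $\sup_x Q_d(x)<\infty$ by Lemma~\ref{lem:QfunctionBound}. Since $m$ is even, $\widetilde{\connf}_L(s)^m = \abs*{\widetilde{\connf}_L(s)}^m \geq 0$, so I can split $\abs*{\widetilde{\connf}_L(s)}^n = \abs*{\widetilde{\connf}_L(s)}^{n-m}\widetilde{\connf}_L(s)^m \leq \norm*{\Opconnf_L}_{2\to 2}^{n-m}\widetilde{\connf}_L(s)^m$ using Lemma~\ref{lem:RangeSphericalTransform}. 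Integrating what remains gives $\frac{1}{w}\int_\R\widetilde{\connf}_L(s)^m\abs{\mathbf{c}(s)}^{-2}\dd s$, and applying Plancherel/inversion at $x=\orig$ (where $A(\orig,b)=0$ and $Q_d(\orig)=1$) identifies this with $\frac{1}{\mathfrak{S}_{d-1}}\connf_L^{\star m}\left(\orig,\orig\right)$. Collecting constants yields the first inequality with $K := \sup_x Q_d(x)$.

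For the second inequality, I would first handle $n_3=1$ by a pointwise bound: $\connf_L^{\star n_1\star n_2\cdot 1}\left(\orig,\orig\right) = \int_\HypDim \connf_L^{\star n_1}(x,\orig)\connf_L^{\star n_2}(x,\orig)\connf_L(x,\orig)\mu(\dd x)$, and since $\connf_L\leq 1$ pointwise this is $\leq \int_\HypDim \connf_L^{\star n_1}(x,\orig)\connf_L^{\star n_2}(x,\orig)\mu(\dd x) = \connf_L^{\star(n_1+n_2)}\left(\orig,\orig\right)$, using associativity/symmetry of the convolution and $\connf_L^{\star n_1}\star\connf_L^{\star n_2}=\connf_L^{\star(n_1+n_2)}$. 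For $n_3\geq 2$, I would instead bound $\connf_L^{\star n_3}(x,\orig) \leq \sup_x \connf_L^{\star n_3}(x,\orig) \leq K\norm*{\Opconnf_L}_{2\to 2}^{n_3-2}\connf_L^{\star 2}\left(\orig,\orig\right)$ by the first inequality (with $m=2$), pull this out of the $x$-integral, and then the remaining $\int_\HypDim\connf_L^{\star n_1}(x,\orig)\connf_L^{\star n_2}(x,\orig)\mu(\dd x) = \connf_L^{\star(n_1+n_2)}\left(\orig,\orig\right)$ as before. This gives exactly the stated bound, with the same constant $K$ (one can always enlarge $K$ to cover both cases).

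The main obstacle — really the only subtle point — is the sign/positivity bookkeeping: the argument that $\abs*{\widetilde{\connf}_L(s)}^n \leq \norm*{\Opconnf_L}_{2\to 2}^{n-m}\widetilde{\connf}_L(s)^m$ relies crucially on $m$ being even (so that $\widetilde{\connf}_L(s)^m$ is a.e.\ nonnegative and equals its own absolute value) together with the fact, from Lemma~\ref{lem:RangeSphericalTransform}, that $\widetilde{\connf}_L(s)$ is real-valued a.e.\ and $\norm*{\Opconnf_L}_{2\to 2} = \esssup_s\abs*{\widetilde{\connf}_L(s)}$. One also needs $\sup_{x\in\HypDim}Q_d(x)<\infty$, which follows from Lemma~\ref{lem:QfunctionBound} since $\max\{1,r\}\e^{-\frac{d-1}{2}r}$ is bounded on $[0,\infty)$ for $d\geq 2$; and one needs the convolution identity $\int_\HypDim f^{\star n_1}(x,\orig)g(x,\orig)\mu(\dd x) = (f^{\star n_1}\star g)(\orig,\orig)$, which holds because the kernels are isometry invariant (hence symmetric) so that convolution against $g(\cdot,\orig)$ evaluated at $\orig$ reproduces the integral. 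Everything else is bookkeeping with the transform and the constants can be consolidated at the end.
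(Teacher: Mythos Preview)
Your proposal is correct and follows essentially the same argument as the paper's proof: invert the spherical transform, bound the phase factor by $Q_d(x)$, use evenness of $m$ together with Lemma~\ref{lem:RangeSphericalTransform} to replace $\abs*{\widetilde{\connf}_L(s)}^n$ by $\norm*{\Opconnf_L}_{2\to 2}^{n-m}\widetilde{\connf}_L(s)^m$, and recognise the remaining integral as $\connf_L^{\star m}(\orig,\orig)$; the second inequality is then a supremum bound on the $\connf_L^{\star n_3}$ factor (using $\connf_L\leq 1$ when $n_3=1$ and the first part with $m=2$ when $n_3\geq 2$). The only cosmetic difference is that the paper pulls out $\sup_x\connf_L^{\star n_3}(x,\orig)$ first and then bounds it, whereas you bound $\connf_L\leq 1$ directly inside the integral for $n_3=1$, but these are equivalent.
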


\begin{proof}
    Begin by inverting the spherical transform:
    \begin{align}
        \sup_{x\in\HypDim}\connf_L^{\star n}\left(x,\orig\right) &=  \frac{1}{w}\sup_{x\in\HypDim}\int_{\mathbb{S}^{d-1}}\int_\R\e^{(is+d-1)A(x,b)}\widetilde{\connf}_L(s)^n\abs*{\mathbf{c}(s)}^{-2}\dd s\dd b\nonumber\\
        &\leq \left(\sup_{x\in\HypDim}Q_d(x)\right)\left(\esssup_{t\in\R}\abs*{\widetilde{\connf}_L(t)}^{n-m}\right)\frac{\mathfrak{S}_{d-1}}{w}\int_\R\abs*{\widetilde{\connf}_L(s)}^m\abs*{\mathbf{c}(s)}^{-2}\dd s\nonumber\\
        & = K\norm*{\Opconnf_L}_{2\to 2}^{n-m} \connf_L^{\star m}\left(\orig,\orig\right).
    \end{align}
    The role $m$ being even plays is that we necessarily have $\abs*{\widetilde{\connf}_L(s)^m} = \widetilde{\connf}_L(s)^m$ for all $s\in\R$.

    For the second bound, we first take a supremum bound to get
    \begin{align}
        \int_{\HypDim}\connf^{\star n_1}_L\left(x,\orig\right)\connf^{\star n_2}_L\left(x,\orig\right)\connf^{\star n_3}_L\left(x,\orig\right)\dd x &\leq \left(\int_{\HypDim}\connf^{\star n_1}_L\left(x,\orig\right)\connf^{\star n_2}_L\left(x,\orig\right)\dd x\right)\sup_{x\in\HypDim}\connf^{\star n_3}_L\left(x,\orig\right)\nonumber\\
        &=\connf^{\star \left(n_1+n_2\right)}_L\left(\orig,\orig\right) \sup_{x\in\HypDim}\connf^{\star n_3}_L\left(x,\orig\right).
    \end{align}
    From the first part of the lemma, if $n_3\geq 2$ we have
    \begin{equation}
        \sup_{x\in\HypDim}\connf^{\star n_3}_L\left(x,\orig\right) \leq K\norm*{\Opconnf_L}_{2\to 2}^{n_3-2}\connf_L^{\star 2}\left(\orig,\orig\right).
    \end{equation}
    If $n_3=1$, then we can simply bound
    \begin{equation}
        \connf_L(x,\orig) \leq 1,
    \end{equation}
    and the result follows.
\end{proof}

The following corollary allows us to bound more complicated integrals with simpler ones -- in particular to can be used to determine the relevancy of various integrals as $L\to\infty$.

\begin{corollary}\label{cor:diagramBounds}
    Suppose $\left\{\connf_L\right\}_{L>0}$ satisfies Assumption~\ref{enum:AssumptionFiniteDegree}. Then there exists $K<\infty$ such that for all $L>0$,
    \begin{align}
        \frac{1}{\norm*{\Opconnf_L}_{1\to 1}^2}\connf^{\star 3}_L\left(\orig,\orig\right)& \leq K\frac{\norm*{\Opconnf_L}_{2\to 2}}{\norm*{\Opconnf_L}_{1\to 1}},\\
        \frac{1}{\norm*{\Opconnf_L}_{1\to 1}^3}\connf^{\star 4}_L\left(\orig,\orig\right) & \leq K\left(\frac{\norm*{\Opconnf_L}_{2\to 2}}{\norm*{\Opconnf_L}_{1\to 1}}\right)^2,\\
        \frac{1}{\norm*{\Opconnf_L}_{1\to 1}^4}\connf^{\star 5}_L\left(\orig,\orig\right) & \leq K\frac{1}{\norm*{\Opconnf_L}_{1\to 1}^3}\connf^{\star 4}_L\left(\orig,\orig\right)\frac{\norm*{\Opconnf_L}_{2\to 2}}{\norm*{\Opconnf_L}_{1\to 1}},\\
        \frac{1}{\norm*{\Opconnf_L}_{1\to 1}^5}\connf^{\star 6}_L\left(\orig,\orig\right) & \leq K\frac{1}{\norm*{\Opconnf_L}_{1\to 1}^3}\connf^{\star 4}_L\left(\orig,\orig\right)\left(\frac{\norm*{\Opconnf_L}_{2\to 2}}{\norm*{\Opconnf_L}_{1\to 1}}\right)^2,\\
        \frac{1}{\norm*{\Opconnf_L}_{1\to 1}^3}\connf^{\star 1 \star 2 \cdot 2}_L\left(\orig,\orig\right) &\leq \min\left\{\frac{1}{\norm*{\Opconnf_L}_{1\to 1}^3}\connf^{\star 4}_L\left(\orig,\orig\right),\frac{1}{\norm*{\Opconnf_L}_{1\to 1}^2}\connf^{\star 3}_L\left(\orig,\orig\right)\right\},\\
        \frac{1}{\norm*{\Opconnf_L}_{1\to 1}^4}\connf^{\star 2 \star 2 \cdot 2}_L\left(\orig,\orig\right)  & \leq \frac{1}{\norm*{\Opconnf_L}_{1\to 1}^3}\connf^{\star 4}_L\left(\orig,\orig\right).
    \end{align}
    Furthermore,
    \begin{equation}
        \Ecal(L) \leq \frac{C}{\norm*{\Opconnf_L}_{1\to 1}^2}\connf^{\star 3}_L\left(\orig,\orig\right) + \frac{C^2}{\norm*{\Opconnf_L}_{1\to 1}^3}\connf^{\star 4}_L\left(\orig,\orig\right),
    \end{equation}
    where $C$ is the constant appearing in the definition of $\Ecal(L)$ (see \eqref{eqn:ECAL}).
\end{corollary}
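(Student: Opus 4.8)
The plan is to obtain each displayed inequality from Lemma~\ref{lem:DiagramOrderGeneral} together with two elementary observations. Since $\connf_L$ takes values in $\left[0,1\right]$ and is isometry invariant, Lemma~\ref{lem:ConvolutionOperator} gives $\connf_L^{\star 2}\left(\orig,\orig\right) = \int_\HypDim \connf_L\left(x,\orig\right)^2\mu\left(\dd x\right) \leq \int_\HypDim \connf_L\left(x,\orig\right)\mu\left(\dd x\right) = \norm*{\Opconnf_L}_{1\to 1}$; and, again using $\connf_L\leq 1$, $\sup_{x\in\HypDim}\connf_L^{\star 2}\left(x,\orig\right) = \sup_{x\in\HypDim}\int_\HypDim \connf_L\left(x,y\right)\connf_L\left(y,\orig\right)\mu\left(\dd y\right) \leq \int_\HypDim\connf_L\left(y,\orig\right)\mu\left(\dd y\right) = \norm*{\Opconnf_L}_{1\to 1}$.

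For the first four inequalities I would apply the supremum bound of Lemma~\ref{lem:DiagramOrderGeneral}, namely $\connf_L^{\star n}\left(\orig,\orig\right)\leq \sup_{x\in\HypDim}\connf_L^{\star n}\left(x,\orig\right)\leq K\norm*{\Opconnf_L}_{2\to 2}^{n-m}\connf_L^{\star m}\left(\orig,\orig\right)$, with $m=2$ for $n=3,4$ and with $m=4$ for $n=5,6$. For $m=2$ one replaces $\connf_L^{\star 2}\left(\orig,\orig\right)$ by $\norm*{\Opconnf_L}_{1\to1}$ using the first observation and then divides by the appropriate power of $\norm*{\Opconnf_L}_{1\to1}$; for $m=4$ one keeps $\connf_L^{\star 4}\left(\orig,\orig\right)$ on the right and merely redistributes powers of $\norm*{\Opconnf_L}_{1\to1}$ across the inequality. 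Choosing $m=4$ rather than $m=2$ for $\connf_L^{\star 5}$ and $\connf_L^{\star 6}$ is exactly what makes those right-hand sides involve $\connf_L^{\star 4}\left(\orig,\orig\right)$ instead of a higher power of $\norm*{\Opconnf_L}_{2\to2}$ against $\norm*{\Opconnf_L}_{1\to1}$.

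The fifth and sixth (product-type) inequalities carry no constant, so instead of the product bound of Lemma~\ref{lem:DiagramOrderGeneral} I would argue directly. Writing $\connf_L^{\star 1\star 2\cdot 2}\left(\orig,\orig\right)=\int_\HypDim \connf_L\left(x,\orig\right)\connf_L^{\star 2}\left(x,\orig\right)^2\mu\left(\dd x\right)$, the bound $\connf_L\left(x,\orig\right)\leq 1$ gives $\connf_L^{\star 1\star 2\cdot 2}\left(\orig,\orig\right)\leq \int_\HypDim\connf_L^{\star 2}\left(x,\orig\right)^2\mu\left(\dd x\right)=\connf_L^{\star 4}\left(\orig,\orig\right)$, while pulling out one factor via $\connf_L^{\star 2}\left(x,\orig\right)\leq \norm*{\Opconnf_L}_{1\to1}$ gives $\connf_L^{\star 1\star 2\cdot 2}\left(\orig,\orig\right)\leq \norm*{\Opconnf_L}_{1\to1}\connf_L^{\star 3}\left(\orig,\orig\right)$; dividing by $\norm*{\Opconnf_L}_{1\to1}^3$ produces the stated minimum. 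Similarly $\connf_L^{\star 2\star 2\cdot 2}\left(\orig,\orig\right)=\int_\HypDim\connf_L^{\star 2}\left(x,\orig\right)^3\mu\left(\dd x\right)\leq \sup_{x\in\HypDim}\connf_L^{\star 2}\left(x,\orig\right)\cdot\connf_L^{\star 4}\left(\orig,\orig\right)\leq \norm*{\Opconnf_L}_{1\to1}\connf_L^{\star 4}\left(\orig,\orig\right)$, which upon dividing by $\norm*{\Opconnf_L}_{1\to1}^4$ is the sixth line. Finally, the bound on $\Ecal(L)$ follows by inserting the preceding six estimates term by term into the definition of $\Ecal(L)$ in \eqref{eqn:ECAL}.

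I do not expect a real obstacle here: the only thing needing care is the bookkeeping of the exponent $m$ in the supremum bound of Lemma~\ref{lem:DiagramOrderGeneral} — it must be even and at most $n$, and taking it as large as possible gives the sharpest right-hand side — together with verifying that the two ``no-constant'' lines genuinely hold with constant $1$, which is precisely why the direct supremum argument above is used in place of the product bound of Lemma~\ref{lem:DiagramOrderGeneral} (the latter would introduce an extra factor $K$).
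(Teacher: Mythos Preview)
Your argument for the six displayed inequalities is correct and is exactly the paper's approach: apply Lemma~\ref{lem:DiagramOrderGeneral} with $m=2$ (for $n=3,4$) or $m=4$ (for $n=5,6$), together with the elementary observations $\connf_L^{\star 2}(\orig,\orig)\leq\norm*{\Opconnf_L}_{1\to1}$ and $\sup_x\connf_L^{\star 2}(x,\orig)\leq\norm*{\Opconnf_L}_{1\to1}$, and handle the two product-type lines directly via $\connf_L\leq1$ and the latter supremum bound to avoid the extraneous constant $K$.

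One small correction: your final sentence about $\Ecal(L)$ is not right. The six preceding inequalities play no role in bounding $\Ecal(L)$; they go the wrong way (they bound the quantities appearing on the \emph{right} of the desired $\Ecal(L)$ inequality from above, not $\Ecal(L)$ from above by them). The actual argument, as the paper does, is simply to drop the $-\tfrac12\connf_L(x,\orig)$ from the integrand of \eqref{eqn:ECAL} (permissible since $(a-b)_+\leq a$ for $a,b\geq0$) and then integrate, recognising the two remaining integrals as $\connf_L^{\star 3}(\orig,\orig)$ and $\connf_L^{\star 4}(\orig,\orig)$.
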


\begin{proof}
    The first set of bounds either follow directly from Lemma~\ref{lem:DiagramOrderGeneral}, or with the additional observation that
    \begin{equation}
        \sup_{x\in\HypDim}\connf^{\star 2}\left(x,\orig\right) \leq \norm*{\Opconnf_L}_{1\to 1}\esssup_{x,y\in\HypDim}\connf\left(x,y\right) \leq \norm*{\Opconnf_L}_{1\to 1}.
    \end{equation}
    The bound on $\Ecal(L)$ follows by omitting the $-\frac{1}{2}\connf_L\left(x,\orig\right)$ in the integrand.
\end{proof}

For the following arguments it will be convenient to use the notation
\begin{multline}
\label{eqn:OMEGA}
    \Omega(L) := \frac{1}{\norm*{\Opconnf_L}_{1\to 1}^3}\connf^{\star 2\star 2\cdot 1}_L\left(\orig,\orig\right) + \frac{1}{\norm*{\Opconnf_L}_{1\to 1}^4}\connf^{\star 5}_L\left(\orig,\orig\right)\\
    + \frac{1}{\norm*{\Opconnf_L}_{1\to 1}^4}\connf^{\star 2\star 2\cdot 2}_L\left(\orig,\orig\right) + \frac{1}{\norm*{\Opconnf_L}_{1\to 1}^5}\connf^{\star 6}_L\left(\orig,\orig\right).
\end{multline}

\begin{prop}
\label{prop:PiBounds}
For $\lambda\geq 0$, $L>0$ and $x\in\HypDim$, define $\omega^{(0)}_{\lambda,L}\left(x\right)$ and $\omega^{(1)}_{\lambda,L}\left(x\right)$ by
    \begin{align}
         \omega^{(0)}_{\lambda,L}\left(x\right) &:= \lambda\pi^{(0)}_{\lambda,L}\left(x,\orig\right) - \frac{1}{2}\lambda^3\connf^{\star 2}_L\cdot \connf^{\star 2}_L\left(x,\orig\right) \label{eqn:defn:omegazero}\\
         \omega^{(1)}_{\lambda,L}\left(x\right) &:= \lambda\pi^{(1)}_{\lambda,L}\left(x,\orig\right) - \lambda^2\connf^{\star 2}_L\cdot\connf_L\left(x,\orig\right) - \lambda^3\connf^{\star 2}_L\cdot \connf^{\star 2}_L\left(x,\orig\right) - \lambda^3\connf^{\star 3}_L\cdot \connf_L\left(x,\orig\right).\label{eqn:defn:omegaone}
    \end{align}
    
    Suppose $\left\{\connf_L\right\}_{L>0}$ satisfies Assumption~\ref{Model_Assumption}, and let $n_0\geq 2$ and $N\geq 1$ be fixed. Then there exists $L_0,C_0<\infty$ such that for $L\geq L_0$ and $\lambda\leq \lambda_\mathrm{c}(L)$,
    \begin{align}
        \max\left\{\norm*{\omega^{(0)}_{\lambda,L}}_1, \norm*{\omega^{(1)}_{\lambda,L}}_1 , \lambda\norm*{\sum^{n_0}_{n=3}\left(-1\right)^n\OpLace^{(n)}_{\lambda,L}}_{1\to 1}\right\} &\leq C_0 \Omega(L),\\
         \lambda\norm*{\sum^\infty_{n=N}\left(-1\right)^n\OpLace^{(n)}_{\lambda,L}}_{1\to 1} &\leq C_0 \beta\left(L\right)^N.
    \end{align}
\end{prop}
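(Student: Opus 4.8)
The plan is to follow the template of \cite[Section~3]{dickson2025expansion}, exploiting the fact that the diagrammatic upper and lower bounds on lace-expansion coefficients used there are purely probabilistic -- first- and last-pivotal decompositions, the BK inequality, and tree-graph inequalities for the Poisson RCM -- and so transfer from $\Rd$ to $\HypDim$ verbatim, with no role played by the geometry of the underlying space. The only hyperbolic input is the passage from the resulting diagrams built out of $\tau_{\lambda,L}$'s first to diagrams built out of $\connf_L$'s, and then to the basic quantities assembled in $\Omega(L)$; this is exactly what Lemma~\ref{lem:tautoconnf}, Lemma~\ref{lem:DiagramOrderGeneral} and Corollary~\ref{cor:diagramBounds} supply. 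Thus the argument has three layers: (i) space-independent two-sided diagrammatic estimates for $\pi^{(0)}_{\lambda,L}$, $\pi^{(1)}_{\lambda,L}$, and $\pi^{(n)}_{\lambda,L}$; (ii) reduction of $\tau_{\lambda,L}$-diagrams to $\connf_L$-diagrams via Lemma~\ref{lem:tautoconnf}, valid for $\lambda\le\lambda_\mathrm{c}(L)$ and $L$ large; (iii) domination of $\connf_L$-diagrams by $\Omega(L)$ and $\beta(L)$ via Lemma~\ref{lem:DiagramOrderGeneral} and Corollary~\ref{cor:diagramBounds}, throughout using $\lambda_\mathrm{c}(L)\norm*{\Opconnf_L}_{1\to1}=\LandauBigO{1}$ and $\norm*{\Opconnf_L}_{2\to2}/\norm*{\Opconnf_L}_{1\to1}\to0$ from Theorem~\ref{thm:TriangleSmall}, so that spare factors of $\lambda\norm*{\Opconnf_L}_{1\to1}$ cost nothing and any extra $\connf_L$-convolution factor costs a factor $o(1)$.

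For $\omega^{(0)}_{\lambda,L}$, write $\pi^{(0)}_{\lambda,L}(x,\orig)=\plal(\,\orig\text{ and }x\text{ doubly connected in }\xi^{\orig,x}_0\,)-\connf_L(x,\orig)$ and expand the doubly-connected probability in a two-sided way. Retaining exactly the event ``no direct edge $\orig\sim x$ and there exist two internally vertex-disjoint open paths of length two'' produces the explicit term $\frac{1}{2}\lambda^2\connf_L^{\star 2}\cdot\connf_L^{\star 2}(x,\orig)$ (the $\frac{1}{2}$ from the unordered pair of midpoints after Mecke's formula, the $1-\connf_L$ factor absorbed into the remainder, which is of the admissible $\star2\star2\cdot1$ shape). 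The lower bound keeps only this event, corrected by a second-moment (Bonferroni) term; the upper bound applies BK to the doubly-connected event and subtracts the extracted term. In both directions the remainder is a finite sum of $\tau_{\lambda,L}$-diagrams of strictly higher order, each of which, after Lemma~\ref{lem:tautoconnf}, is a $\connf_L$-diagram that Corollary~\ref{cor:diagramBounds} bounds by $C_0\Omega(L)$; multiplying through by $\lambda\le\lambda_\mathrm{c}(L)$ gives $\norm*{\omega^{(0)}_{\lambda,L}}_1\le C_0\Omega(L)$. For $\omega^{(1)}_{\lambda,L}$ the same scheme is applied to $\pi^{(1)}_{\lambda,L}$, whose defining integral carries one variable $u_0$, a doubly-connected factor for $(\orig,u_0)$, and an $E(u_0,x;\C_0,\cdot)$ factor; expanding each of these to leading order under the last-pivotal constraint isolates precisely the three terms $\lambda\connf_L^{\star 2}\cdot\connf_L(x,\orig)$, $\lambda^2\connf_L^{\star 2}\cdot\connf_L^{\star 2}(x,\orig)$ and $\lambda^2\connf_L^{\star 3}\cdot\connf_L(x,\orig)$ of \eqref{eqn:defn:omegaone}, with remainder again a controlled sum of higher-order $\tau_{\lambda,L}$-diagrams reduced to $C_0\Omega(L)$ as before.

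For $3\le n\le n_0$ no leading term is extracted, but the crude geometric bound \eqref{eqn:triangleboundsLace} is far from sharp enough here (it only gives $\LandauBigO{\beta(L)^n}$, which exceeds $\Omega(L)$). Instead one uses the finer tree-graph bound for $\pi^{(n)}_{\lambda,L}$ of \cite{DicHey2022triangle}, which is already a sum of $\tau_{\lambda,L}$-diagrams; after steps (ii)--(iii) every such diagram contains a sub-diagram matching one of the four building blocks of $\Omega(L)$ (for $n=3$ the minimal diagrams have the $\star2\star2\cdot k$ shape or at least five convolution factors) multiplied by extra factors that are $\LandauBigO{1}$ or $o(1)$, hence is bounded by $C_n\Omega(L)$; summing the finitely many $n$ with $C_0=(n_0-2)\max_{3\le n\le n_0}C_n$ and using the triangle inequality gives the first displayed bound. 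The infinite tail needs nothing new: by the triangle inequality and \eqref{eqn:triangleboundsLace}, $\lambda\norm*{\OpLace^{(n)}_{\lambda,L}}_{1\to1}\le C(16\triangle_{\lambda_\mathrm{c}(L),L})^{n/2}$ for $n\ge1$ and $\lambda\le\lambda_\mathrm{c}(L)$, and $\triangle_{\lambda_\mathrm{c}(L),L}=\LandauBigO{\beta(L)^2}$ with $\beta(L)\to0$; so for $L$ large the series $\sum_{n\ge N}(16\triangle_{\lambda_\mathrm{c}(L),L})^{n/2}$ is geometric with ratio below $\frac{1}{2}$ and sums to $\LandauBigO{\beta(L)^N}$, i.e. $\le C_0\beta(L)^N$ for a constant $C_0$ depending on $N$.

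The main obstacle is layer (i) for $\pi^{(0)}$ and $\pi^{(1)}$: carrying out the two-sided diagrammatic bookkeeping carefully enough that precisely the stated explicit terms are isolated and that every one of the several residual diagrams -- in both the upper and the lower bound -- is recognisably dominated, after the $\tau\to\connf$ reduction, by one of the four building blocks of $\Omega(L)$. By contrast the hyperbolic geometry is not an obstacle here: it has been fully absorbed into Lemma~\ref{lem:tautoconnf}, Lemma~\ref{lem:DiagramOrderGeneral} and Corollary~\ref{cor:diagramBounds}, and the role played in \cite{dickson2025expansion} by Euclidean ``loop'' integrals is taken over here by the $\connf_L$-diagrams in $\Omega(L)$.
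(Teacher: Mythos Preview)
Your proposal is correct and follows essentially the same three-layer approach as the paper's proof outline (probabilistic two-sided bounds, $\tau\to\connf$ reduction via Lemma~\ref{lem:tautoconnf}, diagram domination via Lemma~\ref{lem:DiagramOrderGeneral} and Corollary~\ref{cor:diagramBounds}). The paper additionally emphasises that, unlike in \cite{dickson2025expansion}, one needs \emph{pointwise} (not merely integrated) upper and lower bounds on $\pi^{(0)}_{\lambda,L}$ and $\pi^{(1)}_{\lambda,L}$, and for the $\pi^{(0)}$ upper bound it uses a direct Poisson-count argument (the number of $u\in\eta$ with $\orig\sim u$ and $\conn{u}{x}{\xi^x}$ is Poisson, then $1-\e^{-t}-t\e^{-t}\le\tfrac12 t^2$) in place of the BK inequality you invoke.
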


\begin{proof}[Proof Outline]
    The proof follows essentially the same line as that of \cite[Proposition~3.4]{dickson2025expansion}, except we need point-wise upper and lower bounds for $\pi^{(0)}_{\lambda,L}$ and $\pi^{(1)}_{\lambda,L}$ rather than just integral bounds. In what follows we outline the probabilistic ideas used to construct these bounds in lower detail than in \cite{dickson2025expansion} -- the details can be found therein. We do, however, go into more detail regarding how the \emph{pointwise} bounds can be obtained -- as this is where the argument diverges from \cite{dickson2025expansion}.
    
    \paragraph{Step $1$:} First let us establish the notation that given two events, $E$ and $F$, we use $E\circ F$ to denote the vertex disjoint occurrence of $E$ and $F$. Now to get an upper bound for $\lambda\pi^{(0)}_{\lambda,L}\left(x,\orig\right)$, we begin by writing
    \begin{align}
        &\mathbb{P}_{\lambda,L}\left(\dconn{\orig}{x}{\xi^{x,\orig}}\right) - \connf_L\left(x,\orig\right)\nonumber\\
        &\hspace{2cm}=\left(1-\connf_L\left(x,\orig\right)\right)\mathbb{P}_{\lambda,L}\left(\exists u,v\in\eta\colon u\ne v, \orig\sim u, \orig\sim v,\right. \nonumber\\
        &\hspace{7.5cm}\left.\left\{\conn{u}{x}{\xi^x}\right\}\circ \left\{\conn{u}{x}{\xi^x}\right\}\right).
    \end{align}
    Since we have $\left\{\conn{u}{x}{\xi^x}\right\}\circ \left\{\conn{u}{x}{\xi^x}\right\}\subset \left\{\conn{u}{x}{\xi^x}\right\}\cap \left\{\conn{u}{x}{\xi^x}\right\}$, we can bound
    \begin{multline}
        \mathbb{P}_{\lambda,L}\left(\exists u,v\in\eta\colon u\ne v, \orig\sim u, \orig\sim v,\left\{\conn{u}{x}{\xi^x}\right\}\circ \left\{\conn{u}{x}{\xi^x}\right\}\right) \\\leq 1 - \mathbb{P}_{\lambda,L}\left(\#\left\{u\in\eta\colon\orig\sim u, \conn{u}{x}{\xi^x}\right\} \leq 1\right).
    \end{multline}
    Since $\eta$ is distributed as a Poisson point process, $\#\left\{u\in\eta\colon\orig\sim u, \conn{u}{x}{\xi^x}\right\}$ is distributed as a Poisson random variable and Mecke's formula gives its expectation as $\lambda\connf_L\star\tau_{\lambda,L}\left(x,\orig\right)$. Then since $1-\e^{-x}-x\e^{-x}\leq \frac{1}{2}x^2$ for all $x\geq 0$ (express the remainder as an infinite sum and take its derivative), we have
    \begin{equation}
        \lambda\pi^{(0)}_{\lambda,L}\left(x,\orig\right) \leq \frac{1}{2}\lambda^3\left(1-\connf_L\left(x,\orig\right)\right)\left(\connf_L\star\tau_{\lambda,L}\left(x,\orig\right)\right)^2 \leq \frac{1}{2}\lambda^3\left(\connf_L\star\tau_{\lambda,L}\left(x,\orig\right)\right)^2
    \end{equation}
    for all $x\in\HypDim$. By applying $\tau_{\lambda,L} \leq \connf_L + \lambda\connf_L\star\tau_{\lambda,L}$ (recall from \eqref{eqn:FirstStep}) twice,
    \begin{align}
        \frac{1}{2}\lambda^3\left(\connf_L\star\tau_{\lambda,L}\left(x,\orig\right)\right)^2 &\leq \frac{1}{2}\lambda^3\left(\connf_L^{\star 2}\left(x,\orig\right) + \lambda\connf_L^{\star 3}\left(x,\orig\right) + \lambda^2\connf_L^{\star 3}\star\tau_{\lambda,L}\left(x,\orig\right)\right)^2\nonumber\\
        &= \frac{1}{2}\lambda^3\connf_L^{\star 2}\cdot \connf_L^{\star 2}\left(x,\orig\right) + \mathrm{Rem}^{(0,+)}_{\lambda,L}\left(x\right),
    \end{align}
    where the last equality defines the remainder term $\mathrm{Rem}^{(0,+)}_{\lambda,L}\colon\HypDim \to \R_+$. By using Lemma~\ref{lem:tautoconnf} (and $\tau_{\lambda,L} \leq \connf_L + \lambda\connf_L\star\tau_{\lambda,L}$ an additional time), one can show
    \begin{align}
        \int_\HypDim\abs*{\mathrm{Rem}^{(0,+)}_{\lambda,L}\left(x\right)}\mu\left(\dd x\right) &= \frac{1}{2}\left(2\lambda^4\connf_L^{\star 5} + \lambda^5\connf_L^{\star 6} + 2\lambda^5\connf_L^{\star 5}\star\tau_{\lambda,L} + 2\lambda^6\connf_L^{\star 6}\star\tau_{\lambda,L}\right.\nonumber\\
        &\hspace{6cm}\left.+ \lambda^7\connf_L^{\star 6}\star\tau_{\lambda,L}^{\star 2}\right)\left(\orig,\orig\right) \nonumber\\
        &\leq \frac{1}{2}\left(2\lambda^4\connf_L^{\star 5} + 3\lambda^5\connf_L^{\star 6} + 4\lambda^6\connf_L^{\star 6}\star\tau_{\lambda,L}+ \lambda^7\connf_L^{\star 6}\star\tau_{\lambda,L}^{\star 2}\right)\left(\orig,\orig\right)\nonumber\\
        & \leq  \lambda^4\connf_L^{\star 5}\left(\orig,\orig\right) + \frac{1}{2}\left(3+4K+K^2\right)\lambda^5\connf_L^{\star 6}\left(\orig,\orig\right).
    \end{align}
    This gives the required upper bound on $\lambda\pi^{(0)}_{\lambda,L}\left(x,\orig\right)$ for $\lambda\leq \lambda_\mathrm{c}(L) \leq C\norm*{\Opconnf_L}_{1\to 1}^{-1}$.

    \paragraph{Step $2$:}
    For the lower bound on $\lambda\pi^{(0)}_{\lambda,L}\left(x,\orig\right)$, we observe that
    \begin{equation}
        \left\{\dconn{x}{\orig}{\xi^{x,\orig}}\right\}\setminus\left\{x\sim \orig\right\} \supset \left\{x\not\sim\orig\right\}\cap\left\{\#\left\{u\in\eta\colon \orig\sim u\sim x\right\}\geq 2\right\}.
    \end{equation}
    As above $\#\left\{u\in\eta\colon\orig\sim u, \conn{u}{x}{\xi^x}\right\}$ is distributed as a Poisson random variable with expectation $\lambda\connf_L\star\tau_{\lambda,L}\left(x,\orig\right)$, and furthermore is independent of $\left\{x\not\sim\orig\right\}$. Then since $1-\e^{-x}-x\e^{-x}\geq \frac{1}{2}x^2 - \frac{1}{2}x^3$ for all $x\in\R$,
    \begin{align}
        \lambda\pi^{(0)}_{\lambda,L}\left(x,\orig\right) &\geq \frac{1}{2}\lambda\left(1-\connf_L\left(x,\orig\right)\right)\left(\lambda^2\connf_L^{\star 2}\cdot \connf_L^{\star 2}\left(x,\orig\right) - \lambda^3\connf_L^{\star 2}\cdot \connf_L^{\star 2}\cdot \connf_L^{\star 2}\left(x,\orig\right)\right)\nonumber\\
        &= \frac{1}{2}\lambda^3\connf_L^{\star 2}\cdot \connf_L^{\star 2}\left(x,\orig\right) + \mathrm{Rem}^{(0,-)}_{\lambda,L}\left(x\right),
    \end{align}
    where the last equality defines the remainder term $\mathrm{Rem}^{(0,-)}_{\lambda,L}\colon\HypDim \to \R$. By expanding the parentheses, applying the triangle inequality, and using $\connf_L\in\left[0,1\right]$,
    \begin{equation}
        \int_\HypDim\abs*{\mathrm{Rem}^{(0,-)}_{\lambda,L}\left(x\right)}\mu\left(\dd x\right) \leq \frac{1}{2}\lambda^3\connf_L^{\star 2\star 2\cdot 1}\left(\orig,\orig\right) + \frac{1}{2}\lambda^4\connf_L^{\star 2 \star 2 \cdot 2}\left(\orig,\orig\right).
    \end{equation}
    This gives the required lower bound on $\lambda\pi^{(0)}_{\lambda,L}\left(x,\orig\right)$ for $\lambda\leq \lambda_\mathrm{c}(L) \leq C\norm*{\Opconnf_L}_{1\to 1}^{-1}$.

    \paragraph{Step $3$:}
    To prove the upper bound on $\lambda\pi^{(1)}_{\lambda,L}\left(x,\orig\right)$, we first recall from \cite[Proposition~3.15]{dickson2025expansion} (with the obvious adjustments for our $\HypDim$ case) that
    \begin{equation}\label{eqn:PiOneFullBound}
        0\leq \lambda\pi^{(1)}_{\lambda,L}\left(x,\orig\right) \leq \sum_{j\in\left\{1,2\right\}}\sum_{k\in\left\{1,2,3\right\}}\int_\HypDim \int_\HypDim\psi^{(j)}_1\left(x;w,u\right)\psi^{(k)}_0\left(w,u;\orig\right)\mu\left(\dd w\right)\mu\left(\dd u\right),
    \end{equation}
    where $\psi^{(1)}_1,\psi^{(2)}_1,\psi^{(1)}_0,\psi^{(2)}_0,\psi^{(3)}_0$ are functions given in \cite[Definition~3.11]{dickson2025expansion} explicitly in terms of $\connf_L$, $\tau_{\lambda,L}$ and similar functions. The contributions from most of these can easily be shown to be negligible for our purposes using the inequality $\tau_{\lambda,L} \leq \connf_L + \lambda\connf_L\star\tau_{\lambda,L}$ and lemmas~\ref{lem:tautoconnf} and \ref{lem:DiagramOrderGeneral}. The sole remaining term in \eqref{eqn:PiOneFullBound} comes from $j=2$ and $k=3$ and is
    \begin{equation}
        \int_\HypDim \int_\HypDim\psi^{(2)}_1\left(x;w,u\right)\psi^{(3)}_0\left(w,u;\orig\right)\mu\left(\dd w\right)\mu\left(\dd u\right) = \lambda^2\tau_{\lambda,L}\left(x,\orig\right)\connf_L\star\tau_{\lambda,L}\left(x,\orig\right).
    \end{equation}
    Then by repeatedly applying  $\tau_{\lambda,L} \leq \connf_L + \lambda\connf_L\star\tau_{\lambda,L}$,
    \begin{align}
        \lambda^2\tau_{\lambda,L}\left(x,\orig\right)\connf_L\star\tau_{\lambda,L}\left(x,\orig\right) &\leq \lambda^2\left(\connf_L + \lambda\connf^{\star 2}_L + \lambda^2\connf^{\star 3}_L + \lambda^3\connf_L^{\star 4} + \lambda^4\connf_L^{\star 4}\star \tau_{\lambda,L}\right)\left(x,\orig\right)\nonumber\\
        &\hspace{1cm}\times\left(\connf_L^{\star 2} + \lambda\connf^{\star 3}_L + \lambda^2\connf^{\star 4}_L + \lambda^3\connf_L^{\star 5} + \lambda^4\connf_L^{\star 5}\star \tau_{\lambda,L}\right)\left(x,\orig\right)\nonumber\\
        &= \lambda^2\connf_L\cdot\connf_L^{\star 2}\left(x,\orig\right) + \lambda^3\connf_L\cdot\connf_L^{\star 3}\left(x,\orig\right) + \lambda^3\connf^{\star 2}\cdot\connf_L^{\star 2}\left(x,\orig\right) \nonumber\\
        &\hspace{1cm}+ \mathrm{Rem}^{(1,+)}_{\lambda,L}\left(x\right),
    \end{align}
    where the last equality defines the remainder term $\mathrm{Rem}^{(1,+)}_{\lambda,L}\colon\HypDim \to \R_+$. Like for the remainder term $\mathrm{Rem}^{(0,+)}_{\lambda,L}$, we can bound this by using Lemma~\ref{lem:tautoconnf} (with some applications of Lemma~\ref{lem:DiagramOrderGeneral} to simplify):
    \begin{align}
        \int_\HypDim\abs*{\mathrm{Rem}^{(1,+)}_{\lambda,L}\left(x\right)}\mu\left(\dd x\right) &\leq 3\lambda^4\connf^{\star 5}_L\left(\orig,\orig\right) + \left(4+2K\right)\lambda^5\connf^{\star 6}_L\left(\orig,\orig\right) +\left(3+2K\right)\lambda^6\connf^{\star 7}_L\left(\orig,\orig\right) \nonumber \\
        &\hspace{1cm}+ \left(2+2K\right)\lambda^7\connf^{\star 8}_L\left(\orig,\orig\right) + \left(1+2K+K^2\right)\lambda^8\connf^{\star 9}_L\left(\orig,\orig\right)\nonumber\\
        & \leq 3\lambda^4\connf^{\star 5}_L\left(\orig,\orig\right) \nonumber\\
        & \hspace{0.5cm}+ \left(4 + 2K + \left(3+2K\right)\lambda\norm*{\Opconnf_L}_{2\to 2} + \left(2+2K\right)\lambda^2\norm*{\Opconnf_L}^2_{2\to 2} \right.\nonumber\\
        &\hspace{3cm}\left.+ \left(1+2K+K^2\right)\lambda^3\norm*{\Opconnf_L}^3_{2\to 2}\right)\lambda^5\connf^{\star 6}_L\left(\orig,\orig\right).
    \end{align}
    Since $\lambda\leq \lambda_\mathrm{c}(L) \leq C\norm*{\Opconnf_L}_{1\to 1}^{-1}$ and $\frac{\norm*{\Opconnf_L}_{2\to 2}}{\norm*{\Opconnf_L}_{1\to 1}} \to 0$, this gives the required upper bound on $\lambda\pi^{(1)}_{\lambda,L}\left(x,\orig\right)$ for $\lambda\leq \lambda_\mathrm{c}(L)$.

    \paragraph{Step $4$:}
    For our lower bound on $\lambda\pi^{(1)}_{\lambda,L}\left(x,\orig\right)$, we aim to identify events that are contained in the events in the definition in \eqref{eq:LE:Pin_def}. Recall from Definition~\ref{defn:lacesetup} that $\eta_{\left<A\right>}$ is an the $A$-thinning of $\eta$. Also let the event $\left\{\xconn{x}{y}{\xi^{x,y}}{=n}\right\}$ denote the event that $x$ and $y$ are connected by a path of length exactly $n$ in $\xi^{x,y}$, but not by a path of length $n-1$ or less. Then
    \begin{equation}
        \lambda\pi^{(1)}_{\lambda,L}\left(x,\orig\right) \geq \lambda^2\int_\HypDim\left(\mathbb{P}_{\lambda,L}\left(\Gcal_1\left(x,u,\orig\right)\right) + \mathbb{P}_{\lambda,L}\left(\Gcal_2\left(x,u,\orig\right)\right) + \mathbb{P}_{\lambda,L}\left(\Gcal_3\left(x,u,\orig\right)\right)\right)\mu\left(\dd u\right),
    \end{equation}
    where
    \begin{align}
        \Gcal_1\left(x,u,\orig\right) &= \left\{\adja{\orig}{u}{\xi^{u,\orig}_0}\right\} \cap \left\{\adja{u}{x}{\xi^{x,u}_1}\right\} \cap \left\{x\not\in \eta^{x,u}_{1,\left<\orig\right>}\right\},\\
        \Gcal_2\left(x,u,\orig\right) &= \left\{\adja{\orig}{u}{\xi^{u,\orig}_0}\right\} \cap \left\{\xconn{u}{x}{\xi^{x,u}_1}{=2}\right\} \cap \left\{x\not\in \eta^{x,u}_{1,\left<\orig\right>}\right\},\\
        \Gcal_3\left(x,u,\orig\right) &= \left\{\adja{\orig}{u}{\xi^{u,\orig}_0}\right\} \cap \left\{\adja{u}{x}{\xi^{x,u}_1}\right\} \cap \left\{x\in \eta^{x,u}_{1,\left<\orig\right>}\right\}\nonumber\\
        &\hspace{5cm}\cap \left\{\exists v\in\eta_0\colon \adja{\orig}{v}{\xi^\orig_0},x\not\in \eta^{x,u_0}_{1,\left<v\right>}\right\}.
    \end{align}
    In \cite{dickson2025expansion}, Lemma~2.6 and the proof of Lemma~3.14 prove that
    \begin{align}
        \lambda^2\int_\HypDim\mathbb{P}_{\lambda,L}\left(\Gcal_1\left(x,u,\orig\right)\right)\mu\left(\dd u\right) &= \lambda^2\connf_L^{\star2}\cdot\connf_L\left(x,\orig\right),\\
        \lambda^2\int_\HypDim\mathbb{P}_{\lambda,L}\left(\Gcal_2\left(x,u,\orig\right)\right)\mu\left(\dd u\right) &\geq \lambda^3\connf_L\left(x,\orig\right)\int_\HypDim\left(1-\connf_L\right)\cdot\left(\connf^{\star 2}_L - \frac{1}{2}\lambda\connf^{\star 2}_L\cdot\connf^{\star 2}_L\right)\left(x,u\right)\nonumber\\
        &\hspace{5cm}\times\connf_L\left(u,\orig\right)\mu\left(\dd u\right),\nonumber\\
        &= \lambda^3\connf_L\cdot \connf^{\star 3}_L\left(x,\orig\right) - \lambda^3\connf_L\cdot\left(\connf_L\star\left(\connf_L\cdot \connf^{\star 2}_L\right)\right)\left(x,\orig\right)\nonumber\\
        &\hspace{1cm} -\frac{1}{2}\lambda^4\connf_L\cdot\left(\connf_L\star\left(\connf^{\star 2}_L\cdot \connf^{\star 2}_L\right)\right)\left(x,\orig\right)\nonumber\\
        &\hspace{1cm} + \frac{1}{2}\lambda^4\connf_L\cdot\left(\connf_L\star\left(\connf^{\star 2}_L\cdot \connf^{\star 2}_L\cdot \connf_L\right)\right)\left(x,\orig\right) \label{eqn:Gtwo_bound}\\
        \lambda^2\int_\HypDim\mathbb{P}_{\lambda,L}\left(\Gcal_3\left(x,u,\orig\right)\right)\mu\left(\dd u\right) &\geq \lambda^3\connf^{\star 2}_L\cdot\left(1-\connf_L\right)\cdot\left(\connf^{\star 2}-\frac{1}{2}\lambda\connf^{\star 2}_L\cdot\connf^{\star 2}_L\right)\left(x,\orig\right)\nonumber\\
        & = \lambda^3\connf^{\star 2}_L\cdot\connf^{\star 2}_L\left(x,\orig\right) - \lambda^3\connf^{\star 2}_L\cdot\connf^{\star 2}_L\left(x,\orig\right), \nonumber\\
        &\hspace{1cm} -\frac{1}{2}\lambda^4\connf_L\cdot\connf^{\star 2}_L\cdot\connf^{\star 2}_L\left(x,\orig\right) + \frac{1}{2}\lambda^4\connf_L\cdot\connf^{\star 2}_L\cdot\connf^{\star 2}_L\left(x,\orig\right).\label{eqn:Gthree_bound}
    \end{align}
    Therefore 
    \begin{equation}
        \lambda\pi^{(1)}_{\lambda,L}\left(x,\orig\right) \geq \lambda^2\connf_L^{\star2}\cdot\connf_L\left(x,\orig\right) + \lambda^3\connf_L\cdot \connf^{\star 3}_L\left(x,\orig\right) + \lambda^3\connf^{\star 2}_L\cdot\connf^{\star 2}_L\left(x,\orig\right) + \mathrm{Rem}^{(1,-)}_{\lambda,L}\left(x\right),
    \end{equation}
    where the remainder term $\mathrm{Rem}^{(1,-)}_{\lambda,L}\colon\HypDim \to \R$ is the sum of the remaining terms in \eqref{eqn:Gtwo_bound} and \eqref{eqn:Gthree_bound}. Then by Lemma~\ref{lem:DiagramOrderGeneral}, using the triangle inequality, and occasionally bounding $\connf_L\leq 1$, one can find
    \begin{equation}
        \int_\HypDim\abs*{\mathrm{Rem}^{(1,-)}_{\lambda,L}\left(x\right)}\mu\left(\dd x\right) \leq 2\lambda^3\connf_L^{\star 2 \star 2 \cdot 1}\left(\orig,\orig\right) + 2\lambda^4\connf_L^{\star 2 \star 2 \cdot 2}\left(\orig,\orig\right).
    \end{equation}
    This gives the required lower bound on $\lambda\pi^{(1)}_{\lambda,L}\left(x,\orig\right)$ for $\lambda\leq \lambda_\mathrm{c}(L) \leq C\norm*{\Opconnf_L}_{1\to 1}^{-1}$.

    \paragraph{Step $5$:}
    The upper bounds for the remaining terms follow the arguments in \cite{dickson2025expansion} more closely. For $\lambda\norm*{\sum^{n_0}_{n=3}\left(-1\right)^n\OpLace^{(n)}_{\lambda,L}}_{1\to 1}$, we can bound  $\lambda\norm*{\OpLace^{(n)}_{\lambda,L}}_{1\to 1}$ using integrals like \eqref{eqn:PiOneFullBound}. We can then use Lemma~\ref{lem:tautoconnf} and identify $\connf_L^{\star2\star 2\cdot 1}\left(\orig,\orig\right)$, $\connf_L^{\star2\star 2\cdot 2}\left(\orig,\orig\right)$, or long loops of $\connf_L$ in the resulting integrals. These long loops will at least be of length $5$, and loops of length $\geq 7$ can be simplified to a multiple of $\connf^{\star 6}\left(\orig,\orig\right)$ using Lemma~\ref{lem:DiagramOrderGeneral}. Since we only need to bound finitely many such terms, we have the required 
    \begin{equation}
        \lambda\norm*{\sum^{n_0}_{n=3}\left(-1\right)^n\OpLace^{(n)}_{\lambda,L}}_{1\to 1} = \LandauBigO{\Omega(L)}.
    \end{equation}
    Finally the bounds on $\OpLace^{(n)}_{\lambda,L}$ we found in \eqref{eqn:triangleboundsLace} are sufficient to get the $\LandauBigO{\beta(L)^N}$ bound on the tail of the sum.
\end{proof}

\section{Estimating the Spectral Radius}
\label{sec:estimatingspectralradius}

\begin{definition}
    Recall that we use the notation that given a real-valued function $f$, $\left(f\right)_+$ is the positive part and $\left(f\right)_-$ is the absolute value of the negative part. That is, 
    \begin{align}
        \left(f\right)_+ &:= \frac{1}{2}\left(\abs*{f} + f\right),\\
        \left(f\right)_- &:= \frac{1}{2}\left(\abs*{f} - f\right).
    \end{align}
    This means that $f = \left(f\right)_+ - \left(f\right)_-$, and $\abs*{f} = \left(f\right)_+ + \left(f\right)_-$.

    Also for $L>0$, $\lambda\geq0$, and $n\in\N$ we let $\left(\Opconnf_L + \OpLace_{\lambda,n,L}\right)_+$ and $\left(\Opconnf_L + \OpLace_{\lambda,n,L}\right)_-$ denote the convolution operators using $\left(\connf_L+ \pi_{\lambda,n,L}\right)_+$ and $\left(\connf_L+ \pi_{\lambda,n,L}\right)_-$ respectively.
\end{definition}

\begin{lemma}
\label{lem:SpecRadiusNegative}
For all $L>0$, $\lambda\geq 0$, and $n\in\N$,
\begin{equation}
    \abs*{\rho_{1\to 1}\left(\Opconnf_L + \OpLace_{\lambda,n,L}\right) - \int_\HypDim\left(\connf_L + \pi_{\lambda,n,L}\right)\left(x,\orig\right)\mu\left(\dd x\right)} \leq 2\int_\HypDim\left(\connf_L + \pi_{\lambda,n,L}\right)_-\left(x,\orig\right)\mu\left(\dd x\right)
\end{equation}
\end{lemma}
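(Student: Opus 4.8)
The plan is to reduce the statement to the operator-norm formula of Lemma~\ref{lem:ConvolutionOperator} and the Gelfand formula \eqref{eqn:Gelfand}, using only that the kernel $g := \connf_L + \pi_{\lambda,n,L}$ is real-valued and isometry invariant. Write $g = (g)_+ - (g)_-$, so that $\abs*{g} = g + 2(g)_-$ pointwise, and set $G := \Opconnf_L + \OpLace_{\lambda,n,L}$, which is the convolution operator with kernel $g$; write $m := \int_\HypDim g(x,\orig)\,\mu(\dd x)$. We may assume $\int_\HypDim \abs*{g(x,\orig)}\,\mu(\dd x) < \infty$, since otherwise $\norm*{G}_{1\to 1} = \infty$ by Lemma~\ref{lem:ConvolutionOperator} and the statement is degenerate; then $\norm*{G}_{1\to 1} = \int_\HypDim \abs*{g(x,\orig)}\,\mu(\dd x)$ by the same lemma together with isometry invariance of $\abs*{g}$.

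For the upper bound I would use only $\rho_{1\to 1}(G) \le \norm*{G}_{1\to 1}$ (immediate from \eqref{eqn:Gelfand} and submultiplicativity of the norm) and the identity $\abs*{g} = g + 2(g)_-$:
\begin{equation*}
\rho_{1\to 1}(G) \;\le\; \int_\HypDim \abs*{g(x,\orig)}\,\mu(\dd x) \;=\; m + 2\int_\HypDim (g)_-(x,\orig)\,\mu(\dd x).
\end{equation*}

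For the lower bound, the one step requiring care is the convolution identity
\begin{equation*}
\int_\HypDim g^{\star k}(x,\orig)\,\mu(\dd x) = m^k \qquad \text{for every } k \in \N.
\end{equation*}
I would prove it by induction on $k$: by associativity of convolution, $g^{\star k}(x,\orig) = \int_\HypDim g(x,z)\,g^{\star(k-1)}(z,\orig)\,\mu(\dd z)$; integrating in $x$ and interchanging the order of integration — legitimate because $\int_\HypDim\!\int_\HypDim \abs*{g(x,z)}\,\abs*{g^{\star(k-1)}(z,\orig)}\,\mu(\dd z)\,\mu(\dd x) = \norm*{G}_{1\to 1}\,\norm*{G^{k-1}}_{1\to 1} < \infty$, where $\int_\HypDim \abs*{g(x,z)}\,\mu(\dd x) = \norm*{G}_{1\to 1}$ for every $z$ by isometry invariance — gives $\int_\HypDim g^{\star k}(x,\orig)\,\mu(\dd x) = m\int_\HypDim g^{\star(k-1)}(z,\orig)\,\mu(\dd z)$, again using that $\int_\HypDim g(x,z)\,\mu(\dd x)$ does not depend on $z$. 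Since $g^{\star k}$ is the kernel of $G^k$ and is itself isometry invariant (a convolution of isometry-invariant kernels is isometry invariant), Lemma~\ref{lem:ConvolutionOperator} yields $\norm*{G^k}_{1\to 1} = \int_\HypDim \abs*{g^{\star k}(x,\orig)}\,\mu(\dd x) \ge \abs*{\int_\HypDim g^{\star k}(x,\orig)\,\mu(\dd x)} = \abs*{m}^k$. Taking $k$-th roots and letting $k \to \infty$ in \eqref{eqn:Gelfand}, $\rho_{1\to 1}(G) \ge \abs*{m} \ge m$, hence $m - \rho_{1\to 1}(G) \le 0 \le 2\int_\HypDim (g)_-(x,\orig)\,\mu(\dd x)$. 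Combining this with the upper bound gives the claimed two-sided estimate.

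The only genuine obstacle is the identity $\int_\HypDim g^{\star k}(x,\orig)\,\mu(\dd x) = m^k$ and the Fubini interchange it relies on; this is the sole place where isometry invariance of $g$ enters, and it plays the role that evaluating the Fourier transform at the origin plays in the Euclidean lace-expansion arguments, so that no spherical transform is needed here. An alternative derivation of the lower bound would apply Lemma~\ref{lem:SpectrumPerturbation} with $A := (\Opconnf_L + \OpLace_{\lambda,n,L})_+$ and $B := -(\Opconnf_L + \OpLace_{\lambda,n,L})_-$ — which commute by Lemma~\ref{lem:IsoInvtoCommute} — together with the classical fact that the spectral radius of the positive operator $A$ on the Banach lattice $L^1(\HypDim)$ lies in its spectrum; the direct computation above is, however, shorter and self-contained.
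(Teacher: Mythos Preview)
Your proof is correct. The upper bound is identical to the paper's. For the lower bound the paper takes exactly the route you describe as an ``alternative'': it decomposes $G = (G)_+ - (G)_-$, uses Lemma~\ref{lem:IsoInvtoCommute} to see the two parts commute, then applies Lemma~\ref{lem:SpectrumPerturbation} together with Lemma~\ref{lem:spectralradiusofPositive} to obtain $\rho_{1\to1}(G) \ge \rho_{1\to1}((G)_+) - \norm*{(G)_-}_{1\to1} = \int (g)_+ - \int (g)_- = m$.

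Your primary argument instead bypasses the perturbation lemma entirely: you show directly that $\int g^{\star k}(x,\orig)\,\mu(\dd x) = m^k$ by isometry invariance, bound $\norm*{G^k}_{1\to1} \ge \abs*{m}^k$, and extract $\rho_{1\to1}(G) \ge \abs*{m}$ from the Gelfand formula. This is more elementary and self-contained --- it needs neither Lemma~\ref{lem:SpectrumPerturbation} nor Lemma~\ref{lem:spectralradiusofPositive} --- and in fact yields the marginally stronger conclusion $\rho_{1\to1}(G) \ge \abs*{m}$ rather than just $\rho_{1\to1}(G) \ge m$. The paper's route, on the other hand, is the one that generalises more readily when one later wants to compare spectral radii of two nearby operators rather than just bound one from below.
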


\begin{proof}
    From the sub-multiplicativity of the operator norm and the triangle inequality,
    \begin{equation}
        \rho_{1\to 1}\left(\Opconnf_L + \OpLace_{\lambda,n,L}\right) \leq \norm*{\Opconnf_L + \OpLace_{\lambda,n,L}}_{1\to 1}\leq \norm*{\left(\Opconnf_L + \OpLace_{\lambda,n,L}\right)_+}_{1\to 1} + \norm*{\left(\Opconnf_L + \OpLace_{\lambda,n,L}\right)_-}_{1\to 1}.
    \end{equation}
    Then from the isometry invariance of $\connf_L$ and $\pi_{\lambda,n,L}$,
    \begin{multline}
        \rho_{1\to 1}\left(\Opconnf_L + \OpLace_{\lambda,n,L}\right) \leq \int_\HypDim\left(\connf_L+ \pi_{\lambda,n,L}\right)_+\left(x,\orig\right)\mu\left(\dd x\right) + \int_\HypDim\left(\connf_L+ \pi_{\lambda,n,L}\right)_-\left(x,\orig\right)\mu\left(\dd x\right)\\
         = \int_\HypDim\left(\connf_L+ \pi_{\lambda,n,L}\right)\left(x,\orig\right)\mu\left(\dd x\right) + 2\int_\HypDim\left(\connf_L+ \pi_{\lambda,n,L}\right)_-\left(x,\orig\right)\mu\left(\dd x\right).
    \end{multline}

    Note that by isometry invariance and Lemma~\ref{lem:IsoInvtoCommute}, $\left(\Opconnf_L + \OpLace_{\lambda,n,L}\right)_+$ and $\left(\Opconnf_L + \OpLace_{\lambda,n,L}\right)_-$ commute. Then from Lemmas~\ref{lem:SpectrumPerturbation} and \ref{lem:spectralradiusofPositive}, 
    \begin{align}
        \rho_{1\to 1}\left(\Opconnf_L + \OpLace_{\lambda,n,L}\right) &\geq \rho_{1\to 1}\left(\left(\Opconnf_L + \OpLace_{\lambda,n,L}\right)_+\right) - \norm*{\left(\Opconnf_L + \OpLace_{\lambda,n,L}\right)_-}_{1\to 1}\nonumber\\
        & = \int_\HypDim\left(\connf_L+ \pi_{\lambda,n,L}\right)_+\left(x,\orig\right)\mu\left(\dd x\right) - \int_\HypDim\left(\connf_L+ \pi_{\lambda,n,L}\right)_-\left(x,\orig\right)\mu\left(\dd x\right) \nonumber\\
        & = \int_\HypDim\left(\connf_L+ \pi_{\lambda,n,L}\right)\left(x,\orig\right)\mu\left(\dd x\right),
    \end{align}
    as required.
\end{proof}

Recall there exists $C<\infty$ such that $\lambda_\mathrm{c}\left(L\right) \leq \frac{C}{\norm*{\Opconnf_L}_{1\to 1}}$ for $L$ sufficiently large (see Theorem~\ref{thm:TriangleSmall}). Then define
\begin{multline}
\label{eqn:ECAL}
    \Ecal(L):= \frac{1}{\norm*{\Opconnf_L}_{1\to 1}}\int_\HypDim \left( \frac{C}{\norm*{\Opconnf_L}_{1\to 1}}\connf_L^{\star 2}\cdot\connf_L\left(x,\orig\right)\right.\\\left. + \frac{C^2}{2\norm*{\Opconnf_L}_{1\to 1}^2}\connf_L^{\star 2}\cdot\connf_L^{\star 2}\left(x,\orig\right) - \frac{1}{2}\connf_L\left(x,\orig\right)\right)_+\mu\left(\dd x\right).
\end{multline}

\begin{lemma}\label{lem:BoundIntegralNegative}
    Suppose $\left\{\connf_L\right\}_{L>0}$ satisfies Assumption~\ref{Model_Assumption}. Then for $L$ sufficiently large and $\lambda\leq \lambda_\mathrm{c}(L)$,
    \begin{equation}
        \lambda\int_\HypDim\left(\connf_L + \pi_{\lambda,1,L}\right)_-\left(x,\orig\right)\mu\left(\dd x\right)
        \leq  C\Ecal(L) + \LandauBigO{\Omega(L)}.
    \end{equation}
\end{lemma}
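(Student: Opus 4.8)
The plan is to substitute the definitions \eqref{eqn:defn:omegazero}--\eqref{eqn:defn:omegaone} of $\omega^{(0)}_{\lambda,L}$ and $\omega^{(1)}_{\lambda,L}$ from Proposition~\ref{prop:PiBounds} into $\lambda\left(\connf_L + \pi_{\lambda,1,L}\right)$ so as to isolate an explicit ``main part'' plus an $L^1$-small error, and then to estimate the negative part of each piece. Since $\pi_{\lambda,1,L} = \pi^{(0)}_{\lambda,L} - \pi^{(1)}_{\lambda,L}$, rearranging \eqref{eqn:defn:omegazero} and \eqref{eqn:defn:omegaone} gives, for all $x\in\HypDim$,
\begin{equation*}
    \lambda\left(\connf_L + \pi_{\lambda,1,L}\right)\left(x,\orig\right) = M_{\lambda,L}(x) + \omega^{(0)}_{\lambda,L}(x) - \omega^{(1)}_{\lambda,L}(x),
\end{equation*}
where $M_{\lambda,L}(x) := \lambda\connf_L\left(x,\orig\right) - \lambda^2\connf_L^{\star 2}\cdot\connf_L\left(x,\orig\right) - \tfrac12\lambda^3\connf_L^{\star 2}\cdot\connf_L^{\star 2}\left(x,\orig\right) - \lambda^3\connf_L^{\star 3}\cdot\connf_L\left(x,\orig\right)$. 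Using $(\lambda f)_- = \lambda(f)_-$ for $\lambda\geq 0$, the pointwise bound $(a+b)_- \leq (a)_- + \abs*{b}$ together with the triangle inequality, and integrating, the left-hand side of the lemma is at most $\int_\HypDim\left(M_{\lambda,L}\right)_-\left(x\right)\mu(\dd x) + \norm*{\omega^{(0)}_{\lambda,L}}_1 + \norm*{\omega^{(1)}_{\lambda,L}}_1$, and the last two norms are $\LandauBigO{\Omega(L)}$ by Proposition~\ref{prop:PiBounds}, uniformly in $\lambda\leq\lambda_\mathrm{c}(L)$.

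It then remains to show $\int_\HypDim\left(M_{\lambda,L}\right)_-\mu(\dd x) \leq C\Ecal(L)$ for $L$ sufficiently large. The crucial step is to absorb the cubic term $\lambda^3\connf_L^{\star 3}\cdot\connf_L$ into a fraction of the linear term $\lambda\connf_L$: by Lemma~\ref{lem:DiagramOrderGeneral} (with $n=3$, $m=2$) together with $\connf_L^{\star 2}(\orig,\orig)\leq\norm*{\Opconnf_L}_{1\to 1}$ we have $\connf_L^{\star 3}(x,\orig)\leq\sup_{z}\connf_L^{\star 3}(z,\orig)\leq K\norm*{\Opconnf_L}_{2\to 2}\norm*{\Opconnf_L}_{1\to 1}$, so with $\lambda\leq\lambda_\mathrm{c}(L)\leq C\norm*{\Opconnf_L}_{1\to 1}^{-1}$ (Theorem~\ref{thm:TriangleSmall}) we obtain $\lambda^2\connf_L^{\star 3}(x,\orig)\leq C^2K\,\norm*{\Opconnf_L}_{2\to 2}/\norm*{\Opconnf_L}_{1\to 1}$, which is $\leq\tfrac12$ once $L$ is large by \eqref{eqn:normRatio}. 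Hence $\lambda^3\connf_L^{\star 3}\cdot\connf_L(x,\orig)\leq\tfrac12\lambda\connf_L(x,\orig)$ pointwise, and therefore
\begin{equation*}
    -M_{\lambda,L}(x) \leq -\tfrac12\lambda\connf_L\left(x,\orig\right) + \lambda^2\connf_L^{\star 2}\cdot\connf_L\left(x,\orig\right) + \tfrac12\lambda^3\connf_L^{\star 2}\cdot\connf_L^{\star 2}\left(x,\orig\right).
\end{equation*}
Bounding $\lambda^2\leq C\lambda\norm*{\Opconnf_L}_{1\to 1}^{-1}$ and $\lambda^3\leq C^2\lambda\norm*{\Opconnf_L}_{1\to 1}^{-2}$ and applying the monotone map $t\mapsto(t)_+$ shows that $\left(M_{\lambda,L}\right)_-(x)=\left(-M_{\lambda,L}\right)_+(x)$ is at most $\lambda$ times the nonnegative integrand appearing in the definition \eqref{eqn:ECAL} of $\Ecal(L)$; integrating and using $\lambda\norm*{\Opconnf_L}_{1\to 1}\leq C$ gives $\int_\HypDim\left(M_{\lambda,L}\right)_-\mu(\dd x)\leq C\Ecal(L)$, which finishes the argument.

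The step I expect to be the main obstacle is exactly the treatment of $\lambda^3\connf_L^{\star 3}\cdot\connf_L$. It is a genuine ``main'' contribution to $\pi^{(1)}_{\lambda,L}$, it is not matched by any explicit term inside the $(\cdot)_+$ that defines $\Ecal(L)$, and crude integration of it produces a $\norm*{\Opconnf_L}_{1\to 1}^{-3}\connf_L^{\star 4}(\orig,\orig)$-type quantity that is too large to be absorbed by $\Omega(L)$. The resolution above uses that this term carries a factor $\connf_L(x,\orig)$ --- so it is supported where the linear term is active --- and the smallness of the ratio $\norm*{\Opconnf_L}_{2\to 2}/\norm*{\Opconnf_L}_{1\to 1}$. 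A minor bookkeeping point is that Proposition~\ref{prop:PiBounds}, the estimate $\lambda_\mathrm{c}(L)\leq C\norm*{\Opconnf_L}_{1\to 1}^{-1}$, and the smallness of this ratio each hold only for $L$ past some threshold, so $L_0$ should be taken large enough for all of them simultaneously.
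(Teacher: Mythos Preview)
Your proof is correct and follows essentially the same route as the paper: decompose $\lambda(\connf_L+\pi_{\lambda,1,L})$ via \eqref{eqn:defn:omegazero}--\eqref{eqn:defn:omegaone}, peel off the $\omega$-errors using $(a+b)_-\le (a)_-+|b|$ and Proposition~\ref{prop:PiBounds}, and then show that the cubic term $\lambda^3\connf_L^{\star 3}\cdot\connf_L$ is dominated by $\tfrac12\lambda\connf_L$ so that what remains is exactly the $\Ecal(L)$ integrand. The only difference is in how that domination is obtained: the paper bounds $\lambda^2\connf_L^{\star 3}(x,\orig)\le\triangle_{\lambda,L}$ via $\connf_L\le\tau_{\lambda,L}$ and then invokes $\triangle_{\lambda_\mathrm{c}(L),L}\to 0$ from Theorem~\ref{thm:TriangleSmall}, whereas you go directly through Lemma~\ref{lem:DiagramOrderGeneral} and \eqref{eqn:normRatio}; both yield the same pointwise conclusion.
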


\begin{proof}
    This result mostly follows from Definition~\ref{def:LE:lace_expansion_coefficients} with Proposition~\ref{prop:PiBounds}. By applying these and observing $\left(f+g\right)_+\leq \left(f\right)_+ + \left(g\right)_+\leq \left(f\right)_+ + \abs*{g}$, we get
    \begin{align}
        &\lambda\int_\HypDim\left(\connf_L + \pi_{\lambda,1,L}\right)_-\left(x,\orig\right)\mu\left(\dd x\right) \nonumber\\
        &\hspace{1cm}= \int_\HypDim\left(\lambda\connf_L - \lambda^2\connf_L^{\star 2}\cdot \connf_L - \frac{1}{2}\lambda^3\connf_L^{\star 2}\cdot\connf_L^{\star 2} - \lambda^3\connf^{\star 3}\cdot \connf_L + \omega^{(0)}_{\lambda,L} - \omega^{(1)}_{\lambda,L}\right)_-\left(x,\orig\right)\mu\left(\dd x\right)\nonumber\\
        &\hspace{1cm} \leq \lambda\int_\HypDim\left(\lambda\connf_L^{\star 2}\cdot \connf_L + \frac{1}{2}\lambda^2\connf_L^{\star 2}\cdot\connf_L^{\star 2} + \lambda^2\connf^{\star 3}\cdot \connf_L - \connf_L\right)_+\left(x,\orig\right)\mu\left(\dd x\right)\nonumber\\
        &\hspace{10cm}+ \norm*{\omega^{(0)}_{\lambda,L}}_1 + \norm*{\omega^{(1)}_{\lambda, L}}_1.
    \end{align}
    Now observe that $\lambda^2\esssup_{x\in\HypDim}\connf^{\star 3}_L\left(x,\orig\right) \leq \triangle_{\lambda,L}$. Therefore
    \begin{align}
        &\lambda\int_\HypDim\left(\connf_L + \pi_{\lambda,1,L}\right)_-\left(x,\orig\right)\mu\left(\dd x\right)\nonumber\\
        &\hspace{1cm} \leq \lambda\int_\HypDim\left(\lambda\connf_L^{\star 2}\cdot \connf_L + \frac{1}{2}\lambda^2\connf_L^{\star 2}\cdot\connf_L^{\star 2} - \frac{1}{2}\connf_L\right)_+\left(x,\orig\right)\mu\left(\dd x\right) \nonumber\\
        &\hspace{4cm}+ \lambda\int_\HypDim\left(\triangle_{\lambda,L}\connf_L - \frac{1}{2}\connf_L\right)_+\left(x,\orig\right)\mu\left(\dd x\right) + \LandauBigO{\Omega(L)}.
    \end{align}
    Since $\triangle_{\lambda,L}$ is monotone increasing in $\lambda$ and Theorem~\ref{thm:TriangleSmall} shows $\lim_{L\to\infty}\triangle_{\lambda_\mathrm{c}(L),L}=0$,
    \begin{equation}
        \lambda\int_\HypDim\left(\triangle_{\lambda,L}\connf_L - \frac{1}{2}\connf_L\right)_+\left(x,\orig\right)\mu\left(\dd x\right) = \lambda\left(\triangle_{\lambda,L} - \frac{1}{2}\right)_+\int_\HypDim\connf_L\left(x,\orig\right)\mu\left(\dd x\right)= 0
    \end{equation}
    for $L$ sufficiently large and $\lambda\leq \lambda_\mathrm{c}(L)$. Finally using $\lambda_\mathrm{c}(L) \leq C\norm*{\Opconnf_L}_{1\to 1}^{-1}$ (see \eqref{eqn:criticalintensityBound} in Theorem~\ref{thm:TriangleSmall}) and replacing each remaining factors of $\lambda$ with this upper bound gives the result. 
\end{proof}

\begin{prop}
\label{prop:radiusExpansion}
    Suppose $\left\{\connf_L\right\}_{L>0}$ satisfies Assumption~\ref{Model_Assumption}. Then there exists $C_0<\infty$ such that for $L$ sufficiently large and $\lambda\leq \lambda_\mathrm{c}(L)$,
    \begin{multline}
        \abs*{\lambda\rho_{1\to 1}\left(\Opconnf_L + \OpLace_{\lambda,L}\right) - \lambda\norm*{\Opconnf_L}_{1\to 1} + \lambda^2\connf^{\star 3}\left(\orig,\orig\right) +\frac{3}{2}\lambda^3\connf^{\star 4}\left(\orig,\orig\right)} \\\leq C_0\left(\Omega(L)+\Ecal(L)+\beta(L)^N\right)
    \end{multline}
    for any $N\geq 2$.
\end{prop}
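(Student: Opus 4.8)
The plan is to combine Lemma~\ref{lem:SpecRadiusNegative} (after passing to the limit $n\to\infty$), an explicit evaluation of the ``diagonal'' integral $\lambda\int_\HypDim\left(\connf_L+\pi_{\lambda,L}\right)\left(x,\orig\right)\mu\left(\dd x\right)$ using the definitions of $\omega^{(0)}_{\lambda,L}$ and $\omega^{(1)}_{\lambda,L}$, and the negative‑part bound of Lemma~\ref{lem:BoundIntegralNegative}, with Proposition~\ref{prop:PiBounds} used throughout to absorb the leftover terms into $\Omega(L)+\Ecal(L)+\beta(L)^N$. First I would take $n\to\infty$ in the inequality of Lemma~\ref{lem:SpecRadiusNegative}. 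By Proposition~\ref{prop:LaceExpansion}, $\OpLace_{\lambda,n,L}\to\OpLace_{\lambda,L}$ in $\norm*{\cdot}_{1\to 1}$, so by Lemma~\ref{lem:ConvolutionOperator} and isometry invariance $\pi_{\lambda,n,L}\left(\cdot,\orig\right)\to\pi_{\lambda,L}\left(\cdot,\orig\right)$ in $L^1\left(\mu\right)$; since $g\mapsto\left(g\right)_-$ and $g\mapsto\int_\HypDim g$ are continuous on $L^1$, both sides converge, and since $\Opconnf_L+\OpLace_{\lambda,n,L}$ and $\Opconnf_L+\OpLace_{\lambda,L}$ are isometry‑invariant convolution operators they commute (Lemma~\ref{lem:IsoInvtoCommute}), so Lemma~\ref{lem:SpectrumPerturbation} gives $\rho_{1\to 1}\left(\Opconnf_L+\OpLace_{\lambda,n,L}\right)\to\rho_{1\to 1}\left(\Opconnf_L+\OpLace_{\lambda,L}\right)$. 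This yields
\[
\abs*{\lambda\rho_{1\to 1}\left(\Opconnf_L+\OpLace_{\lambda,L}\right)-\lambda\int_\HypDim\left(\connf_L+\pi_{\lambda,L}\right)\left(x,\orig\right)\mu\left(\dd x\right)}\leq 2\lambda\int_\HypDim\left(\connf_L+\pi_{\lambda,L}\right)_-\left(x,\orig\right)\mu\left(\dd x\right).
\]

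Next I would compute the diagonal integral. Writing $\lambda\pi_{\lambda,L}=\lambda\pi^{(0)}_{\lambda,L}-\lambda\pi^{(1)}_{\lambda,L}+\sum_{m\geq 2}(-1)^m\lambda\pi^{(m)}_{\lambda,L}$, substituting \eqref{eqn:defn:omegazero}--\eqref{eqn:defn:omegaone}, integrating the explicit diagrams over $\HypDim$, and using $\int_\HypDim\connf_L^{\star a}\left(x,\orig\right)\connf_L^{\star b}\left(x,\orig\right)\mu\left(\dd x\right)=\connf_L^{\star\left(a+b\right)}\left(\orig,\orig\right)$ (isometry invariance and associativity of $\star$), one gets $\lambda\int_\HypDim\pi^{(0)}_{\lambda,L}\left(x,\orig\right)\mu\left(\dd x\right)=\tfrac12\lambda^3\connf_L^{\star 4}\left(\orig,\orig\right)+\int_\HypDim\omega^{(0)}_{\lambda,L}\left(x\right)\mu\left(\dd x\right)$ and $\lambda\int_\HypDim\pi^{(1)}_{\lambda,L}\left(x,\orig\right)\mu\left(\dd x\right)=\lambda^2\connf_L^{\star 3}\left(\orig,\orig\right)+2\lambda^3\connf_L^{\star 4}\left(\orig,\orig\right)+\int_\HypDim\omega^{(1)}_{\lambda,L}\left(x\right)\mu\left(\dd x\right)$, hence
\[
\lambda\int_\HypDim\left(\connf_L+\pi_{\lambda,L}\right)\left(x,\orig\right)\mu\left(\dd x\right)=\lambda\norm*{\Opconnf_L}_{1\to 1}-\lambda^2\connf_L^{\star 3}\left(\orig,\orig\right)-\tfrac32\lambda^3\connf_L^{\star 4}\left(\orig,\orig\right)+\mathrm{Err},
\]
where $\mathrm{Err}=\int_\HypDim\omega^{(0)}_{\lambda,L}\left(x\right)\mu\left(\dd x\right)-\int_\HypDim\omega^{(1)}_{\lambda,L}\left(x\right)\mu\left(\dd x\right)+\sum_{m\geq 2}(-1)^m\lambda\int_\HypDim\pi^{(m)}_{\lambda,L}\left(x,\orig\right)\mu\left(\dd x\right)$. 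By Proposition~\ref{prop:PiBounds} we have $\norm*{\omega^{(0)}_{\lambda,L}}_1,\norm*{\omega^{(1)}_{\lambda,L}}_1=\LandauBigO{\Omega(L)}$, and splitting $\sum_{m\geq 2}=\sum_{2\leq m\leq N-1}+\sum_{m\geq N}$ the first part is $\LandauBigO{\Omega(L)}$ (the estimate of Step~5 of the proof of Proposition~\ref{prop:PiBounds}, which applies equally at $m=2$) while the second is $\LandauBigO{\beta(L)^N}$; thus $\abs*{\mathrm{Err}}=\LandauBigO{\Omega(L)+\beta(L)^N}$ for every $N\geq 2$.

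For the negative‑part term I would split $\pi_{\lambda,L}=\pi_{\lambda,1,L}+\sum_{m\geq 2}(-1)^m\pi^{(m)}_{\lambda,L}$ and use $\left(f+g\right)_-\leq\left(f\right)_-+\abs*{g}$ to obtain
\[
\lambda\int_\HypDim\left(\connf_L+\pi_{\lambda,L}\right)_-\left(x,\orig\right)\mu\left(\dd x\right)\leq\lambda\int_\HypDim\left(\connf_L+\pi_{\lambda,1,L}\right)_-\left(x,\orig\right)\mu\left(\dd x\right)+\lambda\norm*{\sum_{m\geq 2}(-1)^m\OpLace^{(m)}_{\lambda,L}}_{1\to 1},
\]
the first summand being $\leq C\Ecal(L)+\LandauBigO{\Omega(L)}$ by Lemma~\ref{lem:BoundIntegralNegative} and the second $\LandauBigO{\Omega(L)+\beta(L)^N}$ by Proposition~\ref{prop:PiBounds}, exactly as for $\mathrm{Err}$. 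Combining the three displays with the triangle inequality gives the stated bound with a suitable $C_0$.

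The step I expect to be the main obstacle is not any single estimate but the bookkeeping of the residual terms: checking that every leftover piece --- the tail of the alternating lace series, the finitely many coefficient operators $\OpLace^{(m)}_{\lambda,L}$ with $2\leq m\leq N-1$, and the ``error diagrams'' making up $\omega^{(0)}_{\lambda,L}$ and $\omega^{(1)}_{\lambda,L}$ --- is genuinely $\LandauBigO{\Omega(L)+\Ecal(L)+\beta(L)^N}$ uniformly in $\lambda\leq\lambda_\mathrm{c}(L)$. This is precisely what Lemma~\ref{lem:BoundIntegralNegative} and Proposition~\ref{prop:PiBounds} are designed to deliver (together with $\lambda_\mathrm{c}(L)\norm*{\Opconnf_L}_{1\to1}=\LandauBigO{1}$ from Theorem~\ref{thm:TriangleSmall} to discard surplus factors of $\lambda$), so once the $n\to\infty$ passage in Lemma~\ref{lem:SpecRadiusNegative} is justified the remainder of the argument is a careful assembly rather than a new idea.
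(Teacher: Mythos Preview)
Your proposal is correct and uses the same ingredients as the paper (Lemma~\ref{lem:SpecRadiusNegative}, Lemma~\ref{lem:BoundIntegralNegative}, Proposition~\ref{prop:PiBounds}), with only a minor organisational difference. You first pass to the limit $n\to\infty$ in Lemma~\ref{lem:SpecRadiusNegative} and then split off the tail $\sum_{m\geq 2}(-1)^m\OpLace^{(m)}_{\lambda,L}$ inside the integrals; the paper instead splits at the operator level,
\[
\Opconnf_L+\OpLace_{\lambda,L}=\left(\Opconnf_L+\OpLace_{\lambda,1,L}\right)+\sum_{n\geq 2}(-1)^n\OpLace^{(n)}_{\lambda,L},
\]
uses Lemma~\ref{lem:SpectrumPerturbation} once to discard the tail from the spectral radius, and then applies Lemma~\ref{lem:SpecRadiusNegative} directly at the finite truncation $n=1$. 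The paper's ordering avoids your limit argument entirely (no need to justify continuity of $\rho_{1\to 1}$ or of $g\mapsto(g)_-$ under $L^1$-convergence), so it is a little cleaner, but your route is perfectly valid and the arithmetic of the diagonal integral and the error bookkeeping are identical.
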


\begin{proof}
    Begin by writing $\Opconnf_L + \OpLace_{\lambda,L} = \left(\Opconnf_L + \OpLace_{\lambda,1,L}\right) + \sum^\infty_{n=2}\left(-1\right)^n\OpLace^{(n)}_{\lambda,L}$. By isometry invariance and Lemma~\ref{lem:IsoInvtoCommute}, $\Opconnf_L + \OpLace_{\lambda,1,L}$ and $\sum^\infty_{n=2}\left(-1\right)^n\OpLace^{(n)}_{\lambda,L}$ commute and therefore Proposition~\ref{prop:PiBounds} gives
    \begin{equation}
        \abs*{\rho_{1\to 1}\left(\Opconnf_L + \OpLace_{\lambda,L}\right) - \rho_{1\to 1}\left(\Opconnf_L + \OpLace_{\lambda,1,L}\right)} \leq \norm*{\sum^\infty_{n=2}\left(-1\right)^n\OpLace^{(n)}_{\lambda,L}}_{1\to 1}
        =\LandauBigO{\Omega(L) + \beta(L)^N}
    \end{equation}
    for any $N\geq 2$.
    Then by lemmas~\ref{lem:SpecRadiusNegative} and \ref{lem:BoundIntegralNegative} we have
    \begin{equation}
        \abs*{\rho_{1\to 1}\left(\Opconnf_L + \OpLace_{\lambda,1,L}\right) - \int_\HypDim\left(\connf + \pi_{\lambda,1,L}\right)\left(x,\orig\right)\dd x} = \LandauBigO{\Ecal(L) + \Omega(L)}.
    \end{equation}
    Then Proposition~\ref{prop:PiBounds} gives
    \begin{equation}
        \int_\HypDim\left(\connf + \pi_{\lambda,1,L}\right)\left(x,\orig\right)\dd x = \lambda\int_\HypDim\connf_L\left(x,\orig\right)\mu\left(\dd x\right) - \lambda^2\connf^{\star 3}\left(\orig,\orig\right) -\frac{3}{2}\lambda^3\connf^{\star 4}\left(\orig,\orig\right)
        + \LandauBigO{\Omega(L)}
    \end{equation}
    as required.
\end{proof}

Now we ready to prove the main theorem of the paper.

    \begin{theorem}
    \label{thm:CritIntensityExpansionFull}
        Suppose $\left\{\connf_L\right\}_{L>0}$ satisfies Assumption~\ref{Model_Assumption}. Then for all $d\geq 2$ and $N\in\N$,
        \begin{multline}\label{eqn:TheoremExpansionFull}
            \norm*{\Opconnf_L}_{1\to 1}\lambda_\mathrm{c}(L) = 1 + \frac{\connf_L^{\star 3}(\orig,\orig)}{\norm*{\Opconnf_L}_{1\to 1}^2} + \frac{3}{2}\frac{\connf_L^{\star 4}(\orig,\orig)}{\norm*{\Opconnf_L}_{1\to 1}^3} \\+ \LandauBigO{\left(\frac{\connf_L^{\star 3}(\orig,\orig)}{\norm*{\Opconnf_L}_{1\to 1}^2}\right)^2 + \left(\frac{\connf_L^{\star 4}(\orig,\orig)}{\norm*{\Opconnf_L}_{1\to 1}^3}\right)^2  + \Omega(L) + \Ecal(L) + \beta(L)^N}
        \end{multline}
        as $L\to\infty$.
    \end{theorem}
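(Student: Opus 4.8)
The plan is to evaluate Proposition~\ref{prop:radiusExpansion} at the single point $\lambda=\lambda_\mathrm{c}(L)$ and then invert the resulting polynomial relation for $\lambda_\mathrm{c}(L)$ by a short two-stage bootstrap. First I would invoke Lemma~\ref{lem:CriticalEquation}, which for $L\geq L_0$ gives the exact identity $\lambda_\mathrm{c}(L)\rho_{1\to 1}\bigl(\Opconnf_L+\OpLace_{\lambda_\mathrm{c}(L),L}\bigr)=1$. Since Proposition~\ref{prop:radiusExpansion} is valid at $\lambda=\lambda_\mathrm{c}(L)$, substituting this identity yields
\begin{equation*}
    \abs*{1-\lambda_\mathrm{c}(L)\norm*{\Opconnf_L}_{1\to 1}+\lambda_\mathrm{c}(L)^2\connf_L^{\star 3}(\orig,\orig)+\tfrac{3}{2}\lambda_\mathrm{c}(L)^3\connf_L^{\star 4}(\orig,\orig)}\leq C_0\bigl(\Omega(L)+\Ecal(L)+\beta(L)^N\bigr)
\end{equation*}
for any $N\geq 2$; the case $N=1$ of the theorem is then automatic since $\beta(L)\to 0$, so $\beta(L)^2=\LandauBigO{\beta(L)}$. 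Abbreviating $x:=\norm*{\Opconnf_L}_{1\to 1}\lambda_\mathrm{c}(L)$, $a_3:=\connf_L^{\star 3}(\orig,\orig)/\norm*{\Opconnf_L}_{1\to 1}^2$ and $a_4:=\connf_L^{\star 4}(\orig,\orig)/\norm*{\Opconnf_L}_{1\to 1}^3$, this reads $x=1+x^2a_3+\tfrac32 x^3a_4+\LandauBigO{\Omega(L)+\Ecal(L)+\beta(L)^N}$.

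Next I would bring in the a priori estimates. Theorem~\ref{thm:TriangleSmall} (via \eqref{eqn:criticalintensityBound}) gives $x=\LandauBigO{1}$, while Corollary~\ref{cor:diagramBounds} together with \eqref{eqn:normRatio} gives $a_3=\LandauBigO{\norm*{\Opconnf_L}_{2\to 2}/\norm*{\Opconnf_L}_{1\to 1}}=o(1)$ and $a_4=\LandauBigO{(\norm*{\Opconnf_L}_{2\to 2}/\norm*{\Opconnf_L}_{1\to 1})^2}=o(1)$ as $L\to\infty$; note also $a_3,a_4\geq 0$ since $\connf_L^{\star 3},\connf_L^{\star 4}\geq 0$. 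Since $x$ is bounded and $a_3,a_4\to 0$, the displayed relation forces $x=1+o(1)$.

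With $x=1+o(1)$ in hand I would run one more round of the bootstrap to pin down the error order. Write $x^2a_3=a_3+(x-1)(x+1)a_3$ and $x^3a_4=a_4+(x^3-1)a_4$. The first-order form of the relation already yields $x-1=\LandauBigO{a_3+a_4+\Omega(L)+\Ecal(L)+\beta(L)^N}$, with $x+1=\LandauBigO{1}$ and $x^3-1=\LandauBigO{x-1}$. Hence $(x-1)(x+1)a_3=\LandauBigO{a_3^2+a_3a_4+\Omega(L)+\Ecal(L)+\beta(L)^N}$ — here $a_3\leq 1$ for large $L$ lets one absorb $a_3\Omega(L)$ into $\Omega(L)$, and similarly for the remaining products — and likewise $(x^3-1)a_4=\LandauBigO{a_4^2+a_3a_4+\Omega(L)+\Ecal(L)+\beta(L)^N}$. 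Substituting back and using $a_3a_4\leq\tfrac12(a_3^2+a_4^2)$ gives
\begin{equation*}
    x=1+a_3+\tfrac32 a_4+\LandauBigO{a_3^2+a_4^2+\Omega(L)+\Ecal(L)+\beta(L)^N},
\end{equation*}
which is precisely \eqref{eqn:TheoremExpansionFull} once the definitions of $x$, $a_3$, $a_4$ are unwound.

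All the substantial analytic work — the lace expansion, the characterisation of $\lambda_\mathrm{c}$ via the $L^1\to L^1$ spectral radius, and the pointwise expansion of the lace coefficients — has been completed in the preceding sections, so there is no serious obstacle left in this last step. The only point requiring care is the bookkeeping in the two-stage bootstrap: one must check that every cross term ($a_3a_4$, $a_3\Omega(L)$, $a_4\Ecal(L)$, $a_3\beta(L)^N$, and so on) is dominated by a term already listed in the stated error, which is exactly what $a_3,a_4,\Omega(L),\Ecal(L),\beta(L)\to 0$ guarantees, since each such product is then bounded by one of its own factors.
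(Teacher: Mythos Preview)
Your proposal is correct and follows essentially the same approach as the paper: both evaluate Proposition~\ref{prop:radiusExpansion} at $\lambda=\lambda_\mathrm{c}(L)$, use Lemma~\ref{lem:CriticalEquation} to set the left-hand side equal to $1$, and then iterate the resulting polynomial relation once (using $x=\LandauBigO{1}$ and $a_3,a_4=o(1)$ from Theorem~\ref{thm:TriangleSmall} and Corollary~\ref{cor:diagramBounds}) to extract the claimed expansion. The paper substitutes the expression for $\lambda_\mathrm{c} a$ back into itself, whereas you decompose $x^2a_3=a_3+(x^2-1)a_3$ explicitly, but this is the same manipulation written differently; your handling of the $N=1$ edge case and the cross-term bookkeeping is also fine.
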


\begin{proof}
    From Proposition~\ref{prop:radiusExpansion} we have
    \begin{equation}
        \lambda\rho_{1\to 1}\left(\Opconnf_L + \OpLace_{\lambda,L}\right) = \lambda a - \lambda^2 b - \lambda^3 c +  r,
    \end{equation}
    where $a=\norm*{\Opconnf_L}_{1\to 1}$, $b= \connf^{\star 3}\left(\orig,\orig\right)$, $c=\frac{3}{2}\connf^{\star 4}\left(\orig,\orig\right)$, and the remainder term is given by $r=\LandauBigO{\Omega(L) + \Ecal(L) + \beta(L)^N}$. From Corollary~\ref{cor:diagramBounds} and \eqref{eqn:normRatio} in Theorem~\ref{thm:TriangleSmall}, we have $\frac{b}{a^2}\ll 1$, $\frac{c}{a^3}\ll 1$, $r\ll 1$, and $\lambda_\mathrm{c}(L)=\LandauBigO{a^{-1}}$. Then from Lemma~\ref{lem:CriticalEquation} we have
    \begin{align}
        \lambda_\mathrm{c}a &= 1 + \lambda_\mathrm{c}^2b + \lambda_\mathrm{c} ^3 c -  r \nonumber\\
        & = 1 + \frac{b}{a^2}\left(1 + \lambda_\mathrm{c}^2b + \lambda_\mathrm{c} ^3 c - r\right)^2 + \frac{c}{a^3}\left(1 + \lambda_\mathrm{c}^2b + \lambda_\mathrm{c} ^3 c -  r\right)^3 -r \nonumber\\
        & = 1 + \frac{b}{a^2}\left(1 + \LandauBigO{\frac{b}{a^2}} + \LandauBigO{\frac{c}{a^3}} -r\right)^2 + \frac{c}{a^3}\left(1 + \LandauBigO{\frac{b}{a^2}} + \LandauBigO{\frac{c}{a^3}} -r\right)^3 -r\nonumber\\
        & = 1 + \frac{b}{a^2} + \frac{c}{a^3} + \LandauBigO{ r + \frac{b^2}{a^4} + \frac{c^2}{a^6}},
    \end{align}
    as required.
\end{proof}

\begin{proof}[Proof of Theorem~\ref{thm:CritIntensityExpansion}]
Corollary~\ref{cor:diagramBounds} lets us simplify Theorem~\ref{thm:CritIntensityExpansionFull} by bounding many of the terms in \eqref{eqn:TheoremExpansionFull}. This quickly gives us
\begin{equation}
    \norm*{\Opconnf_L}_{1\to 1}\lambda_\mathrm{c}(L) = 1 +\LandauBigO{\frac{\norm*{\Opconnf_L}_{2\to 2}}{\norm*{\Opconnf_L}_{1\to 1}} + \Ecal(L) + \beta(L)^N}.
\end{equation}
Now observe that if one omits the $-\frac{1}{2}\connf_L$ term from the definition of $\Ecal(L)$, one gets the bound
\begin{equation}
    \Ecal(L) = \LandauBigO{\frac{\connf_L^{\star 3}(\orig,\orig)}{\norm*{\Opconnf_L}_{1\to 1}^2} + \frac{\connf_L^{\star 4}(\orig,\orig)}{\norm*{\Opconnf_L}_{1\to 1}^3}} = \LandauBigO{\frac{\norm*{\Opconnf_L}_{2\to 2}}{\norm*{\Opconnf_L}_{1\to 1}}},
\end{equation}
and Theorem~\ref{thm:TriangleSmall} implies
\begin{equation}
    \beta(L) = \LandauBigO{\sqrt{\frac{\norm*{\Opconnf_L}_{2\to 2}}{\norm*{\Opconnf_L}_{1\to 1}}}}.
\end{equation}
Therefore
\begin{equation}
    \norm*{\Opconnf_L}_{1\to 1}\lambda_\mathrm{c}(L) = 1 +\LandauBigO{\frac{\norm*{\Opconnf_L}_{2\to 2}}{\norm*{\Opconnf_L}_{1\to 1}} + \left(\frac{\norm*{\Opconnf_L}_{2\to 2}}{\norm*{\Opconnf_L}_{1\to 1}}\right)^\frac{N}{2}} = 1+ \LandauBigO{\frac{\norm*{\Opconnf_L}_{2\to 2}}{\norm*{\Opconnf_L}_{1\to 1}}},
\end{equation}
if we choose $N\geq 2$.

For the $L^1\to L^1$ operator norm on the left hand side, we use Lemma~\ref{lem:ConvolutionOperator} to get
\begin{equation}
    \norm*{\Opconnf_L}_{1\to 1} = \int_\HypDim\connf_L\left(x,\orig\right)\mu\left(\dd x\right).
\end{equation}

Finally, Theorem~\ref{thm:TriangleSmall} states that
\begin{equation}
    \frac{\norm*{\Opconnf_L}_{2\to 2}}{\norm*{\Opconnf_L}_{1\to 1}} = o\left(1\right)
\end{equation}
as $L\to\infty$, and the result follows.
\end{proof}

\section{Calculations for Specific Models}
\label{sec:CalcSpecific}

Theorem~\ref{thm:CritIntensityExpansionFull} has a number of terms in it, and it is not immediately obvious which terms will be the most relevant for any particular model. Corollary~\ref{cor:diagramBounds} gave a partial ordering to the terms, but this is far from complete. Clearly evaluating these terms for specific models reveals the ordering (we do this is Sections~\ref{sec:CalcBooleanDisc} and \ref{sec:CalcHeatKernel} for the Boolean disc RCM and heat kernel RCMs respectively), but we can also impose some order if we consider the family of models which we call `non-negative definite' RCMs.

\subsection{`Non-Negative Definite' RCMs}
\label{sec:CalcNonNegative}

\begin{definition}[Non-Negative Definite Models]\label{defn:non-negativeDefinite}
        Given $f\in L^2\left(\HypDim\right)$, let 
        \begin{equation}
            \left<f,\Opconnf_L f\right> := \int_\HypDim\int_\HypDim f(x) \connf_L\left(x,y\right)\overline{f(y)}\mu\left(\dd y\right) \mu\left(\dd x\right),
        \end{equation}
        where $\overline{f(y)}$ denotes the complex conjugate of $f(y)$.
        Note that since $\connf_L(x,y)=\connf_L(y,x)\in\R$ for all $x,y\in\HypDim$, we have $\left<f,\Opconnf_L f\right>\in\R$.
        A hyperbolic RCM is called \emph{non-negative definite} if the adjacency function $\connf_L$ is non-negative definite for all $L$ sufficiently large. An adjacency function $\connf_L$ is non-negative definite if it satisfies
        \begin{equation}\label{eqn:integralnon-negativedefinite}
            \left<f,\Opconnf_L f\right>  \geq 0
        \end{equation}
        for all $f\in L^2\left(\HypDim\right)$.
    \end{definition}

    \begin{observation}
        The heat kernel RCM is an instance of a non-negative definite model.
        For $t>0$, let $\kappa_t\colon L^2\left(\HypThree\right)\to L^2\left(\HypThree\right)$ denote the convolution operator associated with $K_3\left(t,\cdot,\cdot\right)$. As a heat kernel, $\kappa_t$ satisfies the semi-group property, and the symmetry of $K_3$ in $x$ and $y$ implies that $\kappa_t$ is self-adjoint.
    Therefore
    \begin{equation}
        \left<f, \kappa_t f\right> = \left<\kappa_{\frac{t}{2}} f, \kappa_{\frac{t}{2}}f\right> = \norm*{\kappa_{\frac{t}{2}} f}_2^2\geq 0.
    \end{equation}
    Therefore $\kappa_t$ is non-negative definite for all $t>0$.

    \end{observation}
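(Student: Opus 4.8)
The plan is to reduce the claim to the non-negativity of the hyperbolic heat semigroup, and then to deduce that from the semigroup property together with self-adjointness. First I would record the bookkeeping step linking the observation to the model: comparing the formula for $\connf_L$ with that for $K_3$, one reads off that $\connf_L(x,y) = \Acal_L\, K_3(L,x,y)$, so that as convolution operators $\Opconnf_L = \Acal_L\,\kappa_L$. Since $\Acal_L > 0$, non-negative definiteness of $\Opconnf_L$ is equivalent to that of $\kappa_L$, so it suffices to prove that $\kappa_t$ is non-negative definite for every $t > 0$.

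Next I would verify that each $\kappa_t$ is a genuinely bounded operator on $L^2(\HypThree)$, so that the operator manipulations below are legitimate. Since $K_3(t,\cdot,\cdot) \geq 0$ and $\int_{\HypThree} K_3(t,x,y)\,\mu(\dd x) = 1$, Lemma~\ref{lem:ConvolutionOperator} gives $\norm*{\kappa_t}_{1\to 1} = 1$; the symmetry $K_3(t,x,y) = K_3(t,y,x)$ together with the interpolation part of Lemma~\ref{lem:ConvolutionOperator} then gives $\norm*{\kappa_t}_{2\to 2} \leq 1$. That same symmetry makes $\kappa_t$ self-adjoint on $L^2(\HypThree)$.

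Then I would invoke the semigroup identity for the heat kernel, $\int_{\HypThree} K_3(s,x,z)\,K_3(t,z,y)\,\mu(\dd z) = K_3(s+t,x,y)$ for $s,t>0$, i.e.\ $\kappa_s \kappa_t = \kappa_{s+t}$ (standard for the heat kernel on a complete Riemannian manifold; see \cite{davies1988heat}). Applying it with both arguments equal to $t/2$ yields $\kappa_t = \kappa_{t/2}\kappa_{t/2} = \kappa_{t/2}^{\,*}\,\kappa_{t/2}$, so that for every $f \in L^2(\HypThree)$,
\[
  \left\langle f, \kappa_t f\right\rangle = \left\langle \kappa_{t/2}f, \kappa_{t/2}f\right\rangle = \norm*{\kappa_{t/2} f}_2^2 \geq 0 .
\]
Taking $t = L$ and multiplying by $\Acal_L > 0$ gives $\left\langle f, \Opconnf_L f\right\rangle \geq 0$, which is exactly \eqref{eqn:integralnon-negativedefinite}, so the heat kernel RCM is non-negative definite.

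The only non-elementary ingredient is the semigroup identity; everything else is routine. If one preferred to avoid quoting it, an alternative is to use the spherical transform: by Lemma~\ref{lem:InvMultPlancherel} and Lemma~\ref{lem:RangeSphericalTransform}, $\left\langle f, \Opconnf_L f\right\rangle = \frac{1}{w}\int_{\mathbb{S}^{2}}\int_\R \widetilde{\connf}_L(s)\,\abs{\widetilde{f}(s,b)}^2\,\abs{\mathbf{c}(s)}^{-2}\,\dd s\,\dd b$, so non-negative definiteness is equivalent to $\widetilde{\connf}_L(s) \geq 0$ for a.e.\ $s$; one then computes the spherical transform of the $\HypThree$ heat kernel explicitly (it is a positive Gaussian in $s$) and reads off positivity. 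I expect the semigroup route to be the cleaner one, and the point to be careful about is precisely the $L^2$-boundedness check above, since without it ``self-adjoint'' and the factorisation $\kappa_{t/2}\kappa_{t/2} = \kappa_{t/2}^{*}\kappa_{t/2}$ would be unjustified.
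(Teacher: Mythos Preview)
Your proposal is correct and follows essentially the same approach as the paper: the paper's argument is precisely the semigroup-plus-self-adjointness factorisation $\langle f,\kappa_t f\rangle = \langle \kappa_{t/2}f,\kappa_{t/2}f\rangle = \norm{\kappa_{t/2}f}_2^2 \geq 0$, and your write-up supplies the same computation together with some additional bookkeeping (the link $\Opconnf_L = \Acal_L\kappa_L$ and the $L^2$-boundedness check) that the paper leaves implicit.
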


    In Lemma~\ref{lem:RangeSphericalTransformExpand} below, it is proven that for non-negative definite $\connf_L$, the spherical transform satisfies
        \begin{equation}
            \widetilde{\connf}_L(s)\in\left[0,\infty\right)
        \end{equation}
    for Lebesgue-almost all $s\in \R$. A consequence of this is the following lemma.

\begin{lemma}
    \label{lem:deconvolvenon-negative}
    Suppose $\left\{\connf_L\right\}_{L>0}$ satisfies Assumption~\ref{enum:AssumptionFiniteDegree} and that $\widetilde{\connf}_L(s)\geq 0$ for all $s\in\R$ and $L>0$. Then there exists $K<\infty$ such that for all $L>0$ and $m\geq n\geq 1$
    \begin{equation}
        \sup_{x\in\HypDim}\connf_L^{\star m}\left(x,\orig\right) \leq K\norm*{\Opconnf_L}_{2\to 2}^{m-n}\connf_L^{\star n}\left(\orig,\orig\right).
    \end{equation}
\end{lemma}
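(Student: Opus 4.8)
The plan is to repeat, essentially verbatim, the spherical-transform argument used to prove Lemma~\ref{lem:DiagramOrderGeneral}; the only new ingredient is that the hypothesis $\widetilde{\connf}_L(s)\geq 0$ replaces the evenness assumption made there. Recall that in Lemma~\ref{lem:DiagramOrderGeneral} the parity of $m$ was used exactly to guarantee $\widetilde{\connf}_L(s)^m=\abs*{\widetilde{\connf}_L(s)}^m$, since a priori $\widetilde{\connf}_L(s)$ is only known to be real (Lemma~\ref{lem:RangeSphericalTransform}). Once we assume $\widetilde{\connf}_L(s)\geq 0$, this identity holds for every integer power, so neither $m$ nor $n$ need be even.

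First I would record that Assumption~\ref{enum:AssumptionFiniteDegree} gives $\connf_L\in L^1_\natural(\HypDim)\cap L^\infty$ --- the integrability using $(\sinh r)^{d-1}\leq 2^{-(d-1)}\e^{(d-1)r}$ --- and hence, by a short induction using $\connf_L\leq 1$ and $\int_\HypDim\connf_L(x,\orig)\mu(\dd x)=\norm*{\Opconnf_L}_{1\to 1}<\infty$, that $\connf_L^{\star m}\in L^1\cap L^\infty\subset L^2_\natural(\HypDim)$ for every $m\geq 1$. Iterating the diagonalisation identity \eqref{eqn:diagonalise} then yields $(\connf_L^{\star m})^{\sim}(s)=\widetilde{\connf}_L(s)^m$, which is $\geq 0$ for almost every $s$ by hypothesis. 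Applying the inversion formula of Lemma~\ref{lem:InvMultPlancherel} to $\connf_L^{\star m}$ and bounding the integrand in modulus (using $\abs*{\e^{(is+\frac{d-1}{2})A(x,b)}}=\e^{\frac{d-1}{2}A(x,b)}$ together with $\widetilde{\connf}_L(s)^m\geq 0$), then recognising $\frac{1}{\mathfrak{S}_{d-1}}\int_{\mathbb{S}^{d-1}}\e^{\frac{d-1}{2}A(x,b)}\dd b=Q_d(x)$, I get
\begin{equation*}
    \connf_L^{\star m}(x,\orig)\leq Q_d(x)\,\frac{\mathfrak{S}_{d-1}}{w}\int_\R\widetilde{\connf}_L(s)^m\abs*{\mathbf{c}(s)}^{-2}\dd s.
\end{equation*}

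Next I would split $\widetilde{\connf}_L(s)^m=\widetilde{\connf}_L(s)^{m-n}\widetilde{\connf}_L(s)^n$ and bound the first factor by $\bigl(\esssup_{t}\widetilde{\connf}_L(t)\bigr)^{m-n}=\norm*{\Opconnf_L}_{2\to 2}^{m-n}$ (legitimate since $m\geq n\geq 1$, using Lemma~\ref{lem:RangeSphericalTransform}). The residual integral $\frac{\mathfrak{S}_{d-1}}{w}\int_\R\widetilde{\connf}_L(s)^n\abs*{\mathbf{c}(s)}^{-2}\dd s$ equals $\connf_L^{\star n}(\orig,\orig)$ by the inversion formula evaluated at $x=\orig$, where $A(\orig,b)=0$. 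Finally Lemma~\ref{lem:QfunctionBound} gives $K:=\sup_{x\in\HypDim}Q_d(x)<\infty$, and taking the supremum over $x\in\HypDim$ completes the argument.

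I do not anticipate any genuine obstacle: the proof is a routine modification of Lemma~\ref{lem:DiagramOrderGeneral}. The only two points worth a sentence of justification are the membership $\connf_L^{\star m}\in L^2$ (needed to invoke the $L^2$ inversion formula) and the fact that positivity of $\widetilde{\connf}_L$ is precisely what removes the parity restriction; both are immediate.
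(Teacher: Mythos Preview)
Your proposal is correct and follows essentially the same approach as the paper: invert the spherical transform, use non-negativity of $\widetilde{\connf}_L$ to bound in modulus, pull out $\esssup_s\widetilde{\connf}_L(s)^{m-n}=\norm*{\Opconnf_L}_{2\to 2}^{m-n}$, recognise $Q_d(x)$, and identify the remaining integral as $\connf_L^{\star n}(\orig,\orig)$ via inversion at the origin. Your additional remarks on $L^2$-membership and on why positivity replaces the parity hypothesis are sound and slightly more explicit than the paper's presentation.
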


\begin{proof}
    The result follows by writing $\connf_L^{\star m}(x,\orig)$ in terms of their spherical transforms. For all $x\in\HypDim$,
    \begin{align}
        \connf_L^{\star m}(x,\orig) & = \frac{1}{w} \int_{\mathbb{S}^{d-1}}\int_\R\e^{\left(is + \frac{d-1}{2}\right)A(x,b)}\widetilde{\connf}_L(s)^m\abs{\mathbf{c}(s)}^{-2}\dd s\dd b\nonumber\\
        &\leq \frac{1}{w} \int_{\mathbb{S}^{d-1}}\int_\R\abs*{\e^{\left(is + \frac{d-1}{2}\right)A(x,b)}}\widetilde{\connf}_L(s)^m\abs{\mathbf{c}(s)}^{-2}\dd s\dd b\nonumber\\
        &\leq \left(\esssup_{s\in\R}\widetilde{\connf}_L(s)^{m-n}\right)\frac{1}{w} \int_{\mathbb{S}^{d-1}}\int_\R\e^{\frac{d-1}{2}A(x,b)}\widetilde{\connf}_L(s)^{n}\abs{\mathbf{c}(s)}^{-2}\dd s\dd b\nonumber\\
        &\leq \norm*{\Opconnf_L}_{2\to 2}^{m-n}\left(\sup_{x\in\HypDim}Q_d(x)\right)\frac{\mathfrak{S}_{d-1}}{w} \int_\R\widetilde{\connf}_L(s)^{n}\abs{\mathbf{c}(s)}^{-2}\dd s \nonumber\\
        &= K\norm*{\Opconnf_L}_{2\to 2}^{m-n}\connf_L^{\star n}\left(\orig,\orig\right).
    \end{align}
    
\end{proof}

Lemma~\ref{lem:deconvolvenon-negative} differs from the first part of Lemma~\ref{lem:DiagramOrderGeneral} in that the addition of the assumption that $\widetilde{\connf}_L(s)\geq 0$ removes the requirement that $m$ is even. This allows us to replace instances of a cycle of $\connf_L$ functions (i.e. $\connf_L^{\star n}\left(\orig,\orig\right)$) with a cycle of $\connf_L$ functions of any shorter length, rather than only even shorter lengths.

As we shall see in Corollary~\ref{cor:NonNegDefFull} below, for non-negative definite models we can simplify the term $\Ecal(L)$ that appeared in Theorem~\ref{thm:CritIntensityExpansionFull}. Define
    \begin{equation}
        \Ecal^*(L) := \frac{1}{\norm*{\Opconnf_L}_{1\to 1}}\int_\HypDim \left( \frac{C^2}{\norm*{\Opconnf_L}_{1\to 1}^2}\connf_L^{\star 2}\cdot \connf_L^{\star 2}\left(x,\orig\right) - \frac{1}{2}\connf_L\left(x,\orig\right)\right)_+\mu\left(\dd x\right).
    \end{equation}

\begin{corollary}
\label{cor:DiagramOrderPositive}
    Suppose $\left\{\connf_L\right\}_{L>0}$ satisfies Assumption~\ref{enum:AssumptionFiniteDegree} and that $\widetilde{\connf}_L(s)\geq 0$ for all $s\in\R$ and $L>0$. Then there exists $K<\infty$ such that for all $L>0$,
    \begin{align}
        \frac{1}{\norm*{\Opconnf_L}_{1\to 1}^2}\connf^{\star 3}_L\left(\orig,\orig\right) &\leq K\left(\frac{\norm*{\Opconnf_L}_{2\to 2}}{\norm*{\Opconnf_L}_{1\to 1}}\right)^2,\\
        \frac{1}{\norm*{\Opconnf_L}_{1\to 1}^3}\connf^{\star 4}_L\left(\orig,\orig\right) & \leq K\frac{1}{\norm*{\Opconnf_L}_{1\to 1}^2}\connf^{\star 3}_L\left(\orig,\orig\right)\frac{\norm*{\Opconnf_L}_{2\to 2}}{\norm*{\Opconnf_L}_{1\to 1}},\label{eqn:loopfourTOloopthree}\\
        \frac{1}{\norm*{\Opconnf_L}_{1\to 1}^3}\connf^{\star 1 \star 2 \cdot 2}_L\left(\orig,\orig\right) &\leq \frac{1}{\norm*{\Opconnf_L}_{1\to 1}^3}\connf^{\star 4}_L\left(\orig,\orig\right),\label{eqn:fourcrossoneTOloopfour}\\
        \frac{1}{\norm*{\Opconnf_L}_{1\to 1}^4}\connf^{\star 5}_L\left(\orig,\orig\right) &  \leq K\frac{1}{\norm*{\Opconnf_L}_{1\to 1}^3}\connf^{\star 4}_L\left(\orig,\orig\right)\frac{\norm*{\Opconnf_L}_{2\to 2}}{\norm*{\Opconnf_L}_{1\to 1}},\\
        \frac{1}{\norm*{\Opconnf_L}_{1\to 1}^4}\connf^{\star 2 \star 2 \cdot 2}_L\left(\orig,\orig\right)  & \leq \frac{1}{\norm*{\Opconnf_L}_{1\to 1}^3}\connf^{\star 4}_L\left(\orig,\orig\right)\frac{\norm*{\Opconnf_L}_{2\to 2}}{\norm*{\Opconnf_L}_{1\to 1}},\label{eqn:phitwotwotwoTOloopfour}\\
        \frac{1}{\norm*{\Opconnf_L}_{1\to 1}^5}\connf^{\star 6}_L\left(\orig,\orig\right) &\leq K\frac{1}{\norm*{\Opconnf_L}_{1\to 1}^4}\connf^{\star 5}_L\left(\orig,\orig\right)\frac{\norm*{\Opconnf_L}_{2\to 2}}{\norm*{\Opconnf_L}_{1\to 1}}.
    \end{align}
    Furthermore,
    \begin{equation}
        \Ecal^*(L) \leq \frac{C^2}{\norm*{\Opconnf_L}_{1\to 1}^3}\connf^{\star 4}_L\left(\orig,\orig\right),
    \end{equation}
    where $C$ is the constant appearing in the definitions of $\Ecal(L)$ and $\Ecal^*(L)$ (see \eqref{eqn:ECAL}).
\end{corollary}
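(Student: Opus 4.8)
The plan is to derive every displayed inequality as a direct application of Lemma~\ref{lem:deconvolvenon-negative}, which holds precisely under the stated hypotheses (Assumption~\ref{enum:AssumptionFiniteDegree} together with $\widetilde{\connf}_L(s)\geq 0$), supplemented by the elementary observation that $\sup_{x}\connf_L^{\star 2}(x,\orig)\leq \norm*{\Opconnf_L}_{1\to 1}$ (which follows from $\connf_L\leq 1$ and Lemma~\ref{lem:ConvolutionOperator}) and by the norm identity $\connf_L^{\star n}(\orig,\orig)=\int_\HypDim \connf_L^{\star n_1}(x,\orig)\connf_L^{\star n_2}(x,\orig)\mu(\dd x)$ for $n_1+n_2=n$. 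The key point that makes the non-negative-definite case cleaner than Corollary~\ref{cor:diagramBounds} is that Lemma~\ref{lem:deconvolvenon-negative} permits $\sup_x \connf_L^{\star m}(x,\orig)\leq K\norm*{\Opconnf_L}_{2\to 2}^{m-n}\connf_L^{\star n}(\orig,\orig)$ for \emph{all} $m\geq n\geq 1$, not just even $n$, so each extra convolution factor costs exactly one power of $\norm*{\Opconnf_L}_{2\to 2}$.

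First I would handle the pure-cycle bounds. For $\connf_L^{\star 3}(\orig,\orig)$, write it as $\int_\HypDim \connf_L(x,\orig)\connf_L^{\star 2}(x,\orig)\mu(\dd x)\leq \big(\sup_x\connf_L^{\star 2}(x,\orig)\big)^{?}$—more precisely, bound one factor $\connf_L^{\star 2}(x,\orig)$ in sup-norm via Lemma~\ref{lem:deconvolvenon-negative} with $m=2,n=1$ to get $\leq K\norm*{\Opconnf_L}_{2\to 2}$, and bound $\int_\HypDim\connf_L(x,\orig)\connf_L^{\star 2}(x,\orig)\mu(\dd x)$ another way; iterating, $\connf_L^{\star 3}(\orig,\orig)=\connf_L^{\star 1\star 2}(\orig,\orig)\leq \sup_x\connf_L^{\star 3}(x,\orig)\cdot$(nothing)—the cleanest route is $\connf_L^{\star 3}(\orig,\orig)\leq \big(\sup_x \connf_L^{\star 3}(x,\orig)/\connf_L^{\star 1}(\orig,\orig)\big)\cdots$; rather, use $\connf_L^{\star 3}(\orig,\orig) = \int \connf_L^{\star 2}(x,\orig)\connf_L(x,\orig)\,\mu(\dd x) \le K\norm*{\Opconnf_L}_{2\to2}^2 \connf_L^{\star 0}$—since there is no $\connf_L^{\star 0}$, instead bound via $\connf_L^{\star3}(\orig,\orig)\le \sup_x\connf_L^{\star 2}(x,\orig)\int\connf_L(x,\orig)\mu(\dd x)$ isn't tight enough; the genuinely correct move is $\connf_L^{\star 3}(\orig,\orig)=\int \connf_L(x,\orig)^{1/2}\cdots$—no. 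I will instead use Lemma~\ref{lem:deconvolvenon-negative} directly: $\connf_L^{\star3}(\orig,\orig)=\connf_L^{\star2}\star\connf_L(\orig,\orig)\le \sup_x\connf_L^{\star3}(x,\orig)\le K\norm*{\Opconnf_L}_{2\to2}^2\connf_L^{\star1}(\orig,\orig)$, and $\connf_L^{\star1}(\orig,\orig)=\connf_L(\orig,\orig)\le1\le\norm*{\Opconnf_L}_{1\to1}$ whenever $\norm*{\Opconnf_L}_{1\to1}\ge1$; combined with $\connf_L^{\star3}(\orig,\orig)\le\norm*{\Opconnf_L}_{1\to1}^2$ (from two sup-bounds of $\connf_L^{\star2}$ against one $L^1$ integral) one gets $\connf_L^{\star3}(\orig,\orig)/\norm*{\Opconnf_L}_{1\to1}^2\le K(\norm*{\Opconnf_L}_{2\to2}/\norm*{\Opconnf_L}_{1\to1})^2$ after checking the case $\norm*{\Opconnf_L}_{1\to1}<1$ separately (then $\norm*{\Opconnf_L}_{2\to2}\le\norm*{\Opconnf_L}_{1\to1}$ and everything is bounded). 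The same template—strip one convolution factor into the sup-norm at a cost of $\norm*{\Opconnf_L}_{2\to2}$ and keep the rest as an $L^1$–$L^1$ pairing—gives \eqref{eqn:loopfourTOloopthree}, the $\connf_L^{\star5}$ and $\connf_L^{\star6}$ bounds.

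Next, for the mixed diagrams $\connf_L^{\star 1\star 2\cdot 2}$ and $\connf_L^{\star 2\star 2\cdot 2}$: use the sup-bound $\sup_x \connf_L^{\star n_3}(x,\orig)\leq K\norm*{\Opconnf_L}_{2\to2}^{n_3-n}\connf_L^{\star n}(\orig,\orig)$ on the third factor and recognize $\int \connf_L^{\star n_1}(x,\orig)\connf_L^{\star n_2}(x,\orig)\mu(\dd x)=\connf_L^{\star(n_1+n_2)}(\orig,\orig)$ for the remaining two, exactly as in the proof of Lemma~\ref{lem:DiagramOrderGeneral}, only now without parity restrictions. For \eqref{eqn:fourcrossoneTOloopfour} take $n_3=2, n=1$ so the cost is $\norm*{\Opconnf_L}_{2\to2}^{1}\connf_L(\orig,\orig)\le\norm*{\Opconnf_L}_{2\to2}$, and pair against $\connf_L^{\star3}(\orig,\orig)$—but to land on $\connf_L^{\star4}(\orig,\orig)/\norm*{\Opconnf_L}_{1\to1}^3$ it is cleaner to instead bound $\connf_L^{\star1\star2\cdot2}(\orig,\orig)=\int\connf_L(x,\orig)\connf_L^{\star2}(x,\orig)^2\mu(\dd x)\le\sup_x\connf_L(x,\orig)\cdot\connf_L^{\star2\star2}(\orig,\orig)$ and note $\connf_L^{\star2\star2}(\orig,\orig)\le$—no, $\connf_L^{\star2}\cdot\connf_L^{\star2}$ integrated is $\connf_L^{\star4}(\orig,\orig)$ only if one of them is convolved further; instead use $\connf_L^{\star1\star2\cdot2}(\orig,\orig)\le\sup_x\connf_L^{\star2}(x,\orig)\int\connf_L(x,\orig)\connf_L^{\star2}(x,\orig)\mu(\dd x)=\sup_x\connf_L^{\star2}(x,\orig)\cdot\connf_L^{\star3}(\orig,\orig)$; this is not quite $\connf_L^{\star4}$. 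The robust fix, following Lemma~\ref{lem:DiagramOrderGeneral} verbatim, is $\connf_L^{\star1\star2\cdot2}(\orig,\orig)=\connf_L^{\star(1+2)}\star$-pairing-with-$\connf_L^{\star2}$ $=\int\connf_L^{\star3}(x,\orig)\connf_L^{\star2}(x,\orig)\mu(\dd x)$—no, that product isn't what the notation means; $\connf_L^{\star n_1\star n_2\cdot n_3}$ means $\connf_L^{\star n_1}\star(\connf_L^{\star n_2}\cdot\connf_L^{\star n_3})$, i.e. $\int\connf_L^{\star n_1}(x,\orig)\connf_L^{\star n_2}(x,\orig)\connf_L^{\star n_3}(x,\orig)\mu(\dd x)$. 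So $\connf_L^{\star1\star2\cdot2}(\orig,\orig)=\int\connf_L(x,\orig)\connf_L^{\star2}(x,\orig)^2\mu(\dd x)\le\sup_x[\connf_L(x,\orig)\connf_L^{\star2}(x,\orig)]\cdot\connf_L^{\star3}(\orig,\orig)$ where I used $\int\connf_L^{\star2}(x,\orig)\cdot 1\cdot\connf_L^{\star2}(x,\orig)\mu(\dd x)$ is not it either. I will present these two mixed bounds by the Lemma~\ref{lem:DiagramOrderGeneral}-style argument: pull $\connf_L^{\star n_3}$ (with $n_3=2$) into sup-norm via Lemma~\ref{lem:deconvolvenon-negative} ($m=2$, $n=1$) getting $K\norm*{\Opconnf_L}_{2\to2}\connf_L(\orig,\orig)\le K\norm*{\Opconnf_L}_{2\to2}$, leaving $\int\connf_L^{\star n_1}(x,\orig)\connf_L^{\star2}(x,\orig)\mu(\dd x)=\connf_L^{\star(n_1+2)}(\orig,\orig)$, so $\connf_L^{\star1\star2\cdot2}(\orig,\orig)\le K\norm*{\Opconnf_L}_{2\to2}\connf_L^{\star3}(\orig,\orig)$ and $\connf_L^{\star2\star2\cdot2}(\orig,\orig)\le K\norm*{\Opconnf_L}_{2\to2}\connf_L^{\star4}(\orig,\orig)$; then apply the already-proven cycle inequalities and divide by the appropriate power of $\norm*{\Opconnf_L}_{1\to1}$ to reach \eqref{eqn:fourcrossoneTOloopfour} and \eqref{eqn:phitwotwotwoTOloopfour}. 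Finally, $\Ecal^*(L)\le \frac{C^2}{\norm*{\Opconnf_L}_{1\to1}^3}\connf_L^{\star2}\cdot\connf_L^{\star2}$-integral by simply dropping the $-\tfrac12\connf_L$ term from the positive part (since $(a-b)_+\le(a)_+=a$ when $a\ge0$), and then $\int\connf_L^{\star2}(x,\orig)\connf_L^{\star2}(x,\orig)\mu(\dd x)=\connf_L^{\star4}(\orig,\orig)$. The main obstacle is purely bookkeeping: keeping the case $\norm*{\Opconnf_L}_{1\to1}<1$ (where several normalizations invert) consistent with the $L\to\infty$ regime, and making sure each stripping step uses Lemma~\ref{lem:deconvolvenon-negative} with admissible exponents $m\ge n\ge1$; there is no analytic difficulty beyond that.
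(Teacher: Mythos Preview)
Your overall strategy---apply Lemma~\ref{lem:deconvolvenon-negative} to strip convolution factors at a cost of $\norm*{\Opconnf_L}_{2\to2}$, and use $\connf_L(\orig,\orig)\le 1$---is exactly the paper's approach, and it works cleanly for the pure-cycle inequalities, for \eqref{eqn:phitwotwotwoTOloopfour}, and for the $\Ecal^*(L)$ bound. Your worry about the case $\norm*{\Opconnf_L}_{1\to1}<1$ is unnecessary: once you have $\connf_L^{\star3}(\orig,\orig)\le K\norm*{\Opconnf_L}_{2\to2}^2$, dividing both sides by $\norm*{\Opconnf_L}_{1\to1}^2$ gives the first inequality directly, with no case split.

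There is, however, a genuine gap in your treatment of \eqref{eqn:fourcrossoneTOloopfour}. Your final chosen route produces
\[
\connf_L^{\star1\star2\cdot2}(\orig,\orig)\le K\norm*{\Opconnf_L}_{2\to2}\,\connf_L^{\star3}(\orig,\orig),
\]
but the target is $\connf_L^{\star1\star2\cdot2}(\orig,\orig)\le \connf_L^{\star4}(\orig,\orig)$. You cannot recover the target from your bound: combining with \eqref{eqn:loopfourTOloopthree} only tells you that both quantities are bounded by $K\norm*{\Opconnf_L}_{2\to2}\connf_L^{\star3}(\orig,\orig)$, which does not compare them to each other. The correct (and much simpler) argument is the one you considered and then mistakenly rejected: bound $\connf_L(x,\orig)\le 1$ pointwise to get
\[
\connf_L^{\star1\star2\cdot2}(\orig,\orig)=\int_\HypDim\connf_L(x,\orig)\bigl[\connf_L^{\star2}(x,\orig)\bigr]^2\mu(\dd x)\le \int_\HypDim\bigl[\connf_L^{\star2}(x,\orig)\bigr]^2\mu(\dd x)=\connf_L^{\star4}(\orig,\orig),
\]
where the last equality holds because $\connf_L^{\star2}(x,\orig)=\connf_L^{\star2}(\orig,x)$ by isometry invariance, so the integral is precisely $(\connf_L^{\star2}\star\connf_L^{\star2})(\orig,\orig)$. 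You actually use this same identity correctly at the very end for $\Ecal^*(L)$; the confusion in the middle (``$\connf_L^{\star2}\cdot\connf_L^{\star2}$ integrated is $\connf_L^{\star4}(\orig,\orig)$ only if one of them is convolved further'') is simply wrong.
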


\begin{proof}
    The first inequality follows from applying Lemma~\ref{lem:deconvolvenon-negative} with $m=3$ and $n=1$, and using $\connf(\orig,\orig)\leq 1$. Inequality \eqref{eqn:fourcrossoneTOloopfour} simply follows from $\connf(\orig,\orig)\leq 1$, and \eqref{eqn:phitwotwotwoTOloopfour} follows from $\esssup_{x,y\in\HypDim}\connf^{\star 2}\left(x,y\right) \leq \widetilde{\connf}_L(0)$ (via Lemma~\ref{lem:deconvolvenon-negative} with $m=2$ and $n=1$). Note that in this non-negative definite case we cannot get a better bound for \eqref{eqn:fourcrossoneTOloopfour} by using the $\connf$-triangle diagram rather than the $\connf$-quadrilateral diagram because \eqref{eqn:loopfourTOloopthree} means that the resulting bound will not be better than what we have here. The remaining inequalities in the first block are direct applications of Lemma~\ref{lem:deconvolvenon-negative} with appropriate choices of $m$ and $n$. For the upper bound on $\Ecal^*(L)$, omit the $-\frac{1}{2}\connf_L$ term in the integrand.
\end{proof}

Like we defined $\Ecal^*(L)$ as a reduced version of $\Ecal(L)$, we define $\Omega^*(L)$ as a reduced version of $\Omega(L)$ (defined in \eqref{eqn:OMEGA}):
\begin{equation}
    \Omega^*(L):= 
    \frac{1}{\norm*{\Opconnf_L}_{1\to 1}^3}\connf_L^{\star 2 \star 2 \cdot 1}\left(\orig,\orig\right) + \frac{1}{\norm*{\Opconnf_L}_{1\to 1}^3}\connf_L^{\star 4}\left(\orig,\orig\right)\frac{\norm*{\Opconnf_L}_{2\to 2}}{\norm*{\Opconnf_L}_{1\to 1}}.
\end{equation}

    \begin{corollary}\label{cor:NonNegDefFull}
        Suppose $\left\{\connf_L\right\}_{L>0}$ satisfies Assumption~\ref{Model_Assumption} and is non-negative definite for sufficiently large $L$. Then
        \begin{equation}\label{eqn:nonnegdefOrderterms}
            \frac{\connf_L^{\star 4}(\orig,\orig)}{\norm*{\Opconnf_L}_{1\to 1}^3} = o\left(\frac{\connf_L^{\star 3}(\orig,\orig)}{\norm*{\Opconnf_L}_{1\to 1}^2}\right), \qquad \Ecal^*(L) + \Omega^*(L) = \LandauBigO{\frac{\connf_L^{\star 4}(\orig,\orig)}{\norm*{\Opconnf_L}_{1\to 1}^3}},
        \end{equation}
        and for all $N\in\N$,
\begin{multline}\label{eqn:NonNegExpansionFull}
            \norm*{\Opconnf_L}_{1\to 1}\lambda_\mathrm{c}(L) = 1 + \frac{\connf_L^{\star 3}(\orig,\orig)}{\norm*{\Opconnf_L}_{1\to 1}^2} + \frac{3}{2}\frac{\connf_L^{\star 4}(\orig,\orig)}{\norm*{\Opconnf_L}_{1\to 1}^3} \\+ \LandauBigO{\left(\frac{\connf_L^{\star 3}(\orig,\orig)}{\norm*{\Opconnf_L}_{1\to 1}^2}\right)^2  + \Omega^*(L) + \Ecal^*(L) + \beta(L)^N}
        \end{multline}
        as $L\to\infty$.
    \end{corollary}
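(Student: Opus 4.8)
The corollary is a reduction of Theorem~\ref{thm:CritIntensityExpansionFull} in the presence of the non-negative definiteness hypothesis, and the only genuinely new input is a good uniform pointwise bound on $\connf_L^{\star 2}$. The plan is as follows. First, since $\connf_L$ is non-negative definite for $L$ large, Lemma~\ref{lem:RangeSphericalTransformExpand} gives $\widetilde{\connf}_L(s)\geq 0$ for Lebesgue-almost every $s$, so Lemma~\ref{lem:deconvolvenon-negative} and Corollary~\ref{cor:DiagramOrderPositive} are available. In particular, applying Lemma~\ref{lem:deconvolvenon-negative} with $m=2$, $n=1$ and using $\connf_L(\orig,\orig)\leq 1$ yields
\begin{equation}
    \sup_{x\in\HypDim}\connf_L^{\star 2}(x,\orig) \leq K\norm*{\Opconnf_L}_{2\to 2},
\end{equation}
which together with \eqref{eqn:normRatio} of Theorem~\ref{thm:TriangleSmall} shows $\sup_x\connf_L^{\star 2}(x,\orig) = o\bigl(\norm*{\Opconnf_L}_{1\to 1}\bigr)$. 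This is the workhorse estimate. The first display in \eqref{eqn:nonnegdefOrderterms} then follows from \eqref{eqn:loopfourTOloopthree} and \eqref{eqn:normRatio}; and $\Ecal^*(L)+\Omega^*(L)=\LandauBigO{\connf_L^{\star 4}(\orig,\orig)/\norm*{\Opconnf_L}_{1\to 1}^3}$ follows from the $\Ecal^*$-bound in Corollary~\ref{cor:DiagramOrderPositive}, from $\connf_L^{\star 2\star 2\cdot 1}(\orig,\orig)\leq \connf_L^{\star 4}(\orig,\orig)$ (drop the factor $\connf_L\leq 1$ in the integrand), and from the second summand of $\Omega^*(L)$ carrying the vanishing factor $\norm*{\Opconnf_L}_{2\to 2}/\norm*{\Opconnf_L}_{1\to 1}$.

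Next I would feed this into Theorem~\ref{thm:CritIntensityExpansionFull} and reduce each error term there to the ones appearing in \eqref{eqn:NonNegExpansionFull}. The term $\bigl(\connf_L^{\star 4}(\orig,\orig)/\norm*{\Opconnf_L}_{1\to 1}^3\bigr)^2$ is $\LandauBigO{\bigl(\connf_L^{\star 3}(\orig,\orig)/\norm*{\Opconnf_L}_{1\to 1}^2\bigr)^2}$ by squaring the first display of \eqref{eqn:nonnegdefOrderterms}. For $\Omega(L)$ I would bound its four constituent terms using Corollary~\ref{cor:DiagramOrderPositive}: the first is already a summand of $\Omega^*(L)$; the $\connf_L^{\star 5}$ term and the $\connf_L^{\star 2\star 2\cdot 2}$ term are each $\LandauBigO{}$ of the second summand of $\Omega^*(L)$ by the relevant inequalities; and the $\connf_L^{\star 6}$ term is $o(\cdot)$ of that same summand because it picks up an extra factor $\norm*{\Opconnf_L}_{2\to 2}/\norm*{\Opconnf_L}_{1\to 1}\to 0$. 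Hence $\Omega(L)=\LandauBigO{\Omega^*(L)}$.

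The main point — and the only place where care is needed — is showing $\Ecal(L)=\LandauBigO{\Ecal^*(L)}$. Here I would use the workhorse estimate to note that for $L$ large, $\tfrac{C}{\norm*{\Opconnf_L}_{1\to 1}}\connf_L^{\star 2}(x,\orig)\leq \tfrac14$ uniformly in $x$, so the mixed summand in the integrand of $\Ecal(L)$ obeys $\tfrac{C}{\norm*{\Opconnf_L}_{1\to 1}}\connf_L^{\star 2}\cdot\connf_L(x,\orig)\leq \tfrac14\connf_L(x,\orig)$; consequently the whole integrand (before taking the positive part) is bounded pointwise by $\tfrac12\bigl(\tfrac{C^2}{\norm*{\Opconnf_L}_{1\to 1}^2}\connf_L^{\star 2}\cdot\connf_L^{\star 2}(x,\orig)-\tfrac12\connf_L(x,\orig)\bigr)$, which is exactly one half of the integrand defining $\Ecal^*(L)$. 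Taking positive parts (monotone, and $(\tfrac12 v)_+=\tfrac12(v)_+$) and integrating gives $\Ecal(L)\leq\tfrac12\Ecal^*(L)$ for $L$ sufficiently large. Combining the three reductions, the error term in Theorem~\ref{thm:CritIntensityExpansionFull} collapses to $\LandauBigO{\bigl(\connf_L^{\star 3}(\orig,\orig)/\norm*{\Opconnf_L}_{1\to 1}^2\bigr)^2+\Omega^*(L)+\Ecal^*(L)+\beta(L)^N}$, which is \eqref{eqn:NonNegExpansionFull}. The subtlety to watch is precisely that neither $\Omega(L)$ nor $\Ecal(L)$ should be allowed to reduce merely to $\connf_L^{\star 4}(\orig,\orig)/\norm*{\Opconnf_L}_{1\to 1}^3$ (which would swamp the explicit $\tfrac32$-coefficient term); non-negative definiteness is what forces the sharper reductions to $\Omega^*(L)$ and $\Ecal^*(L)$.
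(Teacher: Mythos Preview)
Your proposal is correct and follows essentially the same route as the paper: both derive \eqref{eqn:nonnegdefOrderterms} directly from Corollary~\ref{cor:DiagramOrderPositive} and Theorem~\ref{thm:TriangleSmall}, reduce $\Omega(L)$ to $\Omega^*(L)$ via Corollary~\ref{cor:DiagramOrderPositive}, and handle $\Ecal(L)$ by exploiting the pointwise bound $\sup_x\connf_L^{\star 2}(x,\orig)\leq K\norm*{\Opconnf_L}_{2\to 2}=o(\norm*{\Opconnf_L}_{1\to 1})$ from Lemma~\ref{lem:deconvolvenon-negative}. The only cosmetic difference is that the paper splits the integrand of $\Ecal(L)$ via $(a+b)_+\leq (a)_++(b)_+$ and then shows the second piece vanishes, whereas you bound the integrand pointwise before taking the positive part; both yield $\Ecal(L)\leq\tfrac12\Ecal^*(L)$ for large $L$.
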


\begin{proof}
    The bounds on $\norm*{\Opconnf_L}_{1\to 1}^{-3}\connf_L^{\star 4}(\orig,\orig)$, $\Ecal^*(L)$, and $\Omega^*(L)$ follow directly from Corollary~\ref{cor:DiagramOrderPositive} and Theorem~\ref{thm:TriangleSmall}.

    For the approximation of $\lambda_\mathrm{c}$, we start from Theorem~\ref{thm:CritIntensityExpansionFull}. The above bound on $\norm*{\Opconnf_L}_{1\to 1}^{-3}\connf_L^{\star 4}(\orig,\orig)$ means we can neglect the $\left(\norm*{\Opconnf_L}_{1\to 1}^{-3}\connf_L^{\star 4}(\orig,\orig)\right)^2$ term from the error. Simplifying $\Omega(L)$ to $\Omega^*(L)$ follows directly from Corollary~\ref{cor:DiagramOrderPositive}. For the $\Ecal(L)$ term, observe that by the triangle inequality
    \begin{equation}\label{eqn:EcalTOEcalStar}
        \Ecal(L) \leq \frac{1}{2}\Ecal^*(L) + \frac{1}{\norm*{\Opconnf_L}_{1\to 1}}\int_{\HypDim}\left(\frac{C}{\norm*{\Opconnf_L}_{1\to 1}}\connf_L^{\star 2}\left(x,\orig\right) - \frac{1}{4}\right)_+\connf_L\left(x,\orig\right)\mu\left(\dd x\right).
    \end{equation}
    Then by Corollary~\ref{cor:DiagramOrderPositive} we have
    \begin{equation}
        \frac{C}{\norm*{\Opconnf_L}_{1\to 1}}\sup_{x\in\HypDim}\connf_L^{\star 2}\left(x,\orig\right) \leq CK\frac{\norm*{\Opconnf_L}_{2\to 2}}{\norm*{\Opconnf_L}_{1\to 1}}\connf_L\left(\orig,\orig\right).
    \end{equation}
    By Theorem~\ref{thm:TriangleSmall} this vanishes as $L\to\infty$, and therefore the second term in \eqref{eqn:EcalTOEcalStar} vanishes for sufficiently large $L$.   
\end{proof}

\subsection{Heat Kernel RCM}
\label{sec:CalcHeatKernel}

Here we show how Theorem~\ref{cor:HeatKernelResult} follows from Theorem~\ref{thm:CritIntensityExpansionFull}. Recall that for the heat kernel RCM in $d=3$,  $0<\Acal_L\leq \left(2\pi L\right)^{\frac{3}{2}}\e^{\frac{1}{2}L}$ and
\begin{multline}
    \connf_L\left(x,\orig\right) = \Acal_L K_3\left(L,x,\orig\right) \\= \frac{\Acal_L}{\left(2\pi\right)^\frac{3}{2}}L^{-\frac{3}{2}}\exp\left(-\frac{1}{2}L\right)\frac{\distThree{x,\orig}}{\sinh \distThree{x,\orig}}\exp\left(-\frac{1}{2L}\distThree{x,\orig}^2\right)
\end{multline}
for all $L>0$ and $x\in\HypThree$.

\begin{lemma}
\label{lem:heatkernelGood}
    The heat kernel RCM in $d=3$ satisfies Assumption~\ref{Model_Assumption}.
\end{lemma}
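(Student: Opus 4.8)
The plan is to verify the two parts of Assumption~\ref{Model_Assumption} directly from the explicit form of $\connf_L$. First I would handle \ref{enum:AssumptionFiniteDegree}. Since $d=3$, I need to show $0<\int_0^\infty \connf_L(r)\e^{2r}\,\dd r<\infty$. Substituting the explicit expression,
\[
\int_0^\infty \connf_L(r)\e^{2r}\,\dd r = \frac{\Acal_L}{(2\pi L)^{3/2}}\e^{-L/2}\int_0^\infty \frac{r}{\sinh r}\exp\left(2r - \tfrac{1}{2L}r^2\right)\,\dd r.
\]
Positivity is clear because the integrand is strictly positive on an interval. For finiteness, I would note that $r/\sinh r \leq C\, r\,\e^{-r}$ for $r\geq 1$ (and is bounded near $0$), so the integrand is bounded by a constant times $r\exp(r - \tfrac{1}{2L}r^2)$, and the Gaussian factor $\exp(-r^2/(2L))$ dominates the exponential growth $\e^r$, making the integral convergent for every fixed $L>0$. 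This is the easy part.

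Next I would verify \ref{enum:AssumptionLongRange}: for every fixed $R>0$,
\[
\lim_{L\to\infty}\frac{\int_0^R \connf_L(r)(\sinh r)^{2}\,\dd r}{\int_0^\infty \connf_L(r)(\sinh r)^{2}\,\dd r}=0.
\]
The prefactor $\Acal_L (2\pi L)^{-3/2}\e^{-L/2}$ cancels between numerator and denominator, so it suffices to show
\[
\frac{\int_0^R \frac{r}{\sinh r}\exp(-\tfrac{1}{2L}r^2)(\sinh r)^2\,\dd r}{\int_0^\infty \frac{r}{\sinh r}\exp(-\tfrac{1}{2L}r^2)(\sinh r)^2\,\dd r}\to 0.
\]
The numerator is bounded uniformly in $L$: on $[0,R]$ the Gaussian factor is $\leq 1$, so the numerator is at most the $L$-independent constant $\int_0^R r\sinh r\,\dd r<\infty$. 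For the denominator I would show it diverges as $L\to\infty$. Using $\sinh r \geq \tfrac12\e^{r}$ for $r\geq 0$ large (or more simply $r\sinh r \geq c\,r\,\e^{r}$ for $r\geq 1$), the denominator is bounded below by $c\int_1^\infty r\,\e^{r}\exp(-r^2/(2L))\,\dd r$. A change of variables $r = Lu$ (or a Laplace-type estimate) shows this integral grows like $\e^{L/2}$ (the exponent $r - r^2/(2L)$ is maximized at $r=L$ with value $L/2$) up to polynomial-in-$L$ corrections; in any case it tends to $\infty$. Hence the ratio tends to $0$.

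The main obstacle, such as it is, is the lower bound on the denominator integral in the second part — one must be slightly careful to extract genuine divergence rather than just boundedness. A clean way is to restrict the integral to a shrinking window around $r=L$, say $r\in[L-\sqrt L, L+\sqrt L]$, on which $\frac{r}{\sinh r}(\sinh r)^2 \gtrsim L\,\e^{L}$ and $\exp(-r^2/(2L)) \gtrsim \e^{-L/2 - O(1)}$, giving a lower bound of order $L^{3/2}\e^{L/2}\to\infty$. With both assumptions verified, the lemma follows. Incidentally this also records the normalization $\int_{\HypThree}K_3(t,x,y)\,\mu(\dd x)=1$ claimed in the definition, which follows from the semigroup identity $K_3(s+t,x,y)=\int K_3(s,x,z)K_3(t,z,y)\mu(\dd z)$ together with $K_3(t,x,y)\to \delta_y$ as $t\downarrow 0$, or by direct computation using $\int_{\HypThree}f\,\dd\mu = \mathfrak{S}_2\int_0^\infty g(r)(\sinh r)^2\,\dd r$ with $g(r)=(2\pi t)^{-3/2}\tfrac{r}{\sinh r}\e^{-t/2-r^2/(2t)}$ and $\mathfrak S_2 = 4\pi$.
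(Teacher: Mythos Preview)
Your proposal is correct. Both you and the paper verify the two parts of Assumption~\ref{Model_Assumption} directly, but the executions differ in a way worth noting.

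For \ref{enum:AssumptionFiniteDegree}, the paper observes that (since $\connf_L\in[0,1]$) the condition is equivalent to $\norm{\Opconnf_L}_{1\to1}<\infty$ and then computes \emph{exactly} $\norm{\Opconnf_L}_{1\to1}=\Acal_L$ via the substitution $r\mapsto r\pm L$ in $\int_0^\infty r\sinh r\,\e^{-r^2/(2L)}\dd r$. You instead bound $\int_0^\infty\connf_L(r)\e^{2r}\dd r$ directly using Gaussian decay, which is quicker but less informative. For \ref{enum:AssumptionLongRange}, the paper again computes the ratio explicitly (obtaining $\LandauBigO{L^{-3/2}\e^{-L/2}}$), whereas you argue numerator bounded / denominator divergent via a Laplace-type lower bound on $[L-\sqrt L,L+\sqrt L]$. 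Your argument is sound; note that completing the square $r-r^2/(2L)=-\tfrac{1}{2L}(r-L)^2+L/2$ gives the cleanest route to the $L^{3/2}\e^{L/2}$ growth. The paper's exact evaluation $\norm{\Opconnf_L}_{1\to1}=\Acal_L$ is reused several times downstream (Lemma~\ref{lem:heatkernelTermsSize}, Theorem~\ref{cor:HeatKernelResult}), so its extra precision here is not wasted effort; your estimates suffice for the lemma as stated but would need to be sharpened later anyway.
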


\begin{proof}
    First let us address assumption~\ref{enum:AssumptionFiniteDegree}. Note that since $\connf_L(r)\in\left[0,1\right]$ for all $r> 0$, assumption~\ref{enum:AssumptionFiniteDegree} holds if and only if $\norm*{\Opconnf_L}_{1\to 1}<\infty$. Since $\mathfrak{S}_{2}=4\pi$, we have
    \begin{align}
        \norm*{\Opconnf_L}_{1\to 1} &=\int_\HypThree\connf_L\left(x,\orig\right)\mu\left(\dd x\right) \nonumber\\
        &= 4\pi\int^\infty_0\connf_L(r)\left(\sinh r\right)^2\dd r\nonumber\\
        &= \frac{4\pi\Acal_L }{\left(2\pi \right)^\frac{3}{2}} L^{-\frac{3}{2}}\e^{-\frac{1}{2}L}\int^\infty_0 r \sinh r \e^{-\frac{1}{2L}r^2}\dd r \nonumber\\
        & = \frac{\Acal_L }{\sqrt{2\pi}} L^{-\frac{3}{2}}\int^\infty_0 r\left(\e^{-\frac{1}{2L}\left(r-L\right)^2} - \e^{-\frac{1}{2L}\left(r+L\right)^2}\right)\dd r \nonumber\\
        &= \frac{\Acal_L }{\sqrt{2\pi}} L^{-\frac{3}{2}}\left(\int^\infty_{-L} \left(r_1+L\right)\e^{-\frac{1}{2L}r_1^2} \dd r_1 - \int^\infty_{L} \left(r_2-L\right)\e^{-\frac{1}{2L}r_2^2} \dd r_2 \right),
    \end{align}
    where in this last step we have performed two substitutions. Bringing together like terms then gives
    \begin{align}
        \norm*{\Opconnf_L}_{1\to 1} &= \frac{\Acal_L }{\sqrt{2\pi}} L^{-\frac{3}{2}}\left(\int^L_{-L}r\e^{-\frac{1}{2L}r^2}\dd r + L\int^L_{-L}\e^{-\frac{1}{2L}r^2}\dd r + 2L\int^\infty_L \e^{-\frac{1}{2L}r^2}\dd r\right)\nonumber\\
        &=\frac{\Acal_L }{\sqrt{2\pi}} L^{-\frac{1}{2}}\left(\int^L_{-L}\e^{-\frac{1}{2L}r^2}\dd r + 2\int^\infty_L \e^{-\frac{1}{2L}r^2}\dd r\right)\nonumber\\
        &= \frac{\Acal_L}{\sqrt{2\pi L}}\int^\infty_{-\infty}\e^{-\frac{1}{2L}r^2}\dd r = \Acal_L .
    \end{align}
    Therefore assumption~\ref{enum:AssumptionFiniteDegree} holds.

    To verify assumption~\ref{enum:AssumptionLongRange}, calculate
    \begin{align}
        \frac{\int^R_0\connf_L(r)\left(\sinh r\right)^2\dd r}{\int^\infty_0\connf_L(r)\left(\sinh r\right)^2\dd r} &= \frac{4\pi}{\left(2\pi L\right)^\frac{3}{2}}\e^{-\frac{1}{2}L}\int^R_0 r\sinh r\e^{-\frac{1}{2L}r^2}\dd r \nonumber\\
        & = \frac{1}{\sqrt{2\pi}}L^{-\frac{3}{2}}\int^R_0 r\left(\e^{-\frac{1}{2L}\left(r-L\right)^2} - \e^{-\frac{1}{2L}\left(r+L\right)^2}\right)\dd r \nonumber\\
        &= \frac{1}{\sqrt{2\pi}}L^{-\frac{3}{2}}\left(\int^L_{L-R}\left(-r_1+L\right)\e^{-\frac{1}{2L}r^2_1}\dd + \int^{L+R}_L\left(L-r_2\right)\e^{-\frac{1}{2L}r^2_2}\dd r_2\right)\nonumber\\
        &= \frac{1}{\sqrt{2\pi}}L^{-\frac{3}{2}}\int^{L+R}_{L-R}\left(L-r\right)\e^{-\frac{1}{2L}r^2}\dd r\nonumber\\
        & \leq \frac{1}{\sqrt{2\pi}}L^{-\frac{3}{2}} \times 2R\times R\e^{-\frac{1}{2L}\left(L-R\right)^2}
    \end{align}
    for $L\geq R$. In particular,
    \begin{equation}
        \frac{\int^R_0\connf_L(r)\left(\sinh r\right)^2\dd r}{\int^\infty_0\connf_L(r)\left(\sinh r\right)^2\dd r} = \LandauBigO{L^{-\frac{3}{2}}\e^{-\frac{1}{2}L}}
    \end{equation}
    as $L\to\infty$ for all fixed $R<\infty$. Therefore assumption~\ref{enum:AssumptionLongRange} holds.
\end{proof}

\begin{lemma}
    For heat kernel RCM in $d=3$, for all $L>0$,
    \begin{equation}
        \norm*{\Opconnf_L}_{1\to 1} = \Acal_L, \qquad \norm*{\Opconnf_L}_{2\to 2} = \Acal_L\e^{-\frac{1}{2}L},
    \end{equation}
    and
    \begin{equation}
        \connf^{\star 2}_L\left(\orig,\orig\right) = \frac{\Acal_L^2}{4\pi^{\frac{3}{2}}}L^{-\frac{3}{2}}\e^{-L}.
    \end{equation}

    Therefore
    \begin{equation}
        \beta(L)=\LandauBigO{\Acal^{\frac{1}{2}}_L L^{-\frac{3}{4}}\e^{-\frac{3}{4}L}}.
    \end{equation}
\end{lemma}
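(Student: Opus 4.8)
The plan is to use that the adjacency function is a rescaled heat kernel, $\connf_L = \Acal_L K_3(L,\cdot,\cdot)$, together with two of its features: $K_3$ has total mass $1$, and it obeys the semigroup property $K_3(t,\cdot,\cdot)\star K_3(t',\cdot,\cdot) = K_3(t+t',\cdot,\cdot)$. The $L^1\to L^1$ norm is then immediate: by Lemma~\ref{lem:ConvolutionOperator} and isometry invariance, $\norm*{\Opconnf_L}_{1\to1} = \int_\HypThree\connf_L(x,\orig)\mu\left(\dd x\right) = \Acal_L\int_\HypThree K_3(L,x,\orig)\mu\left(\dd x\right) = \Acal_L$, the last equality being precisely the mass-$1$ identity already verified in the proof of Lemma~\ref{lem:heatkernelGood}.

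For the $L^2\to L^2$ norm I would invoke Lemma~\ref{lem:RangeSphericalTransform} (whose hypotheses hold since $\connf_L$ is real, isometry invariant, and lies in $L^1\cap L^\infty$, hence induces a bounded $L^2\to L^2$ operator): $\norm*{\Opconnf_L}_{2\to2} = \esssup_{s\in\R}\abs*{\widetilde{\connf}_L(s)}$. The spherical transform of the heat kernel of $\HypThree$ is the well-known Gaussian $\widetilde{K_3(t,\cdot,\cdot)}(s) = \e^{-\frac{t}{2}\left(s^2+1\right)}$ --- the $\tfrac{t}{2}$ reflecting the $(2\pi t)$ rather than $(4\pi t)$ normalisation and the $+1$ being $\left(\tfrac{d-1}{2}\right)^2$ for $d=3$; see \cite{helgason1994geometric,davies1988heat} --- so that $\widetilde{\connf}_L(s) = \Acal_L\e^{-\frac{L}{2}\left(s^2+1\right)}$, whose essential supremum over $s\in\R$ equals $\Acal_L\e^{-L/2}$, attained at $s=0$. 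Equivalently, this spectral gap is visible in the $\e^{-t/2}$ prefactor built into $K_3$.

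For $\connf_L^{\star2}(\orig,\orig)$ two equivalent routes are available. The quick one uses the semigroup property to write $\connf_L^{\star2} = \Acal_L^2 K_3(2L,\cdot,\cdot)$, and then evaluates on the diagonal (where $\distThree{x,\orig}/\sinh\distThree{x,\orig}\to1$ as $x\to\orig$), giving $\connf_L^{\star2}(\orig,\orig) = \Acal_L^2 K_3(2L,\orig,\orig)$; evaluating $K_3$ at coinciding arguments then produces the stated closed form in $\Acal_L$, $L$, and $\e^{-L}$. The direct route writes $\connf_L^{\star2}(\orig,\orig) = \int_\HypThree\connf_L(x,\orig)^2\mu\left(\dd x\right) = 4\pi\int^\infty_0\connf_L(r)^2(\sinh r)^2\dd r$, where the factor $\left(r/\sinh r\right)^2$ coming from $\connf_L^2$ cancels the $(\sinh r)^2$ volume factor and reduces the integral to the Gaussian moment $\int^\infty_0 r^2\e^{-r^2/L}\dd r = \tfrac{\sqrt{\pi}}{4}L^{3/2}$; collecting constants gives the same answer.

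Finally I would substitute these into \eqref{eqn:BetaDefinition}: $\norm*{\Opconnf_L}_{2\to2}/\norm*{\Opconnf_L}_{1\to1} = \e^{-L/2}$ and $\connf_L^{\star2}(\orig,\orig)/\norm*{\Opconnf_L}_{1\to1} = \LandauBigO{\Acal_L L^{-3/2}\e^{-L}}$, so their product is $\LandauBigO{\Acal_L L^{-3/2}\e^{-3L/2}}$ and taking the square root gives $\beta(L) = \LandauBigO{\Acal_L^{1/2}L^{-3/4}\e^{-3L/4}}$. The only step requiring genuine care is the exact evaluation of $\norm*{\Opconnf_L}_{2\to2}$ --- identifying the bottom of the $L^2$-spectrum of $-\Delta_{\HypThree}$ as $\left(\tfrac{d-1}{2}\right)^2$, which is what makes the $\e^{-L/2}$ factor exact rather than a mere upper bound; I expect this to be the main obstacle, with everything else reducing to the explicit form of the heat kernel, the semigroup property, and Gaussian integrals of the type already carried out in the proof of Lemma~\ref{lem:heatkernelGood}.
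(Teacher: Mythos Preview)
Your proposal is correct and follows essentially the same route as the paper. The only variation is in the $L^2\to L^2$ computation: the paper evaluates $\widetilde{\connf}_L(0)$ directly via the integral $\mathfrak{S}_2\int_0^\infty\connf_L(r)Q_3(r)(\sinh r)^{d-1}\dd r$ with $Q_3(r)=r/\sinh r$ (reducing to the Gaussian second moment), whereas you quote the full spherical transform $\widetilde{\connf}_L(s)=\Acal_L\e^{-\frac{L}{2}(s^2+1)}$ and read off the supremum at $s=0$; your version has the advantage of making explicit why the essential supremum is attained at $s=0$, which the paper takes for granted.
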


\begin{proof}
    We have already established in the proof of Lemma~\ref{lem:heatkernelGood} that $\norm*{\Opconnf_L}_{1\to 1} = \Acal_L$.

    Now recall
    \begin{equation}
        \norm*{\Opconnf_L}_{2\to 2} = \widetilde{\connf}_L(0) = 4\pi\int^\infty_0\connf_L(r)Q_3(r)\left(\sinh r\right)^2\dd r.
    \end{equation}
    In \cite{dickson2024NonUniqueness} it is noted that $Q_3(r)=\frac{r}{\sinh r}$, and therefore
    \begin{equation}
        \norm*{\Opconnf_L}_{2\to 2} = \frac{4\pi\Acal_L }{\left(2\pi \right)^\frac{3}{2}}L^{-\frac{3}{2}}\e^{-\frac{1}{2}L}\int^\infty_0 r^2\e^{-\frac{1}{2L}r^2}\dd r = \frac{\Acal_L }{\sqrt{2\pi}}L^{-\frac{3}{2}}\e^{-\frac{1}{2}L} \int^\infty_{-\infty} r^2\e^{-\frac{1}{2L}r^2}\dd r.
    \end{equation}
    Then
    \begin{equation}
        \frac{1}{\sqrt{2\pi L}}\int^\infty_{-\infty} r^2\e^{-\frac{1}{2L}r^2}\dd r = L, 
    \end{equation}
    and
    \begin{equation}
        \norm*{\Opconnf_L}_{2\to 2} = \Acal_L \e^{-\frac{1}{2}L}.
    \end{equation}

    Now from the semi-group property,
    \begin{equation}
        \connf^{\star 2}_L\left(\orig,\orig\right) = \Acal_L^2 K_3\left(2L,\orig,\orig\right) = \frac{\Acal^2_L}{\left(4\pi\right)^\frac{3}{2}}L^{-\frac{3}{2}}\e^{-L},
    \end{equation}
    and
    \begin{equation}
        \beta(L) = \left(\frac{\norm*{\Opconnf_L}_{2\to 2}}{\norm*{\Opconnf_L}_{1\to 1}}\frac{\connf^{\star 2}_L\left(\orig,\orig\right) }{\norm*{\Opconnf_L}_{1\to 1}}\right)^\frac{1}{2} = \Acal^\frac{1}{2}_L\frac{1}{2\sqrt{2}\pi^{\frac{3}{4}}}L^{-\frac{3}{4}}\e^{-\frac{3}{4}L}.
    \end{equation}
\end{proof}

\begin{lemma}\label{lem:heatkernelTermsSize}
    For heat kernel RCM in $d=3$, for $n\geq 1$ and all $L>0$,
    \begin{equation}
        \connf_L^{\star n}\left(\orig,\orig\right) = \frac{\Acal_L^n}{\left(2\pi n\right)^\frac{3}{2}} L^{-\frac{3}{2}}\e^{-\frac{n}{2}L}.
    \end{equation}
    As $L\to\infty$,
    \begin{align}
        \connf_L^{\star 1\star 2\cdot 2}\left(\orig,\orig\right) &= \frac{2^{\frac{3}{2}}\Acal_L^5}{\left(4\pi\right)^\frac{7}{2}}L^{-\frac{9}{2}}\e^{-\frac{5}{2}L}\int^\infty_0\frac{r^3}{\sinh r}\dd r + \LandauBigO{\Acal_L^5 L^{-\frac{11}{2}}\e^{-\frac{5}{2}L}}\\
        \connf_L^{\star 2\star 2\cdot 2}\left(\orig,\orig\right) &= \frac{\Acal_L^6}{\left(4\pi\right)^\frac{7}{2}}L^{-\frac{9}{2}}\e^{-3L}\int^\infty_0\frac{r^3}{\sinh r}\dd r + \LandauBigO{\Acal_L ^6L^{-\frac{11}{2}}\e^{-3L}}.
    \end{align}
    For $L$ sufficiently large,
    \begin{equation}
        \Ecal^*\left(L\right)=0.
    \end{equation}
\end{lemma}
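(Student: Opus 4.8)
The plan is to establish the three claims in Lemma~\ref{lem:heatkernelTermsSize} in order, all relying on the semi-group property of the heat kernel $K_3$ and the explicit form $Q_3(r) = r/\sinh r$ noted in \cite{dickson2024NonUniqueness}. For the first identity, the key observation is that $\connf_L^{\star n} = \Acal_L^n K_3(nL,\cdot,\cdot)$ by the semi-group property, so $\connf_L^{\star n}(\orig,\orig) = \Acal_L^n K_3(nL,\orig,\orig)$. Evaluating $K_3$ at coinciding points requires taking the limit $\distThree{x,y}\to 0$ of $\frac{\distThree{x,y}}{\sinh\distThree{x,y}}$, which is $1$, so $K_3(t,\orig,\orig) = (2\pi t)^{-3/2}\e^{-t/2}$; substituting $t = nL$ gives the stated formula directly. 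This step is routine.

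For the two integral identities, the plan is to write each as an integral over $\HypThree$ of products of convolution powers, then use the substitution $K_3(t,\orig,x) = (2\pi t)^{-3/2}\frac{r}{\sinh r}\e^{-t/2 - r^2/(2t)}$ with $r = \distThree{x,\orig}$ and pass to radial coordinates via $\int_\HypThree f\,\dd\mu = 4\pi\int_0^\infty g(r)(\sinh r)^2\,\dd r$. For $\connf_L^{\star 1\star 2\cdot 2}(\orig,\orig) = \int_\HypThree \connf_L(x,\orig)\connf_L^{\star 2}(x,\orig)^2\,\mu(\dd x)$ the integrand becomes, up to constants, $\frac{r}{\sinh r}\cdot\left(\frac{r}{\sinh r}\right)^2 (\sinh r)^2 = \frac{r^3}{\sinh r}$ times the Gaussian factors $\e^{-(1/(2L) + 2\cdot 1/(4L))r^2} = \e^{-r^2/L}$ and the exponential-in-$L$ prefactors. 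The leading behaviour comes from noting that as $L\to\infty$ the Gaussian $\e^{-r^2/L}\to 1$ pointwise and $\int_0^\infty \frac{r^3}{\sinh r}\dd r<\infty$ (convergence near $0$ because $r^3/\sinh r\sim r^2$, near $\infty$ because of exponential decay), so by dominated convergence the integral converges to $\int_0^\infty\frac{r^3}{\sinh r}\dd r$; the error term $\LandauBigO{L^{-1}}$ relative to the leading term comes from expanding $\e^{-r^2/L} = 1 + \LandauBigO{r^2/L}$ and using that $\int_0^\infty \frac{r^5}{\sinh r}\dd r<\infty$ as well. Collecting the prefactors $\left(\frac{\Acal_L}{(2\pi)^{3/2}}\right)\e^{-L/2}L^{-3/2}$ from $\connf_L$, $\left(\frac{\Acal_L^2}{(4\pi)^{3/2}}\right)\e^{-L}L^{-3/2}$ squared from $\connf_L^{\star 2}$, and the $4\pi$ from the radial measure, together with the powers of $L$ absorbed into the substitution $r\mapsto r$, yields the claimed $\frac{2^{3/2}\Acal_L^5}{(4\pi)^{7/2}}L^{-9/2}\e^{-5L/2}$ coefficient after bookkeeping. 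The computation of $\connf_L^{\star 2\star 2\cdot 2}(\orig,\orig) = \int_\HypThree \connf_L^{\star 2}(x,\orig)^3\,\mu(\dd x)$ is analogous, with three copies of $\frac{r}{\sinh r}$ giving $\frac{r^3}{\sinh r}$ after the $(\sinh r)^2$ measure factor, the Gaussian becoming $\e^{-3r^2/(4L)}$, and the prefactor bookkeeping producing $\frac{\Acal_L^6}{(4\pi)^{7/2}}L^{-9/2}\e^{-3L}$.

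For the final claim $\Ecal^*(L) = 0$ for $L$ sufficiently large, the plan is to show the positive-part integrand vanishes identically once $L$ is large. We have $\Ecal^*(L) = \frac{1}{\norm*{\Opconnf_L}_{1\to 1}}\int_\HypThree\left(\frac{C^2}{\norm*{\Opconnf_L}_{1\to 1}^2}\connf_L^{\star 2}\cdot\connf_L^{\star 2}(x,\orig) - \frac12\connf_L(x,\orig)\right)_+\mu(\dd x)$, and it suffices to prove that $\frac{C^2}{\norm*{\Opconnf_L}_{1\to 1}^2}\connf_L^{\star 2}(x,\orig)^2 \leq \frac12\connf_L(x,\orig)$ pointwise for all $x$ and all large $L$, i.e. that $\sup_x \frac{\connf_L^{\star 2}(x,\orig)^2}{\connf_L(x,\orig)}$ is $o(\norm*{\Opconnf_L}_{1\to 1}^2) = o(\Acal_L^2)$. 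Writing everything in terms of $r = \distThree{x,\orig}$, both $\connf_L^{\star 2}(x,\orig)$ and $\connf_L(x,\orig)$ carry the factor $\frac{r}{\sinh r}$, so the ratio is $\frac{\left(\frac{\Acal_L^2}{(4\pi)^{3/2}}L^{-3/2}\e^{-L}\e^{-r^2/(4L)}\right)^2\cdot\frac{r^2}{\sinh^2 r}}{\frac{\Acal_L}{(2\pi)^{3/2}}L^{-3/2}\e^{-L/2}\e^{-r^2/(2L)}\cdot\frac{r}{\sinh r}} = c\,\Acal_L^3 L^{-3/2}\e^{-3L/2}\,\frac{r}{\sinh r}\,\e^{-r^2/(2L)}$ for a dimensional constant $c$, whose supremum over $r\ge 0$ is at most $c\,\Acal_L^3 L^{-3/2}\e^{-3L/2}$ since $\frac{r}{\sinh r}\le 1$ and $\e^{-r^2/(2L)}\le 1$. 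Dividing by $\norm*{\Opconnf_L}_{1\to 1}^2 = \Acal_L^2$ leaves $c\,\Acal_L L^{-3/2}\e^{-3L/2}$, and since the standing constraint $\Acal_L\le(2\pi L)^{3/2}\e^{L/2}$ forces this to be $\le c(2\pi)^{3/2}\e^{-L}\to 0$, the pointwise inequality holds for $L$ large enough, hence the positive part vanishes and $\Ecal^*(L)=0$.

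The main obstacle is the careful bookkeeping of constants and powers of $L$ in the two convolution-integral identities — in particular tracking how the substitution and the radial Jacobian $4\pi(\sinh r)^2$ interact with the $\frac{r}{\sinh r}$ factors to leave exactly $\frac{r^3}{\sinh r}$, and confirming the exact numerical coefficients $\frac{2^{3/2}}{(4\pi)^{7/2}}$ and $\frac{1}{(4\pi)^{7/2}}$. The analytic content (dominated convergence, finiteness of $\int_0^\infty r^k/\sinh r\,\dd r$ for $k\ge 2$) is straightforward; the risk is purely arithmetical.
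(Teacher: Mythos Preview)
Your proposal is correct and follows essentially the same approach as the paper's proof: semi-group property for $\connf_L^{\star n}(\orig,\orig)$, radial integration with the explicit $K_3$ formula and dominated convergence for the two mixed-convolution integrals, and a pointwise bound on the ratio $(\connf_L^{\star 2})^2/\connf_L$ (equivalently, factoring out $\connf_L$) together with the constraint $\Acal_L\le(2\pi L)^{3/2}\e^{L/2}$ for $\Ecal^*(L)=0$. One minor slip: in your ratio computation the Gaussian factors $\e^{-r^2/(2L)}$ in numerator and denominator cancel exactly, so the extra $\e^{-r^2/(2L)}$ you carry is spurious---but since you then bound it by $1$, the argument is unaffected.
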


\begin{proof}
        The expression for $\connf_L^{\star n}\left(\orig,\orig\right)$ follows from the heat kernel satisfying the semi-group property:
    \begin{equation}
        \int_{\HypThree}K_3\left(L_1,x,y\right)K_3\left(L_2,y,\orig\right)\mu\left(\dd y\right) = K_3\left(L_1+L_2,x,\orig\right)
    \end{equation}
    for all $L_1,L_2>0$ and $x\in \HypThree$. Therefore
    \begin{equation}
        \connf_L^{\star n}\left(x,\orig\right) = \Acal_L^n K_3\left(nL,x,\orig\right),
    \end{equation}
    for all $L>0$, $n\geq 1$, and $x\in\HypThree$. In particular, the result follows from setting $x=\orig$.

    By using $\connf_L^{\star 1\star 2 \cdot 2}\left(\orig,\orig\right) = \int_{\HypThree}\connf_L\left(x,\orig\right)\connf^{\star 2}_L\left(x,\orig\right)^2 \mu\left(\dd x\right)$ and this expression for $\connf_L^{\star n}\left(x,\orig\right)$, we arrive at
    \begin{align}
        \connf_L^{\star 1\star 2 \cdot 2}\left(\orig,\orig\right) &= 4\pi\int^\infty_0\left(\frac{\Acal_L }{\left(2\pi L\right)^\frac{3}{2}}\e^{-\frac{1}{2}L}\frac{r}{\sinh r}\e^{-\frac{1}{2L}r^2}\right)\left(\frac{\Acal_L^2}{\left(4\pi L\right)^\frac{3}{2}}\e^{-L}\frac{r}{\sinh r}\e^{-\frac{1}{4L}r^2}\right)^2\left(\sinh r\right)^2\dd r\nonumber\\
        & = \Acal_L ^5\frac{\pi}{2}\frac{1}{\left(2\pi L\right)^\frac{9}{2}}\e^{-\frac{5}{2}L} \int^\infty_0\frac{r^3}{\sinh r}\e^{-\frac{1}{L}r^2}\dd r\nonumber\\
        & = \Acal_L^5 2^{-\frac{11}{2}}\pi^{-\frac{7}{2}}L^{-\frac{9}{2}}\e^{-\frac{5}{2}L} \left(\int^\infty_0\frac{r^3}{\sinh r}\dd r - \frac{1}{L}\int^\infty_0\frac{r^5}{\sinh r}\dd r + \LandauBigO{\frac{1}{L^2}}\right)
    \end{align}
    as $L\to\infty$. Note that since $\frac{r^n}{\sinh r}\sim 2r^n\e^{-r}$, this error bound follows from dominated convergence. If we similarly use $\connf_L^{\star 2\star 2 \cdot 2}\left(\orig,\orig\right) = \int_{\HypThree}\connf^{\star 2}_L\left(x,\orig\right)^3 \mu\left(\dd x\right)$, we get
    \begin{align}
        \connf_L^{\star 2\star 2 \cdot 2}\left(\orig,\orig\right) &= 4\pi\int^\infty_0\left(\frac{\Acal_L^2}{\left(4\pi L\right)^\frac{3}{2}}\e^{-L}\frac{r}{\sinh r}\e^{-\frac{1}{4L}r^2}\right)^3\left(\sinh r\right)^2\dd r\nonumber\\
        & = \frac{\Acal_L^6}{\left(4\pi\right)^\frac{7}{2}}L^{-\frac{9}{2}}\e^{-3L} \int^\infty_0\frac{r^3}{\sinh r}\e^{-\frac{3}{4L}r^2}\dd r\nonumber\\
        &=\frac{\Acal_L^6}{\left(4\pi\right)^\frac{7}{2}}L^{-\frac{9}{2}}\e^{-3L}\left( \int^\infty_0\frac{r^3}{\sinh r}\dd r - \frac{3}{4L} \int^\infty_0\frac{r^5}{\sinh r}\dd r + \LandauBigO{\frac{1}{L^2}}\right)
    \end{align}

    Since $\norm*{\Opconnf_L}_{1\to 1}=\Acal_L $, we have
    \begin{equation}
    \label{eqn:errorstar}
        \Ecal^*\left(L\right) = \frac{C}{\Acal_L }\int_\HypThree\left(\frac{C^2}{2\Acal_L^2}\left(\connf_L^{\star 2}\left(r\right)\right)^2 - \frac{1}{2}\connf_L\left(r\right)\right)_+\left(\sinh r\right)^2\dd r.
    \end{equation}
    Using our expressions for $\connf_L^{\star 2}(r)$ and $\connf_L(r)$, we have
    \begin{equation}
        \left(\frac{C^2}{2\Acal_L^2}\left(\connf_L^{\star 2}\left(r\right)\right)^2 - \frac{1}{2}\connf_L\left(r\right)\right)_+ = \connf_L(r)\left(\frac{C^2}{16}\frac{\Acal_L}{\left(2\pi L\right)^\frac{3}{2}}\e^{-\frac{3}{2}L}\frac{r}{\sinh r} - \frac{1}{2}\right)_+.
    \end{equation}
    Since $\frac{r}{\sinh r}< 1$ for all $r>0$ and $\Acal_L\leq \left(2\pi L\right)^\frac{3}{2}\e^{\frac{1}{2}L}$, we have
    \begin{equation}
        \frac{C^2}{16}\frac{\Acal_L}{\left(2\pi L\right)^\frac{3}{2}}\e^{-\frac{3}{2}L}\frac{r}{\sinh r} = \LandauBigO{\e^{-L}}
    \end{equation}
    uniformly in $r>0$. Therefore the integrand in \eqref{eqn:errorstar} vanishes for sufficiently large $L$.
\end{proof}

\begin{proof}[Proof of Theorem~\ref{cor:HeatKernelResult}]
    We can use the above results to control the terms in Theorem~\ref{thm:CritIntensityExpansionFull}. The dominant part of the error term is either $\norm*{\Opconnf_L}_{1\to 1}^{-4}\connf_L^{\star 5}\left(\orig,\orig\right)$ or $\norm*{\Opconnf_L}_{1\to 1}^{-3}\connf_L^{\star 2\star 2\cdot 1}\left(\orig,\orig\right)$ in $\Omega(L)$, depending upon the behaviour of $\Acal_L$. Since $\beta(L)$ decays exponentially, the term $\beta(L)^N$ can be made no bigger than the other terms by choosing $N$ sufficiently large ($N=4$ suffices). 
\end{proof}

\subsection{Boolean disc RCM}
\label{sec:CalcBooleanDisc}
Here we show how Theorem~\ref{cor:BooleanResult} follows from Theorem~\ref{thm:CritIntensityExpansionFull}. Recall that for the Boolean disc RCM,
\begin{equation}
    \connf_L(r) = \Id\left\{r<L\right\}.
\end{equation}

\begin{lemma}
    The Boolean disc RCM satisfies Assumption~\ref{Model_Assumption}.
\end{lemma}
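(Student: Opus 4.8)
The plan is to verify the two conditions of Assumption~\ref{Model_Assumption} by direct computation; both collapse to elementary integral estimates because $\connf_L=\Id\{\,\cdot\,<L\}$ is an indicator function.

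For Assumption~\ref{enum:AssumptionFiniteDegree}, I would simply compute
\[
    \int_0^\infty \connf_L(r)\,\e^{(d-1)r}\,\dd r = \int_0^L \e^{(d-1)r}\,\dd r = \frac{\e^{(d-1)L}-1}{d-1},
\]
which is strictly positive and finite for every $L>0$ and every $d\geq 2$. That disposes of the first condition.

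For Assumption~\ref{enum:AssumptionLongRange}, fix $R>0$ and consider $L>R$. Then the numerator equals the $L$-independent finite constant $\int_0^R(\sinh r)^{d-1}\,\dd r$, while the denominator is $\int_0^L(\sinh r)^{d-1}\,\dd r = \mathfrak{S}_{d-1}^{-1}\,\mu\!\left(B_L(\orig)\right)$. Using the crude bound $\sinh r\geq \tfrac12(1-\e^{-2})\e^{r}$, valid for $r\geq 1$, the denominator is bounded below by a positive constant times $\e^{(d-1)L}$ and hence tends to $\infty$ as $L\to\infty$. Therefore the quotient tends to $0$, which is exactly Assumption~\ref{enum:AssumptionLongRange}.

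There is no genuine obstacle in this lemma; the only point requiring the slightest care is that the lower bound on the denominator must be exponential of order $(d-1)L$ so that the ratio truly vanishes, and this follows at once from $\sinh r \sim \tfrac12\e^{r}$ as $r\to\infty$. (Alternatively one may simply invoke the standard fact that $\mu\!\left(B_L(\orig)\right)\to\infty$ as $L\to\infty$.)
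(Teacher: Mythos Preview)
Your proposal is correct and follows essentially the same approach as the paper: direct verification of the two conditions using that $\connf_L$ is a compactly supported indicator. If anything, your argument is more explicit than the paper's, which simply notes that finite range gives \ref{enum:AssumptionFiniteDegree} ``clearly'' and defers \ref{enum:AssumptionLongRange} to \cite[Lemma~A.2]{dickson2024NonUniqueness} or to the asymptotics \eqref{eqn:BooleanOneNormAsymp}.
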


\begin{proof}
    Since $\connf_L(r)\in\left[0,1\right]$ and is finite range, assumption~\ref{enum:AssumptionFiniteDegree} clearly holds.

    Assumption~\ref{enum:AssumptionLongRange} is verified by \cite[Lemma~A.2]{dickson2024NonUniqueness}. It is also simple to directly verify by slightly adapting the derivation of the leading order term in \eqref{eqn:BooleanOneNormAsymp} below.
\end{proof}

\begin{lemma}\label{lem:normsBoolean}
    For the Boolean disc RCM, as $L\to\infty$,
    \begin{align}
        \norm*{\Opconnf_L}_{2\to 2} &= \widetilde{\connf}_L(0) = \LandauBigO{L\e^{\frac{1}{2}\left(d-1\right)L}},\\
        \norm*{\Opconnf_L}_{1\to 1} & = \frac{\mathfrak{S}_{d-1}}{\left(d-1\right)2^{d-1}}\e^{\left(d-1\right)L}\left(1 - \left(1+o\left(1\right)\right)\begin{cases}
            \e^{-L}&\colon d=2,\\
            4L\e^{-2L}&\colon d=3,\\
            \frac{\left(d-1\right)^2}{d-3}\e^{-2L}&\colon d\geq 4.
        \end{cases}\right)
        \label{eqn:BooleanOneNormAsymp}
    \end{align}
    In particular, as $L\to\infty$
    \begin{equation}
        \beta(L) = \LandauBigO{L^\frac{1}{2}\e^{-\frac{1}{4}\left(d-1\right)L}}.
    \end{equation}
\end{lemma}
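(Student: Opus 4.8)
The plan is to establish the three claimed estimates in turn; each reduces to an elementary computation of a hyperbolic ball volume or of a spherical-transform value, so the lemma is essentially a calculation relying on the tools already assembled.

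For $\norm*{\Opconnf_L}_{1\to 1}$ I would start from Lemma~\ref{lem:ConvolutionOperator}, which gives $\norm*{\Opconnf_L}_{1\to 1} = \int_\HypDim\connf_L(x,\orig)\mu\left(\dd x\right) = \mu\left(B_L(\orig)\right) = \mathfrak{S}_{d-1}\int_0^L(\sinh r)^{d-1}\dd r$. Writing $(\sinh r)^{d-1} = 2^{-(d-1)}(\e^r-\e^{-r})^{d-1}$, expanding by the binomial theorem and integrating term by term, the integral of $\e^{(d-1-2k)r}$ over $[0,L]$ is $\frac{\e^{(d-1-2k)L}-1}{d-1-2k}$ (and equals $L$ when $d-1-2k=0$). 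The $k=0$ term produces the leading behaviour $\frac{2^{-(d-1)}}{d-1}\e^{(d-1)L}$, hence the prefactor $\frac{\mathfrak{S}_{d-1}}{(d-1)2^{d-1}}\e^{(d-1)L}$. The dominant correction then depends on $d$: for $d\geq 4$ it is the $k=1$ exponential term (exponent $d-3>0$), giving the relative correction $-\frac{(d-1)^2}{d-3}\e^{-2L}$; for $d=3$ the exponent $d-3$ vanishes and that term is linear in $L$, producing the $L\e^{-2L}$ correction; and for $d=2$ the only competitors are the constants coming from the lower endpoint of integration, giving a correction of order $\e^{-L}$. Collecting the dominant correction in each case yields \eqref{eqn:BooleanOneNormAsymp}.

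For $\norm*{\Opconnf_L}_{2\to 2}$, note first that $\connf_L\geq 0$ is isometry invariant, so for every $s\in\R$ and $b\in\mathbb{S}^{d-1}$ one has $\abs*{\widetilde{\connf}_L(s)} = \abs*{\int_\HypDim\connf_L(x)\e^{(-is+\frac{d-1}{2})A(x,b)}\mu\left(\dd x\right)} \leq \int_\HypDim\connf_L(x)\e^{\frac{d-1}{2}A(x,b)}\mu\left(\dd x\right) = \widetilde{\connf}_L(0)$, with equality at $s=0$; combined with Lemma~\ref{lem:RangeSphericalTransform} this gives $\norm*{\Opconnf_L}_{2\to 2} = \widetilde{\connf}_L(0)$. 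Since $\connf_L$ is rotation invariant about $\orig$, $\widetilde{\connf}_L(0)$ is $b$-independent, so averaging the defining integral over $b\in\mathbb{S}^{d-1}$ rewrites it as $\int_\HypDim\connf_L(x)Q_d(x)\mu\left(\dd x\right) = \mathfrak{S}_{d-1}\int_0^L Q_d(r)(\sinh r)^{d-1}\dd r$. Then Lemma~\ref{lem:QfunctionBound} ($Q_d(r)\leq C\max\{1,r\}\e^{-\frac12(d-1)r}$) together with $(\sinh r)^{d-1}\leq 2^{-(d-1)}\e^{(d-1)r}$ bounds this by a constant multiple of $\int_0^L r\,\e^{\frac12(d-1)r}\dd r = \LandauBigO{L\e^{\frac12(d-1)L}}$.

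For $\beta(L)$: because $\connf_L$ is an indicator, $\connf_L^2 = \connf_L$, so $\connf_L^{\star 2}\left(\orig,\orig\right) = \int_\HypDim\connf_L(x,\orig)^2\mu\left(\dd x\right) = \norm*{\Opconnf_L}_{1\to 1}$; hence the second factor in \eqref{eqn:BetaDefinition} equals $1$ and $\beta(L)^2 = \norm*{\Opconnf_L}_{2\to 2}/\norm*{\Opconnf_L}_{1\to 1} = \LandauBigO{L\e^{-\frac12(d-1)L}}$ by the previous two estimates, which gives $\beta(L) = \LandauBigO{L^{1/2}\e^{-\frac14(d-1)L}}$. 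There is no conceptual obstacle here; the only step that needs genuine care is the bookkeeping of the subleading term of $\int_0^L(\sinh r)^{d-1}\dd r$, where one must, case by case in $d=2$, $d=3$ and $d\geq 4$, identify which of the competing contributions (the second exponential of the binomial expansion, the linear-in-$L$ term from a vanishing exponent, or the constants from the lower endpoint) dominates, and verify it beats the residual $\LandauBigO{1}$ error — the spherical-transform estimates and the identity for $\connf_L^{\star 2}\left(\orig,\orig\right)$ are immediate once the tools above are in place.
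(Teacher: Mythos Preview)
Your proposal is correct and follows essentially the same approach as the paper: the binomial expansion of $(\sinh r)^{d-1}$ with case-by-case analysis for $\norm*{\Opconnf_L}_{1\to 1}$, the $Q_d$ bound from Lemma~\ref{lem:QfunctionBound} for $\widetilde{\connf}_L(0)$, and the observation $\connf_L^{\star 2}(\orig,\orig)=\norm*{\Opconnf_L}_{1\to 1}$ for $\beta(L)$. You even add the explicit justification that $\esssup_s\abs*{\widetilde{\connf}_L(s)}=\widetilde{\connf}_L(0)$ via nonnegativity of $\connf_L$, which the paper simply asserts.
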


\begin{proof}
    For the Boolean model, 
    \begin{equation}
        \norm*{\Opconnf_L}_{2\to 2} = \widetilde{\connf}_L(0) = \mathfrak{S}_{d-1}\int^L_0Q_d(r)\left(\sinh r\right)^{d-1} \dd r,
    \end{equation}
    and \cite[Lemma~3.3]{dickson2024NonUniqueness} shows that there exists $c_d<\infty$ such that
    \begin{equation}
        Q_d(r) \leq c_d \max\left\{r,1\right\}\e^{-\frac{1}{2}\left(d-1\right)r}.
    \end{equation}
    Therefore there exists $C_d<\infty$ such that
    \begin{align}
        \widetilde{\connf}_L(0) &\leq \mathfrak{S}_{d-1}c_d\int^1_0\left(\sinh r\right)^{d-1}\dd r + \mathfrak{S}_{d-1}c_d\int^L_1r\e^{-\frac{1}{2}\left(d-1\right)r}\frac{1}{2^{d-1}}\e^{\left(d-1\right)r}\dd r\nonumber\\
        & \leq C_d + \frac{\mathfrak{S}_{d-1}c_d}{2^{d-1}}\int^L_0r\e^{\frac{1}{2}\left(d-1\right)r}\dd r\nonumber\\
        & = C_d +\frac{\mathfrak{S}_{d-1}c_d}{2^{d-1}}\left(\frac{2}{d-1} L\e^{\frac{1}{2}\left(d-1\right)L} - \frac{4}{\left(d-1\right)^2}\e^{\frac{1}{2}\left(d-1\right)L} + \frac{4}{\left(d-1\right)^2}\right) \nonumber\\
        & = \LandauBigO{L\e^{\frac{1}{2}\left(d-1\right)L}}.
    \end{align}

    On the other hand,
    \begin{align}
        \norm*{\Opconnf_L}_{1\to 1} &= \mathfrak{S}_{d-1}\int^L_0 \left(\sinh r\right)^{d-1}\dd r \nonumber\\
        &= \frac{\mathfrak{S}_{d-1}}{2^{d-1}}\sum^{d-1}_{k=0}\binom{d-1}{k}\left(-1\right)^k\int^L_0\exp\left(\left(d-1-2k\right)r\right)\dd r\nonumber\\
        &= \frac{\mathfrak{S}_{d-1}}{2^{d-1}}\sum^{d-1}_{k=0}\binom{d-1}{k}\left(-1\right)^k \times\begin{cases}
            \frac{1}{d-1-2k}\left(\e^{\left(d-1-2k\right)L}-1\right)&\colon k\ne\frac{d-1}{2},\\
            L&\colon k=\frac{d-1}{2}.
        \end{cases}
    \end{align}
    This means that
    \begin{equation}
        \norm*{\Opconnf_L}_{1\to 1} = \frac{\mathfrak{S}_{d-1}}{2^{d-1}}\left(\frac{1}{d-1}\e^{\left(d-1\right)L} - \left(1+o\left(1\right)\right)\begin{cases}
            1&\colon d=2,\\
            2L&\colon d=3,\\
            \frac{d-1}{d-3}\e^{\left(d-3\right)L}&\colon d\geq 4.
        \end{cases}\right)
    \end{equation}

    Therefore
    \begin{equation}
        \frac{\norm*{\Opconnf_L}_{2\to 2}}{\norm*{\Opconnf_L}_{1\to 1}} = \LandauBigO{L \e^{-\frac{1}{2}\left(d-1\right)L}}.
    \end{equation}
    We also have $\connf_L^{\star 2}\left(\orig,\orig\right) = \int_\HypDim\connf_L(x,\orig)^2\mu\left(\dd x\right) = \int_\HypDim\connf_L(x,\orig)\mu\left(\dd x\right) =\norm*{\Opconnf_L}_{1\to 1}$. Therefore
    \begin{equation}
        \beta(L) = \left(\frac{\norm*{\Opconnf_L}_{2\to 2}}{\norm*{\Opconnf_L}_{1\to 1}}\right)^\frac{1}{2} = \LandauBigO{L^\frac{1}{2}\e^{-\frac{1}{4}\left(d-1\right)L}}.
    \end{equation}
\end{proof}

\begin{lemma}\label{lem:diagramsBoolean}
    For the Boolean disc RCM, as $L\to\infty$,
    \begin{align}
        \frac{1}{\norm*{\Opconnf_L}_{1\to 1}^2}\connf^{\star 3}_L\left(\orig,\orig\right) &\sim \frac{\mathfrak{S}_{d-2}}{\mathfrak{S}_{d-1}}\frac{2^{d+2}}{d-1}\e^{-\frac{1}{2}\left(d-1\right)L},\\
        \frac{1}{\norm*{\Opconnf_L}_{1\to 1}^3}\connf^{\star 4}_L\left(\orig,\orig\right) &\sim \left(\frac{\mathfrak{S}_{d-2}}{\mathfrak{S}_{d-1}}\right)^2\frac{2^{2d+3}}{d-1}L\e^{-\left(d-1\right)L},\\
        \frac{1}{\norm*{\Opconnf_L}_{1\to 1}^3}\connf^{\star 1 \star 2 \cdot 2}_L\left(\orig,\orig\right) &\sim \left(\frac{\mathfrak{S}_{d-2}}{\mathfrak{S}_{d-1}}\right)^2\frac{2^{2d+2}}{d-1}L\e^{-\left(d-1\right)L},
    \end{align}
    and
    \begin{equation}
        \Ecal\left(L\right) = \LandauBigO{L\e^{-\left(d-1\right)L}}.
    \end{equation}
\end{lemma}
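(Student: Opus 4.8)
The plan is to reduce all four quantities to one–dimensional integrals in the radial variable $\rho=\dist{x,\orig}$, built from the single function
\[
  G_L(\rho):=\connf_L^{\star 2}\left(\orig,x\right)=\mu\left(B_L(\orig)\cap B_L(x)\right),\qquad \dist{\orig,x}=\rho,
\]
and then to run a Laplace–type asymptotic analysis. Since $\connf_L=\Id_{B_L(\orig)}$, rotational invariance gives the reductions
\begin{align*}
  \connf_L^{\star 3}\left(\orig,\orig\right) &= \mathfrak{S}_{d-1}\int_0^{L}G_L(\rho)\left(\sinh\rho\right)^{d-1}\dd\rho,\\
  \connf_L^{\star 4}\left(\orig,\orig\right) &= \mathfrak{S}_{d-1}\int_0^{2L}G_L(\rho)^2\left(\sinh\rho\right)^{d-1}\dd\rho,\\
  \connf_L^{\star 1\star 2\cdot 2}\left(\orig,\orig\right) &= \mathfrak{S}_{d-1}\int_0^{L}G_L(\rho)^2\left(\sinh\rho\right)^{d-1}\dd\rho,
\end{align*}
using in each case $\connf_L^{\star 2}(\cdot,\orig)=G_L(\dist{\cdot,\orig})$ and that $G_L$ is supported on $\{\rho<2L\}$. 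Everything then hinges on the asymptotics of $G_L$.

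The core step is to compute $G_L(\rho)$ as $L\to\infty$, uniformly in $\rho$. Writing a point $w$ in Fermi coordinates $(s,h,\phi)$ along the geodesic through $\orig$ and $x$ (equivalently, applying the hyperbolic law of cosines from Appendix~\ref{app:hyperbolictriangles}), the constraint $w\in B_L(\orig)\cap B_L(x)$ becomes $\cosh h\,\cosh m(s)\le\cosh L$ with $m(s)=\tfrac{\rho}{2}+\left|s-\tfrac{\rho}{2}\right|$ the larger of the distances from the foot of the perpendicular to $\orig$ and to $x$; integrating out the transverse directions yields the closed form
\[
  G_L(\rho)=\frac{\mathfrak{S}_{d-2}}{d-1}\int_{\rho-L}^{L}\left(\frac{\cosh^2 L}{\cosh^2 m(s)}-1\right)^{\!\frac{d-1}{2}}\dd s
  =\frac{2\,\mathfrak{S}_{d-2}}{d-1}\int_{\rho/2}^{L}\left(\frac{\cosh^2 L}{\cosh^2 s}-1\right)^{\!\frac{d-1}{2}}\dd s.
\]
The integrand concentrates near $s=\rho/2$ (the perpendicular bisector); substituting $s=L-u$, using $\cosh^2 L/\cosh^2(L-u)\sim\e^{2u}$, and $\int_0^{A}(\e^{2u}-1)^{(d-1)/2}\dd u\sim\tfrac1{d-1}\e^{(d-1)A}$, one gets, uniformly for $\rho$ in the bulk of $[0,2L]$,
\[
  G_L(\rho)\sim c_d\,\e^{(d-1)L}\,\e^{-\frac{d-1}{2}\rho}
\]
for an explicit constant $c_d$ (its precise value is forced by matching \eqref{eqn:expectedDegreeBoolean}, and identifying it correctly is the delicate part of the proof). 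One must then separately check that the endpoint windows $\rho=\LandauBigO{1}$, $\rho\in[2L-\LandauBigO{1},2L]$, and the full–sphere/partial–sphere transition layer, where this equivalence degrades, contribute only lower order to the three integrals above; this uses only the crude bound $G_L\le\mu(B_L)$ and $\int_0^{\varepsilon}(\sinh\rho)^{d-1}\dd\rho=\LandauBigO{\varepsilon^{d}}$.

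Feeding $G_L$ back in: in $\connf_L^{\star 3}$ the integrand $G_L(\rho)(\sinh\rho)^{d-1}\sim c_d 2^{-(d-1)}\e^{(d-1)L}\e^{\frac{d-1}{2}\rho}$ is an increasing pure exponential, so the integral is dominated by $\rho\approx L$ and picks up the extra $\tfrac{2}{d-1}\e^{\frac{d-1}{2}L}$; in $\connf_L^{\star 4}$ and $\connf_L^{\star 1\star 2\cdot 2}$ the cancellation $G_L(\rho)^2(\sinh\rho)^{d-1}\sim c_d^2 2^{-(d-1)}\e^{2(d-1)L}$ makes the integrand essentially constant in $\rho$, so both are of order $L\,\e^{2(d-1)L}$ and sit in the ratio $2:1$ coming purely from the lengths of $[0,2L]$ and $[0,L]$. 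Dividing by $\norm*{\Opconnf_L}_{1\to1}^{k}\sim\big(\tfrac{\mathfrak{S}_{d-1}}{(d-1)2^{d-1}}\e^{(d-1)L}\big)^{k}$ from Lemma~\ref{lem:normsBoolean} (with $k=2,3,3$) converts these into the three displayed ratios and fixes the numerical constants. For $\Ecal(L)$ the bound of Corollary~\ref{cor:diagramBounds} only gives $\LandauBigO{\e^{-\frac{d-1}{2}L}}$, so instead I would exploit the $-\tfrac12\connf_L$ in \eqref{eqn:ECAL}: its integrand is supported on $B_L(\orig)$, where $\connf_L^{\star 2}(x,\orig)=G_L(\dist{x,\orig})$, and since $G_L(\rho)/\norm*{\Opconnf_L}_{1\to1}\sim\tfrac{c_d(d-1)2^{d-1}}{\mathfrak{S}_{d-1}}\e^{-\frac{d-1}{2}\rho}\to0$, the positive part forces $\dist{x,\orig}\le\rho^{*}$ for a threshold $\rho^{*}$ bounded in $L$; on the $\mu$–bounded ball $B_{\rho^{*}}(\orig)$ the integrand is $\LandauBigO{1}$, whence $\Ecal(L)=\LandauBigO{\norm*{\Opconnf_L}_{1\to1}^{-1}}=\LandauBigO{\e^{-(d-1)L}}$, which even improves on the stated bound.

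The main obstacle will be the second step: obtaining $G_L(\rho)$ with the correct leading constant and with error estimates uniform in $\rho$ across the whole range $[0,2L]$ — in particular through the transition layer — since every numerical coefficient in Theorem~\ref{cor:BooleanResult} is inherited from it.
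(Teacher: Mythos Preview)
Your overall strategy matches the paper's: compute the two--point ball overlap $G_L(\rho)=\connf_L^{\star 2}(\rho)$ asymptotically and feed it into the three radial integrals. The paper estimates $G_L$ by decomposing the lens $B_L(\orig)\cap B_L(x)$ as two spherical caps minus two cones and using hyperbolic trigonometry to find the cap half--angle; your Fermi--coordinate formula is an equivalent route to the same leading behaviour $G_L(\rho)\sim\frac{4\mathfrak{S}_{d-2}}{(d-1)^2}\e^{\frac{d-1}{2}(2L-\rho)}$, and once that constant is nailed down the three diagram asymptotics follow exactly as you describe.

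There is, however, a genuine gap in your treatment of $\Ecal(L)$. You assert that ``its integrand is supported on $B_L(\orig)$'', but this is false: the term $\frac{C^2}{2\norm*{\Opconnf_L}_{1\to1}^2}\connf_L^{\star 2}\cdot\connf_L^{\star 2}$ in \eqref{eqn:ECAL} is supported on $B_{2L}(\orig)$, and on the annulus $L\le\rho<2L$ it is the \emph{only} term present, hence equals the positive part there. On that region your own asymptotic gives
\[
\frac{C^2}{2\norm*{\Opconnf_L}_{1\to1}^2}G_L(\rho)^2 \asymp \e^{-(d-1)\rho},
\]
so after multiplying by $(\sinh\rho)^{d-1}\asymp\e^{(d-1)\rho}$ the integrand is of order $1$ on an interval of length $L$, and dividing by $\norm*{\Opconnf_L}_{1\to1}$ yields a contribution of order $L\e^{-(d-1)L}$. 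This is precisely the source of the stated bound, and it is why your claimed improvement to $\LandauBigO{\e^{-(d-1)L}}$ cannot be right. The paper handles this by splitting the domain into $[0,\overline{\varepsilon}_L)$, $[\overline{\varepsilon}_L,L)$, and $[L,2L)$; on the middle piece the $-\tfrac12\connf_L$ kills the positive part for large $L$ (as you argue), but the outer annulus must be treated separately.
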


\begin{proof}
    Our main tool here will be an estimate of $\connf_L^{\star 2}\left(r\right)$. This quantity can be understood geometrically as the volume of the intersection of two balls with radius $L$ and centres a distance $r$ apart:
    \begin{equation}
        \connf_L^{\star 2}\left(r\right) = \habsd{B_L(\orig)\cap B_L(r)}.
    \end{equation}
    To estimate this volume, we will take a spherical segment of the ball and subtract off a cone with a hyperplane base. The volume of the intersection of the two balls will then be equal to two times this difference.

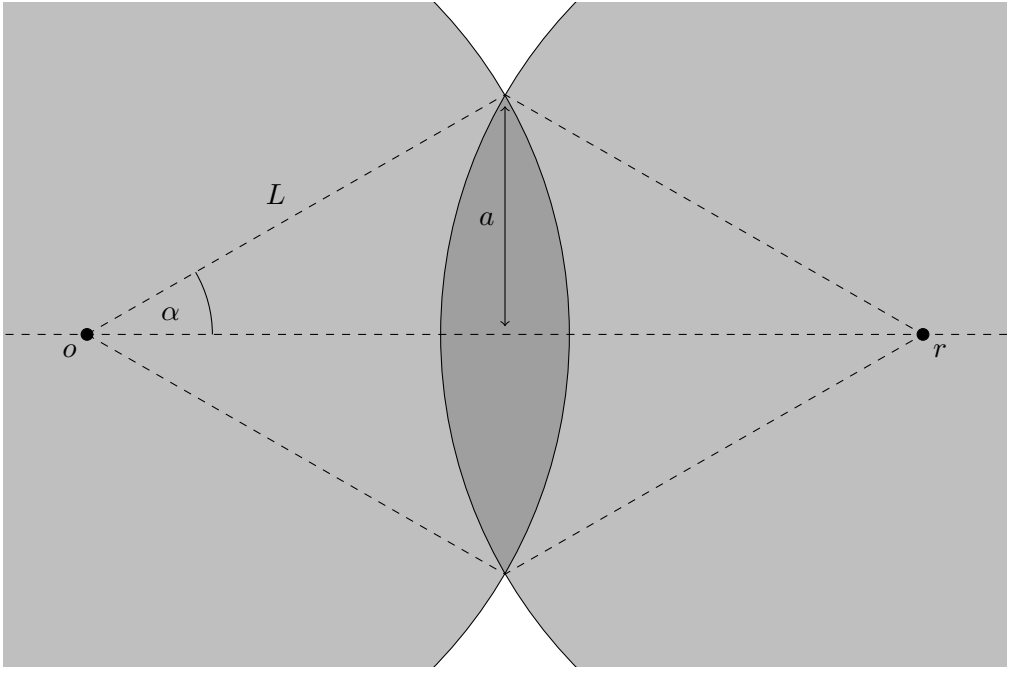
\begin{figure}
    \centering
    \begin{tikzpicture}[scale=1.1]
    \begin{scope}
        \clip (-6,-4) rectangle (6,4);
        \fill[semitransparent,fill=gray] (-5,0)  circle (5.77);
        \fill[semitransparent,fill=gray] (5,0)  circle (5.77);
        \draw (-5,0)  circle (5.77);
        \draw (5,0)  circle (5.77);
        \draw[dashed] (-5,0) -- (0,2.89) -- (5,0) -- (0,-2.89) -- (-5,0);
        \draw[dashed] (-10,0)--(10,0);
        \draw (-4,0.25) node{$\alpha$};
        \draw (-3.5,0) arc (0:30:1.5);
        \draw (-2.5,1.45) node[above left]{$L$};
        \draw[fill] (-5,0) circle (2pt) node[below left]{$\orig$};
        \draw[fill] (5,0) circle (2pt) node[below right]{$r$};
        \draw[<->] (0,0.1) -- (0,2.75);
        \draw (0,1.4) node[left]{$a$};
    \end{scope}
    \end{tikzpicture}
    \caption{Sketch and labelling of the cross-section of the region $B_L(\orig)\cap B_L(r)$.}
    \label{fig:circleIntersection}
\end{figure}

    Let $\alpha=\alpha(r,L)$ denote the internal half-angle of the spherical segment -- see Figure~\ref{fig:circleIntersection}. Then for $r\leq 2L$, by the hyperbolic trigonometric identity Lemma~\ref{lem:RightHyperbolicTriangles}
    \begin{equation}
        \cos \alpha = \frac{\tanh \frac{r}{2}}{\tanh L} = \left(\tanh \frac{r}{2}\right)\left(1 + 2 \e^{-2L} + \LandauBigO{\e^{-4L}}\right),
    \end{equation}
    and $\alpha\to \arccos\left(
    \tanh \frac{r}{2}\right)$ monotonically as $L\to\infty$. The volume of this spherical segment is then given by
    \begin{equation}\label{eqn:VolSphereSegment}
        \mathfrak{S}_{d-2}\int^{\alpha(r,L)}_0\left(\sin \theta\right)^{d-2}\dd \theta \times \int^L_0\left(\sinh \varrho\right)^{d-1}\dd \varrho.
    \end{equation}
    To help estimate $\habsd{B_L(\orig)\cap B_L(r)}$, we want to find an upper bound for the `cone' that is the convex hull of $\orig$ and the intersection of the \emph{spheres} of radius $L$ centred on $\orig$ and $r$. We do this by taking a very crude `volume of rotation' approach. Looking at Figure~\ref{fig:circleIntersection}, we can see a hyperbolic triangle when we take a cross-section that includes the axis of symmetry (the geodesic passing through $\orig$ and $r$). This triangle has side lengths $L$, $\frac{r}{2}$, and the as-yet-unknown $a$. It also has internal angles $\alpha$ at $\orig$, and $\frac{\pi}{2}$ where the edge $a$ meets the geodesic between $\orig$ and $r$. Since this triangle is a hyperbolic right-angled triangle, it has hyperbolic area $\leq \frac{\pi}{2}-\alpha\leq \frac{\pi}{2}$. The point on the triangle that is furthest from the axis of symmetry is the top of the $a$ edge. Therefore we can upper-bound the hyperbolic volume of the cone with
    \begin{equation}
        \left(\frac{\pi}{2}\right)\times\left(\mathfrak{S}_{d-2}\left(\sinh a\right)^{d-2}\right).
    \end{equation}
    The factor $\mathfrak{S}_{d-2}\left(\sinh a\right)^{d-2}$ appears here because it is the $\mathbb{H}^{d-2}$-volume of the hyperbolic sphere $\left\{x\in\mathbb{H}^{d-1}\colon \mathrm{dist}_{\mathbb{H}^{d-1}}\left(x,\orig\right)=a\right\}$. Now the hyperbolic sine rule (Lemma~\ref{lem:sinerule}) implies that $\sinh a = \sin \alpha \sinh L$, and therefore we can upper-bound the volume of the cone with
    \begin{equation}
        \frac{\pi}{2}\mathfrak{S}_{d-2}\left(\sin \alpha\right)^{d-2} \left(\sinh L\right)^{d-2} \\= \frac{\pi}{2^{d-1}}\mathfrak{S}_{d-2}\left(\sin \alpha\right)^{d-2} \,\e^{\left(d-2\right)L}\left(1-\e^{-2L}\right)^{d-2}.
    \end{equation}

    As $r,L\to\infty$ (with $r\leq 2L$),
    \begin{equation}
        \cos \alpha = 1 - 2\e^{-r} + 2\e^{-2L} + \LandauBigO{\e^{-2r}}
    \end{equation}
    and therefore
    \begin{equation}
    \alpha = 2\e^{-\frac{r}{2}}\left(1 - \e^{r-2L}\right)^\frac{1}{2}\left(1 + \LandauBigO{\e^{-\frac{1}{2}r}}\right).
    \end{equation}
    By \eqref{eqn:VolSphereSegment}, the volume of the spherical segment is therefore
    \begin{multline}
        \mathfrak{S}_{d-2}\frac{1}{d-1}\alpha^{d-1}\left(1+\LandauBigO{\alpha^2}\right)\times\frac{1}{d-1}\frac{1}{2^{d-1}}\e^{\left(d-1\right)L}\left(1+\LandauBigO{\e^{-2L}}\right) \\= \mathfrak{S}_{d-2}\frac{2}{\left(d-1\right)^2}\e^{\frac{d-1}{2}\left(2L-r\right)}\left(1-\e^{r-2L}\right)^\frac{d-1}{2}\left(1 + \LandauBigO{\e^{-\frac{1}{2}r}}\right).
    \end{multline}
    On the other hand, the volume of the cone is bounded above by
    \begin{equation}
        \frac{\pi}{2}\mathfrak{S}_{d-2}\e^{\frac{d-2}{2}\left(2L-r\right)}\left(1 - \e^{r-2L}\right)^\frac{d-2}{2}\left(1 +\LandauBigO{\e^{-\frac{1}{2}r}}\right).
    \end{equation}
    In particular, this means that if $2L-r\to\infty$, then the volume of the spherical segment dominates.

    To summarise the above, we can use ($2$ times) the volume of the spherical segment as an upper bound for $\connf_L^{\star 2}(r)$ if $r\to\infty$ and $r\leq 2R$, and ($2$ times) the volume of the spherical segment as a lower bound for $\connf_L^{\star 2}(r)$ if $r\to\infty$ and $2L-r\to\infty$. These bounds are more precisely written as the following. Fix $\varepsilon>0$ and let $\overline{\varepsilon}_L$ be any sequence such that $1\ll\overline{\varepsilon}_L\ll \log L$. Then there exists $L_0$ such that for $L\geq L_0$
    \begin{equation}
        \connf_L^{\star 2}(r) \leq \left(1+\varepsilon\right)\frac{4\mathfrak{S}_{d-2}}{\left(d-1\right)^2}\e^{\frac{d-1}{2}\left(2L-r\right)}
    \end{equation}
    for $r\in\left[\overline{\varepsilon}_L,2L\right]$, and
    \begin{equation}
        \connf_L^{\star 2}(r) \geq \left(1-\varepsilon\right)\frac{4\mathfrak{S}_{d-2}}{\left(d-1\right)^2}\e^{\frac{d-1}{2}\left(2L-r\right)}
    \end{equation}
    for $r\in\left[\overline{\varepsilon}_L,2L-\overline{\varepsilon}_L\right]$. Also note that by using that $\connf_L(r)\in\left\{0,1\right\}$,
    \begin{equation}
        \sup_{r\geq 0}\connf_L^{\star 2}(r) =\connf_L^{\star 2}(0) = \norm*{\Opconnf_L}_{1\to 1} \sim \frac{\mathfrak{S}_{d-1}}{2^{d-1}\left(d-1\right)}\e^{\left(d-1\right)L}.
    \end{equation}

    Therefore
    \begin{align}
        \connf^{\star 3}_L\left(\orig,\orig\right) &= \mathfrak{S}_{d-1}\int^{L}_0\connf_L^{\star 2}(r)\left(\sinh r\right)^{d-1}\dd r \nonumber\\
        & \leq \mathfrak{S}_{d-1}\connf^{\star 2}_L(0)\int^{\overline{\varepsilon}_L}_0\left(\sinh r\right)^{d-1}\dd r \nonumber\\
        & \hspace{4cm} + \mathfrak{S}_{d-1}\left(1+\varepsilon\right)\frac{4\mathfrak{S}_{d-2}}{\left(d-1\right)^2}\e^{\left(d-1\right)L}\int^L_0 \e^{-\frac{d-1}{2}r}\frac{1}{2^{d-1}}\e^{\left(d-1\right)r}\dd r \nonumber\\
        &\leq \left(1+\varepsilon\right)^2\mathfrak{S}_{d-1}\frac{\mathfrak{S}_{d-1}}{2^{d-1}}\e^{\left(d-1\right)L}\frac{\mathfrak{S}_{d-1}}{2^{d-1}}\e^{\left(d-1\right)\overline{\varepsilon}_L} \nonumber\\
        & \hspace{4cm} + \mathfrak{S}_{d-1}\left(1+\varepsilon\right)\frac{2^{3-d}\mathfrak{S}_{d-2}}{\left(d-1\right)^2}\e^{\left(d-1\right)L}\frac{2}{d-1}\e^{\frac{1}{2}\left(d-1\right)L}\nonumber\\
        &= \left(1+\varepsilon\right)^2 2^{2-2d}\left(\mathfrak{S}_{d-1}\right)^3\e^{\left(d-1\right)\left(L+\overline{\varepsilon}_L\right)} \nonumber\\
        & \hspace{4cm} + \left(1+\varepsilon\right)\frac{2^{4-d}\mathfrak{S}_{d-1}\mathfrak{S}_{d-2}}{\left(d-1\right)^3}\e^{\frac{3}{2}\left(d-1\right)L}.
    \end{align}
    Since $\overline{\varepsilon}_L\ll \log L \ll L$, the second term dominates. On the over hand,
    \begin{align}
        \connf^{\star 3}_L\left(\orig,\orig\right) &= \mathfrak{S}_{d-1}\int^{L}_0\connf_L^{\star 2}(r)\left(\sinh r\right)^{d-1}\dd r \nonumber\\
        & \geq \mathfrak{S}_{d-1}\left(1-\varepsilon\right)\frac{4\mathfrak{S}_{d-2}}{\left(d-1\right)^2}\e^{\left(d-1\right)L}\int^L_{\overline{\varepsilon}_L} \e^{-\frac{d-1}{2}r}\frac{1-\varepsilon}{2^{d-1}}\e^{\left(d-1\right)r}\dd r \nonumber\\
        & = \mathfrak{S}_{d-1}\left(1-\varepsilon\right)^2\frac{2^{4-d}\mathfrak{S}_{d-2}}{\left(d-1\right)^3}\e^{\left(d-1\right)L}\left(\e^{\frac{1}{2}\left(d-1\right)L} - \e^{\frac{1}{2}\left(d-1\right)\overline{\varepsilon}_L}\right) \nonumber\\
        &= \left(1-\varepsilon\right)^2\frac{2^{4-d}\mathfrak{S}_{d-1}\mathfrak{S}_{d-2}}{\left(d-1\right)^3}\e^{\frac{3}{2}\left(d-1\right)L} - \left(1-\varepsilon\right)^2\frac{2^{4-d}\mathfrak{S}_{d-1}\mathfrak{S}_{d-2}}{\left(d-1\right)^3}\e^{\left(d-1\right)\left(L+\frac{1}{2}\overline{\varepsilon}_L\right)}.
    \end{align}
    Again, since $\overline{\varepsilon}_L\ll L$, the first term dominates. These two bounds mean that
    \begin{equation}
        \connf^{\star 3}_L\left(\orig,\orig\right) \sim \frac{2^{4-d}\mathfrak{S}_{d-1}\mathfrak{S}_{d-2}}{\left(d-1\right)^3}\e^{\frac{3}{2}\left(d-1\right)L},
    \end{equation}
    and combining this with the asymptotic behaviour of $\norm*{\Opconnf_L}_{1\to 1}$ gives the result for this diagram.

    Let us now repeat this for $\connf^{\star 4}(0)$. For the upper bound we find
    \begin{align}
        \connf^{\star 4}_L\left(\orig,\orig\right) &= \mathfrak{S}_{d-1}\int^{2L}_0\connf^{\star 2}(r)^2\left(\sinh r\right)^{d-1}\dd r \nonumber\\
        & \leq \mathfrak{S}_{d-1}\connf^{\star 2}_L(0)^2\int^{\overline{\varepsilon}_L}_0\left(\sinh r\right)^{d-1}\dd r \nonumber\\
        & \hspace{2cm} + \mathfrak{S}_{d-1}\left(1+\varepsilon\right)^2\frac{4^2\mathfrak{S}_{d-2}^2}{\left(d-1\right)^4}\e^{2\left(d-1\right)L}\int^{2L}_0 \e^{-\left(d-1\right)r}\frac{1}{2^{d-1}}\e^{\left(d-1\right)r}\dd r \nonumber\\
        & \leq \left(1+\varepsilon\right)^3 2^{3-3d}\left(\mathfrak{S}_{d-1}\right)^4\e^{\left(d-1\right)\left(2L+\overline{\varepsilon}_L\right)} \nonumber\\
        & \hspace{2cm} + \left(1+\varepsilon\right)^2\frac{2^{6-d}\mathfrak{S}_{d-1}\mathfrak{S}_{d-2}^2}{\left(d-1\right)^4}L\e^{2\left(d-1\right)L}.
    \end{align}
    Since $\overline{\varepsilon}_L\ll \log L$, the second term dominates. For the lower bound,
    \begin{align}
        \connf^{\star 4}_L\left(\orig,\orig\right) &= \mathfrak{S}_{d-1}\int^{2L}_0\connf_L^{\star 2}(r)^2\left(\sinh r\right)^{d-1}\dd r \nonumber\\
        & \geq \mathfrak{S}_{d-1}\left(1-\varepsilon\right)^2\frac{4^2\mathfrak{S}_{d-2}^2}{\left(d-1\right)^4}\e^{2\left(d-1\right)L}\int^{2L-\overline{\varepsilon}_L}_{\overline{\varepsilon}_L} \e^{-\left(d-1\right)r}\frac{1}{2^{d-1}}\e^{\left(d-1\right)r}\dd r \nonumber\\
        & = \left(1-\varepsilon\right)^2\frac{2^{6-d}\mathfrak{S}_{d-1}\mathfrak{S}_{d-2}^2}{\left(d-1\right)^4}L\e^{2\left(d-1\right)L} - \left(1-\varepsilon\right)^2\frac{2^{6-d}\mathfrak{S}_{d-1}\mathfrak{S}_{d-2}^2}{\left(d-1\right)^4}\overline{\varepsilon}_L\e^{2\left(d-1\right)L}.
    \end{align}
    Again $\overline{\varepsilon}_L\ll L$ means the first term dominates, and these bounds mean
    \begin{equation}
        \connf^{\star 4}_L\left(\orig,\orig\right) \sim \frac{2^{6-d}\mathfrak{S}_{d-1}\mathfrak{S}_{d-2}^2}{\left(d-1\right)^4}L\e^{2\left(d-1\right)L}.
    \end{equation}
    Combining this with the asymptotic behaviour of $\norm*{\Opconnf_L}_{1\to 1}$ gives the result for this diagram.

    For the third diagram, first note
    \begin{equation}
        \connf^{\star 1 \star 2 \cdot 2}_L\left(\orig,\orig\right) = \mathfrak{S}_{d-1}\int^{L}_0\connf_L^{\star 2}(r)^2\left(\sinh r\right)^{d-1}\dd r.
    \end{equation}
    The analysis then proceeds identically as for $\connf_L^{\star 4}(\orig,\orig)$, with the only difference coming from the missing factor of $2$ in the upper bound of the integral domain.

    To deal with $\Ecal(L)$, note that the above bounds on $\connf^{\star 2}_L(r)$ show that there exists an $L$-independent constant $C_d<\infty$ such that
    \begin{multline}
        \frac{C}{\norm*{\Opconnf_L}_{1\to 1}}\connf^{\star 2}_L\cdot \connf_L(r) + \frac{C^2}{2\norm*{\Opconnf_L}_{1\to 1}^2}\connf^{\star 2}_L\cdot\connf^{\star 2}_L(r) - \frac{1}{2}\connf_L(r)\\
        \leq \begin{cases}
            C_d &\colon r<\overline{\varepsilon}_L,\\
            C_d \e^{-\frac{1}{2}\left(d-1\right)r} -\frac{1}{2} &\colon \overline{\varepsilon}_L\leq r< L,\\
            C_d \e^{-\left(d-1\right)r}&\colon L\leq r < 2L,\\
            0 &\colon r\geq2L.
        \end{cases}
    \end{multline}
    Since $\overline{\varepsilon}_L\gg 1$, the $\overline{\varepsilon}_L\leq r< L$ part vanishes for sufficiently large $L$ and
    \begin{align}
        \Ecal(L) &\leq \frac{1}{\norm*{\Opconnf_L}_{1\to 1}}\mathfrak{S}_{d-1}\left(\int^{\overline{\varepsilon}_L}_0C_d\left(\sinh r\right)^{d-1}\dd r + \int^{2L}_LC_d\e^{-\left(d-1\right)r}\left(\sinh r\right)^{d-1}\dd r\right)\nonumber\\
        & = \LandauBigO{\e^{-\left(d-1\right)L}\left(\e^{\left(d-1\right)\overline{\varepsilon}_L} + L\right)}\nonumber\\
        &= \LandauBigO{L\e^{-\left(d-1\right)L}},
    \end{align}
    where we have used $\overline{\varepsilon}_L\ll \log L$.
\end{proof}

\begin{proof}[Proof of Theorem~\ref{cor:BooleanResult}]
    The first equality in \eqref{eqn:expectedDegreeBoolean} is a trivial application of Mecke's formula. We can obtain estimates for the size of the terms in Theorem~\ref{thm:CritIntensityExpansion} through Corollary~\ref{cor:diagramBounds} and lemmas~\ref{lem:normsBoolean} and \ref{lem:diagramsBoolean}, which immediately leads to the second equality in \eqref{eqn:expectedDegreeBoolean}. The second equation in Proposition~\ref{cor:BooleanResult} then follows by comparing the error from \eqref{eqn:expectedDegreeBoolean} to the error in $\mu\left(B_L(\orig)\right)$ from Lemma~\ref{lem:normsBoolean}.
\end{proof}

\begin{appendix}

\section{Proofs of the Preliminaries}
\label{app:PrelimProofs}

\ConvolutionOperator*
\begin{proof}
    Let $f\in L^1\left(\HypDim\right)$. Then the inequality $\norm*{Gf}_1 \leq \norm*{f}_1\esssup_{y\in\HypDim}\int_\HypDim\abs*{g(x,y)}\mu\left(\dd x\right)$ follows from Tonelli's theorem and supremum bounds. On the other hand, if we let the test function be a Dirac delta function then
    \begin{equation}
        \norm*{G\delta_u}_1 = \int_\HypDim\abs*{g(x,u)}\mu\left(\dd x\right).
    \end{equation}
    This proves the first equality.

    The second part follows from the Riesz--Thorin theorem as explained in \cite{hutchcroft2019percolation}.
\end{proof}

\spectralradiusofPositive*
\begin{proof}
    First, the isometry invariance and non-negativity implies
    \begin{equation}
        g\left(y,x\right) = g\left(x,y\right) = g\left(\iota_y\left(x\right),\orig\right) \geq 0
    \end{equation}
    for some isometry $\iota_y$ and all $x,y\in\HypDim$.
    
    Then from the sub-multiplicativity of the operator norm, the equality \eqref{eqn:OneOneNorm}, and $g\geq 0$ we have
    \begin{equation}
        \rho_{1\to 1}\left(G\right) \leq \norm*{G}_{1\to 1} = \esssup_{y\in\HypDim}\int_\HypDim g\left(x,y\right)\dd y = \int_\HypDim g\left(x,\orig\right) \dd x.
    \end{equation}

    On the other hand, using \eqref{eqn:Gelfand} and \eqref{eqn:OneOneNorm},
    \begin{align}
        \rho_{1\to 1}\left(G\right) &= \limsup_{n\to\infty}\left(\esssup_{u_0\in\HypDim}\int_\HypDim\ldots\int_\HypDim\left(\prod^{n}_{k=1} g\left(u_{k},u_{k-1}\right)\right)\dd u_{1}\ldots \dd u_{n}\right)^\frac{1}{n},\nonumber\\
        &\geq \limsup_{n\to\infty}\left(\left(\esssup_{u_0\in\HypDim}\int_\HypDim g\left(u_1,u_0\right)\dd u_1\right)\prod^{n}_{k=2}\left(\essinf_{u_{k-1}'\in\HypDim}\int_\HypDim g\left(u_k,u_{k-1}'\right)\dd u_k\right)\right)^\frac{1}{n }\nonumber\\
        & = \limsup_{n\to\infty}\left(\int_\HypDim g\left(x,\orig\right)\dd x\right)^\frac{n}{n}\nonumber\\
        & = \int_\HypDim g\left(x,\orig\right)\dd x,
    \end{align}
    as required.
\end{proof}

\IsoInvtoCommute*

\begin{proof}
    Since $F$ and $G$ are bounded operators, $ L^p\left(\HypDim\right)\subset\mathcal{D}(F)\cap\mathcal{D}(G)$, and $FG$ and $GF$ are both defined and are bounded operators on $L^p\left(\HypDim\right)$.

    Let $\left\{t_{x,y}\colon\HypDim\to\HypDim\right\}_{x,y\in\HypDim}$ be a family of $\HypDim$-isometries such that $t_{x,y}(y)=x$ and $t_{x,y}(x)=y$. Then given $h\in L^p\left(\HypDim\right)$ and $x\in\HypDim$, the boundedness of $F$ and $G$ and $ L^p\left(\HypDim\right)\subset\mathcal{D}(F)\cap\mathcal{D}(G)$ means we can use the Fubini-Tonelli theorem to exchange the order of integration
    \begin{multline}
        \left(FGh\right)(x) = \int_\HypDim \left(\int_\HypDim f(x,y)g(y,z)h(z)\mu\left(\dd z\right)\right)\mu\left(\dd y\right)\\
        = \int_\HypDim \left(\int_\HypDim f(x,y)g(y,z)h(z)\mu\left(\dd y\right)\right)\mu\left(\dd z\right).
    \end{multline}
    Then we can apply the isometry $t_{x,z}t_{x,y}$ to $f$ and $t_{x,z}t_{y,z}$ to $g$ to get
    \begin{multline}
        \left(FGh\right)(x) = \int_\HypDim \left(\int_\HypDim f(t_{x,z}(y),z)g(x,t_{x,z}(y))h(z)\mu\left(\dd y\right)\right)\mu\left(\dd z\right)\\
        =\int_\HypDim \left(\int_\HypDim g(x,t_{x,z}(y))f(t_{x,z}(y),z)h(z)\mu\left(\dd y\right)\right)\mu\left(\dd z\right).
    \end{multline}
    Since $t_{x,y}$ is an isometry,
    \begin{equation}
        \int_\HypDim g(x,t_{x,z}(y))f(t_{x,z}(y),z)h(z)\mu\left(\dd y\right) = \int_\HypDim g(x,y')f(y',z)h(z)\mu\left(\dd y'\right)
    \end{equation}
    for all $x,z\in\HypDim$. Therefore
    \begin{multline}
        \left(FGh\right)(x) = \int_\HypDim \left(\int_\HypDim g(x,y')f(y',z)h(z)\mu\left(\dd y'\right)\right)\mu\left(\dd z\right)\\
         \int_\HypDim \left(\int_\HypDim g(x,y')f(y',z)h(z)\mu\left(\dd z\right)\right)\mu\left(\dd y'\right) = \left(GFh\right)(x),
    \end{multline}
    where we have once again used the Fubini-Tonelli theorem.
\end{proof}

\section{Range of the Spherical Transform}
\label{app:rangesphericaltransform}

The following lemma includes Lemma~\ref{lem:RangeSphericalTransform} from Section~\ref{sec:ExpandLaceCoeff}, with an extra result for non-negative definite models.

\begin{lemma}
\label{lem:RangeSphericalTransformExpand}
    If $G\colon L^2\left(\HypDim\right)\to L^2\left(\HypDim\right)$ is a bounded convolution operator associated with an isometry invariant and real $g$, then
    \begin{equation}
        \widetilde{g}(s)\in\R
    \end{equation}
    for Lebesgue-almost all $s\in\R$, and 
    \begin{equation}
        \norm*{G}_{2\to 2} = \esssup_{s\in\R}\abs*{\widetilde{g}(s)}.
    \end{equation}
    Furthermore, for non-negative definite models
    \begin{equation}
        \widetilde{\connf}_L(s)\in\left[0,\infty\right)
    \end{equation}
    for Lebesgue-almost all $s\in\R$.
\end{lemma}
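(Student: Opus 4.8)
The statement has two parts. The first part (real-valuedness of $\widetilde{g}(s)$ and the operator-norm formula) is Lemma~\ref{lem:RangeSphericalTransform}, so I treat it first. The second part is the non-negative-definite refinement. For the first part, the plan is to use the inversion/Plancherel machinery of Lemma~\ref{lem:InvMultPlancherel}. Since $g$ is isometry invariant, it is in particular rotation invariant about $\orig$, so $\widetilde g$ is $b$-independent; and since $g$ is real and symmetric ($g(x,y)=g(y,x)$ from isometry invariance), one checks from the definition \eqref{eqn:SphericalTransform} that $\overline{\widetilde{g}(s)} = \widetilde{g}(-s)$, while the symmetry of $g$ under the isometry swapping a point with its ``reflection'' gives $\widetilde{g}(s) = \widetilde{g}(-s)$; combining these yields $\widetilde g(s)\in\R$ for a.e.\ $s$. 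Alternatively — and this is the cleaner route — one invokes the fact that the convolution operator $G$ on $L^2$ is self-adjoint (because $g$ is real and symmetric), so under the Plancherel isometry of Lemma~\ref{lem:InvMultPlancherel} it becomes the multiplication operator by $\widetilde g(s)$ on $L^2(\R\times\mathbb S^{d-1}, |\mathbf c(s)|^{-2}\dd s\, \dd b)$, and a self-adjoint multiplication operator must have (a.e.)\ real-valued symbol. The norm formula $\norm{G}_{2\to2} = \esssup_s |\widetilde g(s)|$ is then the standard fact that the operator norm of a multiplication operator equals the essential supremum of the modulus of its symbol.

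\textbf{The non-negative-definite part.} For the refinement, I would argue as follows. By definition (Definition~\ref{defn:non-negativeDefinite}), $\langle f, \Opconnf_L f\rangle \geq 0$ for all $f\in L^2(\HypDim)$, i.e.\ $\Opconnf_L$ is a non-negative (positive semi-definite) self-adjoint operator on $L^2(\HypDim)$. Under the Plancherel isometry $f\mapsto \widetilde f$ of Lemma~\ref{lem:InvMultPlancherel}, $\Opconnf_L$ is unitarily equivalent to multiplication by $\widetilde{\connf}_L(s)$ on $L^2(\R\times\mathbb S^{d-1}, |\mathbf c(s)|^{-2}\dd s\,\dd b)$. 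A multiplication operator $M_h$ by a measurable real function $h$ is non-negative if and only if $h\geq 0$ a.e.: indeed, testing against functions supported where $h<0$ would produce a negative value of $\langle F, M_h F\rangle$. Hence $\widetilde{\connf}_L(s)\geq 0$ for Lebesgue-almost every $s$, which combined with the already-established real-valuedness gives $\widetilde{\connf}_L(s)\in[0,\infty)$ a.e.

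\textbf{Anticipated obstacles.} The main technical care needed is the interplay between the $L^2(\HypDim)$-theory and the $L^2_\natural$ radiality: one must make sure that the quadratic form $\langle f, \Opconnf_L f\rangle$ being non-negative on \emph{all} of $L^2(\HypDim)$ (not just radial functions) is what translates, via Plancherel, into the symbol being non-negative a.e.\ in $s$ — if the hypothesis were only stated for radial $f$ the conclusion would still follow, since for each fixed $s$ the set of achievable ``test profiles'' in the transform domain is rich enough, but the general statement is cleanest. A secondary point is that $\connf_L$ must be in $L^2(\HypDim)$ for $\widetilde{\connf}_L$ and the operator $\Opconnf_L\colon L^2\to L^2$ to be meaningfully defined: this follows from Assumption~\ref{enum:AssumptionFiniteDegree} together with $\connf_L\in[0,1]$ (so $\connf_L^2\leq\connf_L$ and $\int\connf_L(x,\orig)^2\mu(\dd x)\leq\norm{\Opconnf_L}_{1\to1}<\infty$), exactly as used in the proof of Lemma~\ref{lem:tautoconnf}. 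Neither of these is a serious obstacle; the argument is essentially a dictionary translation through the spherical Plancherel theorem, and the ``hard'' analytic input — that this transform is an $L^2$-isometry diagonalising convolution by radial functions — is imported wholesale from \cite{helgason1994geometric} via Lemma~\ref{lem:InvMultPlancherel}.
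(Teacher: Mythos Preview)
Your proposal is correct. Your ``cleaner route'' is essentially what the paper does: the paper first establishes $\langle f,Gf\rangle\in\R$ for all $f\in L^2$ from the symmetry $g(x,y)=g(y,x)$, then writes $\langle f,Gf\rangle=\frac{\mathfrak S_{d-1}}{w}\int_\R\overline{\widetilde g(s)}\,|\widetilde f(s)|^2|\mathbf c(s)|^{-2}\dd s$ via Plancherel, and derives a contradiction by explicitly constructing $f$ with $\widetilde f=\Id_F$ for a bounded positive-measure set $F$ on which $\widetilde g$ takes values in some ball $B_\varepsilon(z)\subset\Complex\setminus\R$. That construction is precisely the standard proof of the multiplication-operator fact you invoke, so the two arguments coincide once unpacked; the non-negative-definite refinement and the norm formula are handled identically in both. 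Your first suggested route --- combining $\overline{\widetilde g(s)}=\widetilde g(-s)$ (from $g$ real) with $\widetilde g(s)=\widetilde g(-s)$ (Weyl-group symmetry of the elementary spherical functions, $\phi_s=\phi_{-s}$) --- is a genuinely more direct alternative that avoids test functions altogether, though you would need to justify the $s\mapsto -s$ symmetry rather than the vague ``swapping a point with its reflection''.
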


\begin{proof}
    The isometry invariance of $g$ implies that $g(x,y)=g(y,x)$ for all $x,y\in\HypDim$. Therefore for all $f\in L^2\left(\HypDim\right)$ we have
    \begin{multline}
        \left<f,Gf\right> = \int_\HypDim f(x)\left(\int_\HypDim g(x,y)\overline{f(y)}\mu\left(\dd y\right)\right)\mu\left(\dd x\right) \\= \int_\HypDim \left(\int_\HypDim f(x)g(x,y)\mu\left(\dd x\right)\right)\overline{f(y)}\mu\left(\dd y\right) = \left<Gf,f\right>,
    \end{multline}
    where we have used the boundedness of $G$ and Fubini-Tonelli to exchange the order of integration. However, we also have
    \begin{multline}
        \overline{\left<f,Gf\right>} = \int_\HypDim \overline{f(x)}\left(\int_\HypDim g(x,y)f(y)\mu\left(\dd y\right)\right)\mu\left(\dd x\right)\\
        = \int_\HypDim \left(\int_\HypDim f(x')g(x',y')\mu\left(\dd x'\right)\right)\overline{f(y')}\mu\left(\dd y'\right) = \left<Gf,f\right>,
    \end{multline}
    where $x'=y$ and $y'=x$. Therefore $\left<f,Gf\right>=\overline{\left<f,Gf\right>}$, and $\left<f,Gf\right>\in\R$.

    Now by taking the properties of the spherical transform in Lemma~\ref{lem:InvMultPlancherel},
    \begin{align}
        \left<f,G f\right> &= \int_\HypDim f(x)\overline{\left(G f\right)\left(x\right)}\mu\left(\dd x\right)\nonumber\\
        & = \frac{\mathfrak{S}_{d-1}}{w}\int_\R\widetilde{f}(s) \overline{\left(G f\right)^\sim(s)}\abs{\mathbf{c}(s)}^{-2}\dd s\nonumber\\
        &=  \frac{\mathfrak{S}_{d-1}}{w}\int_\R\widetilde{f}(s) \overline{\widetilde{g}(s)}\overline{\widetilde{f}(s)}\abs{\mathbf{c}(s)}^{-2}\dd s\nonumber\\
        &= \frac{\mathfrak{S}_{d-1}}{w}\int_\R\overline{\widetilde{g}(s)}\abs*{\widetilde{f}(s)}^2\abs{\mathbf{c}(s)}^{-2}\dd s.
    \end{align}
    
    Now suppose for contradiction that there exists a Lebesgue-positive subset $E\subset \R$ such that $\widetilde{g}(s)\not\in \R$ for all $s\in E$, then there exists a $z\in\Complex\setminus\R$, $\varepsilon>0$, and Lebesgue-positive and bounded $F\subset E$ such that the complex ball $B_\varepsilon\left(z\right)\subset \Complex\setminus\R$ and $\widetilde{g}(s)\in B_\varepsilon\left(z\right)$ for all $s\in F$. Then let $h(s) = \Id\left\{F\right\}(s)$. Since $F\subset \R$ is bounded,
    \begin{equation}
        \int_\R \abs*{h(s)}^2 \abs{\mathbf{c}(s)}^{-2}\dd s = \int_F \abs{\mathbf{c}(s)}^{-2}\dd s <\infty,
    \end{equation}
    and therefore $h\in L^2\left(\R\times \mathbb{S}^{d-1}\right)$.
    Therefore if we invert the spherical transform on this function, we can let
    \begin{equation}
        f(x) = \frac{\mathfrak{S}_{d-1}}{w} \int_\R\e^{\left(+is + \frac{d-1}{2}\right)A(x,b)}h(s)\abs{\mathbf{c}(s)}^{-2}\dd s = \frac{\mathfrak{S}_{d-1}}{w} \int_F\e^{\left(+is + \frac{d-1}{2}\right)A(x,b)}\abs{\mathbf{c}(s)}^{-2}\dd s,
    \end{equation}
    and know $f\in L^2\left(\HypDim\right)$. Therefore $\widetilde{f}(s) = h(s)$, and we have
    \begin{equation}
        \left<f,G f\right> = \frac{\mathfrak{S}_{d-1}}{w}\int_F\overline{\widetilde{g}(s)}\abs{\mathbf{c}(s)}^{-2}\dd s \not\in \R.
    \end{equation}
    This contradiction proves that $\widetilde{g}(s)\in\R$ for Lebesgue-almost all $s\in\R$.

    Recall that for non-negative definite models, $\left<f,\Opconnf_L f\right>\in\left[0,\infty\right)$ for all $f\in L^2\left(\HypDim\right)$. The above argument can easily be adapted to show that if $\widetilde{\connf}_L(s)\not\in\left[0,\infty\right)$ for a Lebesgue-positive set of $s$, then one can construct some $f\in L^2\left(\HypDim\right)$ such that $\left<f,\Opconnf_L f\right>\not\in \left[0,\infty\right)$. This contradiction proves the result.

    To show $\norm*{G}_{2\to 2} = \esssup_{s\in\R}\abs*{\widetilde{g}(s)}$, we just apply Lemma~\ref{lem:InvMultPlancherel}. Starting with the Plancherel result and applying the multiplication formula:
    \begin{align}
        \norm*{Gf}^2_{2} &= \frac{1}{w}\int_{\mathbb{S}^{d-1}}\int_\R\abs*{\widetilde{g}(s)}^2 \abs*{\widetilde{f}(s,b)}^2\abs{\mathbf{c}(s)}^{-2}\dd s\dd b\nonumber\\
        &\leq \left(\esssup_{s\in\R}\abs*{\widetilde{g}(s)}\right)^2\frac{1}{w}\int_{\mathbb{S}^{d-1}}\int_\R\abs*{\widetilde{f}(s,b)}^2\abs{\mathbf{c}(s)}^{-2}\dd s\dd b\nonumber\\
        &= \left(\esssup_{s\in\R}\abs*{\widetilde{g}(s)}\right)^2\norm*{f}^2_2.
    \end{align}
    This shows $\norm*{G}_{2\to 2} \leq \esssup_{s\in\R}\abs*{\widetilde{g}(s)}$. To get the equality, we want to find some $f$ such that $\widetilde{f}(s)$ approximates a Dirac delta function at the argument essential supremum. The inverse formula gives exactly this.
\end{proof}

\section{Hyperbolic Triangles}
\label{app:hyperbolictriangles}
    In the following lemmas, the parameters $A,B,C,a,b,c,\alpha,\beta,\gamma$ correspond to the vertices, edge lengths, and angles in Figure~\ref{fig:labellingHyperbolicTriangle}.

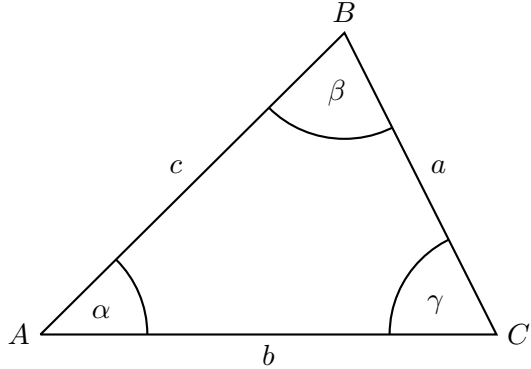
\begin{figure}
    \centering
    \begin{tikzpicture}[scale=2]
    \begin{scope}
        \clip (0,0) -- (2,2) -- (3,0) -- (0,0);
        \draw[thick] (0,0) circle (20pt);
        \draw[thick] (2,2) circle (20pt);
        \draw[thick] (3,0) circle (20pt);
        \draw (0.4,0.15) node{$\alpha$};
        \draw (1.95,1.6) node{$\beta$};
        \draw (2.6,0.2) node{$\gamma$};
    \end{scope}
        \draw[thick] (0,0) node[left]{$A$} -- (2,2) node[above]{$B$} -- (3,0) node[right]{$C$} -- (0,0);
        \draw (1,1) node[above left]{$c$};
        \draw (2.5,1) node[above right]{$a$};
        \draw (1.5,0) node[below]{$b$};
    \end{tikzpicture}
    \caption{Labelling of vertices, angles, and side lengths of the hyperbolic triangle $\Delta ABC$ used in Lemmas \ref{lem:sinerule} and \ref{lem:areaoftriangles}.}
    \label{fig:labellingHyperbolicTriangle}
\end{figure}

\begin{lemma}
    \label{lem:RightHyperbolicTriangles}
    If the hyperbolic triangle $\triangle ABC$ has a right angle at $C$, then
    \begin{equation}
        \cos \alpha = \frac{\tanh b}{\tanh c}.
    \end{equation}
\end{lemma}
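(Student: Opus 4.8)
The plan is to prove the hyperbolic Pythagorean-type identity $\cos\alpha = \tanh b / \tanh c$ from the hyperbolic law of cosines, which I will take as the basic known identity for hyperbolic triangles (it is the standard companion to Lemma~\ref{lem:sinerule}). Recall that for a hyperbolic triangle with sides $a,b,c$ opposite the angles $\alpha,\beta,\gamma$, the law of cosines reads
\begin{equation}
    \cosh c = \cosh a \cosh b - \sinh a \sinh b \cos\gamma,
\end{equation}
and there is a ``dual'' form that isolates a side adjacent to the chosen angle, namely
\begin{equation}
    \cosh a = \cosh b \cosh c - \sinh b \sinh c \cos\alpha.
\end{equation}

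First I would specialise the first law of cosines to the right angle at $C$, i.e. $\gamma = \pi/2$, so that $\cos\gamma = 0$ and we get the hyperbolic Pythagorean theorem $\cosh c = \cosh a \cosh b$. This relates the hypotenuse $c$ to the two legs $a$ and $b$. Next I would take the dual law of cosines displayed above, which expresses $\cosh a$ in terms of $\cosh b$, $\cosh c$, and $\cos\alpha$; solving it for $\cos\alpha$ gives
\begin{equation}
    \cos\alpha = \frac{\cosh b \cosh c - \cosh a}{\sinh b \sinh c}.
\end{equation}
Now I would substitute $\cosh a = \cosh c / \cosh b$ (from the Pythagorean relation) into the numerator, obtaining $\cosh b \cosh c - \cosh c/\cosh b = \cosh c (\cosh^2 b - 1)/\cosh b = \cosh c \sinh^2 b / \cosh b$. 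Dividing by $\sinh b \sinh c$ then yields $\cos\alpha = (\sinh b \cosh c)/(\cosh b \sinh c) = \tanh b / \tanh c$, as claimed.

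There is no real obstacle here beyond bookkeeping; the only judgement call is which ``known'' identities to invoke. If one prefers not to assume the dual law of cosines, an alternative route is to work in the Poincaré ball (or upper half-plane) model: place $C$ at the origin with the two legs along geodesics meeting at a right angle, parametrise $A$ and $B$ by their hyperbolic distances $b$ and $a$ from $C$, compute $c = \dist{A,B}$ via the distance formula $\cosh\dist{x,y}$ in model coordinates, and read off $\cos\alpha$ from the tangent vectors of the two geodesics at $A$. That computation reproduces both the Pythagorean relation and the desired formula directly, but it is more calculation-heavy; I would only fall back to it if the law-of-cosines identities are considered off-limits. Either way the proof is short.

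\begin{proof}
    By the hyperbolic law of cosines applied at the right-angled vertex $C$ (where $\gamma = \pi/2$),
    \begin{equation}
        \cosh c = \cosh a \cosh b,
    \end{equation}
    the hyperbolic Pythagorean theorem. Applying the law of cosines instead to isolate the side $a$ opposite $\alpha$ gives
    \begin{equation}
        \cosh a = \cosh b \cosh c - \sinh b \sinh c \cos\alpha,
    \end{equation}
    so that
    \begin{equation}
        \cos\alpha = \frac{\cosh b \cosh c - \cosh a}{\sinh b \sinh c}.
    \end{equation}
    Substituting $\cosh a = \cosh c/\cosh b$ into the numerator,
    \begin{equation}
        \cosh b \cosh c - \frac{\cosh c}{\cosh b} = \frac{\cosh c\left(\cosh^2 b - 1\right)}{\cosh b} = \frac{\cosh c \sinh^2 b}{\cosh b}.
    \end{equation}
    Hence
    \begin{equation}
        \cos\alpha = \frac{\cosh c \sinh^2 b}{\cosh b \, \sinh b \sinh c} = \frac{\sinh b \cosh c}{\cosh b \sinh c} = \frac{\tanh b}{\tanh c},
    \end{equation}
    as required.
\end{proof}
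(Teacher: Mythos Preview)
Your proof is correct. The paper itself does not give an argument here: it simply cites \cite[Corollary~32.13]{martin2012foundations}. Your derivation from the hyperbolic law of cosines is the standard textbook route and is entirely sound. One small structural point: the cosine rule you invoke is stated in the paper as Lemma~\ref{lem:cosinerule}, which appears \emph{after} the present lemma, so as written your proof uses a later-numbered result to establish an earlier one. Since both are independent classical facts proved in the same external reference, there is no genuine circularity, but if you want the appendix to read linearly you could either reorder the lemmas or note that the cosine rule is being taken as the primitive input.
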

\begin{proof}
    See \cite[Corollary~32.13]{martin2012foundations}.
\end{proof}

\begin{lemma}[Sine rule for hyperbolic triangles]\label{lem:sinerule}
    For the hyperbolic triangle $\Delta ABC$,
    \begin{equation}
        \frac{\sin \alpha}{\sinh a} = \frac{\sin \beta}{\sinh b} = \frac{\sin \gamma}{\sinh c}.
    \end{equation}
\end{lemma}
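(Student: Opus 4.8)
The plan is to reduce the sine rule to the right-triangle identities already at our disposal, via the classical device of dropping an altitude. First I would record the companion right-triangle relation: in a hyperbolic triangle with a right angle at $C$ one has $\sin\alpha = \sinh a/\sinh c$, the hyperbolic analogue of ``opposite over hypotenuse''. This can be quoted from \cite{martin2012foundations} alongside Lemma~\ref{lem:RightHyperbolicTriangles}, or derived from it together with the hyperbolic Pythagorean relation $\cosh c = \cosh a\cosh b$, using $\sin^2\alpha = 1-\cos^2\alpha$ and the identity relating $\tanh$ and $\cosh$.

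Next, drop the perpendicular from the vertex $B$ to the geodesic through $A$ and $C$; call its foot $F$ and its length $h$. This splits $\triangle ABC$ into two right triangles $\triangle ABF$ and $\triangle CBF$, each with its right angle at $F$. Applying the auxiliary relation in $\triangle ABF$ (hypotenuse $AB$ of length $c$, angle $\alpha$ at $A$, opposite side $BF=h$) gives $\sinh h = \sinh c\,\sin\alpha$; applying it in $\triangle CBF$ (hypotenuse $CB$ of length $a$, angle $\gamma$ at $C$, opposite side $BF=h$) gives $\sinh h = \sinh a\,\sin\gamma$. Equating the two expressions for $\sinh h$ yields $\sin\alpha/\sinh a = \sin\gamma/\sinh c$. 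Repeating the construction with the perpendicular from $A$ to the geodesic through $B$ and $C$ gives, identically, $\sin\beta/\sinh b = \sin\gamma/\sinh c$, and combining the two chains of equalities proves the statement.

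The one subtlety is the configuration in which the foot $F$ does not lie on the segment $AC$, i.e.\ when the angle at $A$ or at $C$ is obtuse: then one of the sub-triangles carries the supplement of the relevant angle at that vertex, but since $\sin$ is invariant under $\theta\mapsto\pi-\theta$ the relation $\sinh h = \sinh(\text{hypotenuse})\,\sin(\text{angle})$ is unaffected, so the argument goes through verbatim. Thus the only real obstacle is bookkeeping around acute versus obtuse triangles, which this $\sin$-invariance dispatches. An alternative that sidesteps the case analysis entirely is to invoke the hyperbolic law of cosines $\cosh c = \cosh a\cosh b - \sinh a\sinh b\cos\gamma$, solve for $\cos\gamma$, substitute into $\sin^2\gamma = 1-\cos^2\gamma$, and observe that $\sin^2\gamma/\sinh^2 c$ is then manifestly symmetric in $a,b,c$; taking positive square roots (all quantities being positive) gives the three equal ratios at once. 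In keeping with the style of this appendix, the write-up can reasonably be compressed to a citation of \cite{martin2012foundations}.
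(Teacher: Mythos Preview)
Your proposal is correct and in fact more detailed than the paper's own treatment: the paper simply cites \cite[Corollary~32.14]{martin2012foundations} without giving any argument. Your altitude-dropping proof (with the obtuse-angle caveat handled via $\sin(\pi-\theta)=\sin\theta$) is the standard derivation one would find in such a reference, and your closing remark that the write-up could be compressed to a citation is exactly what the paper does.
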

\begin{proof}
    See \cite[Corollary~32.14]{martin2012foundations}.
\end{proof}

\begin{lemma}[Cosine rule for hyperbolic triangles]\label{lem:cosinerule}
    For the hyperbolic triangle $\Delta ABC$,
    \begin{equation}
        \cos \alpha \sinh b \sinh c = \cosh b \cosh c - \cosh a.
    \end{equation}
\end{lemma}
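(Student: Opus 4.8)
The plan is to work in the hyperboloid (Minkowski) model of $\HypDim$. Since the three vertices $A,B,C$ span a totally geodesic copy of $\HypTwo$ inside $\HypDim$ it would suffice to argue there, but in fact the computation is dimension-free, so I simply realise $\HypDim = \{x\in\mathbb{R}^{d+1} : \langle x,x\rangle = -1,\ x_0>0\}$, where $\langle x,y\rangle = -x_0y_0 + x_1y_1 + \cdots + x_dy_d$ is the Lorentzian form, with the Riemannian metric induced by restricting $\langle\cdot,\cdot\rangle$ to tangent spaces. The two background facts I would quote (they are standard and available, e.g., from the same source \cite{martin2012foundations} used for the sine rule) are: (i) the geodesic distance satisfies $\cosh \dist{P,Q} = -\langle P,Q\rangle$; and (ii) at a point $P$ the tangent space is $P^\perp$, on which $\langle\cdot,\cdot\rangle$ is positive definite and coincides with the Riemannian metric, so the angle between two tangent directions at $P$ is the Euclidean angle between their $\langle\cdot,\cdot\rangle$-orthogonal representatives.

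Then I would carry out the following steps. First, using (i), record $\langle B,C\rangle = -\cosh a$, $\langle A,C\rangle = -\cosh b$, $\langle A,B\rangle = -\cosh c$. Second, at the vertex $A$ produce tangent vectors pointing toward $B$ and toward $C$ by orthogonally projecting $B$ and $C$ onto $A^\perp$: since $\langle A,A\rangle = -1$ these are $B' := B + \langle A,B\rangle A$ and $C' := C + \langle A,C\rangle A$, and a one-line computation using $\langle B,B\rangle = \langle C,C\rangle = -1$ gives $\langle B',B'\rangle = \langle A,B\rangle^2 - 1 = \cosh^2 c - 1 = \sinh^2 c$ and likewise $\langle C',C'\rangle = \sinh^2 b$ (both positive, confirming $B',C'$ are nonzero spacelike vectors for a nondegenerate triangle). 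Third, by (ii),
\[
\cos\alpha = \frac{\langle B',C'\rangle}{\sqrt{\langle B',B'\rangle}\,\sqrt{\langle C',C'\rangle}} = \frac{\langle B',C'\rangle}{\sinh b\,\sinh c}.
\]
Fourth, expand the numerator and use $\langle A,A\rangle=-1$ to cancel the two cross terms:
\[
\langle B',C'\rangle = \langle B,C\rangle + \langle A,C\rangle\langle A,B\rangle + \langle A,B\rangle\langle A,C\rangle + \langle A,B\rangle\langle A,C\rangle\langle A,A\rangle = \langle B,C\rangle + \langle A,B\rangle\langle A,C\rangle = -\cosh a + \cosh b\cosh c.
\]
Combining the last two displays yields $\cos\alpha\,\sinh b\,\sinh c = \cosh b\cosh c - \cosh a$, as claimed.

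The only genuine obstacle is justifying the background facts (i) and (ii) and the reduction to a totally geodesic plane; these are routine in any treatment of hyperbolic geometry and can simply be cited, but if one wanted a self-contained derivation this is the part that takes real work (identifying the hyperboloid metric, the distance formula, and the angle formula). An alternative, purely synthetic route is to drop the perpendicular from $B$ to the line $AC$ and combine Lemma~\ref{lem:RightHyperbolicTriangles} with the hyperbolic Pythagorean identity $\cosh a = \cosh h\,\cosh(\text{segment})$ for the altitude $h$; this works, but it forces a case split according to whether the foot of the altitude lies inside the segment $AC$, so I would prefer the Minkowski computation above.
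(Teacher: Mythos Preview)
Your proof is correct: the hyperboloid-model computation is the standard, clean derivation of the hyperbolic cosine rule, and the steps --- the distance formula $\cosh\dist{P,Q}=-\langle P,Q\rangle$, the identification of $T_A\HypDim$ with $A^\perp$, the orthogonal projections $B'=B+\langle A,B\rangle A$, $C'=C+\langle A,C\rangle A$, and the final bilinear expansion --- are all fine. The paper, however, does not prove this lemma at all: it simply cites \cite[Corollary~32.15]{martin2012foundations}. So your proposal is not so much a different route as a self-contained replacement for an external reference; given that the paper treats this as a background fact of hyperbolic geometry (alongside the sine rule and the right-triangle identity, both also just cited), either a citation or your short argument would be appropriate here.
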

\begin{proof}
    See \cite[Corollary~32.15]{martin2012foundations}.
\end{proof}

\begin{lemma}[Hyperbolic area for hyperbolic triangles]\label{lem:areaoftriangles}
    For the hyperbolic triangle $\Delta ABC\subset\HypTwo$, the hyperbolic area is equal to the angle defect:
    \begin{equation}
        \mu\left(\Delta ABC\right) = \pi - \alpha - \beta - \gamma.
    \end{equation}
\end{lemma}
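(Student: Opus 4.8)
The final statement is the Gauss--Bonnet identity for hyperbolic triangles, and the plan is to give the classical two-step argument, reducing the general case to triangles with an ideal vertex. Since $\mu$ and the notion of angle are both isometry invariant, it suffices to work in the upper half-plane model $\{z = x + \i y \colon y > 0\}$, with length element $y^{-1}(\dd x^2 + \dd y^2)^{1/2}$ and area element $y^{-2}\,\dd x\,\dd y$ (conformal, so hyperbolic angles coincide with Euclidean ones), which is isometric to $\HypTwo$.

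\emph{Step 1 (one ideal vertex).} First I would prove the identity for a generalized triangle with one vertex at $\infty$: its two sides through $\infty$ are vertical rays and the remaining side is a Euclidean semicircle with centre on the real axis. After an isometry we may take that semicircle to be $x^2 + y^2 = 1$; an elementary angle computation (using conformality of the model) shows the two vertical sides are then the lines $x = -\cos\alpha$ and $x = \cos\beta$, where $\alpha, \beta \in (0,\pi)$ are the two finite interior angles and the angle at $\infty$ is $0$. Integrating the area element,
\begin{equation*}
    \mu(\triangle) = \int_{-\cos\alpha}^{\cos\beta} \int_{\sqrt{1-x^2}}^{\infty} \frac{\dd y\,\dd x}{y^2} = \int_{-\cos\alpha}^{\cos\beta} \frac{\dd x}{\sqrt{1-x^2}} = \pi - \alpha - \beta
\end{equation*}
(substitute $x = \cos\phi$), which is the desired formula in this case.

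\emph{Step 2 (general case by subtraction).} Given a triangle $\triangle ABC$ with all vertices finite and angles $\alpha, \beta, \gamma$ at $A, B, C$, extend the side $CA$ past $A$ to its ideal endpoint $\Omega$. The geodesic segment $BA$ splits the generalized triangle $\triangle CB\Omega$ into $\triangle ABC$ and $\triangle AB\Omega$ (overlapping only along $BA$), so $\mu(\triangle ABC) = \mu(\triangle CB\Omega) - \mu(\triangle AB\Omega)$. Writing $\delta := \angle AB\Omega$, the generalized triangle $\triangle CB\Omega$ has finite angles $\gamma$ at $C$ and $\beta + \delta$ at $B$, while $\triangle AB\Omega$ has finite angles $\pi - \alpha$ at $A$ (supplementary to $\alpha$, since $C, A, \Omega$ are colinear with $A$ between $C$ and $\Omega$) and $\delta$ at $B$. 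Applying Step 1 to each,
\begin{equation*}
    \mu(\triangle ABC) = \bigl(\pi - \gamma - (\beta + \delta)\bigr) - \bigl(\pi - (\pi - \alpha) - \delta\bigr) = \pi - \alpha - \beta - \gamma,
\end{equation*}
as claimed. Alternatively, since the preceding trigonometric lemmas are simply quoted from \cite{martin2012foundations}, this angular-defect identity can likewise be cited directly from there.

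The computation in Step 1 is the only genuine calculation; everything else is bookkeeping. The one point requiring care is verifying that the set-theoretic decomposition in Step 2 is legitimate --- that $A$ lies between $C$ and $\Omega$ on their common geodesic, that $BA$ genuinely separates $\triangle CB\Omega$ into the two claimed pieces, and that the auxiliary finite angles lie in $(0,\pi)$ so that Step 1 applies --- but none of this is hard, and I do not expect any real obstacle.
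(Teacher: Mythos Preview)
Your proof is correct: the Step~1 integral is the standard computation and the subtraction in Step~2 is set up properly (the supplementary angle at $A$ and the additivity of the angle at $B$ are both right, and the $\delta$ terms cancel as they should). The paper's own ``proof'' is simply a citation to \cite[Chapter~33.2]{martin2012foundations}, just as with the neighbouring trigonometric lemmas --- so you have supplied the classical argument where the paper only points to it, and you have already anticipated this by noting the citation alternative at the end of your write-up.
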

\begin{proof}
    See \cite[Chapter~33.2]{martin2012foundations}.
\end{proof}
    
\end{appendix}

\small
\bibliography{bibliography}{}
\bibliographystyle{alpha}

\paragraph{Acknowledgements.}
This work was supported by NSERC of Canada. Thanks are due to Markus Heydenreich and Gordon Slade for their advice in presenting the paper.

\end{document}